\newtheorem{theorem}{Theorem}[section]
\theoremstyle{remark}
\newtheorem{example}[theorem]{\bf Example}
\newtheorem{remark}[theorem]{Remark}
\newtheorem{construction}[theorem]{\bf Construction}
\theoremstyle{definition}
\newtheorem{definition}[theorem]{Definition}
\newtheorem{hypothesis}[theorem]{Hypothesis}
\theoremstyle{plain}
\newtheorem{lemma}[theorem]{Lemma}
\newtheorem{corollary}[theorem]{Corollary}
\newtheorem{proposition}[theorem]{Proposition}
\newtheorem{question}[theorem]{Question}
\numberwithin{equation}{section}
\newcommand{\bbD}{{\mathbb D}}
\newcommand{\bbN}{{\mathbb N}}
\newcommand{\bbQ}{{\mathbb Q}}
\newcommand{\bbU}{{\mathbb U}}
\newcommand{\bbZ}{{\mathbb Z}}
\newcommand{\bG}{{\bf G}}
\newcommand{\bN}{{\bf N}}
\newcommand{\bP}{{\bf P}}
\newcommand{\bh}{{\bf h}}
\newcommand{\frb}{{\mathfrak b}}
\newcommand{\frc}{{\mathfrak c}}
\newcommand{\frg}{{\mathfrak g}}
\newcommand{\frh}{{\mathfrak h}}
\newcommand{\frm}{{\mathfrak m}}
\newcommand{\frn}{{\mathfrak n}}
\newcommand{\frp}{{\mathfrak p}}
\newcommand{\frt}{{\mathfrak t}}
\newcommand{\fru}{{\mathfrak u}}
\newcommand{\frz}{{\mathfrak z}}
\newcommand{\frA}{{\mathfrak A}}
\newcommand{\frS}{{\mathfrak S}}
\newcommand{\frX}{{\mathfrak X}}
\newcommand{\frY}{{\mathfrak Y}}
\newcommand{\cC}{{\mathcal C}}
\newcommand{\cD}{{\mathcal D}}
\newcommand{\cF}{{\mathcal F}}
\newcommand{\cG}{{\mathcal G}}
\newcommand{\cH}{{\mathcal H}}
\newcommand{\cI}{{\mathcal I}}
\newcommand{\cJ}{{\mathcal J}}
\newcommand{\cL}{{\mathcal L}}
\newcommand{\cM}{{\mathcal M}}
\newcommand{\cN}{{\mathcal N}}
\newcommand{\cO}{{\mathcal O}}
\newcommand{\cR}{{\mathcal R}}
\newcommand{\cU}{{\mathcal U}}
\newcommand{\cV}{{\mathcal V}}
\newcommand*\bdot{{\mathpalette\bdot@{.9}}}
\newcommand*\bdot@[2]{\mathbin{\vcenter{\hbox{\scalebox{#2}{$\m@th#1\bullet$}}}}}
\newcommand{\Ad}{{\rm Ad}}
\newcommand{\alg}{{\rm alg}}
\newcommand{\diag}{{\rm diag}}
\newcommand{\dif}{{\rm dif}}
\newcommand{\dR}{{\rm dR}}
\newcommand{\End}{{\rm End}}
\newcommand{\Fil}{{\rm Fil}}
\newcommand{\Gal}{{\rm Gal}}
\newcommand{\GL}{{\rm GL}}
\newcommand{\Hom}{{\rm Hom}}
\newcommand{\Mod}{{\rm Mod}}
\newcommand{\pr}{{\rm pr}}
\newcommand{\pdR}{{\rm pdR}}
\newcommand{\Q}{{\mathbb Q}}
\newcommand{\Rep}{{\rm Rep}}
\newcommand{\rig}{{\rm rig}}
\newcommand{\Rig}{{\rm Rig}}
\newcommand{\Sen}{{\rm Sen}}
\newcommand{\Spec}{{\rm Spec}}
\newcommand{\Spf}{{\rm Spf}}
\newcommand{\Sym}{{\rm Sym}}
\newcommand{\tildefrg}{\widetilde{\frg}}
\newcommand{\tildefrY}{\widetilde{\frY}}
\newcommand{\Tor}{{\rm Tor}}
\newcommand{\wt}{\rm wt}
\newcommand{\Z}{{\mathbb Z}}
\newcommand{\mtwo}[4]{\begin{pmatrix}
	#1&#2\\#3&#4
\end{pmatrix}}
\DeclareMathOperator{\Sp}{Sp}
\begin{document}

\title{Geometric translations of $(\varphi,\Gamma)$-modules for $\GL_2(\Q_p)$}
\author{Zhixiang Wu}
\address{Universit\"at M\"unster, Fachbereich Mathematik und Informatik,
Einsteinstrasse 62,
48149 Münster, Germany}
\email{zhixiang.wu@uni-muenster.de}
\begin{abstract}
    We study ``change of weights'' maps between loci of the stack of $(\varphi,\Gamma)$-modules over the Robba ring with integral Hodge-Tate-Sen weights. We show that in the $\GL_2(\Q_p)$ case these maps can realize translations of $(\varphi,\Gamma)$-modules geometrically. The motivation is to investigate translations of locally analytic representations under the categorical $p$-adic Langlands correspondence. 
\end{abstract}
\thanks{The project was funded by the Deutsche Forschungsgemeinschaft (DFG, German Research Foundation) – Project-ID 427320536 – SFB 1442, as well as under Germany's Excellence Strategy EXC 2044 390685587, Mathematics Münster: Dynamics–Geometry–Structure.}
\maketitle
\setcounter{tocdepth}{1}
\tableofcontents

\section{Introduction}
In this introduction we consider translations of locally analytic representations of $p$-adic Lie groups from the point of view of the categorical $p$-adic Langlands program proposed by Emerton-Gee-Hellmann in \cite{emerton2023introduction}. We will give hints for the categorical story in the $\GL_2(\Q_p)$ case by realizing Ding's result in \cite{ding2023change} on translations of $(\varphi,\Gamma)$-modules in arithmetic families. 
\subsection{Translations for locally analytic representations} 
Translation is a fundamental tool to study modules over a reductive lie algebra $\mathfrak{g}$, an operation changing infinitesimal characters. Let us take $\frg=\mathfrak{gl}_n, n\geq 2$ to be the Lie algebra of $\GL_n$ over a $p$-adic coefficient field $L$ for a prime number $p$ with the Cartan subalgebra $\frt$ of the diagonal matrices. Let $U(\frg)$ be the universal enveloping algebra and $Z(\frg)$ be the center of $U(\frg)$. Via the Harish-Chandra isomorphism an (infinitesimal) character $\chi_{\lambda}:Z(\frg)\rightarrow L$ is determined by a weight $\lambda=(\lambda_1,\cdots,\lambda_n)\in \frt^*$. We can consider the category $\Mod(U(\frg))_{\chi_{\lambda}}$ of $U(\frg)$-modules which are generalized eigenspaces for the action of $Z(\frg)$ of eigenvalues given by $\chi_{\lambda}$. For another $\mu\in\frt^*$ such that $\lambda-\mu\in\Z^n$ is integral, the translation operator gives a functor
\[T_{\lambda}^{\mu}:\Mod(U(\frg))_{\chi_{\lambda}}\rightarrow \Mod(U(\frg))_{\chi_{\mu}}.\] 
If $\lambda$ and $\mu$ are both dominant integral and have the same regularity (in the sense of the stabilizers in the Weyl group for the dot action), $T_{\lambda}^{\mu}$ induces an equivalence of categories. While translations between regular and non-regular characters (into and out of the walls) are more interesting.

Locally analytic representations of a $p$-adic Lie group $G$, say $G=\GL_n(\Q_p)$, are naturally $\frg$-modules by differentiating the $G$-actions. Under $p$-adic Langlands correspondence, infinitesimal characters of locally analytic representations correspond to generalized Hodge-Tate(-Sen) weights of the associated $p$-adic Galois representations, cf. \cite{dospinescu2020infinitesimal}. Translations for locally analytic representations were studied by Jena-Lahiri-Strauch in \cite{jena2021translation}.  If a locally analytic representation $\pi $ is in $\Mod(U(\frg))_{\chi_{\lambda}}$, then its translation $T_{\lambda}^{\mu}\pi$ is still a locally analytic representation, with generalized infinitesimal character $\chi_{\mu}$. It is then extremely interesting to investigate how translations intertwine with the Langlands correspondence. The operations that change weights were already observed by Colmez in \cite{colmez2018poids}. A more systematic study was carried out by Ding in \cite{ding2023change}, based on Colmez's construction $D\mapsto D^{\natural}\boxtimes \bP^1$ of $p$-adic local Langlands \cite{colmez2010representations,colmez2016representations} from rank two $(\varphi,\Gamma)$-modules over the Robba ring to locally analytic representations of $\GL_2(\Q_p)$. Ding's idea is to translate $(\varphi,\Gamma)$-modules firstly which can be equipped with $\frg$-actions using Colmez's method (by the infinitesimal action of $G$ on $D=D\boxtimes \Z_p\subset D\boxtimes \bP^1$). Ding proposed recently in \cite{Ding2024wall} conjectures to study $p$-adic Langlands correspondences for general $\GL_n$ via translation functors.
\subsection{Categorical $p$-adic Langlands conjecture}\label{subsectionintroductioncategorical}
Let $\Rig_L$ be the category of rigid analytic spaces over $L$. Emerton-Gee-Hellmann consider in \cite{emerton2023introduction} the moduli stack $\frX_n$ (over $\Rig_L$) of $(\varphi,\Gamma)$-modules of rank $n$ over the Robba ring, which should be viewed as the $p$-adic analytic version of the stack of Langlands parameters for $\GL_n(\Q_p)$. Let $\mathrm{D}^b_{\mathrm{f.p}}(\mathrm{an. }G)$ be the derived category of locally analytic representations of $G=\GL_n(\Q_p)$ (with conjectural finiteness condition discussed in \cite[\S 6.2]{emerton2023introduction}) and let $D^b_{\mathrm{Coh}}(\frX_n)$ be the derived category of coherent sheaves on $\frX_n$. The analytic version of the categorical $p$-adic Langlands correspondence predicts the existence of a functor
\[\frA_G^{\rig}: \mathrm{D}^b_{\mathrm{f.p}}(\mathrm{an. }G)\rightarrow D^b_{\mathrm{Coh}}(\frX_n)\]
which should satisfy various properties, particularly including the compatibility between infinitesimal characters and Hodge-Tate-Sen weights. 

Let $\bh=(h_1,\cdots,h_n)\in \bbZ^n, h_1\leq\cdots\leq h_n$ be fixed integral Sen weights and $\lambda=\lambda_{\bh}:=(h_n-(n-1),\cdots,h_i-(i-1),\cdots, h_1)$ be the corresponding (automorphic)  weight of $\frt$. Let $\mathrm{D}^b_{\mathrm{f.p}}(\mathrm{an. }G)_{\chi_{\lambda}}\subset \mathrm{D}^b_{\mathrm{f.p}}(\mathrm{an. }G)$ be the full subcategory consisting of representations with generalized infinitesimal character $\chi_{\lambda}$. We consider the substack $(\frX_n)_{\bh}^{\wedge}$ (appeared in \cite[\S 5.3.23]{emerton2023introduction}), the formal completion of $\frX_n$ along the weight $\bh$ locus. For an affinoid algebra $A$, the $A$-value of $(\frX_n)_{\bh}^{\wedge}$ is the groupoid of $(\varphi,\Gamma)$-modules $D_A$ of rank $n$ over $\Sp(A)$ such that for any point $x\in\Sp(A)$, the specialization $D_A\otimes_Ak(x)$ has Sen weights $\bh$. Then $\frA_{G}^{\rig}$ should restrict to a functor:
\[\frA_{G}^{\rig}: \mathrm{D}^b_{\mathrm{f.p}}(\mathrm{an. }G)_{\chi_{\lambda}}\rightarrow D^b_{\mathrm{Coh}}((\frX_n)_{\bh}^{\wedge}).\]
For different integral weights $\lambda_{\bh},\lambda_{\bh'}$, the composite of $\frA^{\rig}_{G}$ and the translation functor $T_{\lambda_{\bh}}^{\lambda_{\bh'}}:\mathrm{D}^b_{\mathrm{f.p}}(\mathrm{an. }G)_{\chi_{\lambda_{\bh}}}\rightarrow \mathrm{D}^b_{\mathrm{f.p}}(\mathrm{an. }G)_{\chi_{\lambda_{\bh'}}}$ translates sheaves on $(\frX_n)_{\bh}^{\wedge}$ to $(\frX_n)_{\bh'}^{\wedge}$. If one believes in an ultimate equivalence of categories statement of the categorical $p$-adic Langlands correspondence as Fargues-Scholze (see \cite[Rem. 1.4.6]{emerton2023introduction}), it is then natural to ask if there exists a morphism between spaces $(\frX_n)_{\bh}^{\wedge}$ and $(\frX_n)_{\bh'}^{\wedge}$ (that induces translations of sheaves). 

\subsection{Change of weights} The functor $\frA_G^{\rig}$ is in conjectural and the geometric properties of $\frX_n$ are largely unknown. However, the answer to the question above is positive. We suppose that $\bh$ is regular (i.e. $h_1<\cdots<h_n$) for simplicity and let $0=(0,\cdots,0)$ be the zero weight. Let $B$ be the Borel subgroup of upper triangular matrices of $\GL_n$ with Lie algebra $\frb$. Consider the Grothendieck resolution 
\[f:\tildefrg=\GL_n\times^{B}\frb=\{(\nu,gB)\in\frg\times \GL_n/B\mid \Ad(g^{-1})(\nu)\in\frb\} \rightarrow \frg,(\nu,gB)\mapsto \nu\]
where $\Ad$ denotes the adjoint action.
\begin{proposition}[Proposition \ref{propositionfiberproduct}]\label{propositionintroductionfiberproduct}
    There exists a (change of weights) morphism of stacks 
    \begin{equation}\label{equationintroductionfh}
        f_{\bh}:(\frX_n)_{\bh}^{\wedge}\rightarrow (\frX_n)_{0}^{\wedge}
    \end{equation}
    such that the following commutative diagram of stacks over $\Rig_L$
    \begin{equation}\label{diagramintroductioncartesian}        
        \begin{tikzcd}
            (\frX_{n})^{\wedge}_{\bh}\arrow[r,"D_{\rm pdR}"]\arrow[d,"f_{\bh}"]
            & \tildefrg/\GL_n \arrow[d,"f"]\\ 
            (\frX_n)^{\wedge}_{0}\arrow[r,"D_{\rm pdR}"]&  \frg/\GL_n
        \end{tikzcd}
    \end{equation}
    is Cartesian.
\end{proposition}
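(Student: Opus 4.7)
The strategy is to exhibit the Cartesian property by constructing the morphism $f_{\bh}$ via a canonical ``change of weights'' twist along the eigenspace flag, and then to write down its inverse and verify these two assignments are mutually inverse equivalences.

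First, I would make the horizontal $D_{\rm pdR}$ arrows explicit. For $D_A\in(\frX_n)_{\bh}^{\wedge}(A)$ the Sen module $D_{A,\mathrm{Sen}}$ is a finite projective $A$-module of rank $n$ carrying a Sen operator $\theta_A$ with pointwise eigenvalues $h_1<\cdots<h_n$. By regularity of $\bh$ and Hensel's lemma applied to the separable polynomial $\prod_i(T-h_i)$, the operator $\theta_A$ admits a canonical generalized-eigenspace decomposition $D_{A,\mathrm{Sen}}=\bigoplus_iM_{A,i}$ over $A$, whence a canonical flag $F^{\bullet}_A$ stabilized by $\theta_A$. This defines the top arrow $D_A\mapsto(D_{A,\mathrm{Sen}},F^{\bullet}_A,\theta_A)\in\tildefrg/\GL_n(A)$. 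For the bottom arrow, $\theta_A$ is pointwise nilpotent on $(\frX_n)_0^{\wedge}(A)$, so its characteristic polynomial is $T^n$ in $A[T]$ and $\theta_A$ is globally nilpotent by Cayley-Hamilton; then $D_A\mapsto(D_{A,\mathrm{Sen}},\theta_A)\in\frg/\GL_n(A)$.

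Next, I would construct $f_{\bh}$ by the flag-twist: using $F^{\bullet}_A$ on $D_A$, iteratively modify $D_A$ inside $D_A[1/t]$ so that each graded piece of the flag has its Sen weight shifted by $-h_i$, producing $f_{\bh}(D_A)\in(\frX_n)_0^{\wedge}(A)$ whose Sen module is canonically identified with $D_{A,\mathrm{Sen}}$ and whose Sen operator is $\theta_A$ minus its semisimple part $\bh_{F^{\bullet}_A}$ (hence nilpotent). The commutativity of \eqref{diagramintroductioncartesian} is then built-in. For the Cartesian property I would construct the inverse functor: given $(D_0,F_0^{\bullet})$ in the fiber product (a $(\varphi,\Gamma)$-module $D_0\in(\frX_n)_0^{\wedge}$ equipped with a flag on its Sen module preserved by the nilpotent Sen operator $\theta_0$), apply the opposite twist, raising each graded piece's Sen weight by $h_i$ along $F_0^{\bullet}$, to produce $D\in(\frX_n)_{\bh}^{\wedge}$ whose canonical flag and Sen operator recover $(F_0^{\bullet},\theta_0+\bh_{F_0^{\bullet}})$. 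Checking that the two assignments are mutually inverse on objects and morphisms establishes the equivalence.

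The main technical obstacle is the rigorous definition of the flag-twist in families: the canonical flag lives on the Sen module $D_{A,\mathrm{Sen}}$, while the twisting operation must take place inside $D_A[1/t]$ at the level of the $(\varphi,\Gamma)$-module over $\cR_A$. One must verify that the graded-piece shifts assemble into a bona fide sub-$(\varphi,\Gamma)$-module of $D_A[1/t]$ over $\cR_A$, that this construction is functorial in affinoid families, and that iterating the forward and reverse twists returns the identity. This should follow from the fine relationship between a $(\varphi,\Gamma)$-module and its Sen module in arithmetic families, presumably established earlier in the paper, together with the explicit compatibility between the Grothendieck resolution and the semisimple/nilpotent decomposition of the Sen operator.
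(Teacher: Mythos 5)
Your proposal runs the argument on the wrong module, and this breaks the diagram you are supposed to produce. The horizontal maps in the statement are the local model maps $D_{\pdR}$, which send $D_A$ to the $A$-module $D_{\pdR}(D_A)$ (defined via the $B^+_{\dR}$-lattices $D_{\dif}^{m,+}$ and the $\log(t)$-construction of Appendix B) together with its \emph{nilpotent} operator $\nu_A$ and the Hodge filtration. You instead send $D_A$ to its Sen module $D_{A,\Sen}$ with the generalized eigenspace flag of the Sen operator $\theta_A$ and the operator $\theta_A$ itself. First, $D_{A,\Sen}$ is a module over $A\otimes_{\Q_p}K_m$, not over $A$, so this is not even an $A$-point of $\tildefrg/\GL_n$ without a descent argument you do not supply. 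Second, with your definitions the square does not commute: $f$ merely forgets the flag, so the top route produces $(D_{A,\Sen},\theta_A)$ with eigenvalues $\bh$, while the bottom route applied to $f_{\bh}(D_A)$ produces a nilpotent operator (your ``$\theta_A$ minus its semisimple part''); these are different points of $\frg/\GL_n$. Third, even if you repaired this, you would be proving a Cartesian property for horizontal maps other than the $D_{\pdR}$ of the statement, and you never show your maps agree with those.

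The deeper problem is that the Sen module is not invariant under the modification $D_A\mapsto f_{\bh}(D_A)$, whereas the whole mechanism of the paper's proof rests on an invariant receptacle for the flag: $D_{\pdR}$ depends only on $D_A[\frac{1}{t}]$, so $D_{\pdR}(D_A)=D_{\pdR}(f_{\bh}(D_A))$ canonically, and the fiber product can compare the universal $\nu$-stable filtration of type $\bh$ on $D_{\pdR}(\Delta_A)$ with the data extracted from $D_A$. Your claims that ``the Sen module of $f_{\bh}(D_A)$ is canonically identified with $D_{A,\Sen}$'' and that the round trip ``recovers $(F_0^{\bullet},\theta_0+\bh_{F_0^{\bullet}})$'' are not available: the two lattices have genuinely different Sen modules, identified only on graded pieces up to $t$-twists, so the mutual-inverse check cannot be performed at the level of Sen modules without in effect reconstructing $D_{\pdR}$. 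Finally, what you label ``the main technical obstacle'' is precisely the content of the proposition as the paper proves it: the bijection between modifications of $\Delta_A$ inside $\Delta_A[\frac{1}{t}]$ and $\Gamma$-stable lattices in $D_{\dif}^m$ (Beauville--Laszlo glueing, Proposition \ref{propositionappendixglueing}), the classification of almost de Rham lattices by $(\nu,\Fil^{\bullet})$ (Proposition \ref{propositionappendixequivalencealmostdeRham}), and the uniqueness of the weight-$0$ modification via the $H^0$-vanishing of Lemma \ref{lemmavanishingH0}, which is what gives full faithfulness. Deferring all of this, on top of the misidentification above, leaves the proposal without a working proof.
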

The morphisms $D_{\pdR}$ are the local model maps defined in \textit{loc. cit}. It firstly sends a $(\varphi,\Gamma)$-module $D$ to the associated $B_{\dR}^+$-representation $W_{\dR}^+(D)$ of $\Gal(\overline{\Q}_p/\Q_p)$ which, using Fontaine's classification, gives a rank $n$ bundle $D_{\pdR}(D)$ with a (Hodge) filtration $\Fil^{\bullet}D_{\pdR}(D)$ stabilized under a nilpotent linear endomorphism $\nu$. The filtration depends on the regularity of Sen weights and is parametrized by the stack $*/P=(\GL_n/P)/\GL_n$ where $\GL_n/P$ is a flag variety and $P=B$, resp. $P=\GL_n$, in the case of weight $\bh$, resp. $0$. In the language of $B$-pairs of Berger \cite{berger2008construction}, the map $f_{\bh}$, already pointwisely described in \cite[Lem. 2.1]{Ding2024wall}, sends the $B$-pair $(W_e,W_{\dR}^+)$ attached to $D$ to $(W_e,W_{\dR,0}^+)$ where $W_{\dR,0}^+$ is the unique $\Gal(\overline{\Q}_p/\Q_p)$-invariant $B_{\dR}^+$-lattice inside $W_{\dR}(D)=W_{\dR}^+(D)[\frac{1}{t}]$ of weight $0$ associated to the trivial filtration of $D_{\pdR}(D)$. The proof of the isomorphism 
\[(\frX_{n})^{\wedge}_{\bh}\simeq (\frX_{n})^{\wedge}_{0}\times_{\frg/\GL_n}\tildefrg/\GL_n\] 
is a simple combination of the known family versions of the equivalence between $(\varphi,\Gamma)$-modules and $B$-pairs and the classification of $B_{\dR}^+$-representations with integral weights (see Appendix \ref{sectionappendixfamilies}).
\begin{remark}
    The condition for a $(\varphi,\Gamma)$-module $D$ with integral weights being de Rham is equivalent to the vanishing of the nilpotent endomorphism $\nu$ on $D_{\pdR}(D)=D_{\dR}(D)$. Let $\frX_{n}^{\rm DE}$ be the stack of rank $n$ de Rham $(\varphi,\Gamma)$-modules of weight zero (so called $p$-adic differential equations). The restriction of the diagram (\ref{diagramintroductioncartesian}) to $\nu=0$ locus is
    \begin{center}
        \begin{tikzcd}
            \mathrm{\frX}^{\rm DE}_{n}\times_{*/\GL_n}*/B\arrow[r]\arrow[d,"f_{\bh}"]
            & */B=(\GL_n/B)/\GL_n \arrow[d,"f"]\\ 
            \mathrm{\frX}^{\rm DE}_{n}\arrow[r]&  */\GL_n.
        \end{tikzcd}
    \end{center}
    Thus $\mathrm{\frX}^{\rm DE}_{n}\times_{*/\GL_n}*/B$ is isomorphic to the stack of de Rham $(\varphi,\Gamma)$-modules of weight $\bh$. On the other hand, using Berger's equivalence \cite{berger2008equations}, this stack is locally isomorphic to $\mathrm{WD}_{n}\times_{*/\GL_n}*/B$ (\cite[Thm. 5.2.4]{emerton2023introduction}) where $\mathrm{WD}_n$ is the analytification of the stack of Weil-Deligne representations of rank $n$. 
\end{remark}

Proposition \ref{propositionintroductionfiberproduct} works for general $\GL_n(K)$ and non-regular $\bh$ with suitable modifications. For example, suppose that $K=\bbQ$ and $\bh=(h_1,\cdots,h_n)$ where $h_1\leq \cdots \leq h_n$. Let $P_{\bh}\supset B$ be the parabolic subgroup of $\GL_n$ with a Levi subgroup the centralizer of $\diag(h_1,\cdots,h_n)\in \GL_n$. There is the partial Grothendieck resolution $\tildefrg_{P_{\bh}}\rightarrow \frg$ (see \S\ref{sectinGrothendieckresolution}) and we have 
\[(\frX_{n})^{\wedge}_{\bh}\simeq (\frX_{n})^{\wedge}_{0}\times_{\frg/\GL_n}\tildefrg_{P_{\bh}}/\GL_n\] 
similarly as in the regular case when $P_{\bh}=B$. If $\bh'$ is a weight which is less regular than $\bh$ in the sense that $P_{\bh'}\supset P_{\bh}$. Then the above description of $(\frX_{n})^{\wedge}_{\bh},(\frX_{n})^{\wedge}_{\bh'}$ together with the natural map $\tildefrg_{P_{\bh}}\rightarrow \tildefrg_{P_{\bh'}}$ induce a change of weigths morphism $f_{\bh,\bh'}:(\frX_{n})^{\wedge}_{\bh}\rightarrow (\frX_{n})^{\wedge}_{\bh'}$. Moreover, the change of weights maps exist for possibly non-integral weights whenever the change doesn't increase the regularity (not ``go out of the wall'' of the Weyl chamber). See \S\ref{subsectiongeneralchange} for a general construction of ``change of weights'' of $(\varphi,\Gamma)$-modules and see \cite[Ch. 5]{wu:tel-04116114} for a discussion of local models in the non-integral weights cases.  

An immediate consequence of Proposition \ref{propositionintroductionfiberproduct} is the existence of isomorphisms between loci of $\frX_n$ with different regular Hodge-Tate-Sen weights. We will use $f_{\bh}$ to realize translations of $(\varphi,\Gamma)$-modules in $\GL_2(\Q_p)$-case and then discuss a general speculation. 
\subsection{Geometric translations of $(\varphi,\Gamma)$-modules}
Now we focus on the case $G=\GL_2(\Q_p)$ where we have Colmez's construction. The main result of this paper can only be stated and proved in this case. Following Colmez, there is a unique way to make a $(\varphi,\Gamma)$-module $D_A$ over an affinoid $\Sp(A)$ a $\frg$-module so that $Z(\frg)$ acts via a character determined by the Sen weights of $D_A$ (with explicit formula of the $\frg$-action on $D_A$ given by Dospinescu \cite{dospinescu2012actions}). If $D_A\in (\frX_{2})^{\wedge}_{\bh}(A)$ for some fixed weight $\bh$ with associate $\lambda=\lambda_{\bh}\in\frt^*$, then one can talk about the translation $T_{\lambda}^{\mu}D_A$ to another integral weight as $\frg$-modules. Ding's method shows that $T_{\lambda}^{\mu}D_A$ is still a $(\varphi,\Gamma)$-module. The following is our main theorem.
\begin{theorem}[Theorem \ref{theorem}]\label{theoremintroduction}
    Suppose $\bh=(h_1,h_2)\in\Z^2,h_1<h_2$, $\lambda=\lambda_{\bh}$, $\mu=\lambda_0=(-1,0)$. Let $D_{(\frX_{2})^{\wedge}_{\bh}}$ (resp. $D_{(\frX_{2})^{\wedge}_{0}}$) be the restriction of the universal $(\varphi,\Gamma)$-module on $\frX_2$ to $(\frX_{2})^{\wedge}_{\bh}$ (resp. to $(\frX_{2})^{\wedge}_{0}$). Then the following statements are true locally on affinoid charts of $\frX_2$ (namely on any $\Sp(A)$ with a formally smooth map $\Sp(A)\rightarrow \frX_2$). 
    \begin{enumerate}
        \item There exists an isomorphism
        \[T_{\lambda}^{\mu}D_{(\frX_{2})^{\wedge}_{\bh}}\simeq f_{\bh}^*D_{(\frX_{2})^{\wedge}_{0}}\]
        of $(\varphi,\Gamma)$-modules of rank two which induces the map $f_{\bh}:(\frX_{2})^{\wedge}_{\bh}\rightarrow (\frX_{2})^{\wedge}_{0}$.
        \item There exists an isomorphism
        \[T_{\mu}^{\lambda}D_{(\frX_{2})^{\wedge}_{0}}\simeq Rf_{\bh,*}D_{(\frX_{2})^{\wedge}_{\bh}}\]
        of $(\varphi,\Gamma)$-modules of rank four and in degree $0$ on $(\frX_{2})^{\wedge}_{0}$. 
    \end{enumerate}
\end{theorem}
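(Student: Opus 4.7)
The plan is to prove both statements locally on a formally smooth affinoid chart $\Sp(A)\to\frX_{2}$, over which the universal objects become honest rank $2$ $(\varphi,\Gamma)$-modules over the relative Robba ring. Two ingredients drive the argument. First, Ding's construction in \cite{ding2023change} realises $T_{\lambda}^{\mu}$ at the level of $(\varphi,\Gamma)$-modules: one tensors with a suitable finite-dimensional $\frg$-representation and projects onto a generalised $Z(\frg)$-eigenspace, using Colmez's infinitesimal $\GL_{2}(\Q_{p})$-action (\cite{dospinescu2012actions}). Second, Proposition~\ref{propositionintroductionfiberproduct} identifies $f_{\bh}^{*}D_{(\frX_{2})^{\wedge}_{0}}$ with the rank $2$ $(\varphi,\Gamma)$-module attached to the $B$-pair $(W_{e}(D),W_{\dR,0}^{+})$, i.e., the module obtained from $D_{(\frX_{2})^{\wedge}_{\bh}}$ by replacing the Hodge filtration on $D_{\pdR}$ by the trivial filtration via Berger's dictionary.

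For part (1), I would express $T_{\lambda}^{\mu}D_{A}$ as the projection of $D_{A}\otimes_{L}V_{(-h_{1},-h_{2})}$ onto its generalised $\chi_{\mu}$-eigenspace, where $V_{(-h_{1},-h_{2})}$ is the irreducible $\GL_{2}$-representation of highest weight the dominant conjugate of $\mu-\lambda$. Since $\chi_{\mu}$ is on a wall and $\chi_{\lambda}$ is regular, exactly one weight of $V_{(-h_{1},-h_{2})}$ shifts $\chi_{\lambda}$ into the Weyl-dot-orbit of $\chi_{\mu}$, which accounts for the expected rank $2$. It then remains to identify this Casimir-eigenspace with $f_{\bh}^{*}D_{(\frX_{2})^{\wedge}_{0}}|_{\Sp(A)}$; by Proposition~\ref{propositionintroductionfiberproduct}, this amounts to recognising the projector as the change of $B_{\dR}^{+}$-lattice, which can be made explicit using Colmez's formulas for the $\frg$-action together with the fact that passing from $W_{\dR}^{+}$ to $W_{\dR,0}^{+}$ comes down to multiplication by a fixed power of $t$ on each graded piece.

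For part (2), Ding's formula realises $T_{\mu}^{\lambda}D'_{A}$ as a projection of $D'_{A}\otimes V_{(h_{2},h_{1})}$; since $\chi_{\mu}$ is on a wall, every weight of $V_{(h_{2},h_{1})}$ shifts it into the single Weyl-dot-orbit of $\chi_{\lambda}$, so the projector is the identity and the rank is $4$. On the geometric side, the fibers of $f_{\bh}$ are controlled (by Proposition~\ref{propositionintroductionfiberproduct}) by those of $f:\tildefrg/\GL_{2}\to\frg/\GL_{2}$, and the degree-$0$ concentration together with the rank count for $Rf_{\bh,*}D_{(\frX_{2})^{\wedge}_{\bh}}$ should follow from an explicit $\bP^{1}$-fiber analysis combined with the structure of the universal module along those fibers. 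An alternative route is to deduce (2) from (1) by combining the adjunction $(f_{\bh}^{*},Rf_{\bh,*})$ with the classical adjunction $(T_{\lambda}^{\mu},T_{\mu}^{\lambda})$ between translation functors, once the relevant compatibilities are in place. The main obstacle is the precise identification in part (1) between Colmez's algebraically defined Casimir-eigenspace and the geometric pullback across the change of weights; it reduces to a careful but delicate comparison between two ways of tracking how Sen weights move, one coming from the Harish-Chandra isomorphism and one from the shift of $B_{\dR}^{+}$-lattice by $t$. A secondary technical point is the vanishing of $R^{1}f_{\bh,*}D_{(\frX_{2})^{\wedge}_{\bh}}$ in part (2), needed to produce an isomorphism of $(\varphi,\Gamma)$-modules rather than one in a derived category.
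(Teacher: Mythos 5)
Your plan for part (1) is essentially the paper's: the paper also realizes $T_{\lambda}^{\mu}$ by tensoring with $V_k$ and projecting to the Casimir eigenspace, computes the eigenvectors explicitly (Proposition \ref{propositionregulartononregular}), and identifies the result with the unique weight-$0$ modification of $D$ inside $D[\frac{1}{t}]$, which is by construction $f_{\bh}^*$ of the weight-$0$ universal module (Lemma \ref{lemmauniqueweight0} and Proposition \ref{propositionfiberproduct}). So, modulo writing out the eigenvector recursion, that half is on track.

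Part (2) is where your proposal has genuine gaps. First, the degree-$0$ concentration and rank-$4$ statement do not follow from a ``$\bP^1$-fiber analysis'': the fibers of $f_{\bh}$ jump between $\bP^1$ (at de Rham points) and a non-reduced length-two point (at non-de Rham points, cf.\ Lemma \ref{lemmaderivedfiber}), so there is no cohomology-and-base-change argument from pointwise fiber data. What the paper actually uses is the flatness of the framed local model map $\frY^{\wedge,\square}\rightarrow\frg$ (Hypothesis \ref{hypothesisflat}); this is a nontrivial input, proved by deformation-theoretic dimension counts in Proposition \ref{propositionflatversalmaps} and Corollary \ref{corollaryflatlocalrings} precisely because $D_{\pdR}$ fails to be formally smooth at twists of $[\cR_L-t^{-1}\cR_L(\epsilon)]$, and it is the reason the theorem is stated only on formally smooth charts. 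Flatness allows flat base change to reduce $Rf_{\bh,*}$ of the universal module to $Rf_*\cO_{\tildefrg}(\pm1)$ on the Grothendieck resolution (Proposition \ref{propositiondirectimagelinebundle}), combined with formal GAGA to control the completion and to see that the direct image is again a projective $(\varphi,\Gamma)$-module over $\cR_{\frY^{\wedge}}$ (Proposition \ref{propositiondirectimagephigammamodule}, \S\ref{subsectionformalcompletion}); none of this appears in your outline. Second, your ``alternative route'' of deducing (2) from (1) by composing the adjunctions $(f_{\bh}^*,Rf_{\bh,*})$ and $(T_{\lambda}^{\mu},T_{\mu}^{\lambda})$ is not a proof: neither adjunction is an equivalence, so this only produces a comparison map $T_{\mu}^{\lambda}\Delta\rightarrow f_{\bh,*}D$. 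The paper indeed constructs the map this way, but proves it is an isomorphism only by matching both sides against the explicit description of $f_{\bh,*}D$ (Corollary \ref{corollarydirectimagephigammamodule}), using the counit computation (Proposition \ref{propositionunitcounit}) and the identification $f_*\cO_{\tildefrg}(-1)=\widetilde{\cV}[\nu-(z+h)]$ from Proposition \ref{propositiondirectimagelinebundle}. Finally, a smaller slip: for $h_2-h_1\geq 2$ the projector $\pr_{|\lambda|}$ on $\Delta\otimes_LV$ is not the identity; the rank is $4$ because exactly two (extremal, multiplicity-one) weights of $V$ move $\chi_{\mu}$ into the dot-orbit of $\lambda$, and the general case is handled in the paper by factoring $T_{\mu}^{\lambda'}=T_{\lambda}^{\lambda'}T_{\mu}^{\lambda}$ rather than by a single tensor-and-forget step.
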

Certainly, all objects in the above theorem need proper definitions. The theorem will be stated and proved without the language of stacks. For a chart $\Sp(A)\rightarrow \frX_2$ with $D_A$ the pullback of the universal $(\varphi,\Gamma)$-module, we construct the space $\Sp(A)^{\wedge}=\Sp(A)\times_{\frX_2}(\frX_2)^{\wedge}_0$ as what should be called an affinoid formal rigid space. Note that it is still a conjecture \cite[Conj. 5.1.19]{emerton2023introduction} that $\frX_2$ is a rigid analytic Artin stack \cite[Def. 5.1.10]{emerton2023introduction}. But it is konwn that such charts, namely smooth maps $\Sp(A)\rightarrow \frX_2$ which will guarantee our Hypothesis \ref{hypothesisflat}, exist locally around $L'$-points of $\frX_2$ with $L'$ a field finite over $L$ \cite[Thm. 5.1.20]{emerton2023introduction}. Using Proposition \ref{propositionintroductionfiberproduct}, we only need to prove the results for the map $f_{\bh}^{-1}(\Sp(A)^{\wedge})=\Sp(A)^{\wedge}\times_{\frg/\GL_2}\tildefrg/\GL_2\rightarrow \Sp(A)^{\wedge}$ between formal rigid spaces and $(\varphi,\Gamma)$-modules over these spaces. Fortunately, the map is proper and the cohomologies can be studied via GAGA theorems (see Appendix \ref{sectionappendixformalfunction}). 

The key of the proof is the geometric properties of the Grothendieck resolution $f:\tildefrg\rightarrow \frg$. For example, $Rf_*\cO_{\tildefrg}$ concentrates in degree $0$ and is locally free of rank two over $\cO_{\frg}$, which is basically the reason that $Rf_{\bh,*}D_{(\frX_{2})^{\wedge}_{\bh}}$ has rank four and concentrates in degree $0$. Another vital input is the flatness of the local model map $D_{\pdR}$ (to use flat base change). We can prove the flatness of $D_{\pdR}$ in $\GL_2(\Q_p)$ case (\S\ref{sectionflatlocalmodel}) and we expect it is always flat for other $G$.
\subsection{Speculation}
Finally, we explain the motivation of Theorem \ref{theoremintroduction}. For $U(\frg)$-modules with a generalized infinitesimal character, translation functors can be realized geometrically using Beilinson-Bernstein localization, cf. \cite{beilinson1999wall}. A better approach for translations (into and out of the wall) is to consider singular localizations which send $\mathrm{Mod}(U(\frg))_{\chi_{\mu}}$ for non-regular $\chi_{\mu}$ to some $D$-modules on corresponding partial flag varieties $\GL_n/P$ \cite{backelin2015singular,bezrukavnikov2006singular}. Then translation functors after localization can be realized using pushforward and pullback along the maps between flag varieties like $\GL_n/B\rightarrow \GL_n/P$, see \cite[\S 6]{backelin2015singular} and more similarly \cite[Lem. 2.2.5]{bezrukavnikov2006singular}.

The functor $\frA_G^{\rm rig}$ in \S\ref{subsectionintroductioncategorical} is expected to be certain localization, of the form (\cite[Rem. 6.2.10]{emerton2023introduction})
\[\pi\mapsto \cL_{\infty}\widehat{\otimes}_{\cD(G)}^L\pi\]  where $\cD(G)$ is the distribution algebra of $G$ and $\cL_{\infty}$ plays the role of the sheaf of differential operators on $\frX_n$. As in Proposition \ref{propositionintroductionfiberproduct}, we fix a regular Hodge-Tate weight $\bh$ (resp. non-regular weight $0$) and let $\chi_{\lambda}$ (resp. $\chi_{\mu}$) be the associated infinitesimal character. With the map (\ref{equationintroductionfh}), we get a diagram of functors
\begin{center}
    \begin{tikzcd}
        \mathrm{D}^b_{\mathrm{f.p}}(\mathrm{an. }G)_{\chi_{\lambda}}\arrow[r,"\frA_G^{\rig}"]\arrow[d,shift left, "T_{\lambda}^{\mu}"]& D^b_{\mathrm{Coh}}((\frX_n)_{\bh}^{\wedge})\arrow[d,shift left, "Rf_{\bh,*}"]\\ 
        \mathrm{D}^b_{\mathrm{f.p}}(\mathrm{an. }G)_{\chi_{\mu}}\arrow[r,"\frA_G^{\rig}"]\arrow[u,shift left, "T_{\mu}^{\lambda}"]& D^b_{\mathrm{Coh}}((\frX_n)_{0}^{\wedge}).\arrow[u,shift left,"Lf_{\bh}^*"]
    \end{tikzcd}
\end{center}
\begin{question}\label{equationconjecture}
    In the above diagram, do we have $\frA_G^{\rig}\circ T_{\lambda}^{\mu}=Rf_{\bh,*}\circ\frA_G^{\rig}$ and $\frA_G^{\rig}\circ T_{\mu}^{\lambda}=Lf_{\bh}^*\circ\frA_G^{\rig}$?
\end{question}	
Taking account of the adjunction for translation functors, this suggests to ask whether we have isomorphisms (where $w_0$ denotes the longest element in the Weyl group)
\begin{align}\label{equationconjectureLinfty1}
     T_{-w_0\lambda}^{-w_0\mu}\cL_{\infty}|_{(\frX_{n})^{\wedge}_{\bh}}&\simeq Lf_{\bh}^*\cL_{\infty}|_{(\frX_{n})^{\wedge}_{0}},\\ \label{equationconjectureLinfty2}
     T_{-w_0 \mu}^{-w_0\lambda}\cL_{\infty}|_{(\frX_{n})^{\wedge}_{0}}&\simeq Rf_{\bh,*}\cL_{\infty}|_{(\frX_{n})^{\wedge}_{\bh}}?
\end{align}
\begin{remark}
    The sheaf $\cL_{\infty}$ should be a family version of the dual of $\Pi(D)$ where for a $(\varphi,\Gamma)$-module $D$ of rank $n$, we write $\Pi(D)$ for the conjectural locally analytic representation of $\GL_n(\Q_p)$ attached to $D$ via $p$-adic local Langlands correspondence. The expected isomorphism $T_{-w_0\lambda}^{-w_0\mu}\cL_{\infty}|_{(\frX_{n})^{\wedge}_{\bh}}\simeq Lf_{\bh}^*\cL_{\infty}|_{(\frX_{n})^{\wedge}_{0}}$ is just a family (dual) version of a conjecture of Ding \cite[Conj. 1.1, (1)]{Ding2024wall}: if $D$ has Hodge-Tate-Sen weights $\bh$, then $T_{\lambda}^{\mu}\Pi(D)=\Pi(f_{\bh}(D))$. 
\end{remark}
At present, there is no construction of the sheaf $\cL_{\infty}$ for $n\geq 2$. In the case of $\GL_2(\Q_p)$, as in the Banach case \cite[\S 7.3]{emerton2023introduction}, $\cL_{\infty}$ should be the family version of Colmez's construction $D\mapsto D^{\natural}\boxtimes \bP^1$ from $(\varphi,\Gamma)$-modules to $\cD(G)$-modules (up to a twist). An easier object to construct is $D\boxtimes \bP^1$ as only $U(\frg)$-modules, which equals to copies of $(\varphi,\Gamma)$-modules. The main theorem immediately implies the following (compared with (\ref{equationconjectureLinfty1}) and (\ref{equationconjectureLinfty2}) up to a twist).
\begin{corollary}[Corollary \ref{corollaryDboxtimesP1}]
    In the notation of Theorem \ref{theoremintroduction}, the following isomorphisms of sheaves of $U(\frg)$-modules hold locally on affinoid charts of $\frX_2$ with suitable definitions of the objects:
    \begin{align*}
        T_{\lambda}^{\mu}D_{(\frX_{2})^{\wedge}_{\bh}}\boxtimes \bP^1&\simeq Lf_{\bh}^*D_{(\frX_{2})^{\wedge}_{0}}\boxtimes \bP^1,\\
        T_{\mu}^{\lambda}D_{(\frX_{2})^{\wedge}_{0}}\boxtimes \bP^1&\simeq Rf_{\bh,*}D_{(\frX_{2})^{\wedge}_{\bh}}\boxtimes \bP^1.
    \end{align*}
\end{corollary}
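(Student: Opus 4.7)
The plan is to deduce the corollary essentially formally from Theorem \ref{theoremintroduction}, exploiting the fact emphasized just before its statement that, viewed only as a $U(\frg)$-module, the object $D \boxtimes \bP^1$ is built from a fixed number of copies of the $(\varphi,\Gamma)$-module $D$ in a functorial and exact manner. Recall that in Colmez's construction $D \boxtimes \bP^1$ is realized, on the two standard affine charts of $\bP^1 = \GL_2(\Q_p)/B(\Q_p)$, as a fibre product of two copies of $D$ glued along their $\Z_p^{\times}$-components by the involution $w_*$ induced by the Weyl element $w$. The $\frg$-action obtained by differentiating the $G$-action restricts on each chart to the canonical $\frg$-action on $D$ of \cite{dospinescu2012actions}, and the gluing intertwines these actions up to the inner automorphism $\Ad(w)$. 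Thus $D \mapsto D \boxtimes \bP^1$ is an exact, base-change-compatible functor of sheaves of $U(\frg)$-modules, and this is the only structural input from Colmez's construction that is needed.

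The first step is to extend this fibre-product description to families of $(\varphi,\Gamma)$-modules over the affinoid formal rigid spaces $\Sp(A)^{\wedge}$ and $f_{\bh}^{-1}(\Sp(A)^{\wedge})$ appearing in the local charts of $\frX_2$, using the foundations of Appendix \ref{sectionappendixformalfunction}. One then verifies two geometric compatibilities. For pullback along $f_{\bh}$, the flatness of $f_{\bh}$ (inherited from the flatness of $D_{\pdR}$ via the Cartesian diagram \eqref{diagramintroductioncartesian}) implies $Lf_{\bh}^* = f_{\bh}^*$, which then commutes with the fibre product defining $\boxtimes \bP^1$, giving $Lf_{\bh}^*(D_{(\frX_2)^{\wedge}_{0}} \boxtimes \bP^1) \simeq (Lf_{\bh}^* D_{(\frX_2)^{\wedge}_{0}}) \boxtimes \bP^1$. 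For derived pushforward, the fact established in the proof of Theorem \ref{theoremintroduction} that $Rf_{\bh,*} D_{(\frX_2)^{\wedge}_{\bh}}$ concentrates in degree zero --- itself a reflection of $Rf_* \cO_{\tildefrg}$ being locally free of rank two in degree zero on $\frg$ --- ensures that $Rf_{\bh,*}$ commutes termwise with the $w_*$-gluing, so that $Rf_{\bh,*}(D_{(\frX_2)^{\wedge}_{\bh}} \boxtimes \bP^1) \simeq (Rf_{\bh,*} D_{(\frX_2)^{\wedge}_{\bh}}) \boxtimes \bP^1$. Finally, because $T_{\lambda}^{\mu}$ is defined purely through the $U(\frg)$- and $Z(\frg)$-structure and $\Ad(w)$ normalizes $\frt$, the translation functor also commutes with $\boxtimes \bP^1$: there are canonical identifications $T_{\lambda}^{\mu}(D \boxtimes \bP^1) \simeq (T_{\lambda}^{\mu} D) \boxtimes \bP^1$, and symmetrically for $T_{\mu}^{\lambda}$.

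Combining these three commutativities with parts (1) and (2) of Theorem \ref{theoremintroduction} yields the two asserted isomorphisms immediately. The main obstacle is the bookkeeping required to set up $D \boxtimes \bP^1$ in families over the formal rigid spaces at hand and to check, in that setting, that the Weyl-element gluing is preserved by $Rf_{\bh,*}$; this is conceptually straightforward given the geometry of the Grothendieck resolution $f : \tildefrg \to \frg$ already exploited in the proof of the main theorem, but has to be spelled out at the level of Colmez's explicit cocycle description. Once these foundational points are in place, the corollary is a formal consequence of Theorem \ref{theoremintroduction} and the exactness of $\boxtimes \bP^1$ as a functor on $U(\frg)$-modules.
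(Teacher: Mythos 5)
Your overall instinct --- that the corollary should be a formal consequence of Theorem \ref{theoremintroduction} because $D\boxtimes\bP^1$ is built functorially out of copies of $D$ --- matches the paper, but the route you choose rests on an input that does not exist and that the paper deliberately avoids. You take ``$D\boxtimes\bP^1$'' to be Colmez's genuine construction, realized as a fibre product of two copies of $D$ glued along their $\Z_p^{\times}$-parts by the involution induced by a Weyl-type element, and your first step is to ``extend this fibre-product description to families'' over the formal rigid spaces $\Sp(A)^{\wedge}$ and $f_{\bh}^{-1}(\Sp(A)^{\wedge})$. That is precisely the point the paper flags as open: it says only that ``it is expected that the construction can vary in family,'' and for this reason it refuses to discuss the $\GL_2(\Q_p)$-module at all. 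Making the gluing (which involves the residue map to $D\boxtimes\Z_p^{\times}$, the involution depending on the central character $\omega$, and its compatibility with $\varphi$, $\Gamma$ and with $Rf_{\bh,*}$ over formal completions) work in families is not bookkeeping; it is substantial unproved foundational work, and your step ``$Rf_{\bh,*}$ commutes termwise with the $w_*$-gluing'' has no justification until that structure is constructed in this setting. This is the genuine gap.

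The paper's ``suitable definition'' sidesteps all of this: as a sheaf of $U(\frg)$-modules only, $D\boxtimes\bP^1(\Q_p)$ is \emph{defined} to be the direct sum $D\boxtimes\Z_p\oplus D\boxtimes(\bP^1(\Q_p)\setminus\Z_p)=D\oplus\Pi.D$, i.e.\ two copies of $D$ where the second carries the $\frg$-action twisted by $\Ad(\Pi^{-1})$ with $\Pi=\mtwo{}{1}{p}{}$ (note it is $\Pi$, not the Weyl element $w$, and there is no gluing at all). With that definition the corollary is almost immediate from Theorem \ref{theoremintroduction}: on the first copy it is literally Corollary \ref{corollary}; on the second copy one only needs that $(\Pi.D_A)\otimes_L\Sym^kL^2\simeq \Pi.(D_A\otimes_L\Sym^kL^2)$ as $U(\frg)$-modules (because the $\frg$-action on $\Sym^kL^2$ integrates to $\GL_2(\Q_p)$) and that $\Ad(\Pi)$ acts trivially on $Z(\frg)$, so the translation functors commute with the twist, while $f_{\bh}^*$ and $Rf_{\bh,*}$ do not see the twist since the underlying sheaves are unchanged. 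If you reformulate your argument with this definition, the parts of your proposal about flatness of $f_{\bh}$ and degree-zero concentration of $Rf_{\bh,*}$ become unnecessary, and the proof reduces to the short twist argument above.
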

\begin{remark}
    In the trianguline cases, we expect that the identification of functors in Question \ref{equationconjecture} applying for finite slope Orlik-Strauch representations is compatible with the conjectural description of $\frA_G^{\rm rig}(\pi)$ using local models and Bezrukavnikov's functor in \cite[\S 6.2.27]{emerton2023introduction}, provided the version of \textit{loc. cit.} for non-regular weights (see \cite{wu2021local} for a discussion on cycles). 
\end{remark}
\begin{remark}
    Geometric translations for real local Langlands correspondence were already discussed to some extent, cf. \cite[\S 16]{adams1992langlands}, \cite{straser2014geometric}, etc.
\end{remark}
\subsection{Outline} We review basics on Grothendieck-Springer resolutions in \S\ref{sectinGrothendieckresolution}. In \S\ref{sectionchangeofweight}, we study the change of weights maps and prove Proposition \ref{propositionintroductionfiberproduct}, and show that the local model map is flat in the $\GL_2(\Q_p)$-case. In \S\ref{sectiontranslationinfamily}, we compute translations of $(\varphi,\Gamma)$-modules in families. We prove the main theorem on geometric translations in \S\ref{sectiongeometrictranslation}. We also show in \S\ref{subsectionspecialization} how to recover some of Ding's pointwise calculation of translations from our main Theorem \ref{theoremintroduction}. In two appendices \ref{sectionappendixfamilies} and \ref{sectionappendixformalfunction}, we collect facts on families of $(\varphi,\Gamma)$-modules over the Robba rings and formal rigid geometry.

\subsection{Acknowledgments}
The idea of ``change of weights'' is due to Yiwen Ding. I would like to thank him for many inspiring conversations and questions, and for sharing with me a preliminary version of \cite{Ding2024wall}. I would also like to thank Eugen Hellmann, Claudius Heyer and Damien Junger for helpful discussions during the preparation of this work. I thank the anonymous referee for helpful comments and suggestions.
\subsection{Notation}\label{subsectionnotation}
We fix a prime number $p$. We use $L$ a finite extension of $\Q_p$ as the coefficient field.  

Suppose that $K$ is a $p$-adic local field. Let $K_{\infty}=K(\mu_{\infty})=\cup_{m}K(\mu_{p^m})$ be the extension of $K$ by adding all $p$-th power roots of unity, and $\Gamma_K:=\Gal(K_{\infty}/K)$. Also let $K_m=K(\mu_{p^m})$ for $m\geq 1$. Let $\cG_K=\Gal(\overline{\Q}_p/K)$ be the absolute Galois group of $K$. We write $\Gamma=\Gamma_K$ if $K=\Q_p$.

Let $\epsilon$ be the cyclotomic character of $\cG_K$ viewed also as the character $\mathrm{Norm}_{K/\Q_p}|\mathrm{Norm}_{K/\Q_p}|_{p}$ of $K^{\times}$  where $|p|_{p}=p^{-1}$. In our convention, the cyclotomic character has Hodge-Tate weights one.

We follow the notation of $(\varphi,\Gamma)$-modules in \cite{kedlaya2014cohomology}. If $X$ is a rigid space, we write $\cR_{X,K}$ for the Robba ring of $K$ over $X$ \cite[Def. 6.2.1]{kedlaya2014cohomology}. If $A$ is an affinoid algebra, write $\cR_{A,K}:=\cR_{\mathrm{Sp}(A),K}$. We also need the Robba rings $\cR_{A,K}^r$ and $\cR_{A,K}^{[s,r]}$ for $0<s<r$ small enough in the sense of \cite[Def. 2.2.2]{kedlaya2014cohomology} and $\cR_{A,K}=\bigcup_{0<r}\cR_{A,K}^r$. The ring $\cR_{\Q_p,K}^{[s, r]}$ is the ring of single variable rigid analytic functions converging over $\bbU_{K_0'}^{[s,r]}=\{p^{-\frac{r}{p-1}}\leq |X|\leq p^{-\frac{s}{p-1}}\}$ with coefficients in $K_0'$ the maximal unramified subfield of $K_{\infty}$ and $\cR_{\Q_p,K}^{r}=\varprojlim_{s\leq r}\cR_{\Q_p,K}^{[s,r]}$ is the functions on $\bbU^r_{K_0'}=\cup_{0<s<r}\bbU_{K_0'}^{[s,r]}$. Then $\cR_{A,K}^{[s,r]}=\cR_{\Q_p,K}^{[s,r]}\widehat{\otimes}_{\Q_p}A$ and $\cR_{A,K}^{r}=\varprojlim_{s\leq r}\cR_{A,K}^{[s,r]}$. The group $\Gamma_K$ acts on $\cR_{A,K}^r$ and $\varphi:\cR_{A,K}^r\rightarrow \cR_{A,K}^{r/p}$. 

A $(\varphi,\Gamma_K)$-module $D_A$ over $\cR_{A,K}$ is always the base change of a $(\varphi,\Gamma_K)$-module $D_A^r$ over $\cR_{A,K}^r$ for some $r$ small enough: a finite projective $\cR_{A,K}^r$-module $D_{A}^r$ equipped with an isomorphism $\varphi^*D_A^r=\cR_{A,K}^{r/p}\otimes_{\varphi,\cR_{A,K}^r}D_A^r\simeq \cR_{A,K}^{r/p}\otimes_{\cR_{A,K}^r}D_A^r$ and a commuting continuous semilinear action of $\Gamma_K$. Given a $(\varphi,\Gamma_K)$-module $D_A^r$ over $\cR_{A,K}^r$, we write $D_A^{s}:=\cR_{A,K}^{s}\otimes_{\cR_{A,K}^r}D_A^r,D_A^{[s,r]}:=\cR_{A,K}^{[s,r]}\otimes_{\cR_{A,K}^r}D_A^r,D_A:=\cR_{A,K}\otimes_{\cR_{A,K}^r}D_A^r$ for $s\leq r$. The $(\varphi,\Gamma_K)$-cohomologies $H^i_{\varphi,\gamma_K}(D_A),i=0,1,2$ for a $(\varphi,\Gamma_K)$-module is defined using Herr complex as in \cite[Def. 2.3.3]{kedlaya2014cohomology} where $\gamma_K$ is a fixed topological generator of $\Gamma_K$ modulo the torsion subgroup. And we write $\Hom_{\varphi,\gamma_K}(-,-)$ for the Hom space of $(\varphi,\Gamma_K)$-modules. We write $\cR_{A}=\cR_{A,K}$, $\gamma=\gamma_K$, etc., if $K=\Q_p$.

Let $B_{\dR}^{+},B_{\dR}=B_{\dR}^{+}[\frac{1}{t}]$ be Fontaine's de Rham period rings, where $t$ is Fontaine's $2\pi i$.

For any $r>0$, let $m(r)$ be the minimal integer such that $p^{m(r)-1}[K(\mu_{p^{\infty}}):K_0(\mu_{p^{\infty}})]r\geq 1$ where $K_0$ is the maximal unramified subfield of $K$. The integer is taken so that there are injections $\iota_m:\cR_{L,K}^r\hookrightarrow (L\otimes_{\Q_p}K_m)[[t]]$ for $m\geq m(r)$, see Appendix \ref{sectionappendixfamilies}.

For a continuous character $\delta:K^{\times}\rightarrow\Gamma(X, \cO_X)^{\times}$ over a rigid space $X$, write $\cR_{X,K}(\delta)$ for the corresponding rank one $(\varphi,\Gamma_K)$-module over $\cR_{X,K}$ in \cite[Cons. 6.2.4]{kedlaya2014cohomology}. If $K=\Q_p$, write $z:\Q_p^{\times}\hookrightarrow L^{\times}$ for the algebraic character of weight one. Recall that $\cR_L(z)=t\cR_L$.

If $G=\GL_n$ over $L$ for some $n$, we always take the Borel $B$ the subgroup of upper-triangular matrices and $T$ the diagonal torus. We use the fraktur letter $\frg$ (resp. $\frb$, resp. $\frt$) for the Lie algebra of the group $G$ (resp. $B$, resp. $T$), also viewed as an affine scheme (or its analytification) over $L$. Denote by $\Ad:G\rightarrow \mathrm{End}(\frg)$ the adjoint representation. For a Lie algebra $\frg$, denote by $U(\frg)$ the universal enveloping algebra and $Z(\frg)$ the center of $U(\frg)$. We write $\cN\subset \frg$ for the nilpotent cone. 

If $X$ is a rigid space with $x\in X$, we write $k(x)$ for the residue field at $x$. For a $(\varphi,\Gamma_K)$-module $D_X$ over $X$, we write $D_x$ or $D_{k(x)}$ for the base change to $x$. More generally for an affinoid algebra $A$ and an $A$-point $\Sp(A)\rightarrow X$, write $D_A=D_X\otimes_{\cR_{X,K}}\cR_{A,K}$, and similarly $D_A^r=D_X^r\otimes_{\cR_{X,K}^r}\cR_{A,K}^r$, etc.

Let $\cC_L$ denote the category of commutative local Artinian $L$-algebras with residue field $L$. If $A\in\cC_L$, let $\frm_A$ be its maximal ideal. 

If $Z$ is a commutative ring, $I$ is a finitely generated ideal of $Z$ and $M$ is a $Z$-module, then write $M[I]=\{m\in M\mid z.m=0,\forall z\in I\}$ and $M[I^{\infty}]:=\cup_{i=1}^{\infty}M[I^i]$. If $\chi:Z\rightarrow A$ is a surjction of rings with kernel $I$ and $M$ is an $A$-module, write $M[Z=\chi]=M[I]$ and $M\{Z=\chi\}=M[I^{\infty}]$. 
\section{The Grothendieck-Springer resolution}\label{sectinGrothendieckresolution}
In this section, we recall the basics of the Grothendieck-Springer resolution. We only consider schemes over $L$ for the moment. 

Let $G$ be a split reductive group over $L$ with a Borel subgroup $B$ and a maximal torus $T$. We will write $\frh=\frt$ for the Cartan subalgebra. We use $P$ to denote standard parabolic subgroups containing $B$ with the Lie algebra $\frp$. Write $W$ (resp. $W_P$) for the Weyl group of $G$ (resp. the Levi of $P$).
\subsection{Recollection on Grothendieck-Springer resolution}
For a parabolic subgroup $P\supset B$ with the Levi subgroup $M$ containing $T$, consider the scheme
\[\tildefrg_P:=G\times^{P}\frp\simeq\{(\nu,gP)\in\frg\times G/P\mid \Ad(g^{-1})\nu\in\frp\}\]
and the partial Grothendieck resolution
\[f_P:\tildefrg_P\rightarrow \frg, (\nu,gP)\mapsto \nu.\]
We will omit the subscript $P$ when $P=B$, namely write $f:\tildefrg=\tildefrg_B\rightarrow \frg$. There is a natural map 
\[\tildefrg_P\rightarrow \frm/\!/M\simeq \frh/W_P\]
sending $(\nu,gP)$ to the projection of $\Ad(g^{-1})\nu\in\frp$ to $\frm$. The composite $\tildefrg_P\stackrel{f_P}{\rightarrow}\frg\rightarrow \frg/\!/G\simeq \frh/W$ coincides with $\tildefrg_P\rightarrow \frh/W_P\rightarrow \frh/W$, hence the two maps induce a map 
\[g_P: \tildefrg_P \rightarrow \frg\times_{\frh/W}\frh/W_P.\]
\begin{lemma}\label{lemmaglobalsectiontildegP} Let $f_P,g_P$ be as above.
    \begin{enumerate}
        \item The morphism $f_P$ is proper and surjective, finite over $\frg^{\rm reg}$ and is finite \'etale of degree $|W/W_P|$ over $\frg^{\rm reg-ss}$. Here $\frg^{\rm reg-ss}\subset \frg^{\rm reg} \subset \frg$ denote open subschemes of regular semisimple elements and regular elements.
        \item The natural map $\cO_{\frg\times_{\frh/W}\frh/W_P}\rightarrow Rg_{P,*}\cO_{\tildefrg_P}$ is an isomorphism. Moreover, the map $\frg\times_{\frh/W}\frh/W_P\rightarrow \frg$ is finite flat of rank $|W/W_P|$. 
    \end{enumerate}
\end{lemma}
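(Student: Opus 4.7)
My plan treats the two parts of the lemma in turn.

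For part (1), I realize $\tildefrg_P$ as the closed incidence subscheme $\{(\nu,gP)\in\frg\times G/P\mid \Ad(g^{-1})\nu\in\frp\}$. Since $G/P$ is projective over $L$, the projection $\frg\times G/P\to\frg$ is proper, hence so is its restriction $f_P$ to the closed subscheme $\tildefrg_P$. Surjectivity follows from the Jordan decomposition: every $\nu\in\frg$ lies in some Borel subalgebra, hence in a conjugate of $\frp$. Finiteness over $\frg^{\rm reg}$ follows from quasi-finiteness (a regular element lies in only finitely many parabolics of type $P$) combined with properness. For \'etaleness of degree $|W/W_P|$ over $\frg^{\rm reg-ss}$, a regular semisimple $\nu$ lies in a unique maximal torus (its own centralizer), and the parabolics of type $P$ containing that torus form a $W/W_P$-torsor under the normalizer; since $\tildefrg_P$ is smooth of the expected dimension as a vector bundle over the smooth projective $G/P$, the geometric fibers of $f_P$ over $\frg^{\rm reg-ss}$ consist of $|W/W_P|$ reduced points and $f_P$ is \'etale there.

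For part (2), the finite flatness of $\frh/W_P\to\frh/W$ of degree $|W/W_P|$ is the Chevalley--Shephard--Todd theorem applied to the reflection groups $W_P\subset W$ acting on $\frh$; pulling back along the adjoint quotient $\chi:\frg\to\frh/W$ (which is flat by Kostant's theorem) yields the asserted finite flatness of $\frg\times_{\frh/W}\frh/W_P\to\frg$ of the same degree.

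The isomorphism $\cO_{\frg\times_{\frh/W}\frh/W_P}\simeq Rg_{P,*}\cO_{\tildefrg_P}$ is classical. My plan is first to reduce to the case $P=B$ via the relative Grothendieck--Springer resolution $\tildefrg\to\tildefrg_P$ attached to the Levi of $P$, and then to analyze $g_B:\tildefrg\to\frg\times_{\frh/W}\frh$. This map is proper and birational, being an isomorphism over the preimage of $\frg^{\rm reg-ss}$, and the target is normal; Zariski's main theorem then yields the identification on $R^0$. The main obstacle is the vanishing of $R^{>0}g_{B,*}\cO_{\tildefrg}$. For this I would exploit that $B$ acts trivially on the quotient $\frh=\frb/\frn$, so that $\tildefrg=G\times^B\frb$ is an $\frn$-vector bundle over $G/B\times\frh$ and the map $\tilde f:\tildefrg\to\frh$ is smooth; each fiber of $\tilde f$ is isomorphic to the Springer resolution $G\times^B\frn\to\cN$ via the $B$-equivariant translation $h+\frn\simeq\frn$. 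Combining the classical vanishing of higher direct images under the Springer resolution with flat base change along $\frg\times_{\frh/W}\frh\to\frh$ and cohomology-and-base-change for the proper map $g_B$, the fibrewise vanishing at $h=0$ propagates to the global statement.
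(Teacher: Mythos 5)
Your part (1) and the finite-flatness claim in part (2) are fine and essentially the standard arguments; the paper itself disposes of them by citation (part (1) is \cite[Lem. 2.3]{wu2021local}, and the flatness of $\frh/W_P\rightarrow\frh/W$ is miracle flatness). The genuine gap is in your proof of the isomorphism $\cO_{\frg\times_{\frh/W}\frh/W_P}\simeq Rg_{P,*}\cO_{\tildefrg_P}$, more precisely in the vanishing of $R^{>0}g_{B,*}\cO_{\tildefrg}$. The translation $h+\frn\rightarrow\frn$, $x\mapsto x-h$, is \emph{not} $B$-equivariant: $B$ acts trivially on $\frb/\frn$ but not on $\frb$, so $\Ad(b)(x)-h\neq\Ad(b)(x-h)$ unless $\Ad(b)h=h$. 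Accordingly the fibers of $\tilde{f}:\tildefrg\rightarrow\frh$ are not all isomorphic to the Springer resolution: the fiber over a regular $h$ is $G\times^B(h+\frn)$, which maps isomorphically onto the closed (affine) orbit of $h$, whereas the fiber over $0$ is $\widetilde{\cN}$, which contains the complete zero section $G/B$; these are not isomorphic. Moreover the base-change mechanism you invoke cannot be run as stated: $\tilde{f}$ is not proper, and for the proper map $g_B$ the sheaf $\cO_{\tildefrg}$ is not flat over $\frg\times_{\frh/W}\frh$ (a proper birational flat map would be an isomorphism), so cohomology-and-base-change does not propagate the vanishing known for the special fiber $\widetilde{\cN}\rightarrow\cN$.

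The standard repair is a degeneration rather than a fiber-isomorphism argument: the exact sequence of bundles $0\rightarrow\widetilde{\cN}\rightarrow\tildefrg\rightarrow G\times^B\frh\rightarrow 0$ over $G/B$ induces a filtration on $\pi_*\cO_{\tildefrg}$ whose associated graded is $\cO(\frh)\otimes\cO_{\widetilde{\cN}}$, so the required vanishing reduces to the known vanishing of higher cohomology for $G\times^B\Sym(\frn^*)$ on $G/B$; this is exactly the mechanism used in the proof of Proposition \ref{propositiondirectimagelinebundle} in the paper, and alternatively one may simply quote \cite[Lem. 3.2]{backelin2015singular} or the Frobenius-splitting results of \cite{brion2007frobenius}, which is what the paper's proof does. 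Your remaining ingredients (Zariski's main theorem together with normality of $\frg\times_{\frh/W}\frh$ for the degree-zero identification, and the reduction from $P$ to $B$ via the Levi, using that $\cO(\frh/W_P)$ is a $W_P$-averaging direct summand of $\cO(\frh)$) can be kept once this step is replaced.
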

\begin{proof}
    (1) See for example \cite[Lem. 2.3]{wu2021local}.
    
    (2) The isomorphism is \cite[Lem. 3.2]{backelin2015singular}. (In \textit{loc. cit.}, the map $g_P$ is defined in terms of the linear dual $\frg^*$ of $\frg$. While there are isomorphisms $\frg^*\simeq \frg, \frh^*\simeq \frh$ and $(\frg/\frn_P)^*\simeq \frp$ where $\frn_P$ is the nilradical of $\frp$ if we choose an invariant nondegenerate symmetric bilinear form on $\frg$, for example the Killing form when $\frg$ is semisimple.) The map $\frh/W_P\rightarrow \frh/W$ is flat by miracle flatness. 
\end{proof}

We don't really need the following lemma, but it might be helpful to keep it in mind.
\begin{lemma}\label{lemmadualizingsheaf}
    The dualizing sheaf of $\tildefrg_P$ is trivial. And there is a canonical isomorphism $f^!_P\cF=Lf^{*}_P\cF$ for any coherent $\cO_{\frg}$-module $\cF$. 
\end{lemma}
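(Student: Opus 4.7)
The plan is to exploit the fact that $\tildefrg_P$ is the total space of a $G$-equivariant vector bundle over the flag variety $G/P$ and then apply the adjunction formula for the canonical sheaf. Concretely, the projection $\pi\colon \tildefrg_P = G\times^P \frp \to G/P$ realizes $\tildefrg_P$ as the total space of $\cE := G\times^P \frp$. First I would write down the short exact sequence of tangent bundles
\[
0 \to T_{\tildefrg_P/(G/P)} \to T_{\tildefrg_P} \to \pi^* T_{G/P} \to 0,
\]
use the canonical identifications $T_{\tildefrg_P/(G/P)} \simeq \pi^*\cE$ and $T_{G/P}\simeq G\times^P(\frg/\frp)$, and take determinants. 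Since $\det \frp \otimes \det(\frg/\frp) = \det \frg$, this yields
\[
\det T_{\tildefrg_P} \simeq \pi^*\bigl(G\times^P \det \frg\bigr),
\]
reducing the computation of $\omega_{\tildefrg_P}$ to the analysis of $\det\frg$ as a $P$-representation.

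The key observation is then that the character $\det\Ad|_P\colon P\to \bbG_m$ is trivial. Its derivative at the identity is $\mathrm{tr}\circ\ad\colon \frp\to L$, which vanishes because $\ad$ is traceless on any reductive Lie algebra, hence on $\frp\subset\frg$. As $P$ is connected, the character itself is trivial, so $G\times^P\det\frg$ is the trivial line bundle on $G/P$. Combining with the previous step gives $\omega_{\tildefrg_P}\simeq \cO_{\tildefrg_P}$.

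For the second assertion I would compare canonical bundles along $f_P$. Since $\frg$ is an affine space, $\omega_\frg$ is also canonically trivial, and the dimension count $\dim\tildefrg_P = \dim(G/P)+\dim\frp = \dim G = \dim\frg$ ensures that the relative dualizing complex $\omega_{f_P} := \omega_{\tildefrg_P}\otimes f_P^*\omega_\frg^{-1}$ sits in degree zero and is canonically trivial. Plugging this into the standard identity $f_P^!\cF \simeq Lf_P^*\cF \otimes^L \omega_{f_P}$, valid for a morphism between smooth schemes and any coherent $\cF$, yields the desired isomorphism. The main (mild) obstacle is packaging the various trivializations $G$-equivariantly so that the final isomorphism is genuinely canonical; this reduces cleanly to the vanishing of $\det\Ad|_P$ established above.
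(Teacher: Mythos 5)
Your proposal is correct and takes essentially the same route as the paper: you trivialize $\omega_{\tildefrg_P}$ via the fibration $\pi_P\colon\tildefrg_P\to G/P$, with your observation that $\det\Ad|_P$ is trivial being just a coordinate-free repackaging of the paper's cancellation of the weights $\delta_P$ and $-\delta_P$ coming from $\omega_{G/P}$ and $\omega_{\pi_P}$. Your second step likewise matches the paper's: since $f_P$ is proper and perfect (in your phrasing, an lci map between smooth spaces of equal dimension with trivial relative dualizing sheaf), the standard identity $f_P^!\cF\simeq Lf_P^*\cF\otimes^L f_P^!\cO_{\frg}$ gives the claimed isomorphism.
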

\begin{proof}
    We follow the proof of \cite[Lemma. 5.1.1]{brion2007frobenius}. Let $\pi_P:\tildefrg_P\rightarrow G/P$ be the projection. The canonical bundle $\omega_{\tildefrg_P}$ of $\tildefrg_P$ is isomorphic to $\pi_P^{*}\omega_{G/P}\otimes_{\cO_{\tildefrg_P}} \omega_{\pi_P}$ where $\omega_{\pi_P}$ denotes the relative dualizing sheaf. We know $\omega_{G/P}=G\times^P\delta_P$ where $\delta_P$ is the sum of all roots in the unipotent radical $\fru$ of $\frp$. Moreover, since $\tildefrg_P=G\times^P\frp$, $\omega_{\pi_P}=G\times^P\omega_{\frp}$. Consider the fiberation $0\rightarrow\fru\rightarrow\frp\rightarrow\frm\rightarrow 0$. Choosing coordinates $m,u$ of $\frm$ and $\fru$, then $\omega_P$ is generated by $dm\wedge du$. Since $\omega_{\frm}$ is trivial as a $M$-representation, $\omega_{\frp}=-\delta_P$ as a $P$-representation. Hence $\omega_{\tildefrg_P}\simeq \cO_{\tildefrg_P}$. 

    We get $f^!_P\cO_{\frg}=f^!_P\omega_{\frg}=\cO_{\tildefrg_P}$. The map $f_P:\tildefrg_P\rightarrow \frg$ is perfect, hence $f^!_P\simeq Lf_P^*$ by \cite[\href{https://stacks.math.columbia.edu/tag/0B6U}{Tag 0B6U},\href{https://stacks.math.columbia.edu/tag/068D}{Tag 068D}]{stacks-project}.
\end{proof}
\subsection{Direct images of some line bundles} We need compute direct images of some line bundles on $\tildefrg$ in the $G=\GL_2$ case (Proposition \ref{propositiondirectimagelinebundle}). The computation will be the key for our main result on direct images of $(\varphi,\Gamma)$-modules (Proposition \ref{propositiondirectimagephigammamodule}). 

Take $\frg=\mathfrak{gl}_2=\Spec (L[a,b,c,z])$ where $a,b,c,z$ are coordinates for entries of matrices 
\[\mtwo{a+z}{b}{c}{-a+z}\in\frg.\] 
Take $\frh=\Spec(L[h,z])$ for $(z+h,z-h)\in \frh$. Recall the map $g:\widetilde{\frg}\rightarrow\frg\times_{\frh/W}\frh$ where $\frh/W=\Spec(L[z,h^2])$ and $\cO(\frh/W)\rightarrow\cO(\frg):h^2\mapsto a^2+bc$ and that $\cN\subset \mathfrak{sl}_2\subset \frg$ denotes the nilpotent cone. 
\begin{lemma}\label{lemmapropertygrothendieckresolutiongl2}
    The map $f:\tildefrg\rightarrow\frg$ is a finite \'etale rank two cover over $\frg^{\rm reg-ss}=\frg\setminus\{a^2+bc=0\}$ and is finite flat of degree two over $\frg^{\rm reg}=\frg\setminus\{a=b=c=0\}$. The fiber of $f$ over $0\in \frg$ is identified with $G/B$ and the fiber over a closed point $x\in\cN\setminus\{0\}$ is ramified of degree $2$ over the residue field $k(x)$. 
\end{lemma}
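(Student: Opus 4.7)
The plan is to deduce the four assertions of the lemma from Lemma \ref{lemmaglobalsectiontildegP} combined with explicit $\GL_2$-coordinate computations.

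First, I would verify the description of the regular loci in the given coordinates. For $M = \mtwo{z+a}{b}{c}{z-a}$, the characteristic polynomial is $T^2 - 2zT + (z^2 - a^2 - bc)$, whose discriminant equals $4(a^2+bc)$; hence $M$ is regular semisimple exactly when $a^2+bc \neq 0$, confirming $\frg^{\rm reg-ss} = \frg\setminus\{a^2+bc=0\}$. Likewise $M$ is regular (centralizer of minimal dimension two) iff $M$ is not a scalar matrix, i.e., $(a,b,c)\neq(0,0,0)$, confirming $\frg^{\rm reg} = \frg\setminus\{a=b=c=0\}$. Plugging these identifications into Lemma \ref{lemmaglobalsectiontildegP}(1), together with $|W/W_B| = |W| = 2$, immediately yields finite étaleness of degree two over $\frg^{\rm reg-ss}$ and finiteness over $\frg^{\rm reg}$.

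Next, for the flatness of $f$ over $\frg^{\rm reg}$ I would invoke miracle flatness. The space $\tildefrg = G\times^B\frb$ is a rank-three vector bundle over $G/B \simeq \bbP^1$, hence smooth of dimension $4$; and $\frg^{\rm reg}$ is open in the smooth scheme $\frg$, hence also smooth of dimension $4$. Since any finite morphism between smooth schemes of the same dimension is automatically flat, $f$ is flat of degree two over $\frg^{\rm reg}$. (Alternatively, one could use Lemma \ref{lemmaglobalsectiontildegP}(2), noting that $g$ is a proper birational map whose target is already finite flat of degree two over $\frg$, and checking that $g$ is an isomorphism above $\frg^{\rm reg}$.)

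For the two fiber computations: at $\nu = 0$ the defining condition $\Ad(g^{-1})(0) \in \frb$ is vacuous, so $f^{-1}(0) = \{0\}\times G/B \simeq G/B$. For a closed point $x \in \cN\setminus\{0\}$, conjugating by $G$ reduces to $x = \mtwo{0}{1}{0}{0}$, and a direct check shows that the only Borel subalgebra of $\mathfrak{gl}_2$ containing $x$ is the upper-triangular one; hence $f^{-1}(x)$ is set-theoretically a single $k(x)$-rational point. Combined with the flatness of degree two just established, this single point must carry length two over $k(x)$, so $f^{-1}(x) \simeq \Spec(k(x)[\epsilon]/\epsilon^2)$, which is precisely the statement of ramification of degree two over $k(x)$. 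No step presents a serious obstacle: the $\GL_2$-specific verifications are all direct, and the potentially delicate determination of the non-reduced scheme structure at a nonzero nilpotent is sidestepped by combining the single-point set-theoretic description with flatness of degree two.
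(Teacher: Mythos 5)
Your proposal is correct, but it reaches two of the assertions by a different route than the paper. For finite flatness over $\frg^{\rm reg}$ you apply miracle flatness directly to $f$ (finite morphism from the smooth irreducible $4$-fold $\tildefrg$ to the smooth $\frg^{\rm reg}$, with zero-dimensional fibers), whereas the paper instead uses Lemma \ref{lemmaglobalsectiontildegP}(2): since $f$ is finite, hence affine, over $\frg^{\rm reg}$, the identity $f_*\cO_{\tildefrg^{\rm reg}}=\cO_{(\frg\times_{\frh/W}\frh)\setminus\{a=b=c=0\}}$ upgrades to an identification $\tildefrg^{\rm reg}\simeq(\frg\times_{\frh/W}\frh)\setminus\{a=b=c=0\}$, and finite flatness of degree $2$ is inherited from $\frg\times_{\frh/W}\frh\rightarrow\frg$. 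The paper then reuses this identification to read off the fiber over a nonzero nilpotent $x$ as the fiber of $\frh\rightarrow\frh/W$ over $0$, with coordinate ring $k(x)[h]/h^2$; you instead determine the set-theoretic fiber (the unique Borel containing a regular nilpotent, which is defined over $k(x)$, so the point is $k(x)$-rational) and then use flatness of degree $2$ to force the scheme structure $k(x)[\epsilon]/\epsilon^2$ --- a clean way to sidestep the non-reduced structure, and it does pin it down since a length-two local Artinian $k(x)$-algebra with residue field $k(x)$ is necessarily $k(x)[\epsilon]/\epsilon^2$. Two small points to tighten: the slogan ``any finite morphism between smooth schemes of the same dimension is flat'' needs (as here) both sides equidimensional, which holds since $\tildefrg$ is irreducible; and the degree-two claim over $\frg^{\rm reg}$ should be concluded from local constancy of the degree of a finite flat morphism together with the \'etale degree $2$ over the dense open $\frg^{\rm reg\text{-}ss}$ and connectedness of $\frg^{\rm reg}$. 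The trade-off is that the paper's argument also produces the explicit isomorphism $\tildefrg^{\rm reg}\simeq(\frg\times_{\frh/W}\frh)^{\rm reg}$, which is conceptually aligned with how $\widetilde{\cU}$ is used in Proposition \ref{propositiondirectimagelinebundle}, while your argument is more self-contained and elementary.
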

\begin{proof}
    By \cite[Prop. 2.1.1]{breuil2019local} or Lemma \ref{lemmaglobalsectiontildegP}, we know the corresponding preimage $\tildefrg^{\rm reg-ss}$ (resp. $\tildefrg^{\rm reg}$) is finite \'etale (resp. finite) over $\frg^{\rm reg-ss}$ (resp. $\frg^{\rm reg}$). Since $ f_*\cO_{\tildefrg^{\rm reg}}=\cO_{\frg\times_{\frh/W}\frh\setminus\{a=b=c=0\}}$, we see $\tildefrg^{\rm reg}=(\frg\times_{\frh/W}\frh)\setminus\{a=b=c=0\}$. The fiber over $x\in\cN\setminus\{ 0\}\subset \frg^{\rm reg}$, which has image $0$ in $\frh/W$, is the fiber of $ \frh\rightarrow\frh/W$ over $0$ whose coordinate ring is $k(x)[h]/h^2$.
\end{proof}
Let $\pi:\tildefrg\rightarrow G/B$. For $k\in \Z$, we write $\cO_{\tildefrg}(k)$ for the line bundle $\pi^*\cO_{G/B}(k)$ where our convention is the standard one that $ \cO_{G/B}(k)$ is ample for $k\geq 1$. Let $\cV=\cO_{\frg}e_1\oplus \cO_{\frg}e_2$ be the universal trivialized rank two bundle on $\frg$. The operator $\nu$ acts universally on $\cV$
\[\nu: x_1e_1+x_2e_2\mapsto ((a+z)x_1+bx_2)e_1+(cx_1+(z-a)x_2)e_2\] 
for $x_1,x_2\in \cO_{\frg}$, hence also on $f^*\cV$. By definition, $\nu$ stabilizes the following short exact sequence which gives the universal filtration on $\tildefrg$ pulled back from $G/B$ (arising from the $B$-filtration of the standard representation of $\GL_2$):
\begin{equation}\
    0\rightarrow \cO_{\tildefrg}(-1)\rightarrow f^*\cV\rightarrow \cO_{\tildefrg}(1)\rightarrow 0.
\end{equation}
\begin{proposition} \label{propositiondirectimagelinebundle}
    We write $\cU=\cO(\frg)=L[a,b,c,z]$ and $\widetilde{\cU}:=\cO(\frg\times_{\frh/W}\frh)=\cU[h]/(h^2-(a^2+cb))$ which is free of rank two over $\cU$.
\begin{enumerate}
    \item The sheaves $Rf_*\cO_{\tildefrg}(-1)$ and $Rf_*\cO_{\tildefrg}(1)$ concentrate in degree zero and are free of rank two over $\cU$.
    \item Let $\widetilde{\cV}:= \cV\otimes_{\cU}\widetilde{\cU}=f_*f^*\cV$. The sequence of $\widetilde{\cU}$-modules below
        \[\cdots \stackrel{\nu-(z-h)}{\longrightarrow}\widetilde{\cV}\stackrel{\nu-(h+z)}{\longrightarrow}\widetilde{\cV}\stackrel{\nu-(z-h)}{\longrightarrow}\widetilde{\cV}\stackrel{\nu-(h+z)}{\longrightarrow}\cdots\]
        is exact.  Moreover, the $\widetilde{\cU}$-module $\widetilde{\cV}[\nu-(z\pm h)]:=\ker(\widetilde{\cV}\stackrel{\nu-(z\pm h)}{\rightarrow}\widetilde{\cV})$ is free of rank two over $\cU$ and $(\nu-(z\pm h))\cV=(\nu-(z\pm h))\widetilde{\cV}=\widetilde{\cV}[\nu-(z\mp h)]$.
    \item The sequence of $\widetilde{\cU}$-modules
        \[0\rightarrow f_*\cO_{\tildefrg}(-1)\rightarrow \widetilde{\cV}\rightarrow f_*\cO_{\tildefrg}(1)\rightarrow 0\]
        is exact and identifies $f_*\cO_{\tildefrg}(-1)$ with $\widetilde{\cV}[\nu-(z+h)]$.
\end{enumerate}
    
\end{proposition}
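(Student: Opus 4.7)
For part (1), I would realize $\tildefrg$ as a divisor in $\frg\times G/B$ via the observation that the composite $\cO(-1)\hookrightarrow p_2^*\cV\xrightarrow{\nu} p_2^*\cV\twoheadrightarrow \cO(1)$, a global section of $\cO(2)$, vanishes precisely on $\tildefrg$. This yields the Koszul resolution $0\to \cO(-2)\to \cO\to \iota_*\cO_{\tildefrg}\to 0$ on $\frg\times G/B$. Twisting by $\cO(\pm 1)$ and pushing forward along $p_1:\frg\times G/B\to \frg$, together with the standard computations $H^\bullet(\bP^1,\cO(-1))=0$, $H^\bullet(\bP^1,\cO(-3))\cong L^2[-1]$ (Serre duality), and $H^\bullet(\bP^1,\cO(1))=L^2[0]$, one obtains that both $Rf_*\cO_{\tildefrg}(-1)$ and $Rf_*\cO_{\tildefrg}(1)$ are concentrated in degree zero and $\cU$-free of rank two.

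For part (2), the matrix identity $(\nu-z)^2=(a^2+bc)\cdot\mathrm{id}$ on $\cV$, combined with the relation $h^2=a^2+bc$ in $\wtU$, yields $(\nu-(z+h))(\nu-(z-h))=0=(\nu-(z-h))(\nu-(z+h))$ on $\wtV$, so the displayed sequence is a complex. Writing an element of $\wtV$ uniquely as $v+hw$ with $v,w\in\cV$ (using the $\cU$-decomposition $\wtV=\cV\oplus h\cV$), the equation $(\nu-(z+h))(v+hw)=0$ forces $v=(\nu-z)w$, giving $\wtV[\nu-(z+h)]=((\nu-z)+h)\cV$. The $\cU$-linear map $w\mapsto ((\nu-z)+h)w$ is injective because its $h$-component is the identity on $\cV$, so $\wtV[\nu-(z+h)]$ is $\cU$-free of rank two. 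The direct computation $(\nu-(z-h))(v+hw)=((\nu-z)+h)(v+(\nu-z)w)$ then shows $(\nu-(z-h))\wtV=((\nu-z)+h)\cV=\wtV[\nu-(z+h)]$, establishing exactness at one position; exactness at the other follows by the symmetry $h\leftrightarrow -h$.

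For part (3), applying $Rf_*$ to the universal filtration $0\to \cO_{\tildefrg}(-1)\to f^*\cV\to \cO_{\tildefrg}(1)\to 0$ yields the claimed short exact sequence: the middle term is $f_*f^*\cV=\wtV$ by the projection formula combined with $f_*\cO_{\tildefrg}=\wtU$, and $R^1 f_*\cO_{\tildefrg}(-1)=0$ by part (1). To identify $f_*\cO_{\tildefrg}(-1)$ with $\wtV[\nu-(z+h)]$, observe that $\cO_{\tildefrg}(-1)$ is the first step of the universal $B$-filtration of $f^*\cV$, on which $\nu$ acts by the global function $z+h$ pulled back from $\frh$; since $h$ is a non-zero-divisor on the irreducible scheme $\tildefrg$, the kernel of $\nu-(z+h)$ on $f^*\cV$ coincides with $\cO_{\tildefrg}(-1)$. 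Applying the left-exact functor $f_*$ to this kernel identity yields $f_*\cO_{\tildefrg}(-1)=\wtV[\nu-(z+h)]$. The main subtlety I anticipate is the bookkeeping that pins down eigenvalue $z+h$ (rather than $z-h$) on $\cO_{\tildefrg}(-1)$ from the chosen ordering of the Borel; once this sign convention is fixed the rest is formal, with part (2) supplying the structural input and part (1) the vanishing needed to keep the sequence short exact.
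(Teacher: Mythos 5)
Your argument is correct, and in places it takes a genuinely different, more elementary route than the paper. For (1), the paper deduces the degree-zero concentration from vanishing results in the style of the Steinberg/Springer resolution (filtration with graded pieces on $\widetilde{\cN}$) and then obtains local freeness from Grothendieck--Serre duality together with a maximal Cohen--Macaulay argument, reading off the rank from the generic finite cover (Lemma \ref{lemmapropertygrothendieckresolutiongl2}); you instead use that $\tildefrg\subset\frg\times\bP^1$ is the zero divisor of a section of $\cO(2)$ (equivalently, its ideal sheaf restricts to $\cO_{\bP^1}(-2)$, a fact the paper itself verifies later in the proof of Lemma \ref{lemmaderivedfiber}), so the Koszul resolution twisted by $\cO(\pm1)$ plus $H^{\bullet}(\bP^1,\cO(d))$ gives both the concentration and the rank-two freeness in one stroke. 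This is specific to $\GL_2$, where the embedding has codimension one, whereas the paper's argument is the one that generalizes; for the present purposes yours is shorter and self-contained. (Minor slip: $\cV$ lives on $\frg$, so the pullback should be along the projection to $\frg$, i.e. $p_1^*\cV$, not $p_2^*\cV$.) Part (2) is essentially the paper's computation, just organized via $\widetilde{\cV}=\cV\oplus h\cV$ and the factorization $(\nu-(z-h))(v+hw)=((\nu-z)+h)(v+(\nu-z)w)$, which is if anything cleaner and also immediately gives $(\nu-(z\pm h))\cV=(\nu-(z\pm h))\widetilde{\cV}$. In (3), the paper proves $\cO_{\tildefrg}(-1)=f^*\cV[\nu-(z+h)]$ by an explicit chart check over the regular locus followed by an extension argument across the codimension-$\geq 2$ complement; you instead observe that $\nu-(z+h)$ acts by $(z-h)-(z+h)=-2h$ on the quotient line bundle $\cO_{\tildefrg}(1)$, so, $h$ being a non-zero-divisor on the integral scheme $\tildefrg$, the kernel of $\nu-(z+h)$ on $f^*\cV$ is contained in (hence equal to) $\cO_{\tildefrg}(-1)$, and then you apply the left-exact $f_*$, which commutes with kernels, to get $f_*\cO_{\tildefrg}(-1)=\widetilde{\cV}[\nu-(z+h)]$. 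That is valid and avoids both the chart computation and the extension lemma --- just spell out the quotient-line-bundle step, which your sentence leaves implicit. The remaining points of (3), namely exactness of the pushed-forward sequence from $R^1f_*\cO_{\tildefrg}(-1)=0$ and the identification $f_*f^*\cV=\widetilde{\cV}$ via the projection formula and $f_*\cO_{\tildefrg}=\widetilde{\cU}$ (Lemma \ref{lemmaglobalsectiontildegP}), agree with the paper.
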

\begin{proof}
    (1) The vanishing of higher direct images follows from the same proof for $\cO_{\tildefrg}$ in \cite[Prop. 3.4.1]{bezrukavnikov2008localization} which can be deduced from vanishing results for the Steinberg resolution case, cf. \cite{broer1993line} or \cite[Thm. 5.2.1]{brion2007frobenius}. In detail, by the projection formula, $R\pi_*\cO_{\widetilde{\frg}}(i)=(R\pi_*\cO_{\widetilde{\frg}})\otimes_{\cO_{G/B}}\cO_{G/B}(i)$. Consider the sequence of the bundles $0\rightarrow \widetilde{\cN}=G\times^{B}\frn\rightarrow\widetilde{\frg}\rightarrow G\times^B\frh\rightarrow 0$ on $G/B$ where $\frn$ is the nilpotent cone of $\frb$. This short exact sequence induces a filtration on $\pi_*\cO_{\widetilde{\frg}}$ with the associated graded algebra isomorphic to $\cO(\frh)\otimes \cO_{\widetilde{\cN}}$ (see \cite[Ex. II.5.16]{hartshorne2013algebraic}). Since $\cO_{\widetilde{\cN}}(\pm 1)$ has vanishing higher cohomology, so is $\pi_*\cO_{\widetilde{\frg}}(\pm 1)$. More directly, we can take an increasing filtration by grading on $\cO_{\tildefrg}$ with graded pieces the coherent sheaves $G\times^{B}\Sym^n\frb^*$ which are finite extensions of $\cO(2i),i\geq 0$ and have vanishing higher cohomology even after twisting $\cO_{G/B}(-1)$. Now we show that $Rf_*\cO_{\tildefrg}(\pm 1)$ are free over $\frg$. The dualizing complexes of $\tildefrg$ and $\frg$ are trivial by Lemma \ref{lemmadualizingsheaf} and $f$ is proper. Let $\bbD_{\rm GS}(-)$ be the Grothendieck-Serre duality. We have $\cO_{\tildefrg}(1)=\bbD_{\rm GS}(\cO_{\tildefrg}(-1))[-\dim \frg]$ and hence $Rf_*\cO_{\tildefrg}(1)= \bbD_{\rm GS}(Rf_*\cO_{\tildefrg}(-1))[-\dim \frg]$. Thus $Rf_*\cO_{\tildefrg}(\pm 1)$ are maximal Cohen-Macaulay sheaves, hence locally free, on $\frg$ (cf. \cite[\href{https://stacks.math.columbia.edu/tag/0DWZ}{Tag 0DWZ},\href{https://stacks.math.columbia.edu/tag/090U}{Tag 090U}]{stacks-project}). The (generic) rank is two by Lemma \ref{lemmapropertygrothendieckresolutiongl2}. 

    (2) The composite $(\nu-(h-z))\circ (\nu-(h+z))=0$ is due to $h^2=a^2+bc$ in $\widetilde{\cU}$. A section $x_1e_1+x_2e_2\in \widetilde{\cV},x_1,x_2\in \widetilde{\cU}$ is in the kernel of $\nu-(h+z)$ if and only if $(a-h)x_1+bx_2=cx_1-(a+h)x_2=0$. Using that $\widetilde{\cU}$ is free over $\cU$ with a basis $1,h$, we write $x_i=y_i+hz_i,z_i,y_i\in\cU,i=1,2$. The last condition is equivalent to $y_1=az_1+bz_2,y_2=cz_1-az_2, ay_1-(a^2+bc)z_1+by_2=cy_1-ay_2-(a^2+bc)z_2=0$ and to $y_1=az_1+bz_2,y_2=cz_1-az_2$. We see that there is a $\cU$-surjection $\cV=\cU^2 \twoheadrightarrow \widetilde{\cV}[\nu-(h+z)]:(z_1,z_2)\mapsto ((a+h)z_1+bz_2)e_1+(cz_1-(a-h)z_2)e_2$ which is an isomorphism (for example $(a+h)z_1+bz_2=0$ implies that $z_1=0$). Hence the $\widetilde{\cU}$-map $\nu-(z-h):\widetilde{\cV}\rightarrow \widetilde{\cV}[\nu-(z+h)]$ is surjective. The embedding $\cV\hookrightarrow \widetilde{\cV}$ induces a surjection $\cV\rightarrow \widetilde{\cV}/(\nu-(h+z))=\widetilde{\cV}/(h-(\nu-z))$. Then the map $\nu-(z-h):\widetilde{\cV}/(\nu-(z+h))\rightarrow \widetilde{\cV}[\nu-(z+h)]$ is an isomorphism.

    (3) We identify $\cO_{\tildefrg}(-1)\subset \cV$ with the subsheaf $\cO_{\tildefrg}(g.e_1)$ where $g\in \GL_2(\cO(\GL_2))$ denotes the universal element. The map $\tildefrg\rightarrow \frh$ sends $(\nu,gB)$ to the image of $\Ad(g^{-1})\nu$ in $\frh=\{(h+z,h-z)\}$ via $\frb\rightarrow \frh$. Hence $h+z\in \cO(\frh)$, pulled back to $\tildefrg$, satisfies that $\nu (g.e_1)=g.(\Ad(g^{-1})\nu) e_1=(h+z)(g.e_1)$. Thus $\cO_{\tildefrg}(-1)\subset f^*\cV[\nu-(h+z)]$. Taking direct images we get $f_*\cO_{\tildefrg}(-1)\subset (f_*f^*\cV)[\nu-(h+z)]$. We claim that $\cO_{\tildefrg}(-1)=f^*\cV[\nu-(h+z)]$ (when restricted to $\tildefrg^{\rm reg}$). Then $f_*\cO_{\tildefrg}(-1)=\widetilde{\cV}[\nu-(h+z)]$ when restricted to $\frg^{\rm reg}$ (Lemma \ref{lemmapropertygrothendieckresolutiongl2}), hence on whole $\frg$ by \cite[\href{https://stacks.math.columbia.edu/tag/0EBJ}{Tag 0EBJ}]{stacks-project} and that $\widetilde{\cV}[\nu-(h+z)]$ is a vector bundle by (2). By $G$-equivariance, we may check the identification on the open subspace $U:=\widetilde{\frg}\cap (\{\nu=\mtwo{a+z}{b}{c}{-a+z}\}\times \{\mtwo{1}{}{x}{1}B\})$. Since $\Ad(g^{-1})\nu=\mtwo{a+bx+z}{b}{c-2ax-bx^2}{-a-bx+z}$, we see $h=a+bx$ and $c-2ax-bx^2=0$. A section $x_1e_1+x_2e_2\in f^*\cV(U),x_1,x_2\in \cO_{\tildefrg}(U)$ is in the kernel of $\nu-(h+z)$ if and only if $(a-h)x_1+bx_2=cx_1-(a+h)x_2=0$ if and only if $b(x_2-xx_1)=(2a+bx)(x_2-xx_1)=0$ in $\cO_{\tildefrg}(U)$. The scheme $U$ is integral which implies that $x_2-xx_1=0$. Then $x_1e_1+x_2e_2=x_1(e_1+xe_2)=x_1(g.e_1)\in \cO_{\tildefrg}(-1)(U)$ and we finished the proof.
\end{proof}
\begin{remark}\label{remarklinebundle}
    Those rank two free sheaves in Proposition \ref{propositiondirectimagelinebundle} on $\frg$ are not flat over $\cO_{\frg\times_{\frh/W}\frh}$. The fibers of these sheaves at $a=b=c=z=0$ have dimension $2$ but are not free of rank one over $L[h]/h^2$. For example, by the proof of (2) of the proposition, we know 
    \[f_*\cO_{\tildefrg}(-1)=\widetilde{\cV}[\nu-(h+z)]\simeq (\widetilde{\cU}x_1\oplus\widetilde{\cU}x_2)/((a-h)x_1+bx_2,cx_1-(a+h)x_2).\]
\end{remark}

\section{Change of weights}\label{sectionchangeofweight}
We will construct (in \S\ref{subsectionproductformula}) the change of weights maps and prove the product formula for the completions of the stack $\frX_n$ of $(\varphi,\Gamma)$-modules of rank $n\geq 1$ along fixed Sen weights loci (Proposition \ref{propositionfiberproduct}). In \S\ref{subsectiongeneralchange}, we describe a general construction for families of $(\varphi,\Gamma)$-modules changing possibly non-fixed weights. Then we will study the flatness of the local model map in the $\GL_2(\Q_p)$ case in \S\ref{sectionflatlocalmodel}.

We fix $K$ a finite extension of $\Q_p$ and assume $\Sigma=\Hom(K,L)$ has size $|K:\Q_p|$. Take $\bh=(\bh_{\sigma})_{\sigma\in\Sigma}=(h_{\sigma,1},\cdots,h_{\sigma,n})_{\sigma
\in \Sigma}\in (\Z^n)^{\Sigma}$ such that $h_{\sigma,1}\leq \cdots\leq h_{\sigma,n}$ for all $\sigma$. Let $G=\prod_{\sigma\in\Sigma}\GL_{n/L}$ with the Weyl group $W\simeq(\mathcal{S}_n)^{\Sigma}$ where $\mathcal{S}_n$ is the $n$-th symmetric group and with the Lie algebra $\frg$. Let $P_{\bh}=\prod_{\sigma\in\Sigma}P_{\bh_{\sigma}}$ be the standard parabolic subgroup of $G$ containing $\prod_{\sigma}B$ such that the Weyl group $W_{P_{\bh}}$ of the Levi subgroup of $P_{\bh}$ is the stabilizer subgroup of $\bh$ for the action of $W=\mathcal{S}_n^{\Sigma}$ on $(\Z^n)^{\Sigma}$. We write $\tildefrg_{\bh}$ for $\tildefrg_{P_{\bh}}$ and $f_{\bh}=f_{P_{\bh}}:\tildefrg_{P_{\bh}}\rightarrow \frg$, the analytification of the map in \S\ref{sectinGrothendieckresolution}.
\subsection{Stacks of almost de Rham $(\varphi,\Gamma)$-modules} 
We recall the setting in \cite[\S 5]{emerton2023introduction} and the definition of various stacks.

Let $\Rig_L$ be the category of rigid analytic spaces over $L$ equipped with the Tate-fpqc topology defined in \cite[\S 2.1]{conrad2009non}. By a stack we mean a category fibered in groupoids over $\Rig_L$ satisfying descent for the Tate-fpqc topology. Given a stack $\frX$ over $\Rig_L$ and $\Sp(A)\in \Rig_L$, we write $\frX(A):=\frX(\Sp(A))=\Hom(\Sp(A),\frX)$ for the groupoid lying over $\Sp(A)$ (cf. Yoneda lemma \cite[\href{https://stacks.math.columbia.edu/tag/0GWI}{Tag 0GWI},\href{https://stacks.math.columbia.edu/tag/02XY}{Tag 02XY}]{stacks-project}). Sheaves with values in sets are viewed as stacks fibered in discrete categories.

\begin{example}\label{examplestacks}
    If $Y\in \Rig_L$, then $Y$ defines a sheaf over $\Rig_L$ via Yoneda embedding by \cite[Cor. 4.2.5]{conrad2006relative}. Let $Z\subset Y\in\Rig_L$ be a Zariski-closed subspace and let $\cI$ be the coherent ideal sheaf. We define a subsheaf $Y^{\wedge}\subset Y$ such that for any $X\in \Rig_L$, $Y^{\wedge}(X)$ is the set of morphisms $X\rightarrow Y$ such that the image of $X$ lies set-theoretically in $Z$. For $n\in \bN$, let $\frY_n$ be the closed subspace of $Y$ cut out by $\cI^n$. We get a directed system $\cdots\frY_n\hookrightarrow \frY_{n+1}\cdots $ of sheaves over $\Rig_L$. Then $Y^{\wedge}$ is equal to the sheaf colimit $\varinjlim_n\frY_n$: for an affinoid $\Sp(A)\in \Rig_L$, we have $Y^{\wedge}(A)= \varinjlim_n \frY_n(A)$ (cf. \cite[\href{https://stacks.math.columbia.edu/tag/0738}{Tag 0738},\href{https://stacks.math.columbia.edu/tag/0GXT}{Tag 0GXT},\href{https://stacks.math.columbia.edu/tag/0AIX}{Tag 0AIX}]{stacks-project}). 
\end{example}
For a $(\varphi,\Gamma_K)$-module $D_{L'}$ over $\cR_{L',K}$ for a finite extension $L'$ of $L$, the roots of the Sen polynomial of $D_{L'}$ in $(K\otimes_{\Q_p}L')[T]=\prod_{\sigma\in\Sigma}L'[T]$ are Sen weights of $D_{L'}$, see \cite[Def. 6.2.11]{kedlaya2014cohomology}. The $\sigma$-components of the roots for $\sigma\in \Sigma$ are called $\sigma$-Sen weights.
\begin{definition}
    \begin{enumerate}
        \item Let $L'$ be a finite extension of $L$, then a $(\varphi,\Gamma_K)$-module $D_{L'}$ of rank $n$ over $\cR_{L',K}$ is said to be almost de Rham (resp. has Sen weights $\bh$) if all the Sen weights are integers (resp. for all $\sigma\in\Sigma$, the multiset of $\sigma$-Sen weights of $D_{L'}$ is equal to $\{h_{\sigma,1},\cdots,h_{\sigma,n}\}$.)
        \item Let $X\in \Rig_L$. A $(\varphi,\Gamma_K)$-module $D_X$ of rank $n$ over $\cR_{X,K}$ is said to be almost de Rham (resp. almost de Rham of weight $\bh$) if for every point $x\in X$, the specialization $D_x$ is almost de Rham (resp. has Sen weights $\bh$).
    \end{enumerate}    
\end{definition}
If $X=\Sp(A)$, then a $(\varphi,\Gamma_K)$-module $D_{A}$ over $\cR_{A,K}$ is almost de Rham of weight $\bh$ if and only if its Sen polynomial $P_{\rm Sen}\in (K\otimes_{\Q_p}A)[T]$ is equal to $\prod_{\sigma\in \Sigma}\prod_{i=1}^n(T-h_{\sigma,i})$ modulo the nilradical of $A$.
\begin{definition}\label{defstackalmostderham}
    Let $(\frX_{n})^{\wedge}_{\bh}$ be the category fibered in groupoids over $\Rig_L$ sending  $X\in\Rig_L$ to the groupoid of almost de Rham $(\varphi,\Gamma_K)$-modules over $\cR_{X,K}$ of weight $\bh$ (and rank $n$).
\end{definition}

\begin{definition}\label{defstacknilpotent}
    Assume that $X\in \Rig_L$. Suppose that $D_{\pdR,X}$ is a finite projective rank $n$ module over $\cO_{X}\otimes_{\Q_p}K$ equipped with a decreasing filtration $\Fil^{\bullet}_{X}=(\Fil_X^{i})_{i\in\Z}$ by projective $\cO_X$-submodules of $D_{\pdR,X}$. 
    \begin{enumerate}
        \item The filtration $\Fil_X^{\bullet}$ is said to be of type $\bh$ if for any $\sigma\in\Sigma$, the filtration $\Fil^{\bullet}_{X,\sigma}:=\Fil^{\bullet}_{X}\otimes_{\cO_X\otimes_{\Q_p}K,1\otimes \sigma}\cO_X$ of $D_{\pdR,X,\sigma}:=D_{\pdR,X}\otimes_{\cO_X\otimes_{\Q_p}K,1\otimes \sigma}\cO_X$ satisfies that for all $i\in\Z$, $\Fil^{-i}_{X,\sigma}/\Fil^{-i+1}_{X,\sigma}$ is projective over $\cO_X$ of rank the multiplicity of $i$ in $\{h_{\sigma,1},\cdots,h_{\sigma,n}\}$.
        \item Let $\tildefrg_{\bh}/G$ be the category fibered in groupoids over $\Rig_L$ sending $X\in \Rig_L$ to the groupoid of triples $(D_{\rm pdR,X}, \nu_X,\Fil^{\bullet}_X)$ where $D_{\rm pdR,X}$ is a projective $\cO_X\otimes_{\Q_p}K$-module of rank $n$, $\nu_X$ is an $\cO_X\otimes_{\Q_p}K$-linear endomorphism of $D_{\rm pdR,X}$ and $\Fil^{\bullet}_X$ is a filtration of $D_{\pdR,X}$ by projective sub-$\cO_X$-modules of type $\bh$ stabilized by $\nu$.
        \item Let $\tildefrg_{\bh}$ be the category fibered in groupoids over $\Rig_L$ sending $X\in \Rig_L$ to the groupoid of $(D_{\rm pdR,X}, \nu_X,\Fil^{\bullet}_X,\alpha_X)$ where $(D_{\rm pdR,X}, \nu_X,\Fil^{\bullet}_X)\in (\tildefrg_{\bh}/G)(X)$ and $\alpha_X:D_{\rm pdR,X}\simeq (\cO_X\otimes_{\Q_p}K)^n$ is an isomorphism of $\cO_X\otimes_{\Q_p}K$-modules. 
    \end{enumerate}   
\end{definition}
\begin{lemma}
    The categories fibered in groupoids $(\frX_{n})^{\wedge}_{\bh}$ and $\tildefrg_{\bh}/G$ in Definition \ref{defstackalmostderham} and \ref{defstacknilpotent} define stacks on $\Rig_L$.
\end{lemma}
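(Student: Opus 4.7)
The plan is to verify the stack axioms (effective descent for isomorphisms and for objects in the Tate-fpqc topology) separately for the two categories, in each case reducing to already-established descent statements plus a short argument that the extra pointwise conditions (``type $\bh$'', ``Sen weights $\bh$'') are local enough to descend.

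For $\tildefrg_{\bh}/G$ the main input is that the category of finite projective modules over $\cO_X \otimes_{\Q_p} K$ together with an $\cO_X\otimes_{\Q_p}K$-linear endomorphism and a decreasing filtration by projective $\cO_X$-submodules is a stack on $\Rig_L$. This is a consequence of Tate-fpqc descent for finite projective modules (which follows from classical faithfully flat descent applied to the sheaf $\cO_X$, using that $K/\Q_p$ is finite \'etale so the base change $\cO_X\otimes_{\Q_p}K$ behaves well under descent); the endomorphism descends as an element of a finite projective module, and the filtration descends because each filtered piece is a sub-$\cO_X$-module of a sheaf that already descends. It remains to verify that the ``type $\bh$'' condition is local for Tate-fpqc covers $\{\Sp(B) \to \Sp(A)\}$. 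But the condition is that for every $\sigma \in \Sigma$ and every $i$, the graded piece $\Fil^{-i}_{X,\sigma}/\Fil^{-i+1}_{X,\sigma}$ is a finite projective $\cO_X$-module of a prescribed rank; both projectivity and rank of a finite presented module descend under faithfully flat maps of rings, so the condition descends. The version $\tildefrg_\bh$ differs only by the extra datum of a trivialization $\alpha_X$, and trivializations of a vector bundle descend because they are morphisms of modules that already descend.

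For $(\frX_n)^\wedge_\bh$, the first step is to invoke (or recall from \cite{emerton2023introduction}) that the moduli stack $\frX_n$ of rank $n$ $(\varphi,\Gamma_K)$-modules over $\cR_{-,K}$ is a stack on $\Rig_L$; this rests on Tate-fpqc descent for finite projective modules over the Robba ring together with descent of the $\varphi$-structure and continuous semilinear $\Gamma_K$-action, all of which are standard. Given this, it suffices to show that the property ``$D_X$ is almost de Rham of weight $\bh$'' defines a substack, i.e.\ is Tate-fpqc local on $X$. Over an affinoid $\Sp(A)$ the Sen polynomial $P_{\rm Sen} \in (K\otimes_{\Q_p} A)[T]$ is a globally defined invariant of $D_A$ (its formation commutes with flat base change on $A$), and the weight-$\bh$ condition is the equality $P_{\rm Sen} \equiv \prod_{\sigma,i}(T - h_{\sigma,i})$ modulo the nilradical, equivalently a pointwise condition on the multiset of $\sigma$-Sen weights at every point of $\Sp(A)$. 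Since specialization to points is preserved by any morphism $\Sp(B) \to \Sp(A)$, and a Tate-fpqc cover is surjective on points, the condition can be tested after such a pullback. Hence it descends.

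The only step that is not completely formal is the descent of the Sen-weights condition, and I expect that to be the main obstacle: concretely, one must check that the Sen polynomial commutes with the base change $A \to B$ along any map in a Tate-fpqc cover and that ``equal to $\prod_{\sigma,i}(T-h_{\sigma,i})$ modulo nilpotents'' can be tested after such a base change. Both are in fact true because (i) the Sen polynomial is constructed from the $\Gamma_K$-action on the Sen module, whose formation is compatible with flat base change of affinoid algebras (cf.\ the references collected in Appendix \ref{sectionappendixfamilies}), and (ii) a Tate-fpqc cover is jointly surjective on points, so a polynomial identity that can be verified pointwise on $\Sp(B)$ can be verified pointwise on $\Sp(A)$. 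Combining these ingredients yields that $(\frX_n)^\wedge_\bh$ is a substack of $\frX_n$, and together with the verification for $\tildefrg_\bh/G$ (and $\tildefrg_\bh$) above completes the proof.
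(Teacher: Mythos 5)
Your proposal is correct and follows essentially the same route as the paper: reduce effectiveness of descent to Tate-fpqc descent of vector bundles (this is the content of Conrad's descent theorem, which the paper cites, rather than just classical affine faithfully flat descent), and observe that the weight-$\bh$/type-$\bh$ conditions are pointwise and therefore descend along covers, which are surjective on points. The extra care you take with base-change compatibility of the Sen polynomial is fine but not needed beyond the pointwise formulation the paper uses.
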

\begin{proof}
    We need to verify that $(\frX_{n})^{\wedge}_{\bh}$ and $\tildefrg_{\bh}/G$ satisfy descent for Tate-fpqc coverings. The descents for $(\varphi,\Gamma_K)$-modules and $(D_{\pdR,X},\nu_X,\Fil^{\bullet}_X)$ are effective by descents of vector bundles (cf. \cite[Thm. 4.2.8]{conrad2006relative}). The properties of being of weight/type $\bh$, etc., can be checked pointwisely and thus descend.
\end{proof}
If $\bh=0$, the information on filtrations is trivial and in this case, we write $\frg/G$ for $ \tildefrg_{\bh}/G$. Let $f_{\bh}: \tildefrg_{\bh}/G\rightarrow \frg/G$ be the natural morphism of stacks forgetting the filtrations. 
\begin{lemma} The stack $\tildefrg_{\bh}$ is represented by the rigid analytic space $\tildefrg_{\bh}$. The following diagram of stacks
    \begin{center}
        \begin{tikzcd}
            \tildefrg_{\bh}\arrow[r]\arrow[d,"f_{\bh}"]
            & \tildefrg_{\bh}/G \arrow[d,"f_{\bh}"]\\ 
            \frg\arrow[r]&\frg/G
        \end{tikzcd}
    \end{center}
is Cartesian.
\end{lemma}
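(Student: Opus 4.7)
The plan is to verify both claims by a direct unpacking of the definitions; there is no real technical obstacle. The key observation is that the defining equations of $\tildefrg_{\bh}$ as a closed subscheme of $\frg \times G/P_{\bh}$, namely $\Ad(g^{-1})\nu \in \frp_{\bh}$, translate exactly into the stabilization condition $\nu(\Fil^{\bullet}) \subset \Fil^{\bullet}$ once a filtration of type $\bh$ is encoded as a point of $G/P_{\bh}$.

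For representability, suppose $X \in \Rig_L$ and $(D_{\pdR,X}, \nu_X, \Fil^{\bullet}_X, \alpha_X) \in \tildefrg_{\bh}(X)$. The trivialization $\alpha_X : D_{\pdR,X} \simeq (\cO_X \otimes_{\Q_p} K)^n$ transports $\nu_X$ to an $\cO_X \otimes_{\Q_p} K$-linear endomorphism of the standard module, which is the same as a morphism $\nu : X \to \frg^{\an}$. It also transports $\Fil^{\bullet}_X$ to a decreasing filtration of type $\bh$ on $(\cO_X \otimes_{\Q_p} K)^n$; such filtrations are classified in each $\sigma$-factor by the partial flag variety $\GL_n/P_{\bh_\sigma}$ and hence by a morphism $X \to (G/P_{\bh})^{\an}$, say represented by $gP_{\bh}$ for a universal element $g$. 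The stabilization condition $\nu_X \Fil^{\bullet}_X \subset \Fil^{\bullet}_X$ then amounts precisely to $\Ad(g^{-1})\nu \in \frp_{\bh}$. This gives a functorial equivalence (in fact a bijection of sets, since $\alpha_X$ rigidifies the data) between $\tildefrg_{\bh}(X)$ and $\Hom(X, \tildefrg_{\bh}^{\an})$, so the stack is represented by the analytification of the scheme $\tildefrg_{\bh}$ constructed in Section \ref{sectinGrothendieckresolution}.

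For the Cartesian property, I would unwind the 2-fibre product. An $X$-point of $\frg \times_{\frg/G} (\tildefrg_{\bh}/G)$ consists of an element $\nu \in \frg(X)$, a triple $(D_{\pdR,X}, \nu_X, \Fil^{\bullet}_X) \in (\tildefrg_{\bh}/G)(X)$, and an isomorphism in $(\frg/G)(X)$ between their images. The image of the triple under $f_{\bh} : \tildefrg_{\bh}/G \to \frg/G$ is simply $(D_{\pdR,X}, \nu_X)$ (the filtration is forgotten), whereas $\nu$ maps to the standard module $(\cO_X \otimes_{\Q_p} K)^n$ equipped with the endomorphism $\nu$. An isomorphism in $\frg/G$ between these two pairs is, by definition, an isomorphism $\alpha_X : D_{\pdR,X} \simeq (\cO_X \otimes_{\Q_p} K)^n$ of $\cO_X \otimes_{\Q_p} K$-modules intertwining $\nu_X$ with $\nu$. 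Moreover $\nu$ is then determined by the pair $(\nu_X, \alpha_X)$, so the whole datum is equivalent to a quadruple $(D_{\pdR,X}, \nu_X, \Fil^{\bullet}_X, \alpha_X) \in \tildefrg_{\bh}(X)$.

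Both identifications are manifestly functorial in $X$ and compatible with the projection maps to $\tildefrg_{\bh}/G$ and $\frg$, producing the desired equivalence of stacks. The only point requiring a moment of care is the 2-categorical unpacking of the fibre product, in particular that the choice of $\alpha_X$ is what witnesses the isomorphism in the quotient stack $\frg/G$; once this is stated correctly the argument is purely formal.
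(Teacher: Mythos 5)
Your proposal is correct and follows essentially the same route as the paper: identify filtrations of type $\bh$ on the trivialized module with points of the partial flag varieties $\GL_n/P_{\bh_\sigma}$, observe that the stabilization condition is exactly the defining condition $\Ad(g^{-1})\nu\in\frp_{\bh}$ of $\tildefrg_{\bh}$, and unwind the $2$-fibre product by definition (the paper compresses this last step to ``Cartesian by definitions'', while you spell out that the isomorphism in $(\frg/G)(X)$ is precisely the framing $\alpha_X$). No gaps.
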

\begin{proof}
    A filtration $\Fil_{X,\sigma}^{\bullet}$ of type $\bh$ of $D_{\pdR,X,\sigma}\simeq \cO_X$ on $X\in \Rig_L$ is determined by the flags $\Fil^{-h_{\sigma,n}}\supseteq \cdots \supseteq \Fil^{-h_{\sigma,1}}$ for $\sigma\in\Sigma$. With isomorphisms $D_{\pdR,X,\sigma}\simeq \cO_X^n$, one sees by definition that such flags are parametrized by the flag varieties $\GL_n/P_{\bh_{\sigma}}$. An endomorphism $\nu_{X,\sigma}$ of $\cO_X^n$ is equivalent to a map $X\rightarrow\mathfrak{gl}_n$ and the map $X\rightarrow\mathfrak{gl}_n\times \GL_n/P_{\bh_{\sigma}}$ factors through $\widetilde{\mathfrak{gl}_n}_{P_{\bh_{\sigma}}}$ if and only if $\nu_{X,\sigma}$ stabilizes the filtration. The diagram is Cartesian by definitions.
\end{proof}
\subsection{A product formula}\label{subsectionproductformula}
We recall the local model maps and will define the change of weights maps in Proposition \ref{propositionfiberproduct}. We will freely use constructions and results on families of almost de Rham $(\varphi,\Gamma_K)$-modules in Appendix \ref{sectionappendixfamilies}.

Suppose $\Sp(A)\in \Rig_L$ and $D_A\in (\frX_n)^{\wedge}_{\bh}(A)$. Apply Proposition \ref{propositionappendixequivalencealmostdeRham} for the $\Gamma_K$-representations localized from $D_A$ in \S\ref{subsectionbeauvillelaszlo}, we obtain functorially a triple 
\[(D_{\pdR}(D_A), \nu_A, \Fil^{\bullet}D_{\pdR}(D_A))\in (\tildefrg_{\bh}/G)(A).\]
Such construction glues along admissible Tate coverings and is functorial for the base change, hence we get the following statement, which already appeared in \cite[\S 5.3.23]{emerton2023introduction}.
\begin{proposition}
    The functor $D_{\pdR}:D_X\mapsto (D_{\pdR}(D_X), \nu_X, \Fil^{\bullet}D_{\pdR}(D_X))$ induces a (local model) morphism $D_{\pdR}:(\frX_{n})^{\wedge}_{\bh}\rightarrow \tildefrg_{\bh}/G$ of stacks over $\Rig_L$.
\end{proposition}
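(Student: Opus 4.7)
The plan is to assemble the affinoid-level construction described just above the statement into a morphism of stacks. The essential analytic work has already been done in Proposition~\ref{propositionappendixequivalencealmostdeRham}: for each affinoid $\Sp(A)\in\Rig_L$ and each $D_A\in(\frX_n)^{\wedge}_{\bh}(A)$, one localizes $D_A$ at infinity as in \S\ref{subsectionbeauvillelaszlo} to produce an almost de Rham $B_{\dR}^+$-representation of $\cG_K$ over $A$, and then extracts the triple $(D_{\pdR}(D_A),\nu_A,\Fil^{\bullet}D_{\pdR}(D_A))$ out of it. My first step is simply to invoke this construction at the level of each affinoid chart.

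The second step is to check that the triple lies in $(\tildefrg_{\bh}/G)(A)$. By the appendix, $D_{\pdR}(D_A)$ is projective of rank $n$ over $A\otimes_{\Q_p}K$, and $\nu_A$ stabilizes $\Fil^{\bullet}D_{\pdR}(D_A)$ essentially by construction (it arises from the differential of the $\Gamma_K$-action on the almost de Rham period module and so preserves the Hodge filtration). What still requires verification is that the filtration has type $\bh$: each $\sigma$-graded piece $\Fil^{-i}_{A,\sigma}/\Fil^{-i+1}_{A,\sigma}$ must be projective over $A$ of rank equal to the multiplicity of $i$ in $\{h_{\sigma,1},\ldots,h_{\sigma,n}\}$. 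The hypothesis that $D_A$ has constant Sen weights $\bh$ pins down these ranks pointwise on $\Sp(A)$; projectivity of the graded pieces then follows from constancy of fiber dimension combined with the projectivity of the ambient filtration pieces supplied by the appendix.

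The third step is to extend the assignment from affinoids to arbitrary rigid spaces $X$ and upgrade it to a morphism of stacks. Because the formation of $D_{\pdR}$ is compatible with flat base change in $A$ (as recorded in Appendix~\ref{sectionappendixfamilies}), the construction is functorial under maps $\Sp(B)\to\Sp(A)$, hence glues over any admissible affinoid cover of $X$. Tate-fpqc descent for projective modules equipped with a linear endomorphism and a filtration by subbundles then promotes the data to a well-defined object of $(\tildefrg_{\bh}/G)(X)$, and functoriality in $X$ yields the required morphism of stacks.

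The main obstacle is concentrated in the appendix input rather than in the proposition itself: one needs the formation of $D_{\pdR}$ to commute with arbitrary base change for families that are only \emph{pointwise} almost de Rham of weight $\bh$, and one needs projectivity over $A\otimes_{\Q_p}K$ rather than merely over $A$, so that the filtration pieces are bundles in the relative sense and not just fiberwise. Once those inputs are granted, the remaining verifications — type $\bh$ for the filtration, stability under $\nu_A$, and Tate-fpqc descent — are essentially formal.
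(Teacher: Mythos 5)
Your overall route is the same as the paper's: apply the affinoid-level construction of Appendix \ref{sectionappendixfamilies} (localization via $D_{\dif}^{m,+}$ and Proposition \ref{propositionappendixequivalencealmostdeRham}) to get the triple functorially over each $\Sp(A)$, then use functoriality under base change and descent along admissible Tate coverings to obtain the morphism of stacks. The gluing step and the remark that projectivity over $A\otimes_{\Q_p}K$ reduces to projectivity of the $\sigma$-components over $A$ (via $A\otimes_{\Q_p}K\simeq \prod_{\sigma\in\Sigma}A$) are fine and match the paper.

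The one step that would fail as written is your verification that the filtration has type $\bh$: you argue that projectivity of the graded pieces $\Fil^{-i}/\Fil^{-i+1}$ ``follows from constancy of fiber dimension combined with the projectivity of the ambient filtration pieces.'' Constant fiber rank does not imply projectivity unless $A$ is reduced, and non-reduced $A$ is exactly the relevant case here: points of $(\frX_n)^{\wedge}_{\bh}$ are $(\varphi,\Gamma_K)$-modules whose Sen polynomial equals $\prod_{\sigma,i}(T-h_{\sigma,i})$ only modulo the nilradical, so the interesting test objects are Artinian or otherwise non-reduced (e.g. over $A=L[\epsilon]/\epsilon^2$ the quotient $A/\epsilon$ of a free module has constant fiber rank one but is not projective). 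Fortunately this verification is not yours to redo: Proposition \ref{propositionappendixequivalencealmostdeRham} already asserts that the triple lands in the groupoid of Definition \ref{defstacknilpotent}-type data, and its proof (Step 2, via the identification $\mathrm{gr}^iD_{\pdR}\simeq (t^iD_{\Sen,A}^{m}\otimes_{K_m}K_m[\log(t)])^{\Gamma_K}$ and Lemma \ref{lemmaappendixDsen}) is precisely where projectivity of the graded pieces of the prescribed ranks is established, by an argument that does not go through fiber dimensions. So replace your fiber-dimension claim by a citation of that statement; with that change the proposal agrees with the paper's proof.
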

\begin{remark}
    The map $D_{\pdR}$ factors through the substack $(\widetilde{\frg}_{\bh}/G)_0^{\wedge}$ where all $\nu_X$ are required to be locally nilpotent in the sense of Lemma \ref{lemmanilpotentmatrix} below.
\end{remark}
\begin{lemma}\label{lemmanilpotentmatrix} We consider nilpotent operators.
    \begin{enumerate} 
     \item Let $A$ be a commutative Noetherian ring with the nilradical $I$. Let $\nu\in M_{n}(A)$ be an $n$-by-$n$ matrix. Then $\nu$ is nilpotent if and only if its image in $M_n(A/I)$ satisfies that $\nu^n=0$. And in this case, there is an integer $N$ depending on $A,n$ such that $\nu^N=0$ in $M_n(A)$.
     \item Let $X\in \Rig_L$ and let $\nu\in \End_{\cO_X}(\cO_X^n)$. Then $\nu$ is locally nilpotent ($\nu$ is nilpotent when restricted to any affinoid open $\Sp(A)\subset X$) if and only if for any $x\in X$, the image of $\nu$ in $\End_{k(x)}(k(x)^n)=M_n(k(x))$ is nilpotent.  
    \end{enumerate}
 \end{lemma}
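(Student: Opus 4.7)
The plan is to establish (1) first and then deduce (2) from it using the Jacobson property of affinoid algebras. For (1), the ``only if'' direction reduces to the following fact: over a reduced Noetherian ring $R$, any nilpotent matrix $\nu \in M_n(R)$ satisfies $\nu^n = 0$. I would prove this by observing that $R$ embeds into $\prod_{\frp} R/\frp$, the product taken over the finitely many minimal primes. Over each integral domain $R/\frp$, nilpotency of $\nu$ can be tested in the fraction field, where the characteristic polynomial of a nilpotent matrix must be $T^n$, and Cayley--Hamilton then forces $\nu^n = 0$. Applied to the reduction $A/I$ of $A$, this gives the required vanishing of $\nu^n$ in $M_n(A/I)$.

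For the ``if'' direction, assume the image of $\nu^n$ in $M_n(A/I)$ vanishes, i.e. $\nu^n \in M_n(I)$. Since $A$ is Noetherian, the nilradical $I$ is finitely generated, and being the nilradical it satisfies $I^m = 0$ for some integer $m = m(A)$. Expanding $\nu^{nm} = (\nu^n)^m$ as a matrix product, each entry is a sum of $m$-fold products of elements of $I$, hence lies in $I^m = 0$. Thus $\nu^{nm} = 0$, and setting $N := nm$ gives the uniform bound depending only on $n$ and on $A$ through the nilpotency index $m$.

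For (2), the forward direction is immediate, since restriction to an affinoid open $\Sp(A) \subset X$ followed by specialization at a point commutes with applying $\nu$. For the converse, I would fix an arbitrary affinoid open $\Sp(A) \subset X$ and use the pointwise assumption: for each classical (maximal) point $x \in \Sp(A)$, the specialization $\nu(x) \in M_n(k(x))$ is nilpotent, hence satisfies $\nu(x)^n = 0$. Therefore every entry of $\nu^n \in M_n(A)$ lies in every maximal ideal, that is, in the Jacobson radical of $A$. The key input now is the classical Jacobson property of affinoid $L$-algebras: the Jacobson radical of $A$ coincides with its nilradical. Combined with part (1) applied to $A$, this yields the nilpotency of $\nu$ on $\Sp(A)$. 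The only nontrivial ingredient beyond elementary linear algebra is this Jacobson property of affinoid algebras; the rest is Cayley--Hamilton together with the Noetherian nilpotency of the nilradical.
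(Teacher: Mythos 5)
Your proof is correct and follows essentially the same route as the paper: reduce to the reduced quotient and use Cayley–Hamilton over fraction fields of the (minimal) primes for one direction, the nilpotency of the nilradical of a Noetherian ring for the other, and the Jacobson property of affinoid algebras to pass from pointwise nilpotence to $\nu^n \in M_n(I)$ in part (2). The only cosmetic difference is that you work with the finitely many minimal primes and make the Jacobson property explicit, where the paper quotients by all primes and leaves that fact implicit.
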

 \begin{proof}
     (1) Suppose that $\nu^n\in M_n(I)$. Since $I$ is nilpotent, we see that there exists an integer $N$ such that $I^N=0$. Then $\nu^{nN}=(\nu^n)^N=0$. Conversely, suppose that $\nu$ is nilpotent. We may assume that $A$ is reduced. For any prime ideal $\frp$ of $A$, the image of $\nu$ in $M_n(A/\frp)\subset M_n(\mathrm{Frac}(A/\frp))$ is nilpotent. Hence $\nu^n\equiv 0\mod \frp$ for all prime ideal $\frp$ of $A$. This implies that $\nu^n=0$. 

     (2) Suppose that $\nu$ is pointwisely nilpotent. We prove that $\nu$ is locally nilpotent. The problem is local and we may assume $X=\Sp(A)$ is an affinoid. Let $I$ be the nilradical of $A$. Then the image of $\nu^n$ in $M_n(A/I)$ is zero since it is true pointwisely. By (1), $\nu$ is nilpotent.
 \end{proof}  
The following lemma allows change of weights and is just the family version of \cite[Lem. 2.1]{Ding2024wall}. See \S\ref{subsectiongeneralchange} for a more direct construction.
\begin{lemma}\label{lemmauniqueweight0}
    Let $\Sp(A)\in\Rig_L$ and $D_A\in (\frX_{n})^{\wedge}_{\bh}(A)$. There exists a unique $(\varphi,\Gamma_K)$-module $f_{\bh}(D_A)\in (\frX_{n})^{\wedge}_{0}(A)$ almost de Rham of weight $0$ such that $f_{\bh}(D_A)$ is a sub-$(\varphi,\Gamma_K)$-module of $D_A[\frac{1}{t}]$ and $f_{\bh}(D_A)[\frac{1}{t}]=D_A[\frac{1}{t}]$. Moreover, the formation $D_A\mapsto f_{\bh}(D_A)$ is functorial and commutes with base change.
\end{lemma}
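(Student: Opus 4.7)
\medskip

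\noindent\textbf{Proof proposal.} The plan is to construct $f_{\bh}(D_A)$ via the equivalence between families of $(\varphi,\Gamma_K)$-modules and families of $B$-pairs, by modifying only the $B_{\dR}^+$-lattice part while keeping the $W_e$-part unchanged. This is the family version of the pointwise construction of Ding, so the only real work is to check that every ingredient has been promoted to families in Appendix \ref{sectionappendixfamilies}.

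First, I attach to $D_A$ its associated $B$-pair $(W_e(D_A), W_{\dR}^+(D_A))$ using the family equivalence recalled in Appendix \ref{sectionappendixfamilies}. Since $D_A\in (\frX_n)^\wedge_{\bh}(A)$, applying Proposition \ref{propositionappendixequivalencealmostdeRham} we identify $W_{\dR}^+(D_A)$ with the triple $(D_{\pdR}(D_A),\nu_A,\Fil^\bullet D_{\pdR}(D_A))$ of type $\bh$. Next, I use the same classification in the reverse direction applied to the triple $(D_{\pdR}(D_A),\nu_A,\Fil_0^\bullet)$, where $\Fil_0^\bullet$ is the trivial filtration (with single jump in degree $0$); this yields a $B_{\dR}^+$-representation which I denote $W_{\dR,0}^+(D_A)$. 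Because the underlying $(D_{\pdR},\nu)$ and hence the generic fiber $W_{\dR}(D_A)$ coincide, $W_{\dR,0}^+(D_A)$ is naturally a $\cG_K$-stable $B_{\dR}^+$-lattice inside $W_{\dR}(D_A)$. I then define $f_{\bh}(D_A)$ to be the $(\varphi,\Gamma_K)$-module corresponding to the $B$-pair $(W_e(D_A),W_{\dR,0}^+(D_A))$ under the Appendix \ref{sectionappendixfamilies} equivalence. By construction it lies in $(\frX_n)^\wedge_0(A)$, is contained in $D_A[1/t]$ since the two $B$-pairs share the same $W_e$, and its localization at $t$ agrees with that of $D_A$.

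For uniqueness, suppose $D'_A\in (\frX_n)^\wedge_0(A)$ is another candidate with $D'_A\subset D_A[1/t]$ and $D'_A[1/t]=D_A[1/t]$. Passing to $B$-pairs, these two conditions force $W_e(D'_A)=W_e(D_A)$, so $D'_A$ is determined by its $B_{\dR}^+$-lattice $W_{\dR}^+(D'_A)\subset W_{\dR}(D_A)$, which by hypothesis has Sen weights $0$. Applying Proposition \ref{propositionappendixequivalencealmostdeRham} to $D'_A$, the weight $0$ condition forces the associated filtration on $D_{\pdR}(D'_A)=D_{\pdR}(D_A)$ to be the trivial one, and the nilpotent endomorphism is determined by its restriction to $W_e$; hence $W_{\dR}^+(D'_A)=W_{\dR,0}^+(D_A)$ and $D'_A\simeq f_{\bh}(D_A)$. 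Functoriality and compatibility with base change then follow from the analogous properties of the family $B$-pair equivalence, of the functor $D_{\pdR}$, and of the classification of almost de Rham $B_{\dR}^+$-representations, all established in the Appendix.

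The main obstacle is genuinely the family-level input rather than any manipulation here: one needs the equivalence ``$(\varphi,\Gamma_K)$-modules $\leftrightarrow$ $B$-pairs'' and the classification of almost de Rham $B_{\dR}^+$-representations with a fixed weight to be available over a general affinoid $A$, not just over points. Granted these (which is precisely what Appendix \ref{sectionappendixfamilies} provides), the rest of the argument is the formal modification ``replace $\Fil^\bullet$ by the trivial filtration'' on the Fontaine-side data, and uniqueness reduces to the uniqueness of the weight $0$ lattice within a fixed $B_{\dR}$-representation, which is a direct consequence of the same classification.
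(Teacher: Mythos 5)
Your construction of $f_{\bh}(D_A)$ is essentially the paper's: keep the ``generic'' part of $D_A$ fixed and replace the lattice by the one corresponding to the trivial filtration via the classification of almost de Rham families, then glue. Two remarks. First, a citation/framing caveat: Appendix \ref{sectionappendixfamilies} does not literally contain a family equivalence between $(\varphi,\Gamma_K)$-modules and $B$-pairs $(W_e,W_{\dR}^+)$ over an affinoid $A$; what it provides is Proposition \ref{propositionappendixglueing}, the Beauville--Laszlo-type gluing of the $(\varphi,\Gamma_K)$-module $D_A[\frac{1}{t}]$ with a $\Gamma_K$-stable $(A\otimes_{\Q_p}K_m)[[t]]$-lattice in $D_{\dif}^m(D_A)$, together with Proposition \ref{propositionappendixequivalencealmostdeRham} classifying such lattices of fixed weight by $(D_{\pdR},\nu,\Fil^{\bullet})$. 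Your argument goes through verbatim once ``$W_e$'' is replaced by $D_A[\frac{1}{t}]$ and ``$W_{\dR}^+$'' by $D_{\dif}^{m,+}$, which is exactly how the paper runs the existence step; as written, though, you are attributing to the Appendix a $B$-pair statement it does not prove. Second, your uniqueness argument is a genuinely different route from the paper's: you deduce that any weight-$0$ candidate must correspond to the trivial filtration and then invoke the canonicity of the reconstruction (the isomorphism $\rho_{\pdR,m}$ in Proposition \ref{propositionappendixequivalencealmostdeRham}(1)) to identify the lattices inside the common localization, whereas the paper shows directly that the identification $\Delta_A[\frac{1}{t}]=\Delta_A'[\frac{1}{t}]$ is induced by an inclusion $\Delta_A\subset\Delta_A'$ using the vanishing $H^0_{\varphi,\gamma_K}(t^{i}\Delta_A^{\vee}\otimes\Delta_A'/t^{i+1}(\cdots))=0$ for $i\geq 1$ (Lemma \ref{lemmavanishingH0}). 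Both are valid; your route leans entirely on the classification being canonical (not merely an equivalence up to isomorphism), which Proposition \ref{propositionappendixequivalencealmostdeRham}(1) does supply, while the paper's cohomological argument has the side benefit of producing the Hom-space computation reused in the proof of Proposition \ref{propositionfiberproduct}.
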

\begin{proof}
    By definition, there exists $r>0$ such that $D_A=\cR_{A,K}\otimes_{\cR_{A,K}^r}D_A^r$ for a $(\varphi,\Gamma_K)$-module $D_A^r$ over $\cR_{A,K}^r$. We assume that $m(r)$ is large enough in the sense of Definition \ref{definitioninfinitedif}. We construct $f_{\bh}(D_A^r)$ firstly and will let $f_{\bh}(D_A)=\cR_{A,K}\otimes_{\cR_{A,K}^r}f_{\bh}(D_A^r)$. By Proposition \ref{propositionappendixglueing}, the $(\varphi,\Gamma_K)$-modules inside $D_A^r[\frac{1}{t}]$ which equal $D_A^r[\frac{1}{t}]$ after inverting $t$ is in bijection with $\Gamma_K$-invariant $(A\otimes_{\Q_p}K_m)[[t]]$-lattices in $D_{\dif}^m(D_{A}^r)$ for $m=m(r)$. By (1) of Proposition \ref{propositionappendixequivalencealmostdeRham} and also the Step 1 of its proof, there exists a $\Gamma_K$-invariant lattice inside $D_{\dif}^m(D_{A}^r)$ of weight $0$ corresponding to the filtrations $\Fil^{\bullet}$ of type $0$ of $D_{\pdR}(D_A)$, which can only be the trivial filtration:
    \[\Fil^{0}=D_{\pdR}(D_A)\supset \Fil^{1}=\{0\}.\] 
    Thus there exists a sub $(\varphi,\Gamma_K)$-module $f_{\bh}(D_A^r)$ of rank $n$ inside $D_A^r[\frac{1}{t}]$ of weight $0$ such that $f_{\bh}(D_A^r)[\frac{1}{t}]=D_A^r[\frac{1}{t}]$. To show the uniqueness, suppose that $\Delta_A,\Delta_A'\subset D_{A}[\frac{1}{r}]$ are two required $(\varphi,\Gamma_K)$-submodules of weight $0$. Then we obtain a map in
    \[\Hom_{\varphi,\gamma_K}(\Delta_A,\Delta_{A}'[\frac{1}{t}])=\varinjlim_{i\geq 0}\Hom_{\varphi,\gamma_K}(\Delta_A,t^{-i}\Delta_{A}')=\Hom_{\varphi,\gamma_K}(\Delta_A,\Delta_A').\] 
    The last equality follows from that $H^0_{\varphi,\gamma_K}(t^{-i}\Delta_A^{\vee}\otimes_{\cR_{A,K}} \Delta_A'/t^{-i+1}(\Delta_A^{\vee}\otimes_{\cR_{A,K}} \Delta_A'))=0$ for all $i\geq 1$ by Lemma \ref{lemmavanishingH0} below since $\Delta_A^{\vee}\otimes \Delta_A'$ has weight $0$. Hence the identity $\Delta_A[\frac{1}{t}]=\Delta_A'[\frac{1}{t}]$ is induced by a map $\Delta_A\rightarrow \Delta_A'$ which is necessarily an inclusion. We conclude that $\Delta_A=\Delta_A'$. The functoriality follows from the functorialities of the constructions in Appendix \ref{sectionappendixfamilies} or the uniqueness of $f_{\bh}(D_A)$.
\end{proof}
\begin{lemma}\label{lemmavanishingH0}
    Suppose that $D_A$ is a $(\varphi,\Gamma_K)$-module over $\cR_{A,K}$ for an affinoid algebra $A$ over $L$ and is almost de Rham of weight $0$. Then $H^0_{\varphi,\gamma_K}(t^iD_A/t^{i+1}D_A)=0$ for all $i\neq 0$.
\end{lemma}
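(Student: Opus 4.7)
The plan is to reduce via a twist to a statement on $(\varphi,\Gamma_K)$-modules of nonzero Sen weight, then localize at each cyclotomic point and apply Sen theory.

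Multiplication by $t^i$ gives a $(\varphi,\Gamma_K)$-equivariant isomorphism
\[t^iD_A/t^{i+1}D_A \simeq D_A(z^i)/t\,D_A(z^i),\]
where $D_A(z^i):=D_A\otimes_{\cR_{A,K}}\cR_{A,K}(z^i)$ and we use $\cR_{A,K}(z)\cong t\cR_{A,K}$. Since $D_A$ has all Sen weights $0$, $D_A(z^i)$ has all Sen weights equal to the nonzero integer $h:=i$. Replacing $D_A$ by $D_A':=D_A(z^i)$, it suffices to prove the stronger statement $(D_A'/tD_A')^{\gamma_K=1}=0$ whenever $D_A'$ is an almost de Rham $(\varphi,\Gamma_K)$-module with all Sen weights equal to some nonzero integer $h$.

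Fix $r>0$ so that $D_A'$ descends to $(D_A')^r$ over $\cR_{A,K}^r$. For each $m\geq m(r)$, the $\Gamma_K$-equivariant localization $\iota_m:\cR_{A,K}^r\hookrightarrow(A\otimes_{\Q_p}K_m)[[t]]$ induces a $\Gamma_K$-equivariant map $\pi_m:D_A'/tD_A'\to D_{\rm dif}^{m,+}(D_A')/t$. A Weierstrass-type factorization on the underlying annulus shows that $\prod_m\pi_m$ is injective: the zeros of $t$ on $\bbU^r_{K_0'}$ are exactly the simple zeros at $\zeta_{p^m}-1$ for $m\geq m(r)$, so any $f\in\cR_{A,K}^r$ with $\iota_m(f)\in t\cdot(A\otimes_{\Q_p} K_m)[[t]]$ for all $m$ lies in $t\cR_{A,K}^r$. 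It thus suffices to show $\pi_m(x)=0$ for each $m\geq m(r)$ whenever $\gamma_K\cdot x=x$.

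The target $D_{\rm dif}^{m,+}(D_A')/t$ is the $m$-th Sen module, a free $(A\otimes_{\Q_p}K_m)$-module of rank $n$ equipped with a Sen operator $\nabla$ whose eigenvalues are all equal to $h$; by Lemma~\ref{lemmanilpotentmatrix} applied over $A\otimes_{\Q_p}K_m$ (whose nilradical is controlled using separability of $K_m/\Q_p$), the operator $N:=\nabla-h$ satisfies $N^M=0$ for some $M$, so $\exp(cN)$ is a well-defined unipotent operator for any $c\in A\otimes_{\Q_p}K_m$. For $\gamma\in\Gamma_m:=\Gal(K_\infty/K_m)$ sufficiently close to the identity, $\gamma$ acts on the Sen module as $\exp(\log\epsilon(\gamma)\nabla)=\epsilon(\gamma)^h\cdot\exp(\log\epsilon(\gamma)N)$, a scalar times a unipotent. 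Pick $k\geq 1$ with $\gamma_K^k\in\Gamma_m$; the invariance $\gamma_K^k\pi_m(x)=\pi_m(x)$ becomes
\[\exp(\log\epsilon(\gamma_K^k)\,N)\,\pi_m(x)=\epsilon(\gamma_K)^{-hk}\pi_m(x).\]
Since $\gamma_K$ is a non-torsion topological generator of $\Gamma_K$ modulo torsion and $hk\neq 0$, $\epsilon(\gamma_K)^{hk}\neq 1$ in $L$, so $\epsilon(\gamma_K)^{-hk}-1$ is a unit in $A\otimes_{\Q_p}K_m$; subtracting a scalar from a unipotent operator is then invertible, forcing $\pi_m(x)=0$. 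The main obstacle I anticipate is the joint injectivity of the $\pi_m$, which requires a Weierstrass-type factorization for $t$ in $\cR_{A,K}^r$ over a general affinoid base $A$; this is standard analytic function theory on the annulus $\bbU^r_{K_0'}$, but deserves careful bookkeeping over $A$.
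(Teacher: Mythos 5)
Your reduction to the Sen modules and the component\-wise vanishing argument are sound, and they are essentially a multiplicative version of the paper's own proof: the paper first takes $\varphi$-invariants, identifies $(t^iD_A/t^{i+1}D_A)^{\varphi=1}$ with $D_{\Sen}^{\infty}(t^iD_A)$, and then uses that $\nabla_{\Sen}-i$ is locally nilpotent (so $\nabla_{\Sen}$ has no kernel and hence there are no $\gamma_K$-invariants), whereas you bypass $\varphi$ and work with the group element itself, writing its action on each $D_{\Sen}^m$ as a nontrivial scalar times a unipotent operator. On a single $D_{\Sen}^m$ your argument is correct, up to the small point that $k$ must be chosen so that $\gamma_K^k$ lies in the open subgroup of $\Gamma_{K_m}$ on which the analytic formula $\gamma=\exp(\log\epsilon(\gamma)\nabla)$ is valid (not merely $\gamma_K^k\in\Gamma_{K_m}$); since $\gamma_K^{p^j}\to 1$ this is harmless.

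The step that does not work as written is the interface between $D_A'/tD_A'$ and the components. For a fixed $m$, the localization $\iota_m$ exists only on $\cR_{A,K}^{r'}$ with $m\geq m(r')$, so $\pi_m$ is a map $D_A'^{r}/tD_A'^{r}\rightarrow D_{\dif}^{m,+}(D_A')/t$ and does not descend to $D_A'/tD_A'$. Indeed $D_A'/tD_A'=\varinjlim_{r'}D_A'^{r'}/tD_A'^{r'}$, and under the decomposition $D_A'^{r'}/t\simeq\prod_{m\geq m(r')}D_{\Sen}^m(D_A')$ the transition maps are the projections discarding the low-$m$ factors; for instance $t/Q_{m(r)}(X)\in\cR_{\Q_p,\Q_p}^r$ is not divisible by $t$ in $\cR^r_{\Q_p,\Q_p}$ but becomes divisible by $t$ in $\cR_{\Q_p,\Q_p}$. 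So your Weierstrass factorization gives joint injectivity only at a fixed level $r$, i.e.\ it controls the invariants of $D_A'^{r}/tD_A'^{r}$, which surjects onto, but does not inject into, the module $D_A'/tD_A'$ that the lemma is about; "$\prod_m\pi_m$ is injective on $D_A'/tD_A'$" is not meaningful as stated. The repair is routine: given $x\in(D_A'/tD_A')^{\gamma_K=1}$, choose a representative at some level $r'$; invariance of the class in the colimit means its components in $D_{\Sen}^m$ are $\gamma_K$-invariant for all sufficiently large $m$, your scalar-versus-unipotent argument kills those components, and hence $x=0$ in the colimit. (The paper sidesteps this bookkeeping by imposing $\varphi=1$ first, which converts such germs into the honest module $D_{\Sen}^{\infty}$ before the weight argument is applied.)
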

\begin{proof}
    We know $t^iD_A$ has weights all equal to $i$. Suppose that $D_A=\cR_{A,K}\otimes_{\cR_{A,K}}D_A^r$ for some $(\varphi,\Gamma_K)$-module over $\cR_{A,K}^r$ and $r>0$. Then $t^iD_A/t^{i+1}D_A=\varinjlim_{r'\geq r}t^iD_A^{r'}/t^{i+1}D_A^{r'}=\varinjlim_{r'\geq r}\prod_{m\geq m(r')}D_{\Sen}^{m}(t^iD_A)$, cf. Appendix \ref{sectionappendixfamilies} and \cite[Prop. 2.15]{liu2015triangulation}. Taking $\varphi$-invariants we have $(t^iD_A/t^{i+1}D_A)^{\varphi=1}=D_{\Sen}^{\infty}(t^iD_A)=K_{\infty}\otimes_{K_m}D_{\Sen}^m(t^iD_A)$ (cf. \cite[Prop. 3.2.4, Def. 6.2.11]{kedlaya2014cohomology}). By definition $H^0_{\varphi,\gamma_K}(t^iD_A/t^{i+1}D_A)\subset D_{\Sen}^{\infty}(t^iD_A)^{\gamma_K=1}$. The differential of the $\Gamma_K$-action gives the Sen operator $\nabla_{\Sen}$. Since the Sen weights of $t^iD_A$ are pointwisely $i\neq 0$, $\nabla_{\Sen}-i$ acts locally nilpotently on $D_{\Sen}^{\infty}(D_A)$. One gets that $D_{\Sen}^{\infty}(t^iD_A)^{\gamma_K=1}\subset D_{\Sen}^{\infty}(t^iD_A)^{\nabla_{\rm Sen}=0}=0$.
\end{proof}
Glueing the construction in Lemma \ref{lemmauniqueweight0}, we get a map 
\[f_{\bh}:(\frX_{n})^{\wedge}_{\bh}\rightarrow (\frX_{n})^{\wedge}_{0}\] 
and a commutative diagram of stacks over $\Rig_L$
\begin{center}
    \begin{tikzcd}
        (\frX_{n})^{\wedge}_{\bh}\arrow[r,"D_{\rm pdR}"]\arrow[d,"f_{\bh}"]
        & \tildefrg_{\bh}/G \arrow[d,"f_{\bh}"]&\tildefrg_{\bh}\arrow[l]\arrow[d,"f_{\bh}"]\\ 
        (\frX_n)^{\wedge}_{0}\arrow[r,"D_{\rm pdR}"]&  \frg/G&\frg\arrow[l]
    \end{tikzcd}
\end{center}
where all squares are Cartesian by the following proposition.
\begin{proposition}\label{propositionfiberproduct}
    The functor $\Psi=(f_{\bh},D_{\pdR})$ induces an equivalence
    \[\Psi:(\frX_{n})^{\wedge}_{\bh}\simeq (\frX_n)^{\wedge}_{0}\times_{\frg/G}\tildefrg_{\bh}/G\]
    of stacks over $\Rig_L$.
\end{proposition}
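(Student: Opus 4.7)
The plan is to exhibit an explicit quasi-inverse to $\Psi$, working locally on an affinoid $\Sp(A)\in\Rig_L$. An object of $(\frX_n)^{\wedge}_0\times_{\frg/G}\tildefrg_{\bh}/G$ over $\Sp(A)$ is a triple consisting of an almost de Rham $(\varphi,\Gamma_K)$-module $\Delta_A$ of weight $0$, a quadruple $(D_{\pdR,A},\nu_A,\Fil^{\bullet})\in(\tildefrg_{\bh}/G)(A)$, and an isomorphism $\alpha$ identifying $D_{\pdR}(\Delta_A)$ with $(D_{\pdR,A},\nu_A)$ as objects of $\frg/G$. Transporting $\Fil^{\bullet}$ via $\alpha$, we may regard $\Fil^{\bullet}$ as a filtration of type $\bh$ on $D_{\pdR}(\Delta_A)$ stabilized by the canonical nilpotent operator.

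First I would pick $r>0$ small enough so that $\Delta_A$ descends to a $(\varphi,\Gamma_K)$-module $\Delta_A^r$ over $\cR_{A,K}^r$ with $m=m(r)$ large enough in the sense of Definition \ref{definitioninfinitedif}. Applying Proposition \ref{propositionappendixequivalencealmostdeRham} to $\Delta_A^r$, the transported filtration corresponds to a unique $\Gamma_K$-stable $(A\otimes_{\Q_p}K_m)[[t]]$-lattice $M\subset D_{\dif}^m(\Delta_A^r)[\tfrac{1}{t}]$ of weight $\bh$. Now Proposition \ref{propositionappendixglueing} promotes $M$ to a unique $(\varphi,\Gamma_K)$-submodule $D_A^r\subset \Delta_A^r[\tfrac{1}{t}]$ satisfying $D_A^r[\tfrac{1}{t}]=\Delta_A^r[\tfrac{1}{t}]$ and $D_{\dif}^m(D_A^r)=M$. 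Setting $D_A:=\cR_{A,K}\otimes_{\cR_{A,K}^r}D_A^r$ gives an almost de Rham $(\varphi,\Gamma_K)$-module of weight $\bh$; this defines the functor $\Phi$ on objects.

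Then I would verify the three compatibilities: (i) $D_A$ has Sen weights $\bh$, which is direct from the construction of $M$ and Proposition \ref{propositionappendixequivalencealmostdeRham}; (ii) $f_{\bh}(D_A)=\Delta_A$ inside $D_A[\tfrac{1}{t}]=\Delta_A[\tfrac{1}{t}]$, which is forced by the uniqueness of the weight-$0$ submodule proved in Lemma \ref{lemmauniqueweight0}; and (iii) the local model $D_{\pdR}(D_A)$ equipped with its nilpotent operator and natural filtration recovers $(D_{\pdR,A},\nu_A,\Fil^{\bullet})$ via $\alpha$, which is exactly what Proposition \ref{propositionappendixequivalencealmostdeRham} delivers in the reverse direction. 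These three points together show $\Psi\circ\Phi\simeq\id$. Conversely, starting from any $D_A\in(\frX_n)^{\wedge}_{\bh}(A)$, applying $\Psi$ and then $\Phi$ returns a $(\varphi,\Gamma_K)$-submodule of $f_{\bh}(D_A)[\tfrac{1}{t}]=D_A[\tfrac{1}{t}]$ with the same $D_{\dif}^m$ as $D_A$, which must coincide with $D_A$ by the uniqueness part of Proposition \ref{propositionappendixglueing}.

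Finally, to upgrade this pointwise construction to an equivalence of stacks one must check functoriality in $A$, descent along Tate-fpqc coverings, and the action on morphisms. The first two are formal consequences of the functoriality of $D_{\dif}^m$, of Propositions \ref{propositionappendixequivalencealmostdeRham} and \ref{propositionappendixglueing}, and of the fact that the lattice $M$ is characterized uniquely. For morphisms, a map $D_A\to D_A'$ in $(\frX_n)^{\wedge}_{\bh}(A)$ induces compatible maps of the associated $\Delta_A\to\Delta_{A'}$ and of the local model data, and conversely a pair of compatible maps on $(\frX_n)^{\wedge}_0$ and $\tildefrg_{\bh}/G$ determines a unique map on the lattices, hence on the $(\varphi,\Gamma_K)$-modules. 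The main technical obstacle I expect is ensuring that $r$ and $m(r)$ can be chosen uniformly so that the glueing construction and the correspondence with filtrations both apply simultaneously; this is precisely why the appendix bundles everything into the parameter $m(r)$, and why the construction can be performed on any affinoid chart.
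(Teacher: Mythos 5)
Your proposal is correct and follows essentially the same route as the paper: both rest on the appendix equivalences (Propositions \ref{propositionappendixequivalencealmostdeRham} and \ref{propositionappendixglueing}) together with the uniqueness of the weight-$0$ modification from Lemma \ref{lemmauniqueweight0}, the only cosmetic difference being that you package the argument as an explicit quasi-inverse while the paper verifies full faithfulness (using $t$-torsion freeness and descent of morphisms to level $r$ via \cite[Lem. 2.2.9]{kedlaya2014cohomology}) and essential surjectivity separately. Your construction of $D_A$ from the transported filtration is exactly the paper's essential-surjectivity step, so no gap.
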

\begin{proof}
    The $2$-fiber product $(\frX_n)^{\wedge}_{0}\times_{\frg/G}\tildefrg_{\bh}/G$ is a stack \cite[\href{https://stacks.math.columbia.edu/tag/026G}{Tag 026G}]{stacks-project} and for an affinoid $\Sp(A)\in \Rig_L$, the groupoid $((\frX_n)^{\wedge}_{0}\times_{\frg/G}\tildefrg_{\bh}/G)(A)$ is the category of tuples 
    \[(\Delta_A, D_{\pdR,A}, \nu_A,\Fil^{\bullet}D_{\pdR,A},\alpha_A)\] 
    where $\Delta_A$ is a $(\varphi,\Gamma_K)$-module of rank $n$ over $\cR_{A,K}$ almost de Rham of weight $0$, a triple 
    \[(D_{\pdR,A}, \nu_A,\Fil^{\bullet}D_{\pdR,A})\in \tildefrg_{\bh}/G(A)\] and $\alpha_A:D_{\pdR,A}\simeq D_{\pdR}(\Delta_A)$ is an isomorphism of $A\otimes_{\Q_p}K$-modules compatible with the nilpotent operators. Suppose that $g_1,g_2\in \mathrm{Isom}(D_{A},D_A')$ for $D_A,D_A'\in (\frX_n)^{\wedge}_{\bh}$. If $\Psi(g_1)=\Psi(g_2)$, then $g_1=g_2:D_A[\frac{1}{t}]=f_{\bh}(D_A)[\frac{1}{t}]\rightarrow D_A'[\frac{1}{t}]=f_{\bh}(D_A')[\frac{1}{t}]$. Since $D_A$ is $t$-torsion free, we see $g_1=g_2$. Now suppose $g$ is an isomorphism $(f_{\bh}(D_A),\Fil^{\bullet}D_{\pdR}(D_A),\alpha_A)\simeq (f_{\bh}(D_A'),\Fil^{\bullet}D_{\pdR}(D_A'),\alpha_A')$ where $\alpha_A,\alpha_A'$ are the natural identifications, e.g., $D_{\pdR}(D_A)=D_{\pdR}(D_A[\frac{1}{t}])=D_{\pdR}(f_{\bh}(D_A))$. By \cite[Lem. 2.2.9]{kedlaya2014cohomology}, the morphism $g:f_{\bh}(D_A)\rightarrow f_{\bh}(D_A')$ is induced by some map $f_{\bh}(D_A)^r\rightarrow f_{\bh}(D_A')^r$ uniquely determined by $f_{\bh}(D_A)$ and $f_{\bh}(D_A')$ for some $r>0$ such that $m(r)$ is large enough. By the uniqueness in the proof of Lemma \ref{lemmauniqueweight0}, $f_{\bh}(D_A)^r=f_{\bh}(D_A^r)$ and $f_{\bh}(D_A')^r=f_{\bh}((D_A')^r)$. Then Proposition \ref{propositionappendixequivalencealmostdeRham} and Proposition \ref{propositionappendixglueing} implies that there exists $g':D_A^r\simeq (D_A')^r$ which induces $g$. This shows that $\Psi$ is fully faithful by \cite[\href{https://stacks.math.columbia.edu/tag/04WQ}{Tag 04WQ}]{stacks-project}. Also given $(\Delta_A, D_{\pdR,A}, \nu_A,\Fil^{\bullet}D_{\pdR,A},\alpha_A)$, the triple $(D_{\pdR,A}, \nu_A,\Fil^{\bullet}D_{\pdR,A})$ and $\alpha_A$ define a lattice $D_{\dif}^{m,+}$ of $D_{\dif}^{m}(\Delta_A)$ by Proposition \ref{propositionappendixequivalencealmostdeRham} for $m=m(r)$ if $\Delta_A$ is the base change from a $(\varphi,\Gamma_K)$-module over $\cR_{A,K}^r$ for some $r$ such that $m(r)$ is large enough, which gives a modification $D_A$ of $\Delta_A$ by Proposition \ref{propositionappendixglueing}. This shows that the tuple lies in the essential image of $\Psi_A$. Then $\Psi$ is an equivalence by \cite[\href{https://stacks.math.columbia.edu/tag/046N}{Tag 046N}]{stacks-project}.  
\end{proof} 
\begin{corollary}
    The map $f_{\bh}:(\frX_{n})^{\wedge}_{\bh}\rightarrow (\frX_n)^{\wedge}_{0}$ is projective, i.e., for any $\Sp(A)\in \Rig_L$ with $\Sp(A)\rightarrow (\frX_n)^{\wedge}_{0}$, the fiber product $f_{\bh}^{-1}(\Sp(A)):=\Sp(A)\times_{(\frX_n)^{\wedge}_{0}}(\frX_n)^{\wedge}_{\bh}$ is isomorphic to a rigid analytic space projective over $\Sp(A)$. 
\end{corollary}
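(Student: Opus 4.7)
The plan is to use Proposition~\ref{propositionfiberproduct} to reduce the problem to the evident projectivity of a relative partial flag variety over $\Sp(A)$. The given morphism $\Sp(A) \to (\frX_n)^{\wedge}_0$ corresponds to an almost de Rham $(\varphi,\Gamma_K)$-module $D_A$ of weight $0$. Applying Proposition~\ref{propositionfiberproduct}, which identifies $(\frX_n)^{\wedge}_{\bh}$ with the $2$-fiber product $(\frX_n)^{\wedge}_0 \times_{\frg/G} (\tildefrg_{\bh}/G)$, one obtains
\[
f_{\bh}^{-1}(\Sp(A)) \;\simeq\; \Sp(A) \times_{\frg/G} (\tildefrg_{\bh}/G),
\]
where the map $\Sp(A) \to \frg/G$ is the composition with the local model map $D_{\pdR}$ and sends $\Sp(A)$ to the pair $(D_{\pdR}(D_A), \nu_A)$.

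The next step is to describe this fiber product concretely. Unwinding Definition~\ref{defstacknilpotent}, for any morphism $Y \to \Sp(A)$ the groupoid $(\Sp(A) \times_{\frg/G} (\tildefrg_{\bh}/G))(Y)$ is canonically equivalent to the set of $\nu_Y$-stable filtrations $\Fil^{\bullet}_Y$ of type $\bh$ on the pullback $D_{\pdR}(D_A) \otimes_A \cO_Y$: the groupoid collapses to a set because fixing the image in $\frg/G$ rigidifies the pair $(D_{\pdR}, \nu)$ up to unique isomorphism. Using the decomposition $A \otimes_{\Q_p} K \simeq \prod_{\sigma \in \Sigma} A$, such filtrations are represented by a closed subspace of the relative partial flag variety
\[
\mathrm{Fl}_{\bh}(D_{\pdR}(D_A)) \;:=\; \prod_{\sigma \in \Sigma} \mathrm{Fl}_{\bh_{\sigma}}\!\bigl(D_{\pdR}(D_A)_{\sigma}\bigr)
\]
over $\Sp(A)$, the closed condition being $\nu_A(\Fil^i_{\sigma}) \subset \Fil^i_{\sigma}$ for all $i$ and $\sigma$.

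To conclude, each $D_{\pdR}(D_A)_{\sigma}$ is a finite projective $A$-module, so each factor $\mathrm{Fl}_{\bh_{\sigma}}(D_{\pdR}(D_A)_{\sigma})$ embeds as a closed subspace of a product of relative Grassmannians, hence is projective over $\Sp(A)$. Taking the product over $\sigma$ and passing to the $\nu_A$-stable closed subspace preserves projectivity, producing $f_{\bh}^{-1}(\Sp(A))$ as a projective rigid analytic space over $\Sp(A)$.

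The step requiring the most care is the unwinding of the fiber product of stacks in the second paragraph: one must verify that the groupoid of the fiber product is actually discrete, so that the fiber product is representable by an honest rigid analytic space rather than merely by a stack. Once this rigidification is granted, the representability and projectivity are both consequences of standard facts about relative flag varieties of vector bundles on affinoids, and no further obstacle arises.
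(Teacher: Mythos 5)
Your proposal is correct and follows essentially the same route as the paper: reduce via Proposition \ref{propositionfiberproduct} to $\Sp(A)\times_{\frg/G}\tildefrg_{\bh}/G$, observe that this parametrizes $\nu_A$-stable filtrations of type $\bh$ on $D_{\pdR}(D_A)$ (the trivialization forced by the fibered product rigidifies the groupoid), and conclude by representability of (relative) flag/Grassmannian varieties together with the fact that $\nu_A$-stability is a Zariski-closed condition. Your extra attention to the discreteness of the fiber-product groupoid is a point the paper leaves implicit, but it is the same argument.
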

\begin{proof}
    By Proposition \ref{propositionfiberproduct}, $\Sp(A)\times_{(\frX_n)^{\wedge}_{0}}(\frX_n)^{\wedge}_{\bh}=\Sp(A)\times_{\frg/G}\tildefrg_{\bh}/G$. Let $(D_{\pdR,A},\nu_A)$ be the universal $A\otimes_{\Q_p}K$-module over $A$ induced from $\Sp(A)\rightarrow \frg/G$. Then $f_{\bh}^{-1}(\Sp(A))$ is the stack over $\Rig_L/\Sp(A)$ of $\nu_A$-stable filtrations on $D_{\pdR,A}$ of type $\bh$. This stack is representable by the representability of Grassmannians and that being $\nu_A$-stable is a Zariski-closed condition (essentially given by vanishing of matrix coefficients for morphisms between vector bundles).
\end{proof}
\begin{remark}
    Locally, we can choose a trivialization $D_{\pdR,A}\simeq (A\otimes_{\Q_p}K)^n$ in the above proof, equivalently choose a section $\Sp(A)\rightarrow \frg$ for $\Sp(A)\rightarrow \frg/G$. The map $ \Sp(A)\rightarrow \frg$ is defined by $\nu_A$ with the set-theoretical image contained in the nilpotent cone $\cN$, and $f_{\bh}^{-1}(\Sp(A))=\Sp(A)\times_{\frg}\tildefrg_{\bh}$.
\end{remark}

\subsection{Change of weights for general families}\label{subsectiongeneralchange}
We point out that change of weights for $(\varphi,\Gamma_K)$-modules may work for more general families, without pointwisely fixed Sen weights. Let $D_A$ be a $(\varphi,\Gamma_K)$-module over $\cR_{A,K}$ for an affinoid $\Sp(A)\in\mathrm{Rig}_L$. We fix $\sigma\in \Sigma$ and write $P_{\rm Sen}(T)\in (A\otimes_{\Q_p}K)[T]$ for the Sen polynomial of $D_A$, and $P_{\Sen,\sigma}(T)$ for its $\sigma$-component via $A\otimes_{\Q_p}K\simeq \prod_{\sigma}A$. We will call $P_{\Sen,\sigma}(T)$ the $\sigma$-Sen polynomial in the following. 

\begin{lemma}\label{lemmacoprimepolynomials}
    Two polynomials $Q(T)$ and $S(T)$ in $A[T]$ are coprime to each other ($(Q(T),S(T))=(1)$) if and only if for any $x\in \Sp(A)$, the sets of roots of $Q(T)\otimes_Ak(x)$ and of $S(T)\otimes_Ak(x)$ in $\overline{k(x)}$ have empty intersection.
\end{lemma}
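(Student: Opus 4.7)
The plan is to treat the two directions of the equivalence separately. For the forward implication, a Bézout relation $1 = a(T)Q(T) + b(T)S(T)$ in $A[T]$ base-changes verbatim to $k(x)[T]$ for every $x \in \Sp(A)$, so the reductions $\overline{Q}$ and $\overline{S}$ generate the unit ideal of $k(x)[T]$ and therefore cannot share a root in $\overline{k(x)}$.

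For the converse I would argue by contraposition, exploiting the fact that affinoid algebras are Jacobson. Suppose $(Q,S) \subsetneq A[T]$, and pick a maximal ideal $\frM \subset A[T]$ containing $(Q,S)$. The contraction $\frm := \frM \cap A$ is again a maximal ideal: this is the standard consequence of the Jacobson property of $A$, which says that for a finitely generated $A$-algebra that is a field (such as $A[T]/\frM$) the contraction of the zero ideal to $A$ is maximal, by Zariski's lemma. Let $x \in \Sp(A)$ be the corresponding point.

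Now the image of $\frM$ in $A[T]/\frm A[T] = k(x)[T]$ is a maximal ideal of the PID $k(x)[T]$, hence of the form $(p(T))$ for a monic irreducible $p(T) \in k(x)[T]$ of positive degree. The containment $(Q,S) \subset \frM$ then reduces to $p(T) \mid Q \otimes_A k(x)$ and $p(T) \mid S \otimes_A k(x)$, and any root of $p$ in $\overline{k(x)}$ furnishes a common root of $Q \otimes k(x)$ and $S \otimes k(x)$, contradicting the hypothesis. The only mildly delicate point is the contraction-of-maximals claim, which is immediate from the Jacobson property of affinoid algebras; I do not anticipate any genuine obstruction, and the lemma requires no assumption of monicity on $Q,S$.
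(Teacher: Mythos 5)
Your proof is correct and follows essentially the same route as the paper's: the whole content is that, $A$ being Jacobson, every maximal ideal of $A[T]$ containing $(Q,S)$ lies over a point $x\in\Sp(A)$ and thus corresponds to an irreducible factor of both $Q\otimes_A k(x)$ and $S\otimes_A k(x)$, i.e.\ a common root in $\overline{k(x)}$. Your Bézout argument for the forward direction and the contraposition for the converse are just an expanded version of the paper's one-line proof via this maximal-ideal characterization.
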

\begin{proof}
    The condition that $(Q(T),S(T))=(1)$ is equivalent to that there is no maximal ideal $\frm$ of $A[T]$ containing both $S(T)$ and $P(T)$. Any maximal ideal $\frm$ of $A[T]$ lies over a point $x\in \Sp(A)$ \cite[\href{https://stacks.math.columbia.edu/tag/00GB}{Tag 00GB}]{stacks-project}. The result follows.
\end{proof}
\begin{proposition}
    Suppose that the $\sigma$-Sen polynomial $P_{\Sen,\sigma}(T)$ of $D_A$ admits a decomposition $P_{\Sen,\sigma}(T)=Q(T)S(T)$ in $A[T]$ by monic polynomials such that $(Q(T),S(T))=(1)$. Then there exists a unique $(\varphi,\Gamma_K)$-module $D_A'$ over $\cR_A$ contained in $D_A$ and containing $tD_A$ such that the Sen polynomial of $D_A'$ is equal to $Q(T-1)S(T)\prod_{\sigma'\neq\sigma}P_{\Sen,\sigma}(T)\in \prod_{\sigma\in\Sigma}A[T]$.
\end{proposition}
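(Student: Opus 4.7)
The plan is to adapt the lattice-bijection strategy of Lemma~\ref{lemmauniqueweight0} to the present partial-shift setting. Choose $r>0$ with $m:=m(r)$ large and a descent $D_A^r$ of $D_A$. By Proposition~\ref{propositionappendixglueing}, the $(\varphi,\Gamma_K)$-submodules $D_A'$ with $tD_A\subset D_A'\subset D_A$ correspond bijectively to $\Gamma_K$-stable $(A\otimes_{\Q_p}K_m)$-direct summands $M$ of the Sen module $D_{\Sen}^m:=D_{\dif}^{m,+}(D_A^r)/tD_{\dif}^{m,+}(D_A^r)$, via $D_A'\leftrightarrow L:=\pi^{-1}(M)\subset D_{\dif}^{m,+}(D_A^r)$ where $\pi$ denotes reduction modulo $t$.

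To produce the desired $M$, exploit coprimality via B\'ezout: write $U(T)Q(T)+V(T)S(T)=1$ in $A[T]$. On the $\sigma$-component $D_{\Sen,\sigma}^m$, the Sen operator $\nabla:=\nabla_{\Sen,\sigma}$ is annihilated by its characteristic polynomial $QS$ (Cayley--Hamilton). Hence $e_Q:=V(\nabla)S(\nabla)$ and $e_S:=U(\nabla)Q(\nabla)$ satisfy $e_Q+e_S=1$ and $e_Qe_S=0$, so they are orthogonal idempotents whose images $\ker Q(\nabla)$ and $\ker S(\nabla)$ are direct summands. By Lemma~\ref{lemmacoprimepolynomials} the polynomials $Q,S$ are coprime at every point of $\Sp(A)$, so a fiberwise primary decomposition forces these kernels to have constant fiber dimensions $\deg Q$ and $\deg S$; they are therefore locally free of the expected ranks. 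Both are $\Gamma_K$-stable because $\Gamma_K$ is abelian and thus commutes with its own Lie algebra operator $\nabla_{\Sen}$. Take $M$ to be $\ker S(\nabla_{\Sen,\sigma})$ in the $\sigma$-component and $D_{\Sen,\sigma'}^m$ for $\sigma'\neq\sigma$, and let $D_A'$ be the associated $(\varphi,\Gamma_K)$-module.

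To verify the Sen polynomial of $D_A'$, observe that $D_{\Sen}(D_A')=L/tL$ fits into the $\Gamma_K$-equivariant extension
\[
0\to tD_{\dif}^{m,+}(D_A^r)/tL\to L/tL\to L/tD_{\dif}^{m,+}(D_A^r)\to 0
\]
whose right term is $M$ and whose left term, via multiplication by $t$, is isomorphic to $D_{\Sen}^m/M$ with Sen operator shifted by $+1$ (because $\nabla_{\Sen}(t)=t$, the transport of $\nabla$ through $\cdot t$ becomes $\nabla+1$). In the $\sigma$-component, $M_\sigma=\ker S(\nabla)$ contributes the factor $S(T)$, while the twisted quotient $D_{\Sen,\sigma}^m/M_\sigma\cong \ker Q(\nabla)$ contributes $Q(T-1)$; the components for $\sigma'\neq\sigma$ are unchanged. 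Multiplying yields the asserted Sen polynomial $Q(T-1)S(T)\prod_{\sigma'\neq\sigma}P_{\Sen,\sigma'}(T)$.

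For uniqueness, reverse the argument: another candidate $D_A''$ corresponds to a $\Gamma_K$-stable direct summand $M''\subset D_{\Sen}^m$ which must agree with $D_{\Sen,\sigma'}^m$ in components $\sigma'\neq\sigma$ by rank, and whose $\sigma$-component $M''_\sigma$ has rank $\deg S$ with $\nabla|_{M''_\sigma}$ of characteristic polynomial $S(T)$ by the same computation. Cayley--Hamilton then forces $M''_\sigma\subset\ker S(\nabla)$; since both are direct summands of fiberwise-constant rank $\deg S$, the inclusion is an equality at every fiber and hence an equality by Nakayama, giving $D_A''=D_A'$ via Proposition~\ref{propositionappendixglueing}. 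The main technical point is the family-level idempotent decomposition of step two: verifying that $\ker Q(\nabla)$ and $\ker S(\nabla)$ are \emph{genuine} locally free summands of the expected rank (not merely pointwise decompositions) is what requires the global B\'ezout identity, and is precisely where the hypothesis $(Q,S)=(1)$ in $A[T]$ enters.
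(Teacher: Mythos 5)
Your construction is essentially the paper's: you reduce via Proposition \ref{propositionappendixglueing} to modifying the lattice $D_{\dif,A,\sigma}^{m,+}$, split $D_{\Sen,\sigma}^{m}$ using the coprimality of $Q$ and $S$ (your B\'ezout idempotents give the same decomposition the paper obtains from the Chinese remainder theorem), take the preimage of the $S$-part, and read off the new Sen polynomial from the filtration $tD_{\dif,A,\sigma}^{m,+}\subset M\subset D_{\dif,A,\sigma}^{m,+}$ together with the shift $\nabla(tx)=t(\nabla+1)x$. The gaps are at the two points where the paper actually has to work. First, your opening ``bijection'' between submodules $tD_A\subset D_A'\subset D_A$ and $\Gamma_K$-stable direct summands of the Sen module is not what Proposition \ref{propositionappendixglueing} gives: that proposition concerns \emph{projective} $(A\otimes_{\Q_p}K_m)[[t]]$-lattices. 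The implication ``$M$ a direct summand $\Rightarrow \pi^{-1}(M)$ projective'' is exactly Lemma \ref{lemmaprojectivelattice}, which you need for existence but never invoke; the converse implication --- that an arbitrary candidate $D_A''$ produces a lattice whose image in $D_{\Sen}^m$ is again a direct summand --- is what your uniqueness paragraph starts from, and it is a nontrivial converse to Lemma \ref{lemmaprojectivelattice} that you do not prove and that is not available in the paper (whose terse uniqueness appeals instead to the canonicity of the decomposition $M_Q\oplus M_S$). As written, your uniqueness step is therefore unsupported.

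Second, you assert that $\ker S(\nabla)$ ``contributes the factor $S(T)$'' and that the twisted quotient contributes $Q(T-1)$, but annihilation by $S(\nabla)$ does not determine the characteristic polynomial (an operator acting as $0$ on a rank-two summand is killed by $T(T-1)$ yet has characteristic polynomial $T^2$), and your fiberwise argument pins the characteristic polynomials only pointwise, hence only modulo the nilradical of $A$, whereas the proposition asserts an exact identity in $A[T]$. This is precisely what the paper's paragraph about $M_Q$ supplies: after checking ranks at points, it uses $Q'(T)S'(T)=Q(T)S(T)$ together with the coprimality relations furnished by Lemma \ref{lemmacoprimepolynomials} to deduce $A[T]/Q'=A[T]/Q$ and hence $Q'=Q$ for monic polynomials of the same degree. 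The same care is needed inside your uniqueness argument: the claims that $M''_\sigma$ has characteristic polynomial $S(T)$ and that the $\sigma'$-components are full ``by rank'' are really Sen-polynomial comparisons of this kind, not rank counts, and with a non-reduced $A$ they only come out modulo nilpotents unless you run the paper's argument. Both gaps are repairable, but your write-up assumes rather than proves them.
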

\begin{proof}
    We suppose that $D_A=D_A^r\otimes_{\cR_{A,K}^r}\cR_{A,K}$ for a $(\varphi,\Gamma_K)$-module $D_A^r$ over $\cR_{A,K}^r$. By Proposition \ref{propositionappendixglueing}, it is enough to prove the following statement for a semilinear $\Gamma_K$-representation $D_{\dif,A,\sigma}^{m,+}$ over $(A\otimes_{\sigma,K}K_m)[[t]]$: suppose that the characteristic polynomial for the Sen operator $\nabla=\nabla_{\Sen}$ on $D_{\dif,A,\sigma}^{m,+}/t$ is equal to $P_{\Sen,\sigma}(T)=Q(T)S(T)$ such that $(Q(T),S(T))=(1)$, then there exists a unique sub-$\Gamma_K$-representation $M$ contained in $D_{\dif,A,\sigma}^{m,+}$ and containing $tD_{\dif,A,\sigma}^{m,+}$ such that the Sen polynomial of $M/t$ is equal to $Q(T-1)S(T)$. 
    
    Since $(Q(T),S(T))=1$, $A[T]/P_{\Sen,\sigma}(T)=(A[T]/Q(T))\times (A[T]/S(T))$ by the Chinese remainder theorem. As $P_{\Sen,\sigma}(\nabla)$ annihilates $D_{\dif,A,\sigma}^{m,+}/t$, this leads to a canonical decomposition $D_{\dif,A,\sigma}^{m,+}/t=M_{Q}\oplus M_{S}$ where $Q(\nabla)$ kills $M_{Q}$ and $S(\nabla)$ is invertible on $M_{Q}$. Since the actions of $\Gamma_K, A$ and $K_m$ commute with $\nabla$, $M_{Q}$ and $M_S$ are $\Gamma_K$-stable projective $A\otimes_{\sigma,K}K_m$-modules. 
    
    We claim that $M_{Q}$ is projective over $A\otimes_{\sigma,K}K_m$ of rank $\deg(Q)$ with characteristic polynomial of $\nabla$ equaling to $Q(T)$. We can check the rank at points $\Sp(L')\rightarrow \Spec(A)$ for a finite extension $L'$ over $L$ and reduce to the case when $A=L'$ such that $\Hom_K(K_m,L')=[K_m:K]$. Then $D_{\dif,L',\sigma}^{m,+}/t=\prod_{\sigma'\in \Hom_K(K_m,L')} D_{\Sen,L',\sigma'}^{m}$ where $D_{\Sen,L',\sigma'}^{m}=D_{\Sen,L'}^{m}\otimes_{L'\otimes_{\sigma,K}K_m, 1\otimes \sigma'}L'$. And $\Gamma_K$ permutes and induces $\nabla$-equivariant isomorphisms between different $\sigma'$-factors. We have decompositions $D_{\Sen,L',\sigma'}^{m}=M_{\sigma',Q}\oplus M_{\sigma',S}$ for all $\sigma'$. Up to enlarging $L'$, the decomposition refines to a decomposition by generalized eigenspaces for $\nabla$ whose dimensions are given by multiplicities of roots of $P_{\Sen,\sigma}$. Then we see the rank over $L'$ of each $M_{\sigma',Q}$ is equal to the degree of $Q$ (and with the characteristic polynomial of $\nabla$ equaling $Q(T)$). Hence $M_{Q}=\prod_{\sigma'}M_{\sigma',Q}$ is projective over $L'\otimes_{\sigma,K}K_m$ with the expected rank. Return to general $A$, let $Q'(T), S'(T)$ be the characteristic polynomials of $\nabla$ on $M_{Q},M_S$. Then $Q'(T)S'(T)=Q(S)S(T)$ and $(Q'(T),S'(T))=(Q(T),S'(T))=(Q'(T),S(T))=1$ by Lemma \ref{lemmacoprimepolynomials}. We conclude that $A[T]/Q'(T)=A[T]/Q(T)$, hence $Q'(T)=Q(T)$ since both are monic polynomials of the same degree.
    
    We take $M:=\ker(D_{\dif,A,\sigma}^{m,+}\rightarrow D_{\dif,A,\sigma}^{m,+}/t\twoheadrightarrow M_Q)$. There is a $\Gamma_K$-filtration of $(A\otimes_{\sigma,K}K_m)[[t]]$-submodules  
    \[t^2D_{\dif,A,\sigma}^{m,+}\subset tM\subset tD_{\dif,A,\sigma}^{m,+}\subset M\subset D_{\dif,A,\sigma}^{m,+}\]
    with graded pieces $tM_{S},tM_{Q},M_{S},M_{Q}$. Then $M/tM$ admits a $\Gamma_K$-filtration with graded pieces $tM_Q$ and $M_S$. Since $\nabla(tx)=t(\nabla+1)x$ for $x\in M_Q$, the characteristic polynomial of $\nabla$ on $M/tM$ is $Q(T-1)S(T)$. And $M$ is finite projective over $(A\otimes_{\sigma,K}K_m)[[t]]$ by Lemma \ref{lemmaprojectivelattice} below. The uniqueness comes from the uniqueness of the decomposition $M=M_{Q}\oplus M_S$.
\end{proof}
\begin{lemma}\label{lemmaprojectivelattice}
    Let $B$ be a Noetherian ring. Let $M$ be a submodule of a finite projective $B[[t]]$-module $D$ containing $t^kD$ such that $D/M$ is finite flat over $B[[t]]/t^k$. Then $M$ is a finite projective $B[[t]]$-module of the same rank as $D$. 
\end{lemma}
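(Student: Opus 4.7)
The plan is to deduce that $M$ is finite projective from a bound on the projective dimension of $D/M$ over $B[[t]]$, using the short exact sequence $0 \to M \to D \to D/M \to 0$ together with the projectivity of $D$. The rank equality will then follow by localizing at primes not containing $t$.

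First I would check that $M$ is finitely generated. Since $B$ is Noetherian, so is $B[[t]]/t^k$, and the $B[[t]]/t^k$-submodule $M/t^k D$ of the finitely generated module $D/t^k D$ is finitely generated; hence $M$ is finitely generated over $B[[t]]$. The key step is then to show that $D/M$ has projective dimension at most one over $B[[t]]$. The hypothesis that $D/M$ is finite flat over the Noetherian ring $B[[t]]/t^k$ forces it to be finite projective over $B[[t]]/t^k$, and therefore a direct summand of some $(B[[t]]/t^k)^r$. Since $t^k$ is a non-zero-divisor in $B[[t]]$ (easy induction on lowest coefficient), the two-term free resolution
\[
0 \longrightarrow B[[t]] \xrightarrow{\;t^k\;} B[[t]] \longrightarrow B[[t]]/t^k \longrightarrow 0
\]
shows that $B[[t]]/t^k$, and therefore the summand $D/M$, has projective dimension at most one over $B[[t]]$.

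With this in hand, the long exact $\mathrm{Tor}$ sequence applied to $0 \to M \to D \to D/M \to 0$ yields, for every $B[[t]]$-module $N$, a surjection $\mathrm{Tor}_2^{B[[t]]}(D/M, N) \twoheadrightarrow \mathrm{Tor}_1^{B[[t]]}(M, N)$ since $D$ is projective. The left-hand side vanishes by the projective dimension bound, so $M$ is flat. A finitely generated flat module over the Noetherian ring $B[[t]]$ is finite projective, giving the main conclusion.

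For the rank statement, on the open subscheme $D(t) \subset \mathrm{Spec}(B[[t]])$ the inclusions $t^k D \subset M \subset D$ become equalities after localization, so $M$ and $D$ have equal ranks there. Since $t$ is a non-zero-divisor in $B[[t]]$, it lies in no minimal prime, so $D(t)$ meets every irreducible, and hence every connected, component of the Noetherian scheme $\mathrm{Spec}(B[[t]])$. Local constancy of the rank of a finite projective module then propagates the equality to all of $\mathrm{Spec}(B[[t]])$. The only real content is the projective-dimension bound on $D/M$; it is made clean by the regularity of $t^k$ in $B[[t]]$ together with the standard fact that finitely generated flat over a Noetherian ring is finite projective.
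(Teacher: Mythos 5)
Your proof is correct, and its overall strategy is the same as the paper's: use the short exact sequence $0\to M\to D\to D/M\to 0$ with $D$ flat to reduce everything to showing $\Tor^{B[[t]]}_i(D/M,-)=0$ for $i\geq 2$, then conclude that the finitely generated flat module $M$ is projective. Where you differ is in how that vanishing is obtained. The paper runs a change-of-rings spectral sequence $\Tor_n^{B[[t]]/t^k}(\Tor_m^{B[[t]]}(N,B[[t]]/t^k),D/M)\Rightarrow \Tor_{n+m}^{B[[t]]}(N,D/M)$, which degenerates because $D/M$ is flat over $B[[t]]/t^k$, and then uses the two-term resolution $0\to t^kB[[t]]\to B[[t]]\to B[[t]]/t^k\to 0$. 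You instead upgrade ``finite flat over the Noetherian ring $B[[t]]/t^k$'' to ``finite projective,'' realize $D/M$ as a direct summand of $(B[[t]]/t^k)^r$, and read off projective dimension $\leq 1$ over $B[[t]]$ from the same two-term resolution; this is more elementary and avoids the spectral sequence entirely, at the cost of using the (standard) fact that finite flat over Noetherian implies projective at an earlier stage. You also supply two points the paper leaves implicit: the finiteness of $M$ via $M/t^kD\subset D/t^kD$ over the Noetherian ring $B[[t]]/t^k$, and the rank equality, which you prove cleanly by noting $M[\tfrac1t]=D[\tfrac1t]$ and that $D(t)$ meets every connected component of $\Spec(B[[t]])$ since the non-zero-divisor $t$ lies in no minimal prime, so local constancy of ranks finishes the argument. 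Both the direct-summand step (Tor of a summand is a summand of Tor) and the rank propagation are sound, so the proposal stands as a complete, slightly more self-contained proof.
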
 
\begin{proof} 
    Certainly $M$ is finite over $B[[t]]$. Use the sequence $0\rightarrow M\rightarrow D\rightarrow D/M\rightarrow 0$ and that $D$ is flat over $B[[t]]$, we have $\Tor^{B[[t]]}_{i}(-,M)=\Tor^{B[[t]]}_{i+1}(-,D/M)$ for $i\geq 1$. For any $B[[t]]$-module $N$, there is a spectral sequence \cite[\href{https://stacks.math.columbia.edu/tag/061Y}{Tag 061Y}]{stacks-project} 
    \[\Tor_n^{B[[t]]/t^k}(\Tor_m^{B[[t]]}(N,B[[t]]/t^k),D/M)\Rightarrow \Tor^{B[[t]]}_{n+m}(N,D/M).\] 
    The flatness of $D/M$ over $B[[t]]/t^k$ implies that $\Tor_n^{B[[t]]/t^k}(\Tor_m^{B[[t]]}(N,B[[t]]/t^k),D/M)=0$ for $n\geq 1$. Thus the spectral sequence degenerates at the $E_2$-page and 
    \[\Tor^{B[[t]]}_i(-,D/M)=\Tor^{B[[t]]}_i(-,B[[t]]/t^k)\otimes_{B[[t]]/t^k}D/M\] 
    for all $i\geq 0$. Since $B[[t]]/t^k$ admits a flat resolution $0\rightarrow t^kB[[t]]\rightarrow B[[t]]\rightarrow B[[t]]/t^k\rightarrow 0 $, $\Tor_i^{B[[t]]}(-,B[[t]]/t^k)=0$ for all $i\geq 2$. Hence $\Tor^{B[[t]]}_{i}(-,M)=0$ for all $i\geq 1$. This implies that $M$ is a finite flat $B[[t]]$-module. 
\end{proof}
\begin{example}
    Suppose that $D_A$ has weights $\bh\in (\Z^n)^{\Sigma}$ as in Lemma \ref{lemmauniqueweight0}. Pick $\sigma\in\Sigma$ and assume that $\{h_{\sigma,1},\cdots,h_{\sigma,n}\}=\{-k_{1},\cdots,-k_s\}$ as sets where $-k_1<\cdots<-k_s$ and each $-k_i$ appears $m_i$ times in $\bh_{\sigma}$. Let $I$ be the nilradical of $A$. Then $P_{\Sen,\sigma}(T)\equiv \prod_{i=1}^s(T+k_i)^{m_i}\mod I$. By Hensel's lemma \cite[\href{https://stacks.math.columbia.edu/tag/0ALI}{Tag 0ALI}]{stacks-project}, there exist coprime monic polynomials $Q(T),S(T)$ such that $P_{\Sen,\sigma}(T)=Q(T)S(T)$ and $Q(T)\equiv (T+k_1)^{m_1}\mod I$. The above proposition gives a $(\varphi,\Gamma_K)$-module $D_A'\subset D_A$ such that $D_A'[\frac{1}{t}]=D_A[\frac{1}{t}]$ of $\sigma$-weights $h_{\sigma,1}+1=\cdots=h_{\sigma,m_{1}}+1\leq h_{\sigma,m_1+1}\leq \cdots$. Repeating such procedures for all $\sigma$ and multiplying suitable powers of $t$, we can find in the end $D_A'$ such that $D_A'=f_{\bh}(D_A)$ is almost de Rham of weight $0$ and $D_A'[\frac{1}{t}]=D_A[\frac{1}{t}]$.
\end{example}
\subsection{Flatness of the local model map}\label{sectionflatlocalmodel}
We prove that the local model map $D_{\pdR}: (\frX_n)^{\wedge}_{0}\rightarrow \frg/G$ is flat in the sense of Corollary \ref{corollaryflatlocalrings} and in the case $n=2$, $K=\Q_p$. This part is to explain Hypothesis \ref{hypothesisflat} that will appear in our main theorem (Theorem \ref{theorem} and also Proposition \ref{propositiondirectimagephigammamodule}). From now on we assume $n=2$ and $K=\Q_p$. The major tool will be miracle flatness. Most proofs in this section work for more general situations, except for the very last part of the proof of Proposition \ref{propositionflatversalmaps}.

We will define and study flatness via morphisms between complete local rings. We first recall the definition of versal rings of stacks assuming existence.
\begin{definition}
    Let $\frX$ be a stack over $\mathrm{Rig}_L$ with a morphism $x:\Sp(L')\rightarrow \frX$ where $L'$ is a finite extension of $L$. Then $x$ corresponds to an object in $\frX(L')$ denoted by $D_{L'}$. 
    \begin{enumerate}
        \item We define $\cF_{\frX,x}$ to be the groupoid fibered in $\cC_{L'}$ sending $A'\in\cC_{L'}$ to pairs $(D_{A'}\in \frX(A'),\iota_{A'}:D_{A'}\otimes_{A'}L'\simeq D_{L'})$ where we write $D_{A'}\otimes_{A'}L'$ for the pullback of $D_{A'}$ in $\frX$ along $\Sp(A'/\frm_{A'})\rightarrow \Sp(A')$. A morphism $(A',D_{A'},\iota_{A'})\rightarrow (A'',D_{A''},\iota_{A''})$ in $\cF_{\frX,x}$ is a map $A'\rightarrow A''$ in $\cC_{L'}$ together with an isomorphism $D_{L'}\otimes_{A'}A''\simeq D_{A''}$ compatible with $\iota_{A'}$ and $\iota_{A''}$, cf. \cite[\href{https://stacks.math.columbia.edu/tag/07XD}{Tag 07XD}]{stacks-project}. 
        \item A formal object of $\frX$ is a complete Noetherian local ring $(R,\frm_R)$ with $L'=R/\frm_R$ finite over $L$ and objects $D_{R/\frm_R^n}\in\frX(R/\frm_R^n)$ together with isomorphisms $D_{R/\frm_R^n}\otimes_{R/\frm_R^n}R/\frm_R^{n-1}\simeq D_{R/\frm_R^{n-1}}$ for all $n$ \cite[\href{https://stacks.math.columbia.edu/tag/07X3}{Tag 07X3}]{stacks-project}. This formal object is versal at the map $x:\Sp(L')\rightarrow \frX$ corresponding to $D_{R/\frm_R}\in D(L')$ if the induced map $\Spf(R)\rightarrow \cF_{\frX,x}$ is formally smooth. In this case we say $R$ a versal ring of $\frX$ at $x$.  
    \end{enumerate}   
\end{definition}

\begin{remark}
    Let $A'$ be an Artin local $L$-algebra with residue field $L'$ finite over $L$, then $A'$ is an $L'$-algebra in a unique way such that $L'\rightarrow A'\rightarrow A'/\frm_{A'}=L'$ is the identity map (i.e., $A'\in \cC_{L'}$) since $L'$ is formally \'etale over $L$ \cite[\href{https://stacks.math.columbia.edu/tag/04G3}{Tag 04G3}]{stacks-project}. If $A$ is an affinoid algebra and $\frm$ is a maximal ideal of $A$ with residue field $L'$, then the completion $\widehat{A}_{\frm}$ is an $L'$-algebra and is a versal ring of $A$ at the point $x:\Sp(L')\rightarrow \Sp(A)$ pro-representing $\cF_{\Sp(A),x}$. If $L''$ is a finite extension of $L'$ and we let $x'':\Sp(L'')\rightarrow \Sp(L')\rightarrow \Sp(A)$, then $\cF_{\Sp(A),x''}$ is pro-represented by the base change $\widehat{A}_{\frm}\otimes_{L'}L''$.  Furthermore, if $\frY^{\wedge}$ is the completion of $\frY=\Sp(A)$ with respect to an ideal $I\subset A$ in the way of Example \ref{examplestacks} with an $L'$-point $x:\Sp(L')\rightarrow \frY_1\rightarrow \frY^{\wedge}$, then by definition $\cF_{\frY,x}=\cF_{\frY^{\wedge},x}$ are both pro-represented by $\widehat{A}_{\frm}=\widehat{A}^{\wedge}_{\frm}$.
\end{remark}

We choose an $L$-point $x\in (\frX_{2})^{\wedge}_0(L)\subset \frX_{2}(L)$ with Sen weights $(0,0)$ corresponding to a $(\varphi,\Gamma)$-module over $\cR_L$. Let $X_{D_L}=\cF_{\frX_2,x}$ be the deformation problem over $\cC_L$ sending $A\in\cC_L$ to the groupoid of  pairs $(D_A,\iota_A)$ where $D_{A}$ is a $(\varphi,\Gamma)$-module over $\cR_A$ and $\iota_{A}:D_A/\frm_A\simeq D_L$. Let $x_{\pdR}=D_{\pdR}(D_L)$ be the image of $x$ in $(\frg/G)(L)$ given by $(D_{\pdR}(D_L),\nu_L)$ and write $X_{x_{\pdR}}$ be the deformation problem of $(D_{\pdR}(D_L),\nu_L)$ over $\cC_L$ \cite[\S 3.1]{breuil2019local}. 
\begin{proposition}\label{propositionformallysmooth}
    If $D_L$ is not a twist by a character of an extension of $t^{-1}\cR_L(\epsilon)$ by $\cR_L$ (written as $[\cR_L-t^{-1}\cR_{L}(\epsilon)]$), then $D_{\pdR}:X_{D_L}\rightarrow X_{x_{\pdR}}$ is formally smooth.
\end{proposition}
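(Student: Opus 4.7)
The plan is to verify the standard lifting criterion for formal smoothness of deformation functors: given a small surjection $A' \twoheadrightarrow A$ in $\cC_L$ with square-zero kernel $I$, a deformation $D_A \in X_{D_L}(A)$, and a lift $(M_{A'}, \nu_{A'}) \in X_{x_{\pdR}}(A')$ of $(D_{\pdR}(D_A), \nu_A)$, I must construct $D_{A'} \in X_{D_L}(A')$ lifting $D_A$ and mapping to $(M_{A'}, \nu_{A'})$. Since $X_{x_{\pdR}}$ is the formal completion of the smooth quotient stack $\frg/G$ at $(D_{\pdR}(D_L), \nu_L)$, it is itself formally smooth with trivial obstruction theory. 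Consequently the criterion reduces to: (a) vanishing of the obstruction group $H^2_{\varphi,\gamma}(\End(D_L))$, so that $D_A$ admits some lift $\widetilde D_{A'}$; and (b) surjectivity of the tangent differential
\[
H^1_{\varphi,\gamma}(\End(D_L)) \longrightarrow \End(D_{\pdR}(D_L))/[\nu_L, \End(D_{\pdR}(D_L))],
\]
allowing one to adjust $\widetilde D_{A'}$ by a class in $H^1_{\varphi,\gamma}(\End(D_L)) \otimes_L I$ to match $(M_{A'}, \nu_{A'})$.

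For (a), Tate duality identifies $H^2_{\varphi,\gamma}(\End(D_L))^{\vee} \simeq \Hom_{\varphi,\gamma}(D_L, D_L \otimes \cR_L(\epsilon))$, and the right-hand side vanishes by the Jordan--H\"older filtration of $\End(D_L) \otimes \cR_L(\epsilon)$ combined with the standard criterion $H^0(\cR_L(\delta)) \neq 0 \iff \delta = z^{-k}$ for some $k \geq 0$; indeed the Sen weights of $\End(D_L) \otimes \cR_L(\epsilon)$ are shifted by one away from the zero configuration, so no graded piece can contribute. For (b), I would use the trianguline deformation theory: for generic $D_L$ (absolutely irreducible, or reducible but split, so that both $\Ext^1_{(\varphi,\Gamma)}(\chi_2, \chi_1)$ and $\Ext^1_{(\varphi,\Gamma)}(\chi_1, \chi_2)$ contribute non-trivial deformations), the image of the tangent map spans all of $\End(D_{\pdR}(D_L))/[\nu_L, -]$. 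Surjectivity can only fail for a non-split reducible $D_L$ with unique triangulation, in which case every deformation preserves the filtration so the induced deformation of $\nu_L$ is upper-triangular in a filtration-adapted basis of $D_{\pdR}(D_L)$; the image of the tangent map then contains only the trace direction and misses the off-triangular direction.

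Combining this with Colmez's computation that $\Ext^1_{(\varphi,\Gamma)}(\cR_L(\chi_2), \cR_L(\chi_1)) \neq 0$ among rank-one weight-zero characters precisely when $\chi_2/\chi_1 = |\cdot|$ identifies the rigid non-split configuration as $D_L = \chi \otimes [\cR_L - t^{-1}\cR_L(\epsilon)]$, matching the proposition's exceptional class. The main obstacle is the case analysis in (b): one must verify that the missing off-triangular direction genuinely cannot be recovered by any deformation (non-trianguline or otherwise) in the exceptional configuration, and carefully separate the non-split case from the split and irreducible cases where the surjection is reestablished by the complementary Ext class or by the absence of the triangulation constraint.
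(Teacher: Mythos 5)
Your overall reduction---a lift exists because $H^2_{\varphi,\gamma}(\End_{\cR_L}(D_L))=0$, and one then adjusts it by a class in $I\otimes_L H^1_{\varphi,\gamma}(\End_{\cR_L}(D_L))$ to hit the prescribed deformation of $(D_{\pdR}(D_L),\nu_L)$, so that everything hinges on surjectivity of the tangent map---is the same skeleton as the paper's proof (carried out there in the language of $B$-pairs). But the step that actually proves the proposition, namely that the tangent map is surjective exactly outside the exceptional class, is not established by your argument, and the case analysis you propose rests on incorrect statements. For irreducible $D_L$ you simply assert surjectivity; ``trianguline deformation theory'' gives nothing here, since such $D_L$ need not be trianguline. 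For non-split reducible $D_L$ it is not true that every deformation preserves the triangulation (deformations of a non-split trianguline module are in general not trianguline), so your conclusion that the image of the tangent map ``contains only the trace direction'' would, if the reasoning were valid, deny smoothness at \emph{every} non-split reducible point with $\nu_L$ constrained, not just at twists of $[\cR_L-t^{-1}\cR_L(\epsilon)]$, contradicting the proposition itself. And the statement you attribute to Colmez---that $\Ext^1$ between rank-one weight-zero modules is nonzero precisely when the ratio of the characters is $|\cdot|$---is false: $H^1_{\varphi,\gamma}(\cR_L(\delta))$ is never zero (Euler characteristic), so an $\Ext^1$ criterion cannot single out the exceptional configuration.

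What is missing is the actual mechanism. In the paper, the tangent map $H^1_{\varphi,\gamma}(\End_{\cR_L}(D_L))\rightarrow H^1(\cG_{\Q_p},\End_{B_{\dR}\otimes_{\Q_p}L}(W_{\dR}))$ is identified, via Lemma \ref{lemmaH1weight0}, with the map $H^1_{\varphi,\gamma}(\End_{\cR_L}(D_L))\rightarrow H^1_{\varphi,\gamma}(\End_{\cR_L}(D_L)/t\End_{\cR_L}(D_L))$; by the long exact sequence for $0\rightarrow t\End\rightarrow\End\rightarrow\End/t\rightarrow 0$ and the vanishing in Lemma \ref{lemmaH2weight0}, its cokernel is $H^2_{\varphi,\gamma}(t\End_{\cR_L}(D_L))$, which by local Tate duality is dual to $\Hom_{\varphi,\gamma}(D_L,D_L(\epsilon z^{-1}))$. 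So non-surjectivity is an $H^0$-condition, not an $\Ext^1$-condition: a nonzero map $D_L\rightarrow D_L(\epsilon z^{-1})$, which is impossible for irreducible $D_L$ by a slope argument and otherwise forces $D_L$ to be a twist of an extension of $t^{-1}\cR_L(\epsilon)$ by $\cR_L$. Your part (a) also needs repair: for irreducible $D_L$ there is no Jordan--H\"older filtration by rank-one $(\varphi,\Gamma)$-modules to reduce to; the correct argument (Lemma \ref{lemmaH2weight0}) is that all Hodge--Tate--Sen weights of $D_L\otimes_{\cR_L}D_L^{\vee}(\epsilon)$ equal $1$, so $\Fil^0D_{\dR}=0$ and there is no nonzero map from $\cR_L$---your weight remark points in this direction, but the filtration-based reduction as stated does not apply.
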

\begin{proof}
    To show formally smoothness, we need to show that for any surjction $A'\rightarrow A=A'/I$ in $\cC_L$ such that $\frm_{A'}I=0$, any deformation $(D_{A},\iota)$ of $D_L$ and deformation $(D_{\pdR,A'},\nu_{A'})$ with an isomorphism $(D_{\pdR,A'},\nu_{A'})\otimes_{A'}A\simeq (D_{\pdR}(D_{A}),\nu_{A})$, there exists $(D_{A'},\iota_{A'})\in X_{D_L}(A')$ such that there exists an isomorphism $D_{\pdR}(D_{A'})\simeq D_{\pdR,A'}$ compatible with $\nu_{A'}$ and induces the corresponding isomorphism modulo $I$, see \cite[\href{https://stacks.math.columbia.edu/tag/06HF}{Tag 06HF}]{stacks-project}. 

    It's more convenient for us to use the language of $B$-pairs: the equivalence between $(\varphi,\Gamma)$-modules and $B$-pairs \cite{berger2008construction} and the equivalence between $X_{x_\pdR}$ and deformations of almost de Rham $B_{\rm dR}$-representations \cite[Lem. 3.1.4]{breuil2019local}. We follow the proof and notation of \cite[Prop. 2.30]{nakamura2014deformations}. Write $W=(W_e, W_{\rm dR}^+)=(W_e(D_L),W_{\rm dR}^+(D_L))$ and $W_{A}=(W_{e,A}, W_{\rm dR,A}^+)=(W_e(D_{A}),W_{\rm dR}^+(D_{A}))$. Write $\End(W)=W^{\vee}\otimes W$ where the tensor is in the category of $B$-pairs. Choose basis of $W_{e,A}$ and $W_{{\rm dR},A}^+$. Then $W_A$ gives us $1$-cocycles $\rho_e:\cG_{\Q_p}\rightarrow \GL_2(B_e\otimes_{\Q_p}A)$, $\rho_{\rm dR}:\cG_{\Q_p}\rightarrow \GL_2(B_{\rm dR}^+\otimes_{\Q_p}A)$ and a matrix $P\in\GL_2(B_{\rm dR}\otimes_{\Q_p}A)$ such that $P\rho_e(g)g(P)^{-1}=\rho_{\rm dR}(g)$ for any $g\in \cG_{\Q_p}$. Choose an $L$-linear section $s:A\rightarrow A'$ of $A'\rightarrow A$ which gives us lifts $\widetilde{\rho}_e:=s\circ \rho_{e},\widetilde{\rho}_{\rm dR}:=s\circ \rho_{\rm dR}$ and $\widetilde{P}$. These elements define $2$-cocyles. For example, $c_{\rm dR}^2\in I\otimes_L Z^2(\cG_{\Q_p},\End_{B_{\rm dR}^+\otimes_{\Q_p}L}(W_{\rm dR}^+))$ is defined such that (use $\End_{B_{\rm dR}^+\otimes_{\Q_p}A'}(W_{\rm dR,A}^+\otimes_{A}A')\otimes_{A'}I=I\otimes_L\End_{B_{\rm dR}^+\otimes_{\Q_p}L}(W_{\rm dR}^+)$)
    \[c_{\rm dR}^2(g_1,g_2)=\widetilde{\rho}_{\rm dR}(g_1g_2)g_1(\widetilde{\rho}_{\rm dR}(g_2))^{-1}\widetilde{\rho}_{\rm dR}(g_1)^{-1}-1, \forall g_1,g_2\in\cG_{\Q_p}.\] 
    The vanishing of $H^2(\cG_{\Q_p}, \End(W))$ (Lemma \ref{lemmaH2weight0} below) implies that in the class of $(\widetilde{\rho}_{e},\widetilde{\rho}_{\rm dR},\widetilde{P})$ there exists always a lift $(\rho_{e,A'},\rho_{\rm dR,A'},P_{A'})$ which defines a $B$-pair $W_{A'}=(W_{e,A'},W_{{\rm dR},A'}^+)$ over $A'$ deforming $W_A$. Moreover, standard arguments show that the set of deformations is an affine space under $I\otimes_LH^1(\cG_{\Q_p},\End(W))$ (see the proof of \cite[Lem. 2.28]{nakamura2014deformations}). For example, another lift $\rho_{{\rm dR},A'}'$ of $\rho_{\rm dR}$ defines a $1$-cocycle in $I\otimes_LZ^{1}(\cG_{\Q_p},\End_{B_{\rm dR}^+\otimes_{\Q_p}L}(W_{\rm dR}^+))$:
    \[c^1_{\rm dR}(g)=\rho_{{\rm dR},A'}'(g)\rho_{{\rm dR},A'}(g)^{-1}-1,\forall g\in\cG_{\Q_p}.\]
    Now we consider lifts of $W_{{\rm dR},A}$ to $W_{{\rm dR},A'}$. An easier argument shows that the set of deformations is parametrized by $I\otimes_{L}H^1(\cG_{\Q_p},\End_{B_{\rm dR}\otimes_{\Q_p}L}(W_{\rm dR}))$. The map between lifts induced by $D_{\pdR}$ corresponds to the natural map $I\otimes_LH^1(\cG_{\Q_p},\End(W))\rightarrow I\otimes_{L}H^1(\cG_{\Q_p},\End_{B_{\rm dR}\otimes_{\Q_p}L}(W_{\rm dR}))$. We conclude that the existence of deformations to $A' $ of $W_{A}$ with given image under $D_{\pdR}$ is equivalent to that the map $H^1(\cG_{\Q_p},\End(W))\rightarrow H^1(\cG_{\Q_p},\End_{B_{\rm dR}\otimes_{\Q_p}L}(W_{\rm dR}))$ between tangent spaces is surjective (cf. \cite[\href{https://stacks.math.columbia.edu/tag/0E3R}{Tag 0E3R}]{stacks-project}).
    
    To see when $H^1(\cG_{\Q_p},\End(W))\rightarrow H^1(\cG_{\Q_p},W_{\rm dR})$ is surjective, we go back to the language of $(\varphi,\Gamma)$-modules (we can also use $B$-quotients as in \cite{kedlaya2009some}). Consider the long exact sequence (see Lemma \ref{lemmaH2weight0} below)
    \[\cdots \rightarrow H^1_{\varphi,\gamma}(\End_{\cR_L}(D_L))\rightarrow H^1_{\varphi,\gamma}(\End_{\cR_L}(D_L)/t\End_{\cR_L}(D_L))\rightarrow H^2_{\varphi,\gamma}(t\End_{\cR_L}(D_L))\rightarrow 0. \]
    Using Lemma \ref{lemmaH1weight0} below, the map $H^{1}_{\varphi,\Gamma}(\End_{\cR_L}(D_L))\rightarrow H^1(\cG_{\Q_p}, W_{\rm dR}(\End_{\cR_L}(D_L)))$ is surjective if and only if $ H^2_{\varphi,\gamma}(t\End_{\cR_L}(D_L))=0$. By local Tate duality, $H^2_{\varphi,\gamma}(t\End_{\cR_L}(D_L))^{\vee}=H^0_{\varphi,\gamma}(t^{-1}\End_{\cR_L}(D_L)^{\vee}(\epsilon))=H^0_{\varphi,\gamma}(t^{-1}D_L\otimes_{\cR_L}D_L^{\vee}(\epsilon))$ which is non zero if and only if there exists a non-zero morphism $f:D_L\rightarrow D_L(\epsilon z^{-1})$ of $(\varphi,\Gamma)$-modules. Let $f\neq 0$ be such a map. The kernel and image of $f$ are $(\varphi,\Gamma)$-modules \cite[Prop. 1.1.1]{berger2008construction}. If $D_L$ is irreducible, then we get an injection $D_L\hookrightarrow D_L(\epsilon z^{-1})$ which must be an isomorphism as both modules have weights zero (cf. Lemma \ref{lemmauniqueweight0}). This is not possible considering $\varphi$-slopes (as $\epsilon z^{-1}(p)=p^{-1}$). Hence we may suppose that $D_L$ is split trianguline, namely there exist smooth characters $\delta_1,\delta_2:\Q_p^{\times}\rightarrow L^{\times}$ and a short exact sequence of $(\varphi,\Gamma)$-modules 
    \[0\rightarrow \cR_L(\delta_1)\rightarrow D_L\rightarrow \cR_L(\delta_2)\rightarrow 0.\]
    We may also suppose that $\cR_L(\delta_1)$ is the rank one kernel of $f$. Then we get an injection $\cR_L(\delta_2)\hookrightarrow D_L(\epsilon z^{-1})$. Hence $\delta_1\epsilon z^{-1}=\delta_2$ since $\Hom_{\varphi,\gamma}(\cR_L(\delta),\cR_L(\delta'))=H^0_{\varphi,\gamma}(\cR_L(\delta'\delta^{-1}))\neq 0$ for two smooth characters $\delta,\delta'$ if and only if $\delta=\delta'$. Then under the assumption that $ D_L$ is not of this form, $D_{\pdR}$ is formally smooth at $D_L$.
\end{proof}
\begin{remark}
    In the case that $D_{\pdR}$ is not smooth at $D_L$, $f_{\bh}^{-1}(D_L)$ may contain non-smooth points, see Lemma \ref{lemmaH2weight0} below.
\end{remark}
\begin{lemma}\label{lemmaH1weight0}
    Suppose that $D_L$ is a $(\varphi,\Gamma)$-module of Hodge-Tate-Sen weights all $0$. Then the map $H^1_{\varphi,\gamma}(D_L)\rightarrow H^1(\cG_{\Q_p},W_{\rm dR}(D_L))$ factors through $H^1_{\varphi,\gamma}(D_L/tD_L)$ and induces an isomorphism $H^1_{\varphi,\gamma}(D_L/tD_L)\simeq H^1(\cG_{\Q_p},W_{\rm dR}^+(D_L))$.
\end{lemma}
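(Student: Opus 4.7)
The plan is to combine the short exact sequence
\[0 \to tD_L \to D_L \to D_L/tD_L \to 0\]
of $(\varphi,\Gamma)$-modules with the standard comparison between $(\varphi,\Gamma)$-cohomology and the hypercohomology of the Mayer-Vietoris complex $[W_e(D)\oplus W_{\dR}^+(D) \to W_{\dR}(D)]$ attached to the associated $B$-pair, and to exploit Sen-theoretic vanishing forced by the hypothesis that $D_L$ has all Sen weights equal to $0$.

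The key computation I plan to establish is the vanishing
\[H^i(\cG_{\Q_p}, tW_{\dR}^+(D_L)) = 0 \text{ for } i = 0, 1, 2.\]
For this I filter $tW_{\dR}^+(D_L)$ by powers of $t$: the graded pieces $t^k W_{\dR}^+(D_L)/t^{k+1}W_{\dR}^+(D_L)$ are isomorphic as $\cG_{\Q_p}$-modules to the Tate twists $(W_{\dR}^+(D_L)/tW_{\dR}^+(D_L))(k)$ of a $\bbC_p$-representation whose Sen weights are all $0$. Hence for $k \geq 1$ these graded pieces are $\bbC_p$-representations with all Sen weights equal to $k \neq 0$, so Sen theory combined with the almost pro-cyclicity of $\Gamma_{\Q_p}$ (which gives that $\gamma-1$ is invertible on the Sen module once $\nabla_{\Sen}$ is invertible) forces all their Galois cohomology to vanish; passing to the inverse limit along the $t$-adic filtration then yields the claimed vanishing.

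Granted this, the first assertion of the lemma is immediate from naturality. Since $W_e(tD_L) = W_e(D_L)$ and $W_{\dR}(tD_L) = W_{\dR}(D_L)$, the Mayer-Vietoris comparison applied to $tD_L$ shows the natural map $H^1_{\varphi,\gamma}(tD_L) \to H^1(\cG_{\Q_p}, W_{\dR}(D_L))$ factors through $H^1(\cG_{\Q_p}, tW_{\dR}^+(D_L)) = 0$, so the composition $H^1_{\varphi,\gamma}(tD_L)\to H^1_{\varphi,\gamma}(D_L)\to H^1(\cG_{\Q_p}, W_{\dR}(D_L))$ vanishes and the long exact sequence attached to the displayed short exact sequence produces the factorization through $H^1_{\varphi,\gamma}(D_L/tD_L)$. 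For the isomorphism I will apply the Mayer-Vietoris comparison functorially to the short exact sequence: since $t$ is a unit in both $B_e$ and $B_{\dR}$, the quotient at the level of complexes of the inclusion $[W_e \oplus tW_{\dR}^+ \to W_{\dR}] \hookrightarrow [W_e \oplus W_{\dR}^+ \to W_{\dR}]$ is $W_{\dR}^+(D_L)/tW_{\dR}^+(D_L)$ sitting in degree zero. A five-lemma comparison of the resulting long exact sequences (using the low-degree vanishing of $H^i(\cG_{\Q_p}, tW_{\dR}^+)$) identifies $H^1_{\varphi,\gamma}(D_L/tD_L)$ with $H^1(\cG_{\Q_p}, W_{\dR}^+(D_L)/tW_{\dR}^+(D_L))$. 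Finally, the short exact sequence $0 \to tW_{\dR}^+ \to W_{\dR}^+ \to W_{\dR}^+/tW_{\dR}^+ \to 0$ together with the same vanishing input gives $H^1(\cG_{\Q_p}, W_{\dR}^+(D_L)) \simeq H^1(\cG_{\Q_p}, W_{\dR}^+(D_L)/tW_{\dR}^+(D_L))$, completing the proof.

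The main obstacle I anticipate is making the last compatibility argument rigorous: the $B$-pair Mayer-Vietoris comparison is stated for genuine $(\varphi,\Gamma)$-modules rather than torsion quotients, so care will be needed to apply it at the level of complexes and to chase a $3\times 3$ diagram of long exact sequences. It is precisely the Sen-theoretic vanishing of $H^i(\cG_{\Q_p}, tW_{\dR}^+)$ for $i\leq 2$ that makes this diagram collapse cleanly, so everything is ultimately controlled by the weight-zero hypothesis.
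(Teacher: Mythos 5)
Your proposal is correct and follows essentially the same route as the paper: the short exact sequence $0\to tD_L\to D_L\to D_L/tD_L\to 0$, the comparison of $(\varphi,\gamma)$-cohomology with the Galois hypercohomology of the $B$-pair complexes $[W_e\oplus W_{\dR}^+\to W_{\dR}]$, the resulting short exact sequence of complexes with quotient $W_{\dR}^+(D_L)/t$ in degree zero, a five-lemma comparison, and the weight-zero vanishing of $H^i(\cG_{\Q_p},tW_{\dR}^+(D_L))$. The only difference is cosmetic: you re-derive the vanishing of $H^i(\cG_{\Q_p},tW_{\dR}^+)$ by the $t$-adic filtration and Sen theory (and use the $H^2$ vanishing to pass from $W_{\dR}^+/t$ to $W_{\dR}^+$), where the paper cites Nakamura and uses instead the weight-zero isomorphism $H^1(\cG_{\Q_p},W_{\dR}^+)\simeq H^1(\cG_{\Q_p},W_{\dR})$.
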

\begin{proof}
    Since $W_{\rm dR}^+(D_L)$ has weights $0$, the map $H^1(\cG_K, W_{\rm dR}^+(D_L))\rightarrow H^1(\cG_{\Q_p}, W_{\rm dR}(D_L))$ is an isomorphism. Moreover $H^1(\cG_{\Q_p}, tW_{\rm dR}^+(D_L))=0$ is $0$ (cf. \cite[Cor. 5.6]{nakamura2014deformations}). We get isomorphisms 
    \[H^1(\cG_{\Q_p}, W_{\rm dR}(D_L))=H^1(\cG_{\Q_p}, W_{\rm dR}^+(D_L))=H^1(\cG_{\Q_p}, W_{\rm dR}^+(D_L)/t).\] 
    There is a factorization $H^1_{\varphi,\gamma}(D_L)\rightarrow H^1(\cG_{\Q_p}, W_{\rm dR}^+(D_L))\rightarrow H^1(\cG_{\Q_p}, W_{\rm dR}(D_L))$, see \cite[\S 2.1]{nakamura2009classification}. The cohomology of $D_L$ and $tD_{L}$ can be computed as $\cG_{\Q_p}$-cohomology of complexes in the first two columns of the following short exact sequence of complexes of $\cG_{\Q_p}$-modules (see \textit{loc. cit.}):
    \begin{center}
        \begin{tikzcd}
            W_e(tD_L)\oplus W_{\rm dR}^+(tD_L)\arrow[r]\arrow[d]
            & W_e(D_L)\oplus W_{\rm dR}^+(D_L)\arrow[d]\arrow[r]
            & W_{\rm dR}^+(D_L)/t\arrow[d]\\ 
            W_{\rm dR}(tD_L)\arrow[r]
            &  W_{\rm dR}(D_L)\arrow[r]
            &0.
        \end{tikzcd}
    \end{center}
    By comparing the long exact sequence for the cohomology of $0\rightarrow tD_L\rightarrow D_L\rightarrow D_L/tD_L\rightarrow 0$ and using five lemma, we see $H^i_{\varphi,\gamma}(D_L/t)\simeq H^i(\cG_{\Q_p},W_{\rm dR}^+(D_L)/t)$ for $i=0,1$. 
\end{proof}
\begin{lemma}\label{lemmaH2weight0}
    If $D_L$ has Hodge-Tate weights all $0$, then $H^2_{\varphi,\gamma}(\End_{\cR_L}(D_L))=0$ and $X_{D_L}$ is formally smooth.
\end{lemma}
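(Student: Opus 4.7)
The plan is to establish $H^2_{\varphi,\gamma}(\End_{\cR_L}(D_L))=0$ via local Tate duality combined with a saturation/Sen-weight argument, and then deduce formal smoothness of $X_{D_L}$ from standard obstruction theory for deformations of $(\varphi,\Gamma)$-modules.

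First, by local Tate duality for $(\varphi,\Gamma)$-modules and the trace self-duality $\End_{\cR_L}(D_L)\simeq\End_{\cR_L}(D_L)^{\vee}$,
\[
H^2_{\varphi,\gamma}(\End_{\cR_L}(D_L))^{\vee}\simeq H^0_{\varphi,\gamma}(\End_{\cR_L}(D_L)(\epsilon))=\Hom_{\varphi,\gamma}(D_L,D_L(\epsilon)),
\]
so it suffices to rule out nonzero morphisms $f:D_L\to D_L(\epsilon)$. Given such an $f$, the target $D_L(\epsilon)$ is $t$-torsion-free, which forces $\ker f$ to be saturated in $D_L$; hence $D_L/\ker f$ is itself a $(\varphi,\Gamma)$-module of rank $r\geq 1$ whose Sen weights are all equal to $0$. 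Let $\overline{\im f}\subset D_L(\epsilon)$ be the saturation of the image: this is a saturated rank-$r$ sub-$(\varphi,\Gamma)$-module with all Sen weights equal to $1$, and the cokernel $\overline{\im f}/(D_L/\ker f)$ is a $(\varphi,\Gamma)$-stable torsion $\cR_L$-module, hence killed by a power of $t$.

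The key step is the determinant comparison. Taking top exterior powers produces an inclusion $\cR_L(\delta_0)\hookrightarrow\cR_L(\delta_1)$ of rank-one $(\varphi,\Gamma)$-modules with $t$-power torsion cokernel, where $\delta_0$ has Sen weight $0$ and $\delta_1$ has Sen weight $r$. By Colmez's classification of morphisms between rank-one $(\varphi,\Gamma)$-modules (the same input already invoked in the proof of Proposition~\ref{propositionformallysmooth}), such a nonzero map forces $\delta_1=\delta_0 z^{-k}$ for some integer $k\geq 0$, so $r=-k\leq 0$, contradicting $r\geq 1$. Hence $\Hom_{\varphi,\gamma}(D_L,D_L(\epsilon))=0$, and therefore $H^2_{\varphi,\gamma}(\End_{\cR_L}(D_L))=0$.

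For the formal smoothness of $X_{D_L}$, I would adapt the Galois $2$-cocycle computation in the proof of Proposition~\ref{propositionformallysmooth} to deformations of $D_L$ itself (rather than to the map $D_{\pdR}$): the obstruction to lifting a deformation along a small surjection $A'\twoheadrightarrow A'/I=A$ in $\cC_L$ is a class in $I\otimes_L H^2_{\varphi,\gamma}(\End_{\cR_L}(D_L))$, which vanishes by the first part. The main delicate point I anticipate is matching the sign conventions carefully (in particular the identification $\cR_L(z)=t\cR_L$ and that $\epsilon$ has Hodge-Tate weight $1$), so that the inequality between Sen weights of a sub-module and its ambient runs in the direction producing the contradiction $r\leq 0$ rather than the other way.
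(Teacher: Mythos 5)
Your argument is correct, but it takes a different route from the paper's for the key vanishing. Both proofs start from local Tate duality, reducing $H^2_{\varphi,\gamma}(\End_{\cR_L}(D_L))=0$ to the vanishing of $\Hom_{\varphi,\gamma}(D_L,D_L(\epsilon))=H^0_{\varphi,\gamma}(\End_{\cR_L}(D_L)(\epsilon))$. At that point the paper argues with period modules: a nonzero map $\cR_L\to \End_{\cR_L}(D_L)(\epsilon)$ is injective and produces a nonzero $\cG_{\Q_p}$-invariant element of $W_{\rm dR}^+$ of a $(\varphi,\Gamma)$-module all of whose Sen weights are $1$, which is impossible because $H^0(\cG_{\Q_p},W_{\rm dR}^+)=\Fil^0D_{\rm dR}=0$ for such a module. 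You instead stay inside the category of $(\varphi,\Gamma)$-modules: saturate the image of a hypothetical $f:D_L\to D_L(\epsilon)$, compare Sen weights of sub/quotient (weights $0$ on the source side, $1$ on the target side), pass to determinants, and invoke the rank-one classification ($H^0_{\varphi,\gamma}(\cR_L(\delta))\neq 0$ only for $\delta=z^{-k}$, $k\geq 0$) to get the weight contradiction $r=-k\leq 0$. This is valid; note, though, that the saturation and the ``cokernel killed by a power of $t$'' step are not really needed — injectivity of $D_L/\ker f\hookrightarrow\overline{\im f}$ between free rank-$r$ modules already gives a nonzero map on top exterior powers — and that the rank-one input you need is the full classification with weights, slightly more than the smooth-character special case actually used in the proof of Proposition \ref{propositionformallysmooth}. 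Your route is more elementary (no $B_{\rm dR}$-periods) but longer; the paper's is shorter and uses the same weight-$1$ phenomenon directly at the level of $W_{\rm dR}^+$. For the formal smoothness of $X_{D_L}$, the paper simply cites \cite[Prop.\ 3.6]{Chenevier2011fern}; your sketch (obstruction to lifting along a small surjection lies in $I\otimes_L H^2_{\varphi,\gamma}(\End_{\cR_L}(D_L))$, which vanishes) is exactly the content of that reference and is fine.
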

\begin{proof}
    By the local Tate duality,
    \[H^2_{\varphi,\gamma}(\End_{\cR_L}(D_L))=H^0_{\varphi,\gamma}((D_L^{\vee}\otimes_{\cR_L} D_L)^{\vee}(\epsilon))^{\vee}=\Hom_{\varphi,\gamma}(\cR_L,D_L\otimes_{\cR_L} D_L^{\vee}(\epsilon))^{\vee}.\]
    Suppose that there is a non-zero map $g:\cR_L\rightarrow D_L\otimes_{\cR_L} D_L^{\vee}(\epsilon)$, which must be an injection and induces $B_{\dR}^+\hookrightarrow W_{\rm dR}^+(D_L\otimes_{\cR_L} D_L^{\vee}(\epsilon))$. However, all Hodge-Tate-Sen weights of the $(\varphi,\Gamma)$-module $D_L\otimes_{\cR_L} D_L^{\vee}(\epsilon)$ are equal to $1$,  
    \[H^0(\cG_{\Q_p}, W_{\rm dR}^+(D_L\otimes_{\cR_L} D_L^{\vee}(\epsilon)))=\Fil^0D_{\rm dR}(D_L\otimes_{\cR_L} D_L^{\vee}(\epsilon))=0.\]
    Hence $H^2_{\varphi,\gamma}(\End_{\cR_L}(D_L))=0$. The formally smooth statement is \cite[Prop. 3.6]{Chenevier2011fern}.
\end{proof}
We choose a trivialization $\alpha_L:D_{\pdR}(D_L)\simeq L^2$ and let $X_{x_{\pdR}}^{\square}$ be the completion of $\frg$ at the corresponding matrix $\nu_{L}\in\frg$. Let $X_{D_L}^{\square}:=X_{D_L}\times_{X_{x_{\pdR}}}X_{x_{\pdR}}^{\square}$, defined in \cite[\S 3.5]{breuil2019local}. Below, for a groupoid $X$ fibered over $\cC_L$, write $|X|$ for the corresponding functor taking isomorphism classes.
\begin{proposition}\label{propositionflatversalmaps}
    The morphism $X_{D_L}\rightarrow X_{x_{\pdR}}$ is relatively representable and is flat for all $D_L$ in the sense of maps between versal rings.
\end{proposition}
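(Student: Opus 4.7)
The plan is to split the proposition into representability and flatness. For relative representability, both $X_{D_L}^{\square}$ and $X_{x_{\pdR}}^{\square}$ are pro-representable by complete Noetherian local $L$-algebras: the latter is the completion of $\frg$ at $\nu_L$, regular of dimension $4$, while the former is pro-represented by a ring $R_{D_L}^{\square}$ by Schlessinger's criterion, using finite-dimensionality of the $(\varphi,\Gamma)$-cohomologies of $\End_{\cR_L}(D_L)$. Relative representability of $X_{D_L} \to X_{x_{\pdR}}$ then follows from the algebraicity of the change-of-framing $G$-action on $\frg$.

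For flatness, I would split into two cases according to whether the hypothesis of Proposition \ref{propositionformallysmooth} applies. If $D_L$ is not a twist of an extension $[\cR_L - t^{-1}\cR_L(\epsilon)]$, then Proposition \ref{propositionformallysmooth} gives that $D_{\pdR}: X_{D_L} \to X_{x_{\pdR}}$ is formally smooth, and the induced map of versal rings $R_{x_{\pdR}}^{\square} \to R_{D_L}^{\square}$ between Noetherian complete local rings is formally smooth, hence flat.

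The remaining exceptional case, where $D_L$ is a twist by a character of $[\cR_L - t^{-1}\cR_L(\epsilon)]$, is the main obstacle and I would handle it via miracle flatness. After reducing by twisting to $D_L = [\cR_L - t^{-1}\cR_L(\epsilon)]$, and using that $R_{x_{\pdR}}^{\square}$ is regular of dimension $4$, miracle flatness reduces the claim to showing that $R_{D_L}^{\square}$ is Cohen-Macaulay and that $\dim R_{D_L}^{\square} = 4 + \dim R_{D_L}^{\square}/\frm_{R_{x_{\pdR}}^{\square}}R_{D_L}^{\square}$. The dimensions can be controlled via the Euler characteristic formula for $H^i_{\varphi,\gamma}(\End(D_L))$, together with the analysis of the one-dimensional obstruction space coming from the nonvanishing $\Hom_{\varphi,\gamma}(D_L, D_L(\epsilon z^{-1})) \neq 0$ identified in the proof of Proposition \ref{propositionformallysmooth}; the relative tangent space over $R_{x_{\pdR}}^{\square}$ should match the surjectivity defect of $H^1_{\varphi,\gamma}(\End(D_L)) \to H^1(\cG_{\Qp}, W_{\rm dR}(\End(D_L)))$ appearing there. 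The technically hardest step is the Cohen-Macaulay property: I would attempt to present $R_{D_L}^{\square}$ explicitly as a complete intersection by writing down generators and relations for framed lifts of the associated $B$-pair $W(D_L)$, where the relations are cut out by the finitely many matrix-coefficient obstruction equations coming from the single nontrivial $H^2$-class; failing an explicit presentation, one may appeal to structural results on rank-two trianguline deformation rings, which are classically known to be complete intersections of the expected dimension.
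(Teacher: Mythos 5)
Your overall skeleton (generic case by formal smoothness, exceptional case by miracle flatness over the regular $4$-dimensional ring $S^{\square}$) matches the paper, but the execution of the exceptional case has a genuine gap, and you have misplaced where the difficulty lies. First, the Cohen--Macaulay property of $R^{\square}_{D_L}$ is not the hard step and there is no ``single nontrivial $H^2$-class'' obstructing deformations of $D_L$: since $D_L$ has Hodge--Tate weights $(0,0)$, the paper's Lemma \ref{lemmaH2weight0} gives $H^2_{\varphi,\gamma}(\End_{\cR_L}(D_L))=0$, so the framed deformation ring is formally smooth, i.e.\ regular of dimension $8$ (by the Euler characteristic formula), hence automatically Cohen--Macaulay. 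The nonvanishing class you are thinking of lives in $H^2_{\varphi,\gamma}(t\End_{\cR_L}(D_L))$ and obstructs smoothness of the \emph{map} $D_{\pdR}$ (Proposition \ref{propositionformallysmooth}), not deformations of $D_L$ itself; your plan to present $R^{\square}_{D_L}$ as a complete intersection, or to invoke trianguline deformation rings (which parametrize a different moduli problem, since deformations of $D_L$ need not be trianguline), is therefore unnecessary and off target.

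The step that actually carries the content is the dimension equality $\dim R^{\square}_{D_L} = 4 + \dim R^{\square}_{D_L}/\frm_{S^{\square}}R^{\square}_{D_L}$, equivalently the upper bound $\dim R^{\square}_{D_L}/\frm_{S^{\square}} \le 4$ (the lower bound is automatic from Krull), and your proposal to ``control'' it by Euler characteristics and a matching of the relative tangent space with the surjectivity defect of $H^1_{\varphi,\gamma}(\End(D_L))\rightarrow H^1(\cG_{\Q_p},\End_{B_{\dR}\otimes L}(W_{\dR}))$ cannot work: the fiber is singular at the relevant point, so tangent-space computations overshoot. Concretely, in the split case $D_L=\cR_L\oplus t^{-1}\cR_L(\epsilon)$ with $\nu_L=0$ the fiber is the framed de Rham (equivalently semi-stable) locus, identified with the space of pairs $(\varphi,N)$ with $N\varphi=p\varphi N$; its tangent space at $(\varphi_0,0)$ has dimension $5$ while its dimension is $4$ (the paper cites \cite[Prop.\ 2.1]{hellmann2023derived}), so the defect-counting heuristic yields $5$, not the needed $4$. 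The paper instead computes the fiber dimension directly: in the split case via the $(\varphi,N)$-module description just mentioned, and in the non-split case via the chain $|X^{\square}_{D_L,\nu_L}|\rightarrow|X_{D_L,0}|$ (formally smooth of relative dimension $1$, Lemma \ref{lemmaforchangenu}), the factorization $R_{D_L}=R^{z^{-1}\epsilon}_{D_L}\widehat{\otimes}_LR_{\cR_L}$, and a Krull principal-ideal argument showing the trace $h$ of the universal nilpotent is a nonzerodivisor-type element (nonzero because one can exhibit trianguline deformations of nonzero weight). None of these ingredients appear in your outline, so the exceptional case remains unproved. (Separately, your treatment of relative representability via pro-representability of the framed functors and ``algebraicity of the change-of-framing action'' does not address the unframed groupoid morphism, which has nontrivial automorphisms in the split case; the paper reduces via $B$-pairs to $X_{D_L}\simeq X_{W_e}$ and quotes \cite[Lem.\ 3.5.3]{breuil2019local}.)
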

\begin{proof}
    Let $\cM_L=D_L[\frac{1}{t}]$ and let $X_{\cM_L}$ be the groupoid of deformations of $\cM_L$ in \cite[\S 3.3]{breuil2019local}. By \cite[Lem. 3.5.3]{breuil2019local}, the map $X_{\cM_L}\rightarrow X_{x_{\pdR}}$ is relatively representable. Let $W=(W_e,W_{\rm dR}^+):=(W_e(D_L),W_{\rm dR}^+(D_L))$ be the $B$-pair of $D_L$. We only need to show that the map $X_{D_L}\simeq X_{W_e}\times_{X_{W_{\rm dR}}}X_{W_{\rm dR}^+}\rightarrow X_{\cM_L}\simeq X_{W_e}$ is relatively representable (we used \cite[Prop. 3.5.1]{breuil2019local}). We reduce to show that $X_{W_{\rm dR}^+}\rightarrow X_{W_{\rm dR}}$ is relatively representable. This is true and in our case $X_{W_{\rm dR}^+}\simeq X_{W_{\rm dR}}$ when $W_{\rm dR}^+$ has Hodge-Tate weight $0$ (see Lemma \ref{lemmauniqueweight0} or \cite[Prop. 3.1]{wu2021local}). Note that we get $X_{D_L}\simeq X_{W_e}$.

    We may suppose that the map $X_{D_L}^{\square}=\Spf(R_{D_L}^{\square})\rightarrow X_{x_{\pdR}}^{\square}=\Spf(S^{\square})$ is induced by a continuous local morphism $S^{\square}\rightarrow R_{D_L}^{\square}$ of complete Noetherian local rings. We prove that this map is flat. By Proposition \ref{propositionformallysmooth}, this map is formally smooth, hence flat, if $D_L$ is not an extension of $t^{-1}\cR_L(\epsilon\delta)$ by $\cR_L(\delta)$ for a character $\delta$ of $\Q_p^{\times}$. Otherwise, by miracle flatness \cite[\href{https://stacks.math.columbia.edu/tag/00R3}{Tag 00R3}]{stacks-project} and Lemma \ref{lemmaH2weight0}, it's enough to show that the fiber $R^{\square}_{D_L}/\frm_{S^{\square}}$ has codimension $\dim S^{\square}=4$ in $R^{\square}_{D_L}$.
    
    We first calculate the dimension of $R_{D_L}^{\square}$. Let $A=L[\epsilon]=L[\epsilon]/\epsilon^2$. The fibers of $|X_{D_L}^{\square}|(A)\rightarrow |X_{D_L}(A)|$ over given $(D_{A},\iota_A)\in |X_{D_L}(A)|$ are isomorphisms $\alpha_A:D_{\pdR}(D_{A})\simeq A^2$ parametrized by $\End_{L}(L^2)$. Two deformations given by $\alpha_A,\alpha'_A$ are equivalent (give the same object in $|X_{D_L}^{\square}|(A)$) if and only if there exists an isomorphism $W_{e,A}\simeq W_{e,A}$ (in bijection with $H^0(\cG_{\Q_p}, \End_{B_e\otimes_{\Q_p}L}(W_{e}))$) inducing $(\alpha')^{-1}\alpha$. The dimension of $|X_{D_L}(A)|$ is the dimension of $H^1(\cG_{\Q_p},\End_{B_e\otimes_{\Q_p}L}(W_{e}))$. The composite 
    \begin{align*}
        H^0(\cG_{\Q_p},\End_{B_e\otimes_{\Q_p}L}(W_e))\rightarrow H^0(\cG_{\Q_p},\End_{B_{\dR}\otimes_{\Q_p}L}(W_{\dR}))\\\simeq \End_{\mathrm{Rep}_L(\bG_a)}(D_{\pdR}(W_{\dR}))\hookrightarrow \End_{L}(D_{\pdR}(W_{\dR}))
    \end{align*} 
    is injective. Hence
    \[\dim_L X_{D_L}^{\square}(A)=\dim_L H^1(\cG_{\Q_p}, \End_{B_e\otimes_{\Q_p}L}(W_{e}))+4-\dim_L H^0(\cG_{\Q_p}, \End_{B_e\otimes_{\Q_p}L}(W_{e}))=8\]
    by Euler characteristic formula and vanishing of $H^2$ (Lemma \ref{lemmaH2weight0}).
    
    If $D_L=\cR_L\oplus \cR_L(t^{-1}\epsilon)$, $\nu_L=0$, the quotient $R^{\square}/\frm_{S^{\square}}$ pro-represents the functor sending $A\in\cC_L$ to the groupoid of $(D_{A},\iota_A,\alpha_A)$ where $\alpha_A:D_{\pdR}(D_A)\simeq A^n$ and $D_A$ is de Rham. Since $ D_L$ is semi-stable, all its de Rham deformations are semi-stable by \cite[Thm. 0.9]{berger2002representations} and we have $D_{\pdR}(D_A)=D_{\mathrm{st}}(D_A)$ for such deformations. By the equivalence in \cite{berger2008equations}, the category of semi-stable $(\varphi,\Gamma)$-modules of Hodge-Tate weights $(0,0)$ is equivalent to the category of $(\varphi,N)$-modules of rank $2$. Hence the de Rham locus is the deformation space of matrices $(\varphi,N)\in \GL_2\times\frg$ such that $N\varphi=p\varphi N$. This space has dimension $4$ by \cite[Prop. 2.1]{hellmann2023derived}.

    If $D_L$ is a non-split extension of $t^{-1}\cR_L(\epsilon)$ by $\cR_L$, then $\End_{\varphi,\gamma}(D_L)=L$ and the sheaf $|X_{D_L}|$ is pro-represented by a deformation ring $R_{D_L}$ of dimension $\dim_LH^1_{\varphi,\gamma}(\End_{\cR_L}(D_L))=5$ \cite[Prop. 3.4]{Chenevier2011fern} (even though $X_{D_L}$ is not equivalent to $|X_{D_L}|$). The fiber product $X_{D_L,\nu_L}^{\square}:=\Spf(R^{\square}_{D_L})\times_{\Spf(S^{\square})}\Spf(S^{\square}/\frm_{S^{\square}})$ pro-represents the groupoid fibered over $\cC_L$ sending $A\in\cC_L$ to the groupoid of $(D_{A},\iota_A,\alpha_A)$ where $\alpha_A:D_{\pdR}(D_A)\simeq A^n$ such that $\nu_A=\nu_L$ under the trivialization $\alpha_A$. Let $X_{D_L,0}$ be the groupoid sending $A$ to $(D_A,\iota_A)\in X_{D_L}(A)$ such that coefficients of Sen polynomials (given by $\mathrm{tr}(\nu_A)$ and $\det(\nu_A)$) of $D_A$ vanish. Then $|X_{D_L,0}|$ is pro-represented by a quotient $R_{D_L,0}$ of $R_{D_L}$. Consider the map $|X_{D_L,\nu_L}^{\square}|\rightarrow |X_{D_L,0}|$. This map is formally smooth and of relative dimension $1$ by Lemma \ref{lemmaforchangenu} below. Thus $\dim_LR_{D_L,\nu_L}^{\square}=\dim_LR_{D_L,0}+1$. We show that $\dim_LR_{D_L,0}=3$, or has codimension $2$ in $R_{D_L}$.  Let $X_{D_L}^{z^{-1}\epsilon}$ be the deformation problem parametrizing deformations of $D_L$ with fixed determinant $z^{-1}\epsilon$ (or $\cR_{A}(z^{-1}\epsilon)$). Then $|X_{D_L}^{z^{-1}\epsilon}|$ is pro-represented by a complete Noetherian local ring $R_{D_L}^{z^{-1}\epsilon}$. Let $R_{\cR_L}$ be the dimension $2$ universal deformation ring of the trivial rank one $(\varphi,\Gamma)$-module. We have $R_{D_L}=R_{D_L}^{z^{-1}\epsilon}\widehat{\otimes}_LR_{\cR_L}$ (see Lemma \ref{lemmasquarerootcharacter} below). Since fixed weight deformation of the trivial character has dimension one, it's enough to show that the fixed determinant deformation ring is flat over $L[[h]]$ where $h$ is sent to the element in $R_{D_L}^{z^{-1}\epsilon}$ given by the trace of the universal nilpotent operator $\nu$. By Krull's principal ideal theorem, a minimal prime of $R^{z^{-1}\epsilon}_{D_L}$ containing $h$ has either height one or height zero. If all minimal primes containing $h$ has height one, then $\dim R^{z^{-1}\epsilon}_{D_L}/h=\dim R^{z^{-1}\epsilon}_{D_L}-1=2$ as desired. If the minimal prime containing $h$ has height zero, since $R^{z^{-1}\epsilon}_{D_L}$ is integral (even regular, as is for $R_{D_L}$), we see $h=0$ in $R^{z^{-1}\epsilon}_{D_L}$. This is not possible since we can construct trianguline deformations of the form $D_A=[\cR_A(\delta_A^{-1})-t^{-1}\cR_A(\delta_A\epsilon)]$ for some $\delta_A:\Q_p^{\times}\rightarrow A^{\times}$ and $A=L[\epsilon]$ such that the weight of $\delta_A$ is not zero and $D_A$ deforms $D_L$ (the cokernel of $H^1_{\varphi,\gamma}(\cR_{A}(z\delta_A^{-2}\epsilon^{-1}))\rightarrow H^1_{\varphi,\gamma}(\cR_L(z\epsilon^{-1}))$ maps injectively into $H^2_{\varphi,\gamma}(\cR_L(z\epsilon^{-1}))=0$).
\end{proof}

\begin{lemma}\label{lemmaforchangenu}
    The map $|X_{D_L,\nu_L}^{\square}|\rightarrow |X_{D_L,0}|$ in the above proof is formally smooth of relative dimension $1$.
\end{lemma}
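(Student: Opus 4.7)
The plan is to verify formal smoothness via the infinitesimal lifting criterion, with the key input being a Lie-theoretic calculation specific to the regular nilpotent $\nu_L \in \frg$ arising in the non-split case.

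Let $A' \twoheadrightarrow A$ be a small surjection in $\cC_L$ with square-zero kernel $I$, and suppose given $(D_{A'}, \iota_{A'}) \in |X_{D_L,0}|(A')$ restricting to $(D_A, \iota_A)$, together with a framing $\alpha_A$ with $\alpha_A \nu_A \alpha_A^{-1} = \nu_L$. The first step is to pick any $A'$-linear lift $\alpha_{A'}$ of $\alpha_A$; this exists because $D_{\pdR}(D_{A'})$ is a free $A'$-module of rank $2$ reducing to $D_{\pdR}(D_A)$. Under $\alpha_{A'}$, the operator $\nu_{A'}$ becomes a matrix $\tilde\nu = \nu_L + x \in \frg(A')$ with $x \in \frg \otimes_L I$. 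The nilpotency conditions $\mathrm{tr}(\nu_{A'}) = \det(\nu_{A'}) = 0$, combined with $I^2 = 0$, translate into the two linear conditions $\mathrm{tr}(x) = 0$ and $\mathrm{tr}(\mathrm{adj}(\nu_L)\,x) = 0$ on $x$.

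The substantive step is to show that $x$ lies in the image of $\mathrm{ad}_{\nu_L}\colon y \mapsto [y, \nu_L]$. In the non-split case the operator $\nu_L$ is a regular (nonzero) nilpotent in $\mathfrak{gl}_2$; taking the standard form $\nu_L = \begin{pmatrix} 0 & 1 \\ 0 & 0 \end{pmatrix}$, a direct $2\times 2$ computation gives
\[
\mathrm{image}(\mathrm{ad}_{\nu_L}) = \{z \in \frg : \mathrm{tr}(z) = 0,\ z_{21} = 0\},
\]
which is exactly the $2$-dimensional subspace cut out by the two nilpotency conditions. This is the infinitesimal form of the statement that the regular nilpotent orbit is Zariski-open in the nilpotent cone $\cN$. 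Writing $x = [y, \nu_L]$ for some $y \in \frg \otimes_L I$ and setting $g := 1 - y \in \GL_2(A')$, one has $g \equiv 1 \pmod I$, and since $[y, x] \in I^2 = 0$ one checks that $g\,\tilde\nu\,g^{-1} \equiv \nu_L \pmod{I^2}$. Hence $g\alpha_{A'}$ is the desired framing lifting $\alpha_A$ and mapping $\nu_{A'}$ to $\nu_L$, which proves formal smoothness.

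For the relative dimension, the fiber of the map over an $L$-point $(D_A, \iota_A)$ consists of framings $\alpha_A$ sending $\nu_A$ to $\nu_L$, modulo automorphisms of the deformation $(D_A, \iota_A)$. Such framings form a torsor under the centralizer $C_G(\nu_L) \subset \GL_2$, which has dimension $2$ for the regular nilpotent $\nu_L$. Since $D_L$ is non-split, $\mathrm{End}_{\varphi,\gamma}(D_L) = L$, so the automorphisms of the deformation act on $D_{\pdR}$ through scalars, giving a $1$-dimensional subgroup of $C_G(\nu_L)$. Therefore the fibers have dimension $2 - 1 = 1$. The only genuine obstacle is the Lie-theoretic image computation identifying $\mathrm{image}(\mathrm{ad}_{\nu_L})$ with the tangent space to $\cN$ at $\nu_L$, which goes through precisely because $\nu_L$ is regular and not merely nilpotent.
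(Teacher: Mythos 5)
Your proof is correct and takes essentially the same approach as the paper: your computation that $\mathrm{ad}_{\nu_L}$ surjects onto $\{z:\mathrm{tr}(z)=0,\ z_{21}=0\}$ and the adjustment $g=1-y$ is exactly the paper's explicit solution of $M\nu_{A'}M^{-1}=\nu_L$ with $M\equiv I_2 \bmod I$, just phrased Lie-theoretically. The relative-dimension count is also the paper's: the tangent-space fiber is a torsor under the $2$-dimensional centralizer of the regular nilpotent $\nu_L$, modulo the $1$-dimensional action coming from $\End_{\varphi,\gamma}(D_L)=L$, giving dimension $1$.
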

\begin{proof}
    Let $A'\rightarrow A=A'/I$ be a surjection in $\cC_L$ such that $I^2=0$. Let $\nu_{A'}\in \End_{A'}({A'}^2)$ such that $\nu_{A'}\equiv \nu_L=\mtwo{0}{1}{}{0}\mod I$ and $\det(\nu_{A'})=\mathrm{tr}(\nu_{A'})=0$. We claim that there exists $M\in I_2+IM_2(A')$ (where $I_2$ is the identity matrix) such that $M\nu_{A'}M^{-1}=\nu_L$. To show the claim, write $\nu_{A'}=\mtwo{a}{1+b}{c}{d}$ for $a,b,c,d\in IA'$ and suppose that $M=I_2+\mtwo{x}{y}{z}{w}$ for $x,y,z,w\in IA'$. Since $I^2=0$, one can calculate that $M\nu_{A'}=\nu_LM$ if and only if $\mtwo{z}{w-x}{}{-z}=\mtwo{a}{b}{c}{d}$. As $\det(\nu_{A'})=\mathrm{tr}(\nu_{A'})=0$ implies that $d=-a$ and $c=0$, the solutions exist.

    The above discussion shows that the map is formally smooth. To see the relative dimension, we only need to calculate the difference between tangent spaces. Let $A=L[\epsilon]$. The fiber in $|X_{D_L,\nu_L}^{\square}|(A)$ over given $(D_{A},\iota_{A})\in |X_{D_L,0}|(A)$ (by the above discussion we may suppose that $\nu_A=\mtwo{0}{1}{}{0}$) consists of matrices $M$ of the form $I_2+\epsilon\mtwo{x}{y}{0}{x}$ for $x,y\in L$ which span a space of dimension two. Two such matrices give the same object in $|X_{D_L,\nu_L}^{\square}|(A)$ if and only if there exists an automorphism $D_{A}\rightarrow D_{A}$ which reduces to identity modulo $\epsilon$ (determined by an element in $L=H^0_{\varphi,\gamma}(\End_{\cR_L}(D_L))$) and induces the automorphism of $D_{\pdR}(D_A)$. Hence the fibers between tangent spaces have dimension one. 
\end{proof}
\begin{lemma}\label{lemmasquarerootcharacter}
    Let $A\in \cC_L$ and let $\delta_{A}:\Q_p^{\times} \rightarrow A^{\times}$ be a continuous character such that $\delta_A\equiv 1\mod \frm_A$. Then $\delta_{A}^{\frac{1}{2}}:\Q_p^{\times} \rightarrow A^{\times},x\mapsto \delta_A(x)^{\frac{1}{2}}:=\sum_{i\geq 0}\binom{\frac{1}{2}}{i}(\delta_A(x)-1)^i$ defines the unique character such that $\delta_A^{\frac{1}{2}}\equiv 1\mod \frm_A$ and $\delta_A=(\delta_A^{\frac{1}{2}})^2$.
\end{lemma}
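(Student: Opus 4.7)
The plan is to reduce everything to the standard fact that the multiplicative group $1+\frm_A$ is uniquely $2$-divisible. Since $A\in\cC_L$ is an Artinian local $L$-algebra, the maximal ideal $\frm_A$ is nilpotent, so for each $x\in\Q_p^{\times}$ the element $y:=\delta_A(x)-1\in\frm_A$ is nilpotent. Therefore the binomial series $\sum_{i\geq 0}\binom{1/2}{i}y^i$ is a finite sum, which shows that the formula defining $\delta_A^{1/2}(x)$ makes sense and yields an element of $1+\frm_A$. Continuity of $\delta_A^{1/2}$ follows immediately from continuity of $\delta_A$ and the fact that on $1+\frm_A$ the map $u\mapsto u^{1/2}$ defined by the (truncated) binomial series is continuous (in fact given by polynomial operations).

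Next I would verify the homomorphism property. The cleanest route is to note that, since $A$ contains $\Q\subset L$ and $\frm_A$ is nilpotent, the truncated exponential and logarithm
\[\exp:\frm_A\stackrel{\simeq}{\longrightarrow}1+\frm_A,\quad \log:1+\frm_A\stackrel{\simeq}{\longrightarrow}\frm_A\]
are mutually inverse isomorphisms of abelian groups. The formal power series identity $(1+y)^{1/2}=\exp(\tfrac{1}{2}\log(1+y))$ then gives
\[\delta_A^{1/2}(x)=\exp\!\bigl(\tfrac{1}{2}\log\delta_A(x)\bigr),\]
so $\delta_A^{1/2}$ is a composition of group homomorphisms $\Q_p^{\times}\stackrel{\delta_A}{\to}1+\frm_A\stackrel{\log}{\to}\frm_A\stackrel{\frac{1}{2}}{\to}\frm_A\stackrel{\exp}{\to}1+\frm_A$ and squaring gives $(\delta_A^{1/2}(x))^2=\exp(\log\delta_A(x))=\delta_A(x)$, as required.

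For uniqueness, let $\chi:\Q_p^{\times}\to A^{\times}$ be any continuous character with $\chi\equiv 1\mod\frm_A$ and $\chi^2=\delta_A$. Applying the group isomorphism $\log$ gives $2\log\chi=\log\delta_A$ in $\frm_A$. But $\frm_A$ is an $L$-vector space, hence in particular a $\Q$-vector space, and therefore uniquely $2$-divisible. Thus $\log\chi=\tfrac{1}{2}\log\delta_A$ and $\chi=\exp(\tfrac{1}{2}\log\delta_A)=\delta_A^{1/2}$.

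The proof is essentially formal, so there is no real obstacle; the only point requiring (minor) care is to justify that the three descriptions of the square root—the naive binomial series, the expression $\exp(\tfrac{1}{2}\log(\cdot))$, and the uniqueness argument via $\log$—agree on $1+\frm_A$, which is a standard formal power series manipulation valid because $\frm_A$ is nilpotent and $A$ is a $\Q$-algebra.
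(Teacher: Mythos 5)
Your proof is correct. Note that the paper states this lemma without any proof at all (it is treated as a standard fact), so there is no argument of the author's to compare against; your write-up simply supplies the missing verification. The route you take is the natural one: nilpotency of $\frm_A$ makes the binomial series a finite sum, the formal identity $(1+y)^{1/2}=\exp(\tfrac{1}{2}\log(1+y))$ in $\Q[[y]]$ (valid since both sides square to $1+y$ with constant term $1$) turns $\delta_A^{1/2}$ into $\exp\circ\tfrac{1}{2}\circ\log\circ\,\delta_A$, giving the homomorphism and squaring properties, and uniqueness follows from the unique $2$-divisibility of the $\Q$-vector space $\frm_A$ via $\log$ — an argument that, as you implicitly use, does not even need continuity of the competing square root. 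The only point worth making explicit is the one you already flag: the identification of the truncated binomial series with $\exp(\tfrac{1}{2}\log(\cdot))$ on $1+\frm_A$, which is exactly what lets the pointwise-defined series be multiplicative.
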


We will use the flatness in Proposition \ref{propositionflatversalmaps} in the following situation. Let $\frY=\Sp(A)$ be an affinoid over $L$ together with a $(\varphi,\Gamma)$-module $D_A$. Let $I\subset A$ be the ideal generated by coefficients of the Sen polynomial cutting out the locus where $D_A$ has Sen weights $(0,0)$ and $\frY^{\wedge}$ the completion along this locus, $A^{\wedge}$ the completion with respect to $I$ (cf. Appendix \ref{sectionappendixformalfunction}). Let $h:\Sp(A)\rightarrow \frX_2$ and $h^{\wedge}:\frY^{\wedge}\rightarrow (\frX_{2})_0^{\wedge}$ be the morphism of stacks induced by $D_A$ and $(D_{A/I^n})_n$. Let $ D_{\pdR}:\frY^{\wedge}\rightarrow (\frX_2)_0^{\wedge}\stackrel{D_{\pdR}}{\rightarrow}\frg/G$ be the composite and let $D_{\pdR}^{\square}:\frY^{\wedge,\square}:=\frY^{\wedge}\times_{\frg/G}\frg\rightarrow \frg$ be the base change. 
\begin{corollary}\label{corollaryflatlocalrings}
    In the above situation, suppose that $h$ is smooth in the sense of versal maps: for any $L'$-point $y:\Sp(L')\rightarrow \Sp(A)$ and $x:\Sp(L')\stackrel{y}{\rightarrow}\Sp(A)\stackrel{h}{\rightarrow}\frX_2$, the induced map $\cF_{\Sp(A),y}\rightarrow \cF_{\frX_2,x}=X_{D_{L'}}$ is formally smooth. Then the map  $D_{\pdR}^{\square}:\frY^{\wedge,\square} \rightarrow \frg$ is flat in the sense that maps between versal rings at points with residue fields finite over $L$ are flat. 
\end{corollary}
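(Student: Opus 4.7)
The plan is to pick an $L'$-point $y^\square$ of $\frY^{\wedge,\square}$ (with $L'/L$ finite) and factor the map $D_{\pdR}^\square$ near $y^\square$ as
\[\frY^{\wedge,\square}\;\xrightarrow{h^{\wedge,\square}}\; (\frX_2)_0^{\wedge,\square}:=(\frX_2)_0^\wedge\times_{\frg/G}\frg\;\xrightarrow{D_{\pdR}^\square}\;\frg,\]
and then treat the two factors separately at the level of versal rings. Write $y\in\frY^\wedge$ for the image of $y^\square$, $D_{L'}$ for the corresponding $(\varphi,\Gamma)$-module (which automatically has Sen weights $(0,0)$ since $y$ lies in $\frY^\wedge$), and $\nu_{L'}\in\frg(L')$ for the nilpotent operator recorded by the trivialization.

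First I would unwind the relevant deformation groupoids. The observation is that for any $A'\in\cC_{L'}$ the space $\Sp(A')$ has a single point with residue field $L'$ and Sen weights $(0,0)$, so every deformation of $D_{L'}$ over $\cR_{A'}$ is automatically of weight $0$. This gives canonical identifications $\cF_{\frY^\wedge,y}=\cF_{\Sp(A),y}$ and $\cF_{(\frX_2)_0^\wedge,x}=X_{D_{L'}}$, and after rigidification $\cF_{(\frX_2)_0^{\wedge,\square},x^\square}=X_{D_{L'}}^\square$ with $\cF_{\frg,\nu_{L'}}=\Spf S^\square$. In particular $\cF_{\frY^{\wedge,\square},y^\square}$ equals the fiber product $\cF_{\Sp(A),y}\times_{X_{D_{L'}}}X_{D_{L'}}^\square$.

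Next I would invoke the two key inputs. Proposition \ref{propositionflatversalmaps}, applied at $D_{L'}$, gives exactly that the second factor $X_{D_{L'}}^\square\to\Spf S^\square$ is flat between versal rings. The smoothness hypothesis on $h$ at $y$ is literally the formal smoothness of $\cF_{\Sp(A),y}\to X_{D_{L'}}$; since formal smoothness of deformation functors is preserved under arbitrary base change of groupoids on $\cC_{L'}$, the first factor $\cF_{\frY^{\wedge,\square},y^\square}\to X_{D_{L'}}^\square$ is then formally smooth as well, hence flat at the level of versal rings.

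Composing the two, the versal ring of $\frY^{\wedge,\square}$ at $y^\square$ becomes flat over $S^\square$, which is the desired conclusion. The substantive analytic input is already encapsulated in Proposition \ref{propositionflatversalmaps}; the rest of the argument is essentially formal (base change and composition of flat morphisms of deformation functors), so I do not expect a serious obstruction. The one mildly delicate bookkeeping step is the identification $\cF_{(\frX_2)_0^\wedge,x}=X_{D_{L'}}$, which rests only on the observation that Artinian test algebras over $L'$ see no Sen-weight obstruction beyond the residue point.
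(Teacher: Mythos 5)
Your proposal is correct and follows essentially the same route as the paper: identify $\cF_{\frY^{\wedge,\square},y^{\square}}$ with the fiber product $\cF_{\frY^{\wedge},y}\times_{X_{D_{L'}}}X_{D_{L'}}^{\square}$, use the smoothness hypothesis (preserved under base change) to get formal smoothness over $X_{D_{L'}}^{\square}$, and then compose with the flatness of $X_{D_{L'}}^{\square}\rightarrow \cF_{\frg,\nu_{L'}}$ from Proposition \ref{propositionflatversalmaps}. Your bookkeeping remark that Artinian test algebras over $L'$ see no Sen-weight condition beyond the residue point is exactly the (implicit) reason the completion along the weight-$0$ locus does not change the deformation groupoids, as in the paper's earlier remark on versal rings of completions.
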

\begin{proof}
    An $L'$-point $y^{\square}:\Sp(L')\rightarrow \frY^{\wedge,\square}$ corresponds to a map $\Sp(L')\rightarrow A$ such that the pullback $D_{L'}$ has weights $(0,0)$ together with a framing $\alpha_{L'}:D_{\pdR}(D_{L'})\simeq (L')^2$. Let $\nu_{L'}\in \frg(L')$ be the nilpotent element. By the difinition of $\frY^{\wedge,\square}$, the deformation problem $\cF_{\frY^{\wedge,\square},y^{\square}}$ sends $A'\in\cC_{L'}$ to pairs $(\alpha_{A'}:D_{\pdR}(D_A\otimes_AA')\simeq (A')^2,A\rightarrow A')$  where $A\rightarrow A'$ is a morphism such that $A\rightarrow L'$ factors through $A'$ and $\alpha_{A'}\equiv \alpha_{L'}\mod \frm_{A'}$. Then $(D_{A'}=D_A\otimes_AA',\iota_{A'}:D_{A'}\otimes_{A'}L'=D_{L'},\alpha_{A'})\in X_{D_{L'}}^{\square}(A')$ deforming $x^{\square}=(D_{L'},\iota_{L'},\alpha_{L'})\in X_{D_{L'}}^{\square}(L')$. By our assumption $\cF_{\frY^{\wedge,\square},y^{\square}}=\cF_{\frY^{\wedge},y}\times_{\cF_{\frg/G,\nu_{L'}}}\cF_{\frg,\nu_{L'}}=\cF_{\frY^{\wedge},y}\times_{X_{D_{L'}}}X_{D_{L'}^{\square}}$ is formally smooth over $X_{D_{L'}^{\square}}$. By Proposition \ref{propositionflatversalmaps}, the versal ring map which induces $X_{D_{L'}^{\square}}\rightarrow \cF_{\frg,\nu_{L'}}$ is flat. Hence the composite map $\cF_{\frY^{\wedge,\square},y^{\square}}\rightarrow \cF_{\frg,\nu_{L'}}$ is also flat.
\end{proof}

\begin{remark}\label{remarktrivialtorsor}
    Locally on $\frY_1$, one can choose a trivialization $D_{\pdR}(D_{A/I})$ on $\frY_1$ where $\frY_n=\Sp(A/I^n)$. Then $\frY_1^{\square}=\frY_1\times \GL_2$ and $D_{\pdR}(D_{\frY_1})$ is free with a basis on $\frY_1$. We can lift this basis by Nakayama lemma to a basis of $D_{\pdR}(D_{A^{\wedge}}):=\varprojlim_nD_{\pdR}(D_{A/I^n})$ which is a finite free $A^{\wedge}$-module by Lemma \ref{lemmaGAGAaffinoid}. We get a trivialization $\frY^{\wedge,\square}=\frY^{\wedge}\times \GL_2$.

    Cover $\frY^{\wedge,\square}$ by affinoids of the form $\Sp(B/I)\times U$ where $\Sp(B)\subset \Sp(A)$ and $U\subset \GL_2$ are affinoid opens. Let $C=B\widehat{\otimes}_L\cO(U)$ and $C^{\wedge}$ be its $I$-adic completion. The morphism $\Sp(C)^{\wedge}=\Sp(B)^{\wedge}\times U\rightarrow \frg$ of ringed sites factors through an affinoid $V\subset \frg$. In fact, write $\frg=\cup_{s\in\bbN}\frg_{\leq p^s}:=\cup_{s\in\bbN}\Sp(L\langle p^sa,p^sb,p^sc,p^sd\rangle)$. The map $\Sp(B)^{\wedge}\times U\rightarrow \frg$ is determined by sending $a,b,c,d$ to the matrix coefficients of $\nu_{C^{\wedge}}\in \End_{C^{\wedge}}(D_{\pdR}(D_{C^{\wedge}}))$ under the trivialization $ D_{\pdR}(D_{C^{\wedge}})\simeq (C^{\wedge})^2$ on $\Sp(C)^{\wedge}$. Take $s$ such that $p^sa,\cdots, p^sd$ are topologically nilpotent in $C/I$. Then for any $n\geq 1$, $p^sa,\cdots, p^sd$ are also topologically nilpotent in $C/I^n$ and induces $\varinjlim_n \Sp(C/I^n)\rightarrow \frg_{\leq p^s}$. The ring maps $\cO(\frg_{\leq p^s})\rightarrow C^{\wedge}$ and $L[a,b,c,d]\rightarrow C^{\wedge}$ are flat by Corollary \ref{corollaryflatlocalrings} and Lemma \ref{lemmaflatlocalrings} below.
\end{remark}    
\begin{lemma}\label{lemmaflatlocalrings}
    Let $A$ be an Noetherian ring with an ideal $I$ and $I$-adic completion $A^{\wedge}$. Let $B$ be another Noetherian ring and $g:B\rightarrow A^{\wedge}$ be a morphism of rings. Suppose that for any maximal ideal $\frm$ of $A^{\wedge}$ and $\frn=g^{-1}(\frm)$, the homomorphism $\widehat{B}_{\frn}\rightarrow \widehat{A}^{\wedge}_{\frm}$ of complete local rings is flat. Then the ring map $g$ is flat itself.
\end{lemma}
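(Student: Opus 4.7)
The plan is to reduce flatness of $g$ to the pointwise hypothesis by checking flatness localization-by-localization, and then to descend along the faithfully flat completion maps of the Noetherian local rings involved.

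First I would show that it suffices to prove, for each maximal ideal $\frm \subset A^\wedge$ with $\frn := g^{-1}(\frm)$, that $(A^\wedge)_\frm$ is flat over $B_\frn$. Since $A^\wedge$ is Noetherian, an $A^\wedge$-module vanishes if and only if each of its localizations at maximal ideals vanishes; applied to the kernel of $M \otimes_B A^\wedge \to M' \otimes_B A^\wedge$ for an arbitrary injection $M \hookrightarrow M'$ of $B$-modules, this reduces flatness of $A^\wedge$ over $B$ to flatness of each $(A^\wedge)_\frm$ over $B$. Since $B \to B_\frn$ is flat, the latter is equivalent to flatness of $(A^\wedge)_\frm$ over $B_\frn$.

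Next, fix such $\frm$ and $\frn$. Both $B_\frn$ and $(A^\wedge)_\frm$ are Noetherian local rings, so the completion maps $B_\frn \to \widehat{B}_\frn$ and $(A^\wedge)_\frm \to \widehat{(A^\wedge)}_\frm$ are faithfully flat. The composite
\[
B_\frn \longrightarrow \widehat{B}_\frn \longrightarrow \widehat{(A^\wedge)}_\frm
\]
is flat: the first arrow is flat by faithful flatness of completion, and the second is flat by hypothesis. By the universal property of completion of Noetherian local rings this composite coincides with
\[
B_\frn \longrightarrow (A^\wedge)_\frm \longrightarrow \widehat{(A^\wedge)}_\frm,
\]
so the base change of the $B_\frn$-module $(A^\wedge)_\frm$ along the faithfully flat ring map $(A^\wedge)_\frm \to \widehat{(A^\wedge)}_\frm$ is flat over $B_\frn$. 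Faithfully flat descent of flatness then implies that $(A^\wedge)_\frm$ itself is flat over $B_\frn$, as desired.

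The argument is essentially bookkeeping; there is no serious obstacle. The only points requiring care are the reduction to maximal ideals of $A^\wedge$ (using Noetherianity of $A^\wedge$) and the identification of the two factorizations of $B_\frn \to \widehat{(A^\wedge)}_\frm$, so that the descent step is applied to the correct $B_\frn$-module structure.
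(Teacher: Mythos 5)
Your proof is correct and follows essentially the same route as the paper: reduce flatness of $g$ to flatness of the local maps $B_{\frn}\rightarrow (A^{\wedge})_{\frm}$ at maximal ideals of $A^{\wedge}$ (the paper cites the Stacks Project criterion where you argue this directly), and then obtain the local flatness from the hypothesis via faithful flatness of the completion maps $B_{\frn}\rightarrow \widehat{B}_{\frn}$ and $(A^{\wedge})_{\frm}\rightarrow \widehat{A}^{\wedge}_{\frm}$ of Noetherian local rings. Your explicit spelling-out of the descent step (flatness of $(A^{\wedge})_{\frm}$ over $B_{\frn}$ descends along the faithfully flat map to its completion) is exactly what the paper leaves implicit.
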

\begin{proof}
    Since $I$ is in the Jacobson radical of $A^{\wedge}$, maximal ideals of $A^{\wedge}$ is in bijection with maximal ideals of $A/I=A^{\wedge}/I$. By \cite[\href{https://stacks.math.columbia.edu/tag/00HT}{Tag 00HT}]{stacks-project}, its enough to show that for all $\frm,\frn$ as above, the map $B_{\frn}\rightarrow (A^{\wedge})_{\frm}$ is flat. Since both $A^{\wedge}$ and $B$ are Noetherian, the maps $(A^{\wedge})_{\frm}\rightarrow \widehat{A}^{\wedge}_{\frm}$ and $B_{\frn}\rightarrow \widehat{B}_{\frn}$ are faithfully flat, the flatness of the map between Zariski local rings follows (by definition). 
\end{proof}
\section{Translations in family for $\GL_2(\Q_p)$}\label{sectiontranslationinfamily}
We will recall the definition of translations for $(\varphi,\Gamma)$-modules in \cite{ding2023change} in the $\GL_2(\Q_p)$ case. We will need some explicit calculations for translations in families. It will be shown that the translation from regular weights to non-regular weights is the same as the change of weights of $(\varphi,\Gamma)$-modules. From now on, $K=\Q_p$.

The Lie algebra $\frg=\mathfrak{gl}_2$ is spanned by 
\[a^+=\mtwo{1}{}{}{0},a^-=\mtwo{0}{}{}{1}, u^+=\mtwo{0}{1}{}{0},u^-=\mtwo{0}{}{1}{0}.\] 
We write
\[\frz=a^++a^-=\mtwo{1}{}{}{1},\frh=a^+-a^-=\mtwo{1}{}{}{-1}\]
The Casimir operator $\frc=\frh^2-2\frh+4u^+u^-\in U(\frg)$. 

Let $A$ be an affinoid algebra over $L$. A $(\varphi, \Gamma)$-module $D_A$ over $\cR_A$ is a $P^+=\mtwo{\Z_p\setminus 0}{\Z_p}{}{1}$-module and is a $\frp^+$-module where $\frp^+=L[a^+,u^+]$ via the identifications 
\[\varphi=\mtwo{p}{}{}{1},\Gamma=\mtwo{\Z_p^{\times}}{}{}{1}, L[[X]]=L[[\mtwo{1}{\Z_p}{}{1}]].\] 
The actions of $u^+$ on $D_A$ is given by $v\mapsto \frac{d}{dz}(1+X)^zv|_{z=0}=\log(1+X)v=tv$ and $a^+$ by $\nabla=\nabla_{\rm Sen}$ by the lemma below.
\begin{lemma}
    Suppose that $D_A^r$ is a $(\varphi,\Gamma)$-module over $\cR_A^r$, then the map $v\mapsto \frac{d}{d\gamma}|_{\gamma=1}\gamma.v, \gamma\in\Z_p^{\times}\simeq \Gamma$ defines the Sen operator $\nabla_{\Sen}$ acting on $D_A^r$.
\end{lemma}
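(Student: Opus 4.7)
The plan is to verify the identity after applying the localization maps $\iota_m : D_A^r \hookrightarrow D_{\dif}^{m,+}(D_A^r)$ for $m \geq m(r)$, where the Sen operator admits its classical description, and then exploit the injectivity of the resulting system.

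First, I would note that for every $v \in D_A^r$ the orbit map $\gamma \mapsto \gamma \cdot v$ is analytic in a neighborhood of $1 \in \Gamma \simeq \Z_p^{\times}$. This follows from the analyticity in $\gamma$ of the action on the Robba ring (the formula $\gamma(T) = (1+T)^{\gamma} - 1$ is analytic in $\gamma$) combined with the continuous semilinear $\Gamma$-action on $D_A^r$, which is the classical local analyticity statement for $\Gamma$-actions on $(\varphi,\Gamma)$-modules over the Robba ring. Hence the formula $\nabla(v) := \frac{d}{d\gamma}|_{\gamma=1}(\gamma \cdot v)$ defines an $L$-linear endomorphism of $D_A^r$. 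Since $\iota_m$ is $\Gamma$-equivariant, $\nabla$ is compatible with the analogous derivative operator on $D_{\dif}^{m,+}(D_A^r)$ and on the quotient $D_{\Sen}^m(D_A^r) = D_{\dif}^{m,+}(D_A^r)/t$.

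Next, on $D_{\Sen}^m(D_A^r)$, Sen's theorem gives that the action of an open subgroup $\Gamma_m\subset\Gamma$ is locally analytic and, near the identity, is described by $\gamma \cdot v = \exp(\log(\gamma)\,\nabla_{\Sen})\,v$ (using the cyclotomic character to identify $\Gamma \simeq \Z_p^{\times}$). Expanding $\log(\gamma) = (\gamma - 1) - \tfrac{(\gamma-1)^2}{2} + \cdots$ and differentiating at $\gamma = 1$ gives $\frac{d}{d\gamma}|_{\gamma=1}(\gamma \cdot v) = \nabla_{\Sen}(v)$ on $D_{\Sen}^m$. Consequently $\nabla$ and $\nabla_{\Sen}$ coincide after projection to each $D_{\Sen}^m(D_A^r)$; since $D_A^r$ embeds into $\varprojlim_m D_{\dif}^{m,+}(D_A^r)$ compatibly with the $t$-adic filtrations (Appendix \ref{sectionappendixfamilies}), the two operators must agree on $D_A^r$ itself.

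The main delicate point is ensuring the analyticity of the orbit maps on $D_A^r$ so that the pointwise derivative is literally defined; this is the standard local analyticity of the $\Gamma$-action and is what allows us to differentiate before localizing. Granted that, the content of the lemma reduces to the elementary identity $\lim_{\gamma \to 1} \tfrac{\log \gamma}{\gamma - 1} = 1$ transported through the $\Gamma$-equivariant maps $\iota_m$ and the defining characterization of $\nabla_{\Sen}$.
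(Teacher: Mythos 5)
Your proof is correct and takes essentially the same route as the paper, whose proof is just a citation of Berger's field case together with the fact (KPX, Prop.\ 2.2.14) that the $\Gamma$-action on $D_A^r$ extends to the distribution algebra of $\Gamma$ — precisely the local analyticity you invoke to differentiate — after which the identification with the Sen operator is transported through the $\Gamma$-equivariant localizations $\iota_m$, as you do. One cosmetic remark: the relevant embedding is $D_A^r\hookrightarrow\prod_{m\geq m(r)}D_{\dif}^{m,+}(D_A^r)$ (the modules $D_{\dif}^{m,+}$ form a direct, not an inverse, system), but this does not affect the argument.
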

\begin{proof}
    The proof is the same as the case when $A$ is a field using \cite[Lem. 5.2]{berger2002representations}. It is proved in \cite[Prop. 2.2.14]{kedlaya2014cohomology} that the action of $\Gamma$ on $D_A^r$ extends to an action of the distribution algebra of $\Gamma$ which contains the element $a^+$.
\end{proof}
\begin{definition}\label{definitionphgammagmodule}
    A $(\varphi, \Gamma,\frg)$-module over $\cR_A^r$ is a $(\varphi,\Gamma)$-module $D_A^r$ (in the sense of \cite[Def. 2.2.12]{kedlaya2014cohomology}) over $\cR_A^r$ with an $A$-linear continuous action of $U(\frg)$ extending the action of $U(\frp^+)$ such that the $U(\frg)$-action extends continuously to $D_A^{s}$ for all $0<s<r$ and $\varphi u^-=p^{-1}u^-\varphi, \varphi a^-=a^{-}\varphi$ under $\varphi:D_A^r\rightarrow D_A^{r/p}$. A $(\varphi, \Gamma,\frg)$-module over $\cR_A$ is a $(\varphi,\Gamma)$-module over $\cR_A$ with an action of $U(\frg)$ that is the base change from $\cR_A^r$ to $\cR_A$ for a $(\varphi, \Gamma,\frg)$-module over $\cR_A^r$ and some $r>0$.
\end{definition}
A $(\varphi,\Gamma,\frg)$-module is naturally a $(P^+,\frg)$-module defined in a similar way where $P^+$ acts continuously. If $D_A$ is a $(\varphi, \Gamma,\frg)$-module, then the action of $Z(\frg)$ commutes with $\varphi,\Gamma, \cR_A^r$. 

\begin{lemma}\label{lemmastandardgstructure}
    Let $D_A^r$ be a $(\varphi,\Gamma)$-module over $\cR_A^r$ of rank two. Write $P_{\rm Sen}(T)=T^2-\gamma_1T+\gamma_0\in A[T]$ for the Sen polynomial of $D_A$. There exists a unique $A$-linear $\frg$-module structure on $D_A^r$ such that $\frc$ acts on $D_A^r$ by $(\gamma_1^2-4\gamma_0-1)$ and $\frz$ acts by $\gamma_1-1$ making $D_A^r$ (resp. $D_A$) a $(\varphi,\Gamma,\frg)$-module. 
\end{lemma}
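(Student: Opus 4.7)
The plan is to observe that the $(\varphi,\Gamma)$-module structure already fixes the action of $\frp^+$ (with $u^+$ acting as $t$ and $a^+$ as $\nabla_{\Sen}$), and that the requirement that $\frz$ act as the scalar $\gamma_1-1$ forces $a^- = \frz - a^+ = (\gamma_1-1) - \nabla_{\Sen}$. So the only unknown is $u^-$. Using $\frc = \frh^2 - 2\frh + 4u^+u^-$ together with $\frh = 2a^+ - \frz$ and the prescribed eigenvalue of $\frc$, a short algebraic computation yields the identity $t\cdot u^- = -P_{\Sen}(\nabla_{\Sen})$; since $t$ is a nonzero divisor on the projective $\cR_A^r$-module $D_A^r$, this determines $u^-$ uniquely.

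For existence I would define $u^- := -t^{-1}P_{\Sen}(\nabla_{\Sen})$ and first check that it lands in $D_A^r$ rather than only $D_A^r[\tfrac{1}{t}]$. Via the embeddings $\iota_m$, the quotient $D_A^r/tD_A^r$ is a (locally finite) product of the Sen modules $D_{\Sen}^m(D_A)$, each a rank two projective $A\otimes_{\Q_p}K_m$-module on which $\nabla_{\Sen}$ has characteristic polynomial $P_{\Sen}$; Cayley--Hamilton for projective modules then gives $P_{\Sen}(\nabla_{\Sen})D_A^r \subset tD_A^r$, so $u^-$ is a well-defined $A$-linear endomorphism of $D_A^r$.

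Next I would verify the $\mathfrak{gl}_2$-relations. The only nontrivial check is $[u^+,u^-] = \frh$: using $\nabla_{\Sen}(t) = t$ (as $t$ has Hodge--Tate weight one), we get $[\nabla_{\Sen},t] = t$ and hence $t^{-1}P_{\Sen}(\nabla_{\Sen})\,t = P_{\Sen}(\nabla_{\Sen}+1)$, so
\[
[u^+,u^-] \;=\; -P_{\Sen}(\nabla_{\Sen}) + P_{\Sen}(\nabla_{\Sen}+1) \;=\; 2\nabla_{\Sen} - (\gamma_1-1) \;=\; \frh
\]
by the elementary identity $P_{\Sen}(T+1)-P_{\Sen}(T) = 2T-(\gamma_1-1)$. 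The relations $[a^{\pm},u^-] = \mp u^-$ reduce to $[\nabla_{\Sen},t^{-1}] = -t^{-1}$, while the relations among $a^{\pm}$ and $u^+$ are immediate. The $\varphi$-compatibilities $\varphi u^- = p^{-1}u^-\varphi$ and $\varphi a^- = a^-\varphi$ follow from the standard fact that $\varphi$ commutes with $\nabla_{\Sen}$ and from $\varphi\circ t^{-1} = p^{-1}t^{-1}\circ\varphi$. Continuity and the extension to $D_A^s$ for $0<s<r$ are automatic because $t$, $\nabla_{\Sen}$, and $t^{-1}P_{\Sen}(\nabla_{\Sen})$ all act continuously on $D_A^s$.

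The only real obstacle is the family Cayley--Hamilton containment $P_{\Sen}(\nabla_{\Sen})D_A^r \subset tD_A^r$: one must confirm that the polynomial $P_{\Sen}$ appearing in the statement genuinely coincides, after pulling back along each $\iota_m$, with the characteristic polynomial of $\nabla_{\Sen}$ on $D_{\Sen}^m(D_A)$ as a rank two projective $A\otimes_{\Q_p}K_m$-module. Once this is in place, the remainder is a straightforward algebraic verification paralleling Dospinescu's pointwise construction in \cite{dospinescu2012actions}.
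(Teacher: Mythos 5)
Your proposal is correct and follows essentially the same route as the paper: you force $a^-=\frz-a^+$ and $u^-=-t^{-1}P_{\rm Sen}(\nabla)$ from the prescribed actions of $\frz$ and $\frc$, justify $P_{\rm Sen}(\nabla)D_A^r\subset tD_A^r$ via the Sen modules through the maps $\iota_m$ (which is exactly the reason, via Proposition \ref{propositionappendixglueing}, that the paper invokes following Colmez), and verify the bracket $[u^+,u^-]=\frh$ and the $\varphi$-compatibilities by the same computation. Your explicit uniqueness argument (using that $t$ is a nonzero divisor on $D_A^r$) and the direct check of $\varphi u^-=p^{-1}u^-\varphi$ via $\varphi(t)=pt$ are harmless minor variants of what the paper does.
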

\begin{proof}
    This is just \cite[Prop. 2.2]{colmez2018poids}. We declare the (unique) action of $a^-$ by $\frz-a^+$, $\frh=a^+-a^{-}=2\nabla-\frz=2\nabla-\gamma_1+1$ and $u^{-}$ by $\frac{\frc-\frh^2+2\frh}{4u^+}=-\frac{P_{\rm Sen}(\nabla)}{t}$. The last one is possible because $ P_{\rm Sen}(\nabla)D_A\subset tD_A$ by the same reason for \cite[Lem. 1.6]{colmez2018poids} using Proposition \ref{propositionappendixglueing}. We can check this formally defines an action of $\frg$. For example for $v\in D_A$, we have $[u^+,u^-]v=(u^+u^--u^{-}u^+)v=-P_{\rm Sen}(\nabla)+t^{-1}P_{\rm Sen}(\nabla)tv=-(\nabla^2-\gamma_1\nabla+\gamma_0)v+t^{-1}(\nabla^2-\gamma_1\nabla+\gamma_0)tv=(2\nabla-\gamma_1+1)v=\frh.v$ using that $\nabla(tv)=tv+t\nabla(v)$. We also check that $\varphi u^-=p^{-1}u^-\varphi$ using that $\varphi$ commutes with $\frz,\frc$ and $\varphi u^+=p u^+\varphi$. 
\end{proof}

\begin{definition}\label{definitionstandardgmodulestructure}
    We say the $\frg$-module structure in Lemma \ref{lemmastandardgstructure} the standard $\frg$-module structure for a rank $2$ $(\varphi,\Gamma)$-module over $\cR_A$ or $\cR_A^r$.
\end{definition}
\begin{remark}
    It is possible to equip a $(\varphi,\Gamma)$-module with different $\frg$-structures. See \cite[Rem. 2.14]{ding2023change} for more discussions.
\end{remark}
Let $V_k=\Sym^k L^2=\cR_L^+/X^{k+1}$ be the irreducible representation of $\frg$ of highest weight $(k,0)$ (for the Borel subalgebra the algebra of upper-triangular matrices). Here $\cR_L^+=D(\Z_p,L)\subset \cR_L$ is the distribution algebra of $\Z_p$ identified with rigid analytic functions in variable $X$ on the open unit disc. If $D_A^r$ is a $(\varphi,\Gamma,\frg)$-module, then $D_A^r\otimes_L V_k$ is also a $(P^+,\frg)$-module via the diagonal action of $P^+$. The following observation is due to Ding.
\begin{proposition}[{\cite[Prop. 2.1]{ding2023change}}]\label{proptensoralgebraic}
    Suppose that $D_A$ is a $(\varphi,\Gamma,\frg)$-module over $\cR_A$. The diagonal action of $\cR_A^+=D(\Z_p,L)\widehat{\otimes}_LA$ extends to an action of $\cR_A$ making $ D_A\otimes_L V_k$ a $(\varphi,\Gamma,\frg)$-module over $\cR_A$. And there is a filtration 
    \[0\subset D_A\otimes_L X^{k}\cR_L^+/X^{k+1}\subset \cdots\subset D_A\otimes_LX^i\cR_L^+/X^{k+1}\subset\cdots \subset D_A\otimes_L \cR_L^+/X^{k+1}\]
    of $(\varphi,\Gamma)$-modules (as well as $\frp^+$-modules, but not as $\frg$-modules) with graded pieces 
    \[D_A\otimes_L (X^i\cR_L^+/X^{i+1})\simeq t^i D_A\] 
    for $0\leq i\leq k.$
\end{proposition}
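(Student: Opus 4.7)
The plan is to equip $D_A\otimes_LV_k$ with an explicit $\cR_A$-module structure extending the diagonal action of $\cR_A^+$, and then read off the filtration from the $X$-adic filtration on $V_k=\cR_L^+/X^{k+1}$.

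Write $X_D$ and $X_V$ for the actions of $X\in\cR_L^+$ on $D_A^r$ and on $V_k$ respectively, where $\cR_L^+=D(\Z_p,L)$ is viewed as the distribution algebra of the subgroup $\bigl\{\mtwo{1}{*}{0}{1}\bigr\}\subset P^+$. The diagonal action of $X$ on $D_A^r\otimes_LV_k$ is
\begin{equation*}
    X_{\mathrm{diag}}=(1+X_D)(1+X_V)-1=X_D+N,\qquad N:=(1+X_D)X_V.
\end{equation*}
Since $X_V^{k+1}=0$ on $V_k$ and $X_D,X_V$ act on different tensor factors, $N$ commutes with $X_D$ and satisfies $N^{k+1}=0$. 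For $f\in\cR_A^r$ I would define
\begin{equation*}
    f*w:=\sum_{j=0}^{k}\frac{f^{(j)}(X_D)}{j!}\,N^j\cdot w,\qquad w\in D_A^r\otimes_LV_k,
\end{equation*}
a finite sum. The main lemma to verify is that $f\mapsto f*$ is a continuous ring homomorphism $\cR_A^r\to\End_A(D_A^r\otimes_LV_k)$: the identity $(fg)*=(f*)\circ(g*)$ reduces, after regrouping, to the Leibniz rule $(fg)^{(m)}=\sum_{i+j=m}\binom{m}{i}f^{(i)}g^{(j)}$ and uses crucially that $X_D$ and $N$ commute. Applied to $X$ itself this gives $X*=X_{\mathrm{diag}}$, and more generally on $\cR_A^+$ the Taylor sum recovers the diagonal action on group-like elements, so $*$ extends the diagonal $\cR_A^+$-action. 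In the $A$-basis $(v_i\otimes X^j)_{i,j}$ coming from a basis of $D_A^r$ and the monomial basis of $V_k$, the matrix of $f*$ is upper-triangular with diagonal blocks $f(X_D)$, so $D_A^r\otimes_LV_k$ is free of rank $n(k+1)$ over $\cR_A^r$ and becomes a $(\varphi,\Gamma)$-module over $\cR_A$ by base change. Semi-linearity of $\varphi$ with respect to $*$ follows by the same Taylor/Leibniz argument combined with the ring identity $\varphi(f)(X)=f((1+X)^p-1)$; it can be verified term-by-term in the Taylor expansion of $f(X_D+N)$. The $\frg$-module structure is the pre-existing diagonal one.

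For the filtration, put $F_i:=D_A\otimes_LX^i\cR_L^+/X^{k+1}$. The formula for $f*$ shows $f*F_i\subseteq F_i$ because $N$ raises $X$-degree by one. Stability under $\varphi$ and $\Gamma$ uses $\varphi(X^i)\in p^iX^i+X^{i+1}\cR_L^+$ and $\gamma(X^i)\in\gamma^iX^i+X^{i+1}\cR_L^+$ for $\gamma\in\Z_p^{\times}$; stability under $u^+=t$ is immediate from $tX^i\in X^{i+1}\cR_L^+$; and stability under $a^+=\nabla$ follows from $\nabla(X^i)=it(1+X)X^{i-1}\in iX^i+X^{i+1}\cR_L^+$. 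Non-stability under $u^-$ appears already on $V_k$, since $u^-$ lowers weights by one. Finally, the identification $F_i/F_{i+1}\simeq t^iD_A$ via $[v\otimes X^i]\mapsto t^iv$ intertwines the $\cR_A$-action, $\varphi$ and $\Gamma$ by virtue of the identities $\varphi(t)=pt$ and $\gamma(t)=\gamma t$, which absorb the leading $p^i$, $\gamma^i$ twists on $X^i\cR_L^+/X^{i+1}\cR_L^+$ into $\varphi(t^i)$ and $\gamma(t^i)$ respectively.

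The main obstacle is giving a clean proof that $f\mapsto f*$ is a ring homomorphism that is semi-linear with respect to $\varphi$; the argument is elementary but requires careful bookkeeping using the commutation of $X_D$ with $N$ together with the chain rule applied to $\varphi(X)=(1+X)^p-1$. Everything else in the statement is then a direct consequence of the explicit formula.
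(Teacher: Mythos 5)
Your proposal is correct and follows essentially the same route as the paper: your operator $f\mapsto f(X_D+N)$ with $N=(1+X_D)X_V$ expands to exactly the paper's formula $f(X)(g\otimes v)=\sum_{j}\frac{1}{j!}(1+X)^{j}f^{(j)}(X)\,g\otimes X^{j}v$, and the filtration, $\varphi$-semilinearity via $\varphi(X)=(1+X)^p-1$, and the identification of graded pieces with $t^iD_A$ are handled as in the paper. The only small slip is asserting that $D_A^r\otimes_LV_k$ is \emph{free} of rank $n(k+1)$ by choosing a basis of $D_A^r$: the module $D_A^r$ is only finite projective in general, so one should instead conclude projectivity (e.g.\ from your own filtration, since extensions of projective modules are projective, which is what the paper does).
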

\begin{proof}
    By definition, there exists $r$ such that $D_A$ is the base change of $D_A^r$ from a finite projective module over $\cR_A^r$. We will prove (and will use) the statement for $D_A^r$.
    
    The same proof of \textit{loc. cit.} shows that the action of $\cR_A^{+}$ extends to an action of $\cR_A^r$ on $D_A^r\otimes_LV_k$. We give a direct proof here. Notice that for $X=[1]-1\in L[[\Z_p]]\subset D(\Z_p,L)$, $g\in D_A^r$ and $v\in V_k$, we have $X(X^{-1}g\otimes v)=g\otimes v+X^{-1}g\otimes Xv+g\otimes Xv$. We then get $ \frac{1}{X}(g\otimes v)=X^{-1}g\otimes v-\frac{1}{X}(\frac{X+1}{X}g\otimes Xv)=X^{-1}g\otimes v-\frac{X+1}{X^2}g\otimes Xv+\frac{1}{X}((\frac{X+1}{X})^2g\otimes X^2v)=\sum_{i=0}^k(-\frac{X+1}{X})^iX^{-1}g\otimes X^iv$. Hence $X^{-m}(g\otimes v)=\sum_{i=0}^k\binom{m+i-1}{m-1}(-\frac{X+1}{X})^{i}X^{-m}g\otimes X^iv$. And we conclude that $f(X)(g\otimes v)=\sum_{i=0}^k\frac{1}{i!}(X+1)^{i}f^{(i)}(X)g\otimes X^iv$ where $f^{(i)}$ denotes the $i$-th derivative of $f$. The extension of the action follows since if $f\in \cR_A^r=\cO(\bbU^r\times \Sp(A))$, so is its derivative for $X$. Then $D_A^r\otimes_LV_k$ becomes a projective $\cR_A^r$-module with a $\cR_A^r$-filtration. The projectivity is due to that extensions of projective modules are projective. 
    
    We extend the $\varphi,\Gamma$ actions diagonally. For example $\varphi: D_A^{r}\rightarrow \cR^{r/p}_A\otimes_{\cR^r}D_A^r$ gives a map $D_A^r\otimes_L \cR_L^+/X^{k+1}\rightarrow (\cR^{r/p}_A\otimes_{\cR^r_A}D_A^r)\otimes_L \cR_L^+/X^{k+1}$. There is an $\cR_{A}^{r/p}$-isomorphism $\Psi:\cR^{r/p}_A\otimes_{\cR_A^r}(D_{A}^r\otimes_L \cR_L^+/X^{k+1})\simeq (\cR^{r/p}_A\otimes_{\cR^r_A}D_A^r)\otimes_L \cR_L^+/X^{k+1}: f(X)\otimes (g\otimes v)\mapsto \sum_{i=0}^k(\frac{1}{i!}(X+1)^if^{(i)}(X) \otimes g)\otimes X^iv$ for $f(X)\in \cR_A^{r/p},g\in D_A^r, v\in \cR_L^+/X^{k+1}$ which is well-defined and extends the $\cR_L^{+}$-actions (given by the diagonal action of $\Z_p$). We get the desired $\varphi$-action map: $D_A^r\otimes_L \cR_L^+/X^{k+1}\rightarrow \cR^{r/p}_A\otimes_{\cR_A^r}(D_{A}^r\otimes_L \cR_L^+/X^{k+1}):g\otimes m\rightarrow \Psi^{-1}(\varphi(g)\otimes \varphi(m))$. The equality $\Psi^{-1}(\varphi (f(X+1).(g\otimes m)))=f((X+1)^p).(\Psi^{-1}(\varphi(g)\otimes \varphi(m)))$ is equivalent to that $\varphi (f(X+1).(g\otimes m))=f((X+1)^p).(\varphi(g)\otimes \varphi(m))$ which can be checked formally.

    See Lemma \ref{lemmatwist} below for the last isomorphism.
\end{proof}

\begin{lemma}\label{lemmabasechangetensor}
    If $A\rightarrow B$ is a morphism of affinoid algebras and $D_A$ is an $(\varphi,\Gamma,\frg)$-module, then $D_A\otimes_{\cR_A}\cR_B$ is also a $(\varphi,\Gamma,\frg)$-module and there is an isomorphism $(D_A\otimes_L\cR_L^+/X^{k+1})\otimes_{\cR_A}\cR_B\simeq ((D_A\otimes_{\cR_A}\cR_B)\otimes_L\cR_L^+/X^{k+1})$ as $(\varphi,\Gamma,\frg)$-modules.
\end{lemma}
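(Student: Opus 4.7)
The plan is to verify both assertions by unwinding the explicit formulas in Lemma \ref{lemmastandardgstructure} and Proposition \ref{proptensoralgebraic}, and then observing that each construction is natural in the underlying $(\varphi,\Gamma)$-module, so the argument is essentially bookkeeping and naturality.

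First I would fix $r>0$ with $D_A = \cR_A \otimes_{\cR_A^r} D_A^r$ for a $(\varphi,\Gamma,\frg)$-module $D_A^r$ over $\cR_A^r$ as in Definition \ref{definitionphgammagmodule}, and set $D_B^r := D_A^r \otimes_{\cR_A^r} \cR_B^r$. The $(\varphi,\Gamma)$-module structure on $D_B^r$ is the standard base change of finite projective $(\varphi,\Gamma)$-modules (cf.\ \cite{kedlaya2014cohomology}). The $A$-linear $U(\frg)$-action on $D_A^r$ induces a $B$-linear action on $D_B^r$ by extension of scalars along $A\to B$. The defining identities $\varphi u^- = p^{-1} u^- \varphi$ and $\varphi a^- = a^- \varphi$, as well as commutation with $\Gamma$, are preserved because they are already $A$-linear identities on $D_A^r$; continuity of the action on $D_B^s$ for $0 < s < r$ is inherited from the corresponding statement on $D_A^s$ since $D_B^s = D_A^s \otimes_{\cR_A^s}\cR_B^s$.

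Next I would exhibit the claimed isomorphism as the identity on underlying modules, where abstractly both sides are given by $D_A^r \otimes_L \cR_L^+/X^{k+1} \otimes_A B$ (and analogously after passing to $\cR_A$, $\cR_B$). The relevant four structures --- the $\cR_B$-action from Proposition \ref{proptensoralgebraic}, the $\varphi$-action, the $\Gamma$-action, and the $U(\frg)$-action --- are each given by formulas depending only on the $\cR_A$-module, $\varphi$-, and $\Gamma$-actions on $D_A$ together with fixed combinatorial data on $\cR_L^+/X^{k+1}$. For instance, the $\cR_B$-action is determined by the Leibniz-type formula
\[f(X)(g \otimes v) \;=\; \sum_{i=0}^k \tfrac{1}{i!}(X+1)^i f^{(i)}(X)\, g \otimes X^i v,\]
which makes sense verbatim after base change; the $\varphi$-action is defined through the isomorphism $\Psi$ constructed in the proof of Proposition \ref{proptensoralgebraic}, whose formula is natural in $D_A^r$; and the $U(\frg)$-structure is determined by Lemma \ref{lemmastandardgstructure} from the Sen polynomial and Sen operator, both of which commute with base change.

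The main obstacle is purely notational: one must check that $\cR_B^r$ really behaves as the (appropriately completed) base change of $\cR_A^r$ along $A\to B$, so that the various finiteness, projectivity, and continuity statements transport correctly. These are standard from the formalism of \cite{kedlaya2014cohomology}, and no conceptually new input is required beyond the naturality of the constructions in Lemma \ref{lemmastandardgstructure} and Proposition \ref{proptensoralgebraic}.
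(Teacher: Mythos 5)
The crux of the first assertion is exactly the step you dispatch with ``the $A$-linear $U(\frg)$-action on $D_A^r$ induces a $B$-linear action on $D_B^r$ by extension of scalars along $A\to B$,'' and as written this is a gap. The $U(\frg)$-action is \emph{not} $\cR_A^r$-linear ($a^+$ acts by the derivation $\nabla$ and $u^-$ by $-P_{\rm Sen}(\nabla)/t$), while $D_B^r$ is defined as a base change along $\cR_A^r\to\cR_B^r$, so there is no literal ``extension of scalars along $A\to B$'' to invoke; the fact that the defining identities are $A$-linear identities on $D_A^r$ does not by itself produce operators on $D_B^r$. The paper's proof isolates the one genuinely non-formal point: since $Z(\frg)$ acts $\cR_A^r$-linearly it extends to $D_B$ by scalar extension, the $\frp^+$-part ($\nabla$, multiplication by $t$) is part of the $(\varphi,\Gamma)$-structure of $D_B$, and the only issue is whether $u^-$ is defined, i.e.\ whether $(\frc-(2\nabla-\frz)^2+2(2\nabla-\frz))D_B\subset tD_B$; this $t$-divisibility is checked using $D_{\Sen}^{m}(D_B)=D_{\Sen}^{m}(D_A)\otimes_{A}B$ together with Proposition \ref{propositionappendixglueing}. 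Your route can be repaired --- one can identify $D_B^{[s,r]}\simeq D_A^{[s,r]}\widehat{\otimes}_A B$ (using finite projectivity of $D_A^{[s,r]}$ over $\cR_A^{[s,r]}$) and extend each continuous $A$-linear operator to the completed tensor product, then pass to the limit over $s$ --- but that identification and the continuity argument are precisely the content you defer as ``purely notational,'' and they (or the paper's divisibility argument) must appear explicitly for the proof to stand.

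For the second assertion your map is the paper's map: unwinding your ``identity under canonical identifications'' gives exactly $(g\otimes v)\otimes f\mapsto \sum_{i=0}^k\frac{1}{i!}(X+1)^{i}f^{(i)}(X)g\otimes X^iv$, which is the formula in the paper's proof. What is still missing on your side is a reason this map is bijective and compatible with all structures: the paper checks that it induces isomorphisms on the graded pieces of the filtration of Proposition \ref{proptensoralgebraic}, hence is an isomorphism of $(\varphi,\Gamma)$-modules, and then gets $\frg$-equivariance for free from compatibility with the $\cR_B$-linear $Z(\frg)$-actions (the $\frg$-structure being determined by these together with the $(\varphi,\Gamma)$-structure, cf.\ Lemma \ref{lemmastandardgstructure}). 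Either include that graded-pieces argument, or make the completed-base-change identification rigorous as above; as it stands, ``the identity on underlying modules'' conflates the twisted $\cR_A$-structure on $D_A\otimes_L\cR_L^+/X^{k+1}$ with the untwisted one and so does not yet constitute a proof.
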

\begin{proof}
    We let $Z(\frg)\subset U(\frg)$ acts on the $(\varphi,\Gamma)$-module $D_B:=D_A\otimes_{\cR_A}\cR_B$ by extending the scalars. To show that $D_B$ is a $(\varphi,\Gamma,\frg)$-module, we need to show that the action of $u^{-}=-\frac{1}{t}(\frc^2-\frh^2+2\frh)$ is defined, or that $(\frc^2-(2\nabla-\frz)^2+2(2\nabla-\frz))D_B\subset t D_B$. Using Proposition \ref{propositionappendixglueing}, that $D_{\Sen}^{m}(D_B)=D_{\Sen}^{m}(D_A)\otimes_{A}B$, the extension of the $u^-$ action follows. The map $(D_A\otimes_L\cR_L^+/X^{k+1})\otimes_{\cR_A}\cR_B\rightarrow ((D_A\otimes_{\cR_A}\cR_B)\otimes_L\cR_L^+/X^{k+1})$ is given by $(g\otimes v)\otimes f\mapsto \sum_{i=0}^k\frac{1}{i!}(X+1)^{i}f^{(i)}(X)g\otimes X^iv$ for $g\in D_A,v\in \cR^+_L/X^{k+1}$. The map induces isomorphisms on graded pieces of the filtration in Proposition \ref{proptensoralgebraic}, hence is an isomorphism of $(\varphi,\Gamma)$-modules over $\cR_B$ itself. It is a $\frg$-isomorphism since it is moreover compatible with the $\cR_B$-linear actions of $Z(\frg)$.  
\end{proof}
We need recall translations of $\frg$-modules.

\begin{lemma}\label{lemmageneralizedeigenspace}
    Let $R$ be a ring.
    \begin{enumerate}
        \item Suppose that $R$ is commutative with a nilpotent ideal $I$ and $V_R$ is an $R$-module equipped with an $R$-linear endomorphism $C$. Let $a,b\in R$ such that $a-b\in I$. Then $V_R\{C=a\}=V_R\{C=b\}$ where $\{C=-\}$ denotes the generalized eigenspace.
        \item Let $R$ be an $L$-algebra. Suppose that $V_R$ is an $R$-module and $Z$ is a commutative algebra finitely generated over $L$ with an $L$-morphism $Z\rightarrow \End_R(V_R)$. Suppose that $V_R$ is locally $Z$-finite (for any $v\in V_R$, $Z.v\subset V_R$ is finite-dimensional over $L$). Then the $R$-morphism $\oplus_{\frm\in\mathrm{SpecMax}(Z)}V_R[\frm^{\infty}]\rightarrow V_R$ is an isomorphism. 
        \item Let $R\rightarrow S$ be a morphism of $L$-algebras. Let $V_S=V_R\otimes_RS$ and let $Z$ act on $V_S$ by extending the scalar. Suppose that $V_R$ is locally $Z$-finite. Then for any $\frm\in \mathrm{SpecMax}(Z)$, the natural map $V_R[\frm^{\infty}]\otimes_RS\rightarrow V_S[\frm^{\infty}]$ is an isomorphism.
    \end{enumerate}
\end{lemma}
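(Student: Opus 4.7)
Since $C$ is $R$-linear and $a,b$ lie in the commutative base ring $R$, the operators $C-a$, $C-b$, and the scalar $a-b \in I$ mutually commute. I would expand
\[
(C-b)^N \;=\; \bigl((C-a) + (b-a)\bigr)^N \;=\; \sum_{k=0}^{N}\binom{N}{k}(C-a)^{k}(b-a)^{N-k}
\]
by the binomial theorem. Choosing $N = n + M - 1$, where $(C-a)^n v = 0$ and $I^M = 0$, every monomial contains either the factor $(C-a)^n$ or the factor $(a-b)^M$, so $(C-b)^N v = 0$. This gives $V_R\{C=a\} \subseteq V_R\{C=b\}$, and the reverse inclusion follows by symmetry.

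\textbf{Plan for part (2).} The idea is to reduce pointwise to a finite-dimensional Artinian decomposition. For $v \in V_R$, the $L$-subspace $Z \cdot v$ is finite-dimensional by hypothesis, so the image of $Z$ in $\End_L(Z\cdot v)$ is a commutative Artinian $L$-algebra. By the structure theorem for Artinian rings it decomposes as a finite product $\prod_{i} Z_{\frm_i}$ indexed by maximal ideals $\frm_i$ of $Z$ containing the annihilator; lifting the orthogonal idempotents $e_i$ from this decomposition yields $v = \sum_i e_i v$ with $e_i v \in V_R[\frm_i^\infty]$, giving surjectivity of the assembly map. For directness, note that since $Z$ is a finitely generated commutative $L$-algebra, distinct maximal ideals $\frm \neq \frm'$ satisfy $\frm + \frm' = Z$, and hence $\frm^n + (\frm')^n = Z$ for every $n$; so any element killed by both powers vanishes, forcing $V_R[\frm^\infty] \cap V_R[(\frm')^\infty] = 0$. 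A routine induction extends this to any finite collection of generalized eigenspaces, and the stated direct sum decomposition is as $R$-modules since $Z$ acts by $R$-linear endomorphisms.

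\textbf{Plan for part (3).} Applying (2) to $V_R$ and tensoring over $R$ with $S$ (which commutes with arbitrary direct sums) gives $V_S \;=\; \bigoplus_{\frm \in \mathrm{SpecMax}(Z)} \bigl(V_R[\frm^\infty] \otimes_R S\bigr)$ as $S$-modules, where each summand embeds in $V_S$ because $V_R[\frm^\infty]$ is an $R$-direct summand of $V_R$. Any element of $V_R[\frm^\infty] \otimes_R S$ is a finite sum $\sum v_i \otimes s_i$ with the $v_i$ commonly killed by some $\frm^N$, hence lies in $V_S[\frm^\infty]$, giving one inclusion. The content of (3) is the reverse inclusion $V_S[\frm_0^\infty] \subseteq V_R[\frm_0^\infty] \otimes_R S$: given $w \in V_S[\frm_0^\infty]$, decompose $w = w_0 + \sum_{\frm \in F} w_\frm$ with $F$ a finite set of maximal ideals different from $\frm_0$, choose integers $n_0, n_\frm$ with $\frm_0^{n_0} w = 0$ and $\frm^{n_\frm} w_\frm = 0$, and apply CRT to write $1 = x + y$ with $x \in \frm_0^{n_0}$ and $y \in \prod_{\frm \in F} \frm^{n_\frm}$. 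Then $w = (x+y)w = yw = y w_0$ (the cross terms vanish), so $w \in V_R[\frm_0^\infty] \otimes_R S$ and $w_\frm = 0$ for $\frm \in F$.

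\textbf{Main obstacle.} The chief subtlety is in part (3): $V_S = V_R \otimes_R S$ is \emph{not} in general locally $Z$-finite, so part (2) does not apply directly to $V_S$. This forces the CRT trick above, which uses only that the finitely many summands appearing in a given element of the tensor product have annihilators whose intersection is coprime to any fixed $\frm_0$.
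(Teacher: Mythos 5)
Your parts (1) and (2) follow essentially the same route as the paper: the binomial expansion of $(C-b)^N$ (note the harmless sign slip: $C-b=(C-a)+(a-b)$, not $(C-a)+(b-a)$, immaterial since both differences lie in $I$), and the reduction of (2) to the standard decomposition of the finite-dimensional $Z$-module $Z\cdot v$ over the Artinian quotient of $Z$. Part (3) is where you genuinely diverge, and your argument is correct, but the ``main obstacle'' you identify is not actually there: $V_S=V_R\otimes_RS$ \emph{is} locally $Z$-finite whenever $V_R$ is. Indeed, any $w\in V_S$ is a finite sum $\sum_{i=1}^n v_i\otimes s_i$, and since $Z$ acts through the first factor, $Z\cdot w$ lies in the $L$-span of the finitely many elements $u_{ij}\otimes s_i$, where $(u_{ij})_j$ is an $L$-basis of $Z\cdot v_i$. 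This is exactly what the paper exploits: it applies (2) to both $V_R$ and $V_S$, observes that $V_R[\frm^{\infty}]\otimes_RS$ maps into $V_S[\frm^{\infty}]$, and concludes by comparing the two direct sum decompositions of $V_S$ (an element of $V_S[\frm_0^{\infty}]$ decomposes along $\oplus_{\frm}(V_R[\frm^{\infty}]\otimes_RS)$ with components in the respective $V_S[\frm^{\infty}]$, so all components away from $\frm_0$ vanish). Your CRT trick — splitting $1=x+y$ with $x\in\frm_0^{n_0}$ and $y$ in a product of powers of the other maximal ideals — is a valid substitute and has the merit of not requiring the observation above, but it is strictly more work than the paper's two-line deduction; you should at least correct the claim that (2) fails to apply to $V_S$.
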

\begin{proof}
    (1) Let $x=b-a\in I$. Let $T=C-a$. Then $(T-x)^s=\sum_{i=0}^s\binom{s}{i}(-1)^{n-i}T^{i}x^{n-i}$. If $v\in V$ is killed by some power of $T$, since it is killed by some power of $x$, it is killed by some power $(T-x)^s=(C-b)^s$. 

    (2) The decomposition (as $L$-spaces) holds for all finite-dimensional $Z$-modules.

    (3) This follows from the decompositions in (2) for $V_R$ and $V_S$ and that $V_R[\frm^{\infty}]\otimes_{R}S$ is mapped into $ (V_R\otimes_RS)[\frm^{\infty}]$.
\end{proof}
For two weights $\lambda,\mu\in \frt^*=\Hom_L(\frt,L)$ such that $\nu=\lambda-\mu\in\Z^2$ is integral, write $\overline{\nu}$ for the dominant weight in the Weyl group orbits of $\nu$ for the linear Weyl group action. We recall the following definitions (see \cite[\S 2.3, \S 2.4.1]{jena2021translation}). For $\lambda\in\frt^*$, let $\frm_{\lambda}\subset Z(\frg)$ be the kernel of the infinitesimal character $\chi_{\lambda}$ attached to $\lambda$ via the Harish-Chandra isomorphism.
\begin{definition}
    Let $M$ be a $U(\frg)$-module.
    \begin{enumerate}
        \item We say $M$ is locally $Z(\frg)$-finite if for any $v\in M$, the subspace $Z(\frg).v$ is finite-dimensional over $L$. 
        \item If $M$ is locally $Z(\frg)$-finite, write $\pr_{|\lambda|}M:=M[\frm_{\lambda}^{\infty}]$.
        \item If $M$ is locally $Z(\frg)$-finite, $T_{\mu}^{\lambda}M:=\pr_{|\lambda|}(\pr_{|\mu|}M\otimes_L L(\overline{\nu}))$.
    \end{enumerate}
\end{definition}

Remark that if $M$ is $Z(\frg)$-finite, then $M\otimes_L L(\overline{\nu})$ is also locally $Z(\frg)$-finite (see \cite[Cor. 2.6 (ii)]{bernstein1980tensor}). Moreover, there is a direct sum decomposition $M=\oplus_{\frm\in \mathrm{SpecMax}(Z(\frg))}M[\frm^{\infty}]$. If $M=M[\frm^{\infty}]$ for some $\frm=\frm_{\lambda}$ and $\lambda+\rho$ is dominant, then
\[ M\otimes V=\oplus_{\mu\in \wt(V)} \mathrm{pr}_{|\lambda+\mu|}(M\otimes V)\]
for any finite dimensional $\frg$-module $V$ where $\mu$ runs over all $\frt$-weights appeared in $V$ (see the end of \cite[\S 2]{bernstein1980tensor}). Moreover, if $0\rightarrow M_1\rightarrow M\rightarrow M_2\rightarrow 0$ is a short exact sequence of $U(\frg)$-modules such that $M_1,M_2$ are locally $Z(\frg)$-finite, then so is $M$.

\begin{lemma}\label{lemmatranslationliealgebra}
    Let $\lambda_1, \cdots ,\lambda_n\in \frt^*$ such that $\lambda_{i+1}-\lambda_{i}\in X^*(\frt)_+$ are dominant integral weights for all $n-1\geq i\geq 1$ and suppose that $\lambda_1+(1,0)$ is dominant. Then for a locally $Z(\frg)$-finite $\frg$-module $M$, there are natural isomorphisms 
    \[T_{\lambda_{n-1}}^{\lambda_n}\cdots T_{\lambda_1}^{\lambda_2}M\simeq T_{\lambda_1}^{\lambda_n}M\simeq \pr_{|\lambda_n|}(\pr_{|\lambda_1|}M\otimes L(\lambda_1-\lambda_2)\otimes\cdots\otimes L(\lambda_{n}-\lambda_{n-1})).\]
\end{lemma}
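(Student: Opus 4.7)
The plan is to proceed by induction on $n$. The base case $n=2$ is immediate from the definition of $T_{\lambda_1}^{\lambda_2}M$: since $\lambda_2-\lambda_1$ is dominant integral, $\overline{\lambda_2-\lambda_1}=\lambda_2-\lambda_1$, and both claimed isomorphisms are tautological. (I read the displayed formula assuming $L(\lambda_1-\lambda_2)$ is a typo for $L(\lambda_2-\lambda_1)$, etc., since $\lambda_1-\lambda_2$ is anti-dominant.) The hypothesis $\lambda_1+(1,0)=\lambda_1+\rho$ dominant, together with the dominance of each $\lambda_{i+1}-\lambda_i$, forces $\lambda_i+\rho$ dominant for all $i$, so the Bernstein-type decomposition recalled in the excerpt applies inductively: whenever $N$ has generalized central character $\chi_\lambda$ with $\lambda+\rho$ dominant, every generalized central character appearing in $N\otimes V$ has the form $\chi_{\lambda+\nu}$ for some $\nu\in\wt(V)$.

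For the inductive step, set $V_i:=L(\lambda_{i+1}-\lambda_i)$ and $N:=\pr_{|\lambda_1|}M\otimes V_1\otimes\cdots\otimes V_{n-2}$. By induction $T_{\lambda_{n-2}}^{\lambda_{n-1}}\cdots T_{\lambda_1}^{\lambda_2}M\simeq\pr_{|\lambda_{n-1}|}N$, so applying $T_{\lambda_{n-1}}^{\lambda_n}$ yields $\pr_{|\lambda_n|}(\pr_{|\lambda_{n-1}|}N\otimes V_{n-1})$. The core task is to show this equals $\pr_{|\lambda_n|}(N\otimes V_{n-1})$, equivalently that $\pr_{|\lambda_n|}(\pr_{|\mu|}N\otimes V_{n-1})=0$ for every central character $\chi_\mu\neq\chi_{\lambda_{n-1}}$ occurring in $N$. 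Iterating the above fact expresses every such $\mu$ as $\lambda_1+\nu$ with $\nu\in\wt(V_1\otimes\cdots\otimes V_{n-2})$, so $\mu$ is bounded in the dominance order above by $\lambda_{n-1}$ and below by $\lambda_1+w_0(\lambda_{n-1}-\lambda_1)$, with analogous bounds on any $\nu'\in\wt(V_{n-1})$.

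The heart of the proof, and the main obstacle, is the central-character computation. Non-vanishing requires $\mu+\nu'=w\cdot\lambda_n$ for some $w\in W=\{1,w_0\}$. For $w=1$, extremality of the highest weight of $V_{n-1}$ forces $\mu=\lambda_{n-1}$. For $w=w_0$ one computes
\[
w_0\cdot\lambda_n-\bigl(\lambda_1+w_0(\lambda_n-\lambda_1)\bigr)=-\bigl((\lambda_1+\rho)-w_0(\lambda_1+\rho)\bigr),
\]
a non-positive multiple of the simple root $\alpha$; this is strictly negative when $\lambda_1+\rho$ is regular, contradicting the lower bound $\mu+\nu'\geq\lambda_1+w_0(\lambda_n-\lambda_1)$, and vanishes when $\lambda_1+\rho$ lies on the wall, forcing $\mu$ and $\nu'$ to attain their lower bounds. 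In this edge case a direct calculation using $w_0(\lambda_1+\rho)=\lambda_1+\rho$ shows $\mu=\lambda_1+w_0(\lambda_{n-1}-\lambda_1)=w_0\cdot\lambda_{n-1}$, so $\chi_\mu=\chi_{\lambda_{n-1}}$ after all, completing the vanishing argument.

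The remaining isomorphism $T_{\lambda_1}^{\lambda_n}M=\pr_{|\lambda_n|}(\pr_{|\lambda_1|}M\otimes L(\lambda_n-\lambda_1))\simeq\pr_{|\lambda_n|}(\pr_{|\lambda_1|}M\otimes V_1\otimes\cdots\otimes V_{n-1})$ follows by the same principle applied to a Clebsch--Gordan decomposition $V_1\otimes\cdots\otimes V_{n-1}\simeq L(\lambda_n-\lambda_1)\oplus R$, where each irreducible summand of $R$ has highest weight strictly below $\lambda_n-\lambda_1$. The weight-range analysis above, applied to each such $L(\xi)$, yields $\pr_{|\lambda_n|}(\pr_{|\lambda_1|}M\otimes L(\xi))=0$, so only the top component contributes.
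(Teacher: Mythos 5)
Your proof is correct and follows essentially the same route as the paper: both rest on the decomposition of $L(\lambda_2-\lambda_1)\otimes\cdots\otimes L(\lambda_n-\lambda_{n-1})$ into $L(\lambda_n-\lambda_1)$ (multiplicity one) plus strictly lower irreducible summands, combined with the standard fact that the generalized central characters of $\pr_{|\lambda|}M\otimes V$ lie among $\chi_{\lambda+\nu}$, $\nu\in\wt(V)$. The only difference is one of detail: you spell out the linkage bookkeeping (the $w=1$ versus $w=w_0$ cases, including the wall case $w_0(\lambda_1+\rho)=\lambda_1+\rho$) that the paper compresses into the citation of Steinberg's theorem and the recalled translation-functor facts.
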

\begin{proof}
    The tensor product $L(\lambda_1-\lambda_2)\otimes\cdots \otimes L(\lambda_n-\lambda_{n-1})$ has a direct summand $L(\lambda_n-\lambda_1)$ of multiplicity one and all the other summands are irreducible representations with highest weight $\mu<\lambda_n-\lambda_1$ by induction using Steinberg's theorem \cite{steinberg1961general}.
\end{proof}

\begin{lemma}\label{lemmageneralizedeigenspacephiGammamodules}
    Let $D_A$ be a $(\varphi,\Gamma,\frg)$-module over $\cR_A$ that is locally $Z(\frg)$-finite.  Let $I$ be the nilradical of $A$.
    \begin{enumerate}
        \item Suppose that $\chi_{1,A},\chi_{2,A}:Z(\frg)\rightarrow A$ are two characters such that $\chi_1\equiv \chi_2 \mod I$. Then $D_A\{Z(\frg)=\chi_{1,A}\}=D_A\{Z(\frg)=\chi_{2,A}\}$.
        \item Let $\lambda\in\frt^*$ and $\chi_A=\chi_{\lambda}$ corresponding to $\frm_{\lambda}\in \mathrm{SpecMax}(Z(\frg))$. Then the generalized eigenspace $D_A\{Z(\frg)=\chi_A\}=D_A[\frm_{\lambda}^{\infty}]$ is a $(\varphi,\Gamma)$-module. 
        \item In the situation of (2). The functor $D_A\{Z(\frg)=\chi_{\lambda}\}$ is exact on locally $Z(\frg)$-finite $(\varphi,\Gamma,\frg)$-modules and its formation commutes with arbitrary base change, i.e., for any map $A\rightarrow B$ of $L$-affinoid algebras, we have $D_A[\frm_{\lambda}^{\infty}]\otimes_{\cR_A}\cR_B\simeq D_B[\frm_{\lambda}^{\infty}]$ where $D_B=D_A\otimes_{\cR_A}\cR_B$. 
    \end{enumerate}
\end{lemma}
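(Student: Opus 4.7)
The plan is to reduce each of the three assertions to the purely algebraic statements of Lemma~\ref{lemmageneralizedeigenspace}, using the crucial fact (noted immediately after Definition~\ref{definitionphgammagmodule}) that the action of $Z(\frg)$ on a $(\varphi,\Gamma,\frg)$-module commutes with $\varphi$, $\Gamma$, and multiplication by $\cR_A^r$, hence is $\cR_A$-linear.

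For part (1), since $Z(\frg)$ is Noetherian (a polynomial ring in two variables by Harish-Chandra), I fix finitely many generators $z_1,\dots,z_k$. Then the kernel of $\chi_{i,A}:Z(\frg)\to A$ is generated by $z_j-\chi_{i,A}(z_j)$, and a standard pigeonhole on monomials gives
\[D_A\{Z(\frg)=\chi_{i,A}\}=\bigcap_{j=1}^k D_A\{z_j=\chi_{i,A}(z_j)\}.\]
The hypothesis $\chi_{1,A}\equiv \chi_{2,A}\bmod I$ means $\chi_{1,A}(z_j)-\chi_{2,A}(z_j)\in I$, so Lemma~\ref{lemmageneralizedeigenspace}(1) applied to each endomorphism $z_j$ individually gives $D_A\{z_j=\chi_{1,A}(z_j)\}=D_A\{z_j=\chi_{2,A}(z_j)\}$, and intersecting yields (1).

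For part (2), Lemma~\ref{lemmageneralizedeigenspace}(2) decomposes $D_A=\bigoplus_{\frm\in\mathrm{SpecMax}(Z(\frg))}D_A[\frm^\infty]$ as $A$-modules, with only finitely many nonzero summands because $D_A$ is finitely generated over $\cR_A$ and each generator has finite-dimensional $Z(\frg)$-orbit. Since $\cR_A$ commutes with $Z(\frg)$, this is a decomposition of $\cR_A$-modules, so each $D_A[\frm^\infty]$ is an $\cR_A$-direct summand of the finite projective module $D_A$, hence finite projective. The same argument applied to $D_A^r$ produces the $\cR_A^r$-model $D_A^r[\frm^\infty]$, with $D_A^r[\frm^\infty]\otimes_{\cR_A^r}\cR_A=D_A[\frm^\infty]$ by Lemma~\ref{lemmageneralizedeigenspace}(3). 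That $\varphi$ and $\Gamma$ preserve the summand is immediate from their commutation with $Z(\frg)$; the isomorphism $\varphi^*D_A^r\simeq \cR_A^{r/p}\otimes_{\cR_A^r}D_A^r$ restricts, by another application of Lemma~\ref{lemmageneralizedeigenspace}(3) along $\varphi:\cR_A^r\to\cR_A^{r/p}$, to the corresponding isomorphism on $\frm$-parts, verifying the $(\varphi,\Gamma)$-module axiom.

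For part (3), exactness of $D_A\mapsto D_A[\frm_\lambda^\infty]$ is immediate: the decomposition of (2) is functorial in the $(\varphi,\Gamma,\frg)$-module and realizes this functor as a direct summand of the identity on the category of locally $Z(\frg)$-finite modules. For base change along $A\to B$, Lemma~\ref{lemmabasechangetensor} ensures that $D_B:=D_A\otimes_{\cR_A}\cR_B$ is again a $(\varphi,\Gamma,\frg)$-module with the base-changed $Z(\frg)$-action, and then Lemma~\ref{lemmageneralizedeigenspace}(3), applied with $R=\cR_A$, $S=\cR_B$, $V_R=D_A$ and the finitely generated $L$-algebra $Z=Z(\frg)$, gives $D_A[\frm_\lambda^\infty]\otimes_{\cR_A}\cR_B\simeq D_B[\frm_\lambda^\infty]$; this is automatically $(\varphi,\Gamma)$-equivariant. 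The most delicate point in the whole argument is the compatibility of the eigenspace decomposition with the $(\varphi,\Gamma)$-model structure in (2), namely that the decomposition behaves well under both the inclusion $\cR_A^r\hookrightarrow \cR_A$ and the Frobenius pullback $\varphi:\cR_A^r\to \cR_A^{r/p}$; both reduce to Lemma~\ref{lemmageneralizedeigenspace}(3) once one uses the $\cR_A^r$-linearity of $Z(\frg)$ built into Definition~\ref{definitionphgammagmodule}.
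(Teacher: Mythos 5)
Your proof is correct and follows essentially the same route as the paper: both reduce all three parts to Lemma \ref{lemmageneralizedeigenspace}, using the commutation of $Z(\frg)$ with $\varphi$, $\Gamma$ and $\cR_A^r$ to get the $(\varphi,\Gamma)$-structure by working with the model $D_A^r$ and transporting the generalized eigenspace decomposition along $\cR_A^r\to\cR_A$ and along $\varphi$. The only point you leave implicit is that the nilradical $I$ of the Noetherian affinoid algebra $A$ is nilpotent, which is what licenses the appeal to Lemma \ref{lemmageneralizedeigenspace}(1) (the paper notes this explicitly); otherwise your argument simply fills in the details of the paper's terse proof.
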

\begin{proof}
     (1) An affinoid algebra $A$ over $L$ is Jacobson and Noetherian. The nilradical $I$ is nilpotent. The statement follows from that $Z(\frg)$ is finitely generated and Lemma \ref{lemmageneralizedeigenspace}.

     (2) Since the action of $Z(\frg)$ commutes with the $(\varphi,\Gamma,\frg)$-module structure on $D_A$, we see $D_A\{Z(\frg)=\chi_A\}$ is a $\cR_A$-module with compatible actions of $\varphi,\Gamma,\frg$. It suffices to show that $D_A\{Z(\frg)=\chi_A\}$ is a $(\varphi,\Gamma,\frg)$-module. By (2) of Lemma \ref{lemmageneralizedeigenspace}, $D_A^r[\frm_{\lambda}^{\infty}]$ is a direct summand of $D_A^r$. Then $D_A^r[\frm_{\lambda}^{\infty}]$ is finite projective over $\cR_A^r$ and we have $D_A\{Z(\frg)=\chi_A\}=\cR_A\otimes_{\cR^r_A}D_A^r\{Z(\frg)=\chi_A\}$. 

     (3) This also follows from Lemma \ref{lemmageneralizedeigenspace}.
\end{proof}
For Sen weights $\bh=(h_1,h_2)\in\Z^2,h_1\leq h_2$, we write 
$\lambda=\lambda_{\bh}=(h_2-1,h_1)$ for the corresponding character of $\frb$ where $\lambda+(1,0)$ is dominant. Then $\chi_{\lambda}$ is the infinitesimal character $Z(\frg)\rightarrow L:\frz\mapsto h_1+h_2-1,\frc\mapsto (h_1-h_2)^2-1$.
\begin{proposition}\label{proptranslationphigammamodule}
    Let $D_A$ be an almost de Rham $(\varphi,\Gamma)$-module of rank two over $\cR_A$ with Sen weights $(h_1,h_2),h_1\leq h_2$, equipped with the standard $U(\frg)$-module structure by Lemma \ref{lemmastandardgstructure}. Then $D_A=D_A\{Z(\frg)=\chi_{\lambda}\}$. And for any $\mu\in X^*(\frt)$, $T_{\lambda}^{\mu}D_A$ is an almost de Rham $(\varphi,\Gamma)$-module over $\cR_A$ provided that $T_{\lambda}^{\mu}D_A\neq 0$. Moreover, the formation of $T_{\lambda}^{\mu}D_A$ commutes with base change. 
\end{proposition}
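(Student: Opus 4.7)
\textbf{Part (1).} The plan is to unfold the action of $Z(\frg)$ on $D_A$ via Lemma~\ref{lemmastandardgstructure}: $\frz$ acts by $\gamma_1-1$ and $\frc$ by $\gamma_1^2-4\gamma_0-1$, where $P_{\Sen}(T)=T^2-\gamma_1 T+\gamma_0\in A[T]$ is the Sen polynomial of $D_A$. The hypothesis on Sen weights gives $\gamma_1\equiv h_1+h_2$ and $\gamma_0\equiv h_1h_2$ modulo the nilradical $I$ of $A$. Since $\chi_\lambda$ for $\lambda=(h_2-1,h_1)$ sends $\frz\mapsto h_1+h_2-1$ and $\frc\mapsto (h_1-h_2)^2-1$, the character $Z(\frg)\to A$ describing the action on $D_A$ agrees with $\chi_\lambda\colon Z(\frg)\to L\hookrightarrow A$ modulo $I$, and Lemma~\ref{lemmageneralizedeigenspacephiGammamodules}(1) yields $D_A = D_A\{Z(\frg)=\chi_\lambda\}$.

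\textbf{Part (2).} Using Part (1), $T_\lambda^\mu D_A = \pr_{|\mu|}(D_A\otimes_L L(\overline{\mu-\lambda}))$. Every irreducible finite-dimensional algebraic $\GL_2$-representation factors as $L(\overline{\mu-\lambda})\simeq V_k\otimes_L\det^j$ for unique $k\geq 0$ and $j\in\Z$. Proposition~\ref{proptensoralgebraic} equips $D_A\otimes_L V_k$ with a $(\varphi,\Gamma,\frg)$-module structure over $\cR_A$. I will then verify that the further tensor with $\det^j$ preserves this structure: the character $\det$ is trivial on the unipotent subgroup of $P^+$ (so the $\cR_L^+$-action is unaffected), equals $p^j$ on $\varphi$ and $z^j$ on $\Gamma$; hence as $(\varphi,\Gamma)$-modules $(D_A\otimes_L V_k)\otimes_L\det^j\simeq (D_A\otimes_L V_k)\otimes_{\cR_A}\cR_A(z^j)$, and a direct check (using $u^\pm\det^j=0$ and the $\frg$-shift induced by the $(j,j)$-weight of $\det^j$) shows the relations of Definition~\ref{definitionphgammagmodule} are preserved, making $D_A\otimes_L L(\overline{\mu-\lambda})$ a $(\varphi,\Gamma,\frg)$-module over $\cR_A$. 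This module is locally $Z(\frg)$-finite, so applying $\pr_{|\mu|}$ and invoking Lemma~\ref{lemmageneralizedeigenspacephiGammamodules}(2) shows that $T_\lambda^\mu D_A$ is a $(\varphi,\Gamma)$-module over $\cR_A$, namely a direct summand of $D_A\otimes_L L(\overline{\mu-\lambda})$. For the almost de Rham property: since $a^+=\nabla_{\Sen}$ lies in $\frg$ and commutes with $Z(\frg)$, the Sen operator respects the decomposition by $Z(\frg)$-generalized eigenspaces, so its eigenvalues on $T_\lambda^\mu D_A$ modulo the nilradical of $A$ form a submultiset of those on $D_A\otimes_L V_k\otimes_L\det^j$, which lie in $\{h_i+m+j:i\in\{1,2\},\ 0\leq m\leq k\}\subset\Z$.

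\textbf{Base change.} For a morphism $A\to B$, Sen weights are preserved under specialization, so Part (1) also applies to $D_B$ and gives $T_\lambda^\mu D_B=\pr_{|\mu|}(D_B\otimes_L L(\overline{\mu-\lambda}))$. Combining Lemma~\ref{lemmabasechangetensor} (base change of $D_A\otimes_L V_k$), the trivial base change of the $L$-linear $\det^j$-tensor, and Lemma~\ref{lemmageneralizedeigenspacephiGammamodules}(3) (base change of the generalized eigenspace) yields $T_\lambda^\mu D_B\simeq (T_\lambda^\mu D_A)\otimes_{\cR_A}\cR_B$. The main technical obstacle I expect is the $\det^j$-twist in Part (2), since Proposition~\ref{proptensoralgebraic} only treats the $\Sym^k$ factor; once that compatibility is verified, the rest of the argument is a systematic application of the already-established lemmas.
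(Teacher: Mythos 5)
Your proposal is correct and follows essentially the same route as the paper: part (1) is the same nilradical comparison of $\chi_A$ with $\chi_\lambda$ via Lemma \ref{lemmageneralizedeigenspacephiGammamodules}(1), and part (2) and the base-change statement are deduced from Proposition \ref{proptensoralgebraic}, Lemma \ref{lemmabasechangetensor} and Lemma \ref{lemmageneralizedeigenspacephiGammamodules}(2)--(3), exactly as in the paper. The only extra content is your explicit verification that the $\det^j$-twist preserves the $(\varphi,\Gamma,\frg)$-structure, a point the paper leaves implicit (compare its Lemma \ref{lemmatwist}), and your check is sound.
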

\begin{proof}
    In Lemma \ref{lemmastandardgstructure}, $Z(\frg)$ act on $D_A$ by a character $\chi_A(\frz)=\gamma_1-1$ and $\chi_A(\frc)=(\gamma_1^2-4\gamma_0-1)$ if the Sen polynomial equals to $T^2-\gamma_1T+\gamma_0$. Since $D_A$ pointwisely has Sen weights $h_1,h_2$, the Sen polynomial equals to $(T-h_1)(T-h_2)$ at all closed points of $\Spec(A)$. An element $f\in A$ is in the nilradical $I$ of $A$ if $f\in\frm$ for every maximal ideal $\frm$ of $A$. We see $X^2-\gamma_1X+\gamma_0\equiv X^2-(h_2+h_1)X+h_1h_2 \mod I$. Thus $\chi_A\equiv \chi_{\lambda}\mod I$. Hence $D_A=D_A[Z(\frg)=\chi_A]=D_A\{Z(\frg)=\chi_{\lambda}\}$ by (1) of Lemma \ref{lemmageneralizedeigenspacephiGammamodules}. The other statements follow from the same lemma, Lemma \ref{lemmabasechangetensor} and Proposition \ref{proptensoralgebraic}.
\end{proof}
\begin{corollary}\label{corollarypointwisesDing}
    We equip rank two $(\varphi,\Gamma)$-modules $D_A$ with the standard $\frg$-structures.
    \begin{enumerate}
        \item Suppose that $D_A$ is almost de Rham with pointwisely regular Sen weights $(h_1,h_2)\in\Z^2, h_1<h_2$. Then for any integral weight $\mu=(h_2'-1,h_1')$ such that $h_1'<h_2'$, $T_{\lambda}^{\mu}D_A$ is almost de Rham of rank two pointwisely with Sen weights $(h_1',h_2')$.
        \item Suppose that $D_A$ is almost de Rham with pointwisely non-regular Sen weights $(h_1',h_2')\in\Z^2, \mu=(h_2'-1,h_1'), h_1'=h_2'$. Then for any integral weight $\lambda=(h_2-1,h_1)$ such that $h_1<h_2$, $T_{\mu}^{\lambda}D_A$ is almost de Rham of rank $4$.
    \end{enumerate}
\end{corollary}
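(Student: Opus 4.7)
The plan is to reduce both statements to pointwise calculations via base change, and then to invoke the computations available over a field (Ding's work in \cite{ding2023change}, which in the case of translations between regular and weight-$0$ loci is captured by Theorem \ref{theoremintroduction}). By Proposition \ref{proptranslationphigammamodule} and part (3) of Lemma \ref{lemmageneralizedeigenspacephiGammamodules}, the formation of $T_{\lambda}^{\mu} D_{A}$ and $T_{\mu}^{\lambda} D_{A}$ commutes with any base change of $L$-affinoid algebras. In particular, for each point $x \in \Sp(A)$,
\[
  (T_{\lambda}^{\mu} D_{A}) \otimes_{\cR_{A}} \cR_{k(x)} \;\simeq\; T_{\lambda}^{\mu} D_{k(x)},
\]
and analogously for $T_{\mu}^{\lambda}$, so it suffices to know the rank and the Sen weights of the translated module over each residue field and then to globalize.

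Over a field, case (2) is the pointwise specialization of Theorem \ref{theoremintroduction}(2): after a twist reducing to source Sen weights $(0,0)$, the isomorphism $T_{\mu}^{\lambda} D_{k(x)} \simeq Rf_{\bh,*} D_{k(x)}$ together with the direct-image computation in Proposition \ref{propositiondirectimagelinebundle} produces a module concentrated in degree zero and of rank $4$. For case (1), one may decompose $T_{\lambda}^{\mu}$ via Lemma \ref{lemmatranslationliealgebra} into translations passing through weight zero, or argue directly from the filtration in Proposition \ref{proptensoralgebraic}: the graded pieces $t^{i} D_{k(x)}$ of $D_{k(x)} \otimes_{L} V_{k}$ carry $Z(\frg)$-eigenvalues explicitly controlled by the Sen weights through Lemma \ref{lemmastandardgstructure}, and the projection onto the generalized eigenspace for $\chi_{\mu}$ picks out a single rank-two piece with Sen weights $(h_{1}', h_{2}')$.

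To globalize, Lemma \ref{lemmageneralizedeigenspace}(2) applied to $Z = Z(\frg)$ gives a direct-sum decomposition of the finite projective $\cR_{A}$-module $D_{A} \otimes_{L} L(\overline{\mu - \lambda})$ into its generalized $Z(\frg)$-eigenspaces; hence the translated module is a direct summand of a finite projective $\cR_{A}$-module and is itself finite projective. The pointwise rank computation above forces its rank function on $\Sp(A)$ to be constantly $2$ in case (1) or $4$ in case (2), so it is locally free of the claimed rank. The Sen polynomial lies in $A[T]$ and specializes at every closed point to $(T - h_{1}')(T - h_{2}')$ (respectively to the corresponding degree-$4$ polynomial); it therefore equals that polynomial modulo the nilradical of $A$, which is precisely the defining condition for being almost de Rham with the stated Sen weights.

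The main obstacle is the pointwise rank identification. The direct-summand and base-change steps cleanly separate global and pointwise issues, but the actual rank calculation---recognizing which generalized eigenspace survives the projection $\mathrm{pr}_{|\mu|}$---is the substantive step, and is exactly what Theorem \ref{theoremintroduction}, via Grothendieck-Springer geometry, makes transparent for the translations involving the weight-$0$ locus.
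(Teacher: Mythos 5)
Your reduction skeleton---Proposition \ref{proptranslationphigammamodule} together with Lemma \ref{lemmageneralizedeigenspacephiGammamodules} to see that the translate is a finite projective $(\varphi,\Gamma)$-module whose formation commutes with base change, and then checking rank, almost de Rhamness and Sen weights at points---is exactly the paper's argument. The gap is in how you supply the pointwise input. The paper closes the field case simply by citing Ding's computation \cite[Prop. 2.19]{ding2023change}; you instead propose (i) to deduce case (2) over a field as a specialization of Theorem \ref{theoremintroduction}(2), and (ii) to handle case (1) either via Lemma \ref{lemmatranslationliealgebra} ``passing through weight zero'' or by reading off eigenvalues from the filtration of Proposition \ref{proptensoralgebraic}. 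Route (i) is backwards in the paper's logic: this corollary sits in \S\ref{sectiontranslationinfamily}, and Proposition \ref{propositionregulartononregular}---which the proof of Theorem \ref{theorem} relies on---itself cites this corollary (an inductive alternative is mentioned, but you would have to carry it out rather than quote the theorem); moreover Theorem \ref{theoremintroduction}(2) is proved only under Hypothesis \ref{hypothesisflat} and in the setting of Construction \ref{construction}, so it does not apply to an arbitrary rank-two module over a residue field without extra work.

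Route (ii) also fails as stated. The filtration in Proposition \ref{proptensoralgebraic} is a filtration of $(\varphi,\Gamma)$- and $\frp^+$-modules but explicitly \emph{not} of $\frg$-modules; the Casimir formula (\ref{equationactioncasimir}) shows $\frc$ does not preserve the graded pieces, so the generalized $\chi_{\mu}$-eigenspace is not a graded piece---and the graded pieces $t^iD_{k(x)}$ have Sen weights $(h_1+i,h_2+i)$, which are in general not $(h_1',h_2')$. Likewise, Lemma \ref{lemmatranslationliealgebra} requires the successive differences $\lambda_{i+1}-\lambda_i$ to be dominant, which forbids going into the weight-zero wall and back out; for regular $\lambda,\mu$ the composite of the into-the-wall and out-of-the-wall translations is the wall-crossing functor (of rank $4$ pointwise), not $T_{\lambda}^{\mu}$. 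The substantive pointwise rank and weight identification is therefore not established by either of your routes; the fix is the paper's, namely to invoke \cite[Prop. 2.19]{ding2023change} (or to redo that explicit Casimir/induction computation in the style of Proposition \ref{propositionregulartononregular}).
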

\begin{proof}
    In any case $T_{\lambda}^{\mu}D_A$ is a $(\varphi,\Gamma)$-module by Proposition \ref{proptranslationphigammamodule} and its rank, being almost de Rham and Sen weights can be checked at points. The statements follow from the case when $A$ is a field which was studied in \cite[Prop. 2.19]{ding2023change}. 
\end{proof}
\begin{remark}
    In the case when $\lambda-\mu=(1,0)$ and $D_A$ has non-regular Sen weights, $T_{\mu}^{\lambda}D_A=D_A\otimes_L V_1$.
\end{remark}

We start calculations of translations. The easiest case is a twist by an algebraic character.
\begin{lemma}\label{lemmatwist}
    For $i\in\Z$, let $Lt^i$ be the algebraic character $\det^i$ of $\GL_2$ which is also a $\frg$-module. Then for a rank $2$ $(\varphi,\Gamma)$-module $D_A$ over $\cR_A$, $t^iD_A$ with the standard $\frg$-module structure is equal to $D_A\otimes_L Lt^i$.
\end{lemma}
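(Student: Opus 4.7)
The plan is to reduce the claim to the uniqueness statement in Lemma \ref{lemmastandardgstructure}: after identifying $D_A \otimes_L Lt^i$ with $t^iD_A$ as $(\varphi,\Gamma)$-modules, I verify that the diagonal $\frg$-action on $D_A \otimes_L Lt^i$ is a $(\varphi,\Gamma,\frg)$-structure on $t^iD_A$ satisfying the characterization of the standard structure in terms of $Z(\frg)$.

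First I would identify $D_A \otimes_L Lt^i \simeq t^iD_A$ as $(\varphi,\Gamma)$-modules over $\cR_A$ via $v \otimes e \mapsto t^iv$, where $e$ is an $L$-basis of $Lt^i$. Indeed, $\det^i$ restricted to $P^+$ via the embeddings $\varphi = \mtwo{p}{}{}{1}$, $\Gamma = \mtwo{\Z_p^\times}{}{}{1}$ is the character $z^i \colon \varphi \mapsto p^i$, $\gamma \mapsto \gamma^i$, and $\cR_A \otimes_L Le \simeq \cR_A(z^i) = t^i\cR_A$. Second, I equip $D_A$ with its standard $\frg$-structure (Lemma \ref{lemmastandardgstructure}) and give $D_A \otimes_L Lt^i$ the diagonal $\frg$-action. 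Since $Lt^i$ is one-dimensional with $X \in \frg$ acting by $i \cdot \mathrm{tr}(X)$, the operators $u^\pm$ and $\frh = a^+ - a^-$ act trivially on $Lt^i$, while $a^\pm$ act by $i$ and $\frz$ acts by $2i$. Under the identification with $t^iD_A$, the diagonal action thus gives $u^\pm$ unchanged, $a^\pm \mapsto a^\pm + i$, and $\frz \mapsto \frz + 2i$.

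Third, I would check the diagonal action makes $D_A \otimes_L Lt^i$ a $(\varphi,\Gamma,\frg)$-module in the sense of Definition \ref{definitionphgammagmodule}. On the tensor product $\varphi$ acts as $p^i\varphi_{D_A}$ and $\gamma$ as $\gamma^i\gamma_{D_A}$; the relations $\varphi u^- = p^{-1}u^-\varphi$ and $\varphi a^- = a^-\varphi$ for $D_A$ transport to
\[ (p^i\varphi_{D_A}) u^-_{D_A} = p^{-1}u^-_{D_A}(p^i\varphi_{D_A}), \qquad (p^i\varphi_{D_A})(a^-_{D_A} + i) = (a^-_{D_A} + i)(p^i\varphi_{D_A}),\]
and the continuity requirements follow from those on $D_A$. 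Fourth, I compute the Sen polynomial of $t^iD_A$. Using $\nabla(t) = t$, the Leibniz rule gives $\nabla_{t^iD_A}(t^iv) = t^i(\nabla_{D_A} + i)v$, so if $P_{\Sen,D_A}(T) = T^2 - \gamma_1 T + \gamma_0$, then $P_{\Sen,t^iD_A}(T) = P_{\Sen,D_A}(T-i) = T^2 - (\gamma_1 + 2i)T + (\gamma_0 + \gamma_1 i + i^2)$.

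Finally I compare the two $\frg$-structures on $t^iD_A$ by matching their $\frz$ and $\frc$ actions. On the standard structure these are $\gamma_1 + 2i - 1$ and $(\gamma_1 + 2i)^2 - 4(\gamma_0 + \gamma_1 i + i^2) - 1 = \gamma_1^2 - 4\gamma_0 - 1$ respectively. On the diagonal side, $\frz$ is $(\gamma_1 - 1) + 2i$, and because $\frh$ and $u^\pm$ act trivially on $Lt^i$ the coproduct formulas $\Delta(\frh^2) = \frh^2 \otimes 1 + 2\frh \otimes \frh + 1 \otimes \frh^2$ and $\Delta(u^+u^-) = u^+u^- \otimes 1 + u^+ \otimes u^- + u^- \otimes u^+ + 1 \otimes u^+u^-$ collapse to give $\frc = \frc_{D_A} = \gamma_1^2 - 4\gamma_0 - 1$. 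Both actions therefore agree, so the uniqueness clause of Lemma \ref{lemmastandardgstructure} forces the full $\frg$-structures to coincide. The only mildly delicate point is the Sen polynomial computation in Step 4; everything else is formal manipulation.
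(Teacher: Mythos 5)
Your proof is correct, but it is organized differently from the paper's. The paper proves the lemma by a direct generator-by-generator comparison: it writes down the diagonal action of $u^{\pm},a^{\pm}$ on $x\otimes t^i$ and the standard-structure action on $t^ix$ (in particular the formula $u^-=-\frac{P'_{\rm Sen}(\nabla)}{t}$ on $t^iD_A$), and checks they match using exactly the shift $\nabla(t^ix)=t^i(\nabla+i)x$ and $P'_{\rm Sen}(T)=P_{\rm Sen}(T-i)$. You instead avoid computing $a^-$ and $u^-$ on $t^iD_A$ explicitly: you verify that the transported diagonal action is a $(\varphi,\Gamma,\frg)$-structure in the sense of Definition \ref{definitionphgammagmodule} (extending the $U(\frp^+)$-action, since the diagonal $u^+$ is $t$ and the diagonal $a^+$ is $\nabla_{D_A}+i=\nabla_{t^iD_A}$, with the $\varphi$-relations and continuity inherited), compute that $\frz$ and $\frc$ act by $\gamma_1'-1$ and $(\gamma_1')^2-4\gamma_0'-1$ for the shifted Sen polynomial (the Casimir being unchanged because $\frh,u^{\pm}$ kill $\det^i$), and then invoke the uniqueness clause of Lemma \ref{lemmastandardgstructure}. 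Both routes hinge on the same Sen polynomial computation; yours buys a shorter comparison (only the central character needs matching, uniqueness does the rest, since $a^-=\frz-a^+$ and $u^-$ is pinned down by $4u^+u^-=\frc-\frh^2+2\frh$ and $t$-torsion-freeness), while the paper's direct check makes the equality of the $u^-$-actions visible without appealing to the axiomatics of Definition \ref{definitionphgammagmodule} for the tensor product. Your identification of the underlying $(\varphi,\Gamma)$-modules via $\cR_A(z^i)=t^i\cR_A$ and the observation that the unipotent acts trivially on $\det^i$ (so the $\cR_A$-structure on the tensor is the obvious one) are also fine.
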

\begin{proof}
    On $\det^i=Lt^i$ we have $u^{\pm}.t^i=0$ and $a^{\pm}.t^i=it^i$. Thus for $x\otimes t^i,x\in D_A$ we get $u^+.(x\otimes t^i)=tx\otimes t^i,a^{\pm}.(x\otimes t^i)=(a^{\pm}+i)x\otimes t^i$ and $u^{-}.(x\otimes t^i)=u^{-}.x\otimes t^i=-\frac{P_{\rm Sen}(\nabla)}{t}x\otimes t^i$. For $t^ix\in t^iD_A$ we have $u^+.(t^ix)=t^{i+1}x,a^{+}.(t^ix)=t^i(a^{+}+i)x, \frz. t^ix=(\gamma_1-1+2i)x$ in the notation of Lemma \ref{lemmastandardgstructure}. Hence $a^-. t^ix=t^i(\gamma_1-1+2i-a^{+}-i)x=t^i((a^{-}+i)x)$. And $u^{-}.t^ix=-\frac{P'_{\rm Sen}(\nabla)}{t}t^ix=-t^i\frac{P_{\rm Sen}(\nabla)}{t}x$ where $P'_{\rm Sen}$ denotes the Sen polynomial of $t^iD_{A}$. The last equality comes from $\nabla.t^ix=t^i(\nabla+i)x$ and $P'_{\rm Sen}(T)=P_{\rm Sen}(T-i)=T^2-(\gamma_1+2i)T+\gamma_0+i\gamma_1+i^2$. 
\end{proof}
By a twist, we only need to discuss the case when at least one of the weights of an almost de Rham $(\varphi,\Gamma)$-module is zero.
\begin{lemma}\label{lemmaregularweights}
    Let $D_A$ be an almost de Rham $(\varphi,\Gamma)$-module over $\cR_A$ with pointwisely regular Sen weights $h_1<h_2$. Then there exists $\alpha,\beta\in A$ such that $P_{\rm Sen}(T)=(T-\alpha)(T-\beta)$ and $\alpha-h_1,\beta-h_2$ are in the nilradical $I$ of $A$.
\end{lemma}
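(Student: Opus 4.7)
The plan is to apply Hensel's lemma to lift the factorization of the Sen polynomial from $A/I$ to $A$. First, I note that since $A$ is a Noetherian affinoid algebra, its nilradical $I$ is nilpotent, so $A$ is complete with respect to $I$. The Sen polynomial $P_{\rm Sen}(T)\in A[T]$ is monic of degree two.

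Next, I would use the pointwise assumption. For every maximal ideal $\frm$ of $A$, which by Jacobson-ness has residue field $k(\frm)$ a finite extension of $L$, the specialization $P_{\rm Sen}(T)\otimes_A k(\frm)$ is the Sen polynomial of $D_A\otimes k(\frm)$, and by hypothesis this equals $(T-h_1)(T-h_2)$. Since the intersection of all maximal ideals of $A$ equals the nilradical $I$, this forces
\[
P_{\rm Sen}(T)\equiv (T-h_1)(T-h_2)\pmod{I[T]}.
\]
Because $h_1<h_2$ are distinct integers, $(T-h_1)$ and $(T-h_2)$ are coprime monic polynomials in $(A/I)[T]$.

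Now I apply Hensel's lemma (\cite[\href{https://stacks.math.columbia.edu/tag/0ALI}{Tag 0ALI}]{stacks-project}) to the pair $(A,I)$, which is Henselian because $I$ is nilpotent: there exist unique monic linear polynomials $(T-\alpha),(T-\beta)\in A[T]$ with $\alpha\equiv h_1\pmod I$ and $\beta\equiv h_2\pmod I$ such that
\[
P_{\rm Sen}(T)=(T-\alpha)(T-\beta).
\]
This gives the desired factorization, with $\alpha-h_1,\beta-h_2\in I$. No genuine obstacle arises here; the statement is essentially a direct application of Hensel's lemma once one unwinds that the pointwise hypothesis on Sen weights is equivalent to the mod-nilradical factorization of the Sen polynomial.
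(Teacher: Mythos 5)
Your proof is correct and follows essentially the same route as the paper: both reduce the Sen polynomial modulo the nilradical using the pointwise hypothesis and then apply Hensel's lemma (the paper lifts the simple root $h_1$ and then factors, while you lift the coprime factorization directly, which is the same mechanism). The only point worth keeping explicit is that coprimality of $T-h_1$ and $T-h_2$ in $(A/I)[T]$ uses that $h_2-h_1$ is a nonzero integer, hence invertible in the $L$-algebra $A/I$, which you have implicitly and correctly used.
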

\begin{proof}
    We have $P_{\rm Sen}(T)\equiv (T-h_1)(T-h_2)\mod I$. Hence $P_{\rm Sen}(h_1)\in I$. Moreover $P^{(1)}_{\rm Sen}(h_1)\equiv h_1-h_2\notin I$. By Hensel's lemma,  we can find $\alpha\in A$ such that $\alpha\equiv h_1\mod I$ and $P_{\rm Sen}(\alpha)=0$. Then $P_{\rm Sen}$ must be of the form $(T-\alpha)(T-\beta)$ such that $\beta\equiv h_2\mod I$.
\end{proof}
\begin{proposition}\label{propositionregulartononregular}
    Let $D_A$ be an almost de Rham $(\varphi,\Gamma)$-module pointwisely with regular Sens weight $(0,k)$ and let $\lambda=(k-1,0)$ where $k\geq 1$. We assume that the Sen polynomial is equal to $(T-(k+z-h))(T-(z+h))$ for $z,h$ in the nilradical of $A$ (which is possible by Lemma \ref{lemmaregularweights}).  
    \begin{enumerate}
        \item Let $\mu= (k-1,k)$. The natural map $T_{\lambda}^{\mu}D_A\hookrightarrow D_A\otimes_{L}V_{k}\twoheadrightarrow D_A$ identifies $T_{\lambda}^{\mu}D_A$ with the unique almost de Rham $(\varphi,\Gamma)$-submodule inside $D_A$ of weights $(k,k)$ such that $T_{\lambda}^{\mu}D_A[\frac{1}{t}]=D_A[\frac{1}{t}]$, which contains $t^kD_{A}$ and is the preimage of $(D_A/t^k)[\prod_{i=0}^{k-1}(\nabla-(k+z-h)-i)=0]$. 
        \item Let $\lambda'=(k'-1,0)$ for $k'\geq k$. The natural map $T_{\lambda}^{\lambda'}D_A\hookrightarrow D_A\otimes_{L}V_{l}\twoheadrightarrow D_A$ for $l=k'-k$ identifies $T_{\lambda}^{\mu}D_A$ with the unique almost de Rham $(\varphi,\Gamma)$-submodule inside $D_A$ such that $T_{\lambda}^{\lambda'}D_A[\frac{1}{t}]=D_A[\frac{1}{t}]$ of weights $(0,k')$ and consists of $v\in D_A$ such that $\prod_{i=0}^{j}(\nabla-(z+h)-i)v\in t^{j+1}D_A$ for all $j\leq l-1$.  
    \end{enumerate}
    In both cases, the output of the translation is a rank two $(\varphi,\Gamma)$-module and admits an infinitesimal character.
\end{proposition}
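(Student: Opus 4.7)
The plan is to prove (1) in detail; (2) follows by the same procedure with $V_l$ in place of $V_k$ and with the two Sen eigenvalue roles swapped. I proceed in three steps: (i) reformulate the translation as a tensor-and-project operation using Lemma \ref{lemmatranslationliealgebra}, (ii) map into $D_A$ via the canonical quotient coming from Proposition \ref{proptensoralgebraic}, and (iii) identify the image using uniqueness (a $\det^k$-twist of Lemma \ref{lemmauniqueweight0}) together with a direct Sen-eigenvalue computation.

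For (i), $\mu-\lambda=(0,k)$ has dominant Weyl-orbit representative $(k,0)$, so $L(\overline{\mu-\lambda})=V_k$ and Lemma \ref{lemmatranslationliealgebra} gives $T_\lambda^\mu D_A=\pr_{|\mu|}(D_A\otimes_L V_k)$. By Corollary \ref{corollarypointwisesDing}(1) this is a rank $2$ almost de Rham $(\varphi,\Gamma)$-module with pointwise Sen weights $(k,k)$, and by Lemma \ref{lemmageneralizedeigenspacephiGammamodules}(3) its formation commutes with base change. For (ii), the canonical surjection $V_k\twoheadrightarrow\cR_L^+/X=L$ induces via Proposition \ref{proptensoralgebraic} a $(\varphi,\Gamma,\frp^+)$-equivariant surjection $p:D_A\otimes V_k\twoheadrightarrow D_A$ with kernel $F_1=D_A\otimes X\cR_L^+/X^{k+1}$. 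Composing with the inclusion $T_\lambda^\mu D_A\hookrightarrow D_A\otimes V_k$ produces a $(\varphi,\Gamma)$-equivariant map $\phi:T_\lambda^\mu D_A\to D_A$.

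The technical heart is showing $\phi$ is injective. Since $\ker(p)=F_1$ is not $\frg$-stable, injectivity cannot be deduced from comparing infinitesimal characters on the graded pieces $t^iD_A$. The plan is to check injectivity pointwise: over any $L'$-valued closed point $x\in\Sp(A)$, the field-case calculations in \cite[Prop. 2.19]{ding2023change} underlying Corollary \ref{corollarypointwisesDing} show $\phi\otimes_A k(x)$ is injective; since $T_\lambda^\mu D_A$ is a finite projective $\cR_A^r$-module for some $r>0$ whose formation commutes with base change, Nakayama's lemma upgrades pointwise injectivity to global injectivity. Granted this, the image $\phi(T_\lambda^\mu D_A)\subset D_A$ is a rank $2$ $(\varphi,\Gamma)$-submodule almost de Rham of pointwise Sen weights $(k,k)$, and $\phi(T_\lambda^\mu D_A)[1/t]=D_A[1/t]$ holds pointwise (\cite{ding2023change}) hence globally. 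Uniqueness of such a submodule inside $D_A[1/t]$ is obtained by twisting by $\det^{-k}$ via Lemma \ref{lemmatwist} to reduce to the weight-$0$ uniqueness of Lemma \ref{lemmauniqueweight0}.

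For step (iii), the explicit description via $\nabla$ follows from a direct Sen-eigenvalue count. On the filtered quotient $t^iD_A/t^{i+1}D_A$ ($0\leq i\leq k-1$) the Sen operator $\nabla$ has eigenvalues $z+h+i$ and $k+z-h+i$, so on $D_A/t^kD_A$ the generalized $\nabla$-eigenspace for eigenvalues $\{k+z-h+i:0\leq i\leq k-1\}$ is cut out by $\prod_{i=0}^{k-1}(\nabla-(k+z-h)-i)$ and has Sen-rank $k$; its preimage in $D_A$ is then a rank $2$ $(\varphi,\Gamma)$-sublattice containing $t^kD_A$ with pointwise Sen weights $(k,k)$, so must equal $\phi(T_\lambda^\mu D_A)$ by uniqueness. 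Part (2) runs in parallel with $V_l$ replacing $V_k$ (so $\lambda'-\lambda=(l,0)$ is already dominant); now one selects the $z+h$-branch of the Sen polynomial, and the condition $\prod_{i=0}^{j}(\nabla-(z+h)-i)v\in t^{j+1}D_A$ picks out the rank $2$ sublattice whose ``top'' Sen direction has been divided by $t^l$, matching Sen weights $(0,k+l)=(0,k')$. The principal obstacle is the injectivity in step (ii), for which I rely on the pointwise input from \cite{ding2023change}; a more intrinsic family-level proof would require tracking how the $Z(\frg)$-isotypic decomposition of $D_A\otimes V_k$ intersects the non-$\frg$-stable filtration $F_\bullet$, or alternatively a direct comparison of $\phi(T_\lambda^\mu D_A)$ with the twist $t^kf_{\bh}(D_A)$ supplied by Lemma \ref{lemmauniqueweight0}.
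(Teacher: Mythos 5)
Your skeleton agrees with the paper's (tensor with $V_k$, take the $\chi_\mu$-part, map to $D_A$ via the bottom graded piece, identify the image by $\nabla$-eigenvalues, prove uniqueness), but the step you yourself call the technical heart is not actually proved, and the mechanism you invoke for it is not valid. Nakayama's lemma upgrades surjectivity statements for finitely generated modules over local rings; it does not convert injectivity of $\phi\otimes_A k(x)$ at all points $x\in\Sp(A)$ (these are fibres over $\Sp(A)$, not reductions at maximal ideals of $\cR_A^r$) into injectivity of a map of finite projective $\cR_A^r$-modules — at best one can reduce to reduced $A$ and then argue through the nilpotent filtration with a flatness input, none of which is in your text. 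The same problem affects your ``$\phi(T_\lambda^\mu D_A)[1/t]=D_A[1/t]$ holds pointwise hence globally''. The paper does not need any spreading-out: it computes the Casimir action on $D_A\otimes_LV_k$ directly over $A$ (formula (\ref{equationactioncasimir})) and solves the eigenvector equation, obtaining the recursion $v_i=\frac{1}{it}(\nabla-(k+z-h))v_{i-1}$; this shows at once that an eigenvector $\sum_i v_i\otimes t^ie$ is determined by $v_0$ (injectivity, using $t$-torsion-freeness of $D_A$) and that $v_0$ ranges exactly over the elements satisfying the divisibility conditions in the statement (the image description). Your proposal replaces this family-level computation by a citation to Ding's field case plus an unjustified globalization, so as written there is a genuine gap.

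Two further points. First, to know that the preimage of $(D_A/t^k)[\prod_{i=0}^{k-1}(\nabla-(k+z-h)-i)=0]$ is a projective rank-two $(\varphi,\Gamma)$-submodule of weights $(k,k)$ over a possibly non-reduced $A$, you need the family-level generalized eigenspace decomposition of $D_{\dif}^{m,+}/t^k$ with projective summands and the flatness statement of Lemma \ref{lemmaprojectivelattice} (this is the sublemma in the paper's proof, resting on \S\ref{subsectiongeneralchange} and Proposition \ref{propositionappendixglueing}); your ``Sen-rank count'' gestures at this but does not establish projectivity. Second, part (2) does not run in parallel: your uniqueness argument — twisting by $\det^{-k}$ to reduce to Lemma \ref{lemmauniqueweight0} — only works in the non-regular weight case $(k,k)$ of part (1). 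For weights $(0,k')$ the uniqueness of $D_A'\subset D_A$ with $D_A'[\frac{1}{t}]=D_A[\frac{1}{t}]$ requires a different argument; the paper compares $\Fil^0D_{\pdR}(D_A')$ with $\Fil^0D_{\pdR}(D_A)$, both rank-one direct summands of $D_{\pdR}(D_A)$, and then recovers $D_A'$ from the filtration via the appendix equivalences. You would need to supply this (or the induction on $l$) for (2).
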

\begin{proof}
    The last assertion follows from Corollary \ref{corollarypointwisesDing}, or by the case when $l=1$ using the calculation below and by an induction as for \cite[Prop. 2.19]{ding2023change}.

    Let $l\geq 1$. The element $\frz$ acts on $D_A\otimes_LV_l$ by $2z+k+l-1$. Let $e$ be the lowest weight vector of $\cR_L^+/X^{l+1}\simeq V_l:\alpha\mapsto \alpha e$. Then $\frh=2a^+-\frz$ acts by $2\nabla-2z-k+1$ on $D_A$ and by $2i-l$ on $t^ie$; $u^+$ by $t$ on $D_A$ and $t=\log(1+X)=\sum_{i\geq 0}\frac{(-1)^i}{i}X^i$ on $V_l$; $u^-$ by $-\frac{1}{t}P_{\rm Sen}(\nabla)$ on $D_A$ and $u^-.t^ie=i(l-i+1)t^{i-1}e$ so that $\frc=\frh^2-2\frh+4u^+u^-$ acts on $V_l$ by $l^2+2l$. Hence the Casimir acts by (for $v\in D_A, 0\leq i\leq l$ and with the convention that $t^{i-1}e=0$ if $i=0$)
    \begin{align}\label{equationactioncasimir}
        \frc. (v\otimes t^ie)=&((2\nabla+1-2z-k+2i-l)^2-2(2\nabla+1-2z-k+2i-l))v\otimes t^ie\\\nonumber
        &+4i(l-i+1)(v\otimes t^ie+tv\otimes t^{i-1}e)
        -4P_{\rm Sen}(\nabla)v\otimes t^ie-4t^{-1}P_{\rm Sen}(\nabla)v\otimes t^{i+1}e\\\nonumber
        =&(4(2i-l)\nabla+(2z+k-2i+l)^2-1-4(z+h)(k+z-h)+4i(l-i+1))v\otimes t^ie\\\nonumber
        &+4i(l-i+1)tv\otimes t^{i-1}e- 4t^{-1}P_{\rm Sen}(\nabla)v\otimes t^{i+1}e\\\nonumber
        =&(4(2i-l)\nabla+(\gamma_1-2i+l)^2-1-4\gamma_0+4i(l-i+1))v\otimes t^ie\\\nonumber
        &+4i(l-i+1)tv\otimes t^{i-1}e- 4t^{-1}P_{\rm Sen}(\nabla)v\otimes t^{i+1}e\nonumber
    \end{align}
    where $\gamma_1=2z+k$ and $\gamma_0=(z+h)(k+z-h)$.

    (1) Now $l=k$. 
    \begin{align*}
        \frc. (v\otimes t^ie)=(4(2i-k)\nabla+4(i(1-2z-k)+kz+k^2-hk+h^2)-1)v\otimes t^ie\\
        +4i(k-i+1)tv\otimes t^{i-1}e- 4t^{-1}P_{\rm Sen}(\nabla)v\otimes t^{i+1}e.
    \end{align*}
    Suppose that $\sum_{i=0}^kv_i\otimes t^ie$ is an eigenvalue for $4h^2-1$. Consider the coefficient in $D_A$ of $1\otimes e$ we get $ 4ktv_1+(-4k\nabla+4(kz+k^2-hk+h^2)-1)v_0=(4h^2-1)v_0$. We see $v_1=t^{-1}(\nabla-(k+z-h))v_0$. We prove by induction that $v_{i}=\frac{1}{it}(\nabla-(k+z-h))v_{i-1}$ for $i\geq 1$. Suppose the statement holds for $i-1\geq 0$. Then $t^{-1}P_{\rm Sen}(\nabla)v_{i-1}=i(\nabla-(z+h)+1)v_i$. Consider the coefficient of $1\otimes t^ie$ we get 
    \[ -4i(\nabla-(z+h)+1)v_i+(4(2i-k)\nabla+4(i(1-2z-k)+kz+k^2-hk+h^2)-1)v_i+4(i+1)(k-i)tv_{i+1}=(4h^2-1)v_i\]
    which is equivalent to that $v_{i+1}=\frac{1}{(i+1)t}(\nabla-(k+z-h))v_i$ unless $i\geq k$. And if $i=k$ the equality holds under the induction assumption.
    
    Thus 
    \[\prod_{i=0}^{k-1}\frac{1}{t}(\nabla-(k+z-h))v_0=\frac{1}{t^k}(\nabla-(k+z-h)-(k-1))\cdots(\nabla-(k+z-h))v_0\in D_A.\]
    
    By the discussion in \S\ref{subsectiongeneralchange}, $D_A'=\{v\in D_A\mid \frac{1}{t}(\nabla-(k+z-h))v\in D_A\}$ defines the unique sub-$(\varphi,\Gamma)$-module of $D_A$ such that $D_A'[\frac{1}{t}]=D_A[\frac{1}{t}]$ and $D_A'$ is almost de Rham of weight $(1,k)$. By Proposition \ref{propositionappendixglueing}, the description of $T_{\lambda}^{\mu}D_A$ follows from the following statement:
    \begin{lemma}
        Let $M$ be a continuous $\Gamma$-representation over $(A\otimes_{\Q_p}K_m)[[t]]$ with the connection $\nabla$ as in Appendix \ref{sectionappendixfamilies} such that the characteristic polynomial of $\nabla$ on $M/tM$ is $(X-(k+z-h))(X-(z+h))$ for $k\in \Z_{\geq 0}$ and $h,z$ nilpotent. Then there is a direct sum decomposition
        \[M/t^kM=\oplus_{i=0}^{k-1}M[\nabla=(k+z-h+i)]\oplus \oplus_{i=0}^{k-1}M[\nabla=(z+h+i)]\]
        where all the direct summands are projective of rank one over $A\otimes_{\Q_p}K_m$.
    \end{lemma}
    (2) Suppose that $\sum_{i=0}^lv_i\otimes t^ie$ is an eigenvector of $\frc$ for the eigenvalue $(k+l-2h)^2-1$. Consider the coefficient of $1\otimes e$ we get $ 4ltv_1+(-4l\nabla+(2z+k+l)^2-4(k+z-h)(z+h)-1)v_0=((k+l-2h)^2-1)v_0$. We see $v_1=\frac{1}{t}(\nabla-(h+z))v_0$. We prove by induction that we must have $v_{i}=\frac{1}{it}(\nabla-(h+z))v_{i-1}$ for $i\geq 1$. Assume that the statement holds for $i-1\geq 0$. Then $t^{-1}P_{\Sen}(\nabla)v_{i-1}=i(\nabla-(k+z-h)+1)v_i$. Consider the coefficient of $1\otimes t^ie$ we get 
    \begin{align*}
        -4i(\nabla-(k+z-h)+1)v_i+(4(2i-l)\nabla+(2z+k-2i+l)^2-1-4(z+h)(k+z-h)+4i(l-i+1))v_i\\+4(i+1)(l-i)tv_{i+1}=((k+l-2h)^2-1)v_i
    \end{align*}
    which is equivalent to that $v_{i+1}=\frac{1}{(i+1)t}(\nabla-(h+z))v_i$ unless $i\geq l$. And if $i=l$ the equality holds under the statement. 
    
    To see the uniqueness. Assume that $D_{A}'\subset D_A$ with Sen weights $(0,k')$ and such that $D_{A}'[\frac{1}{t}]=D_{A}[\frac{1}{t}]$. Then $D_{\pdR}(D_A')=D_{\pdR}(D_A)$ and $\Fil^0D_{\pdR}(D_A')\subset \Fil^0D_{\pdR}(D_A)$ by definition. Since $\Fil^0D_{\pdR}(D_A'),\Fil^0D_{\pdR}(D_A)$ are both direct summand of $D_{\pdR}(D_A)$ of rank one, $\Fil^0D_{\pdR}(D_A')=\Fil^0D_{\pdR}(D_A')$ which determines $D_{\dif}^{m,+}(D_A')$ for some $m$ and also $D_A'$ by results in Appendix \ref{sectionappendixfamilies}. The description holds as for (1) and by an induction on $l$.
\end{proof}
From non-regular weights to regular weights we only treat the easiest case.
\begin{proposition}\label{propositionnonregulartoregular}
    Let $\Delta_A$ be an almost de Rham $(\varphi,\Gamma)$-module pointwisely with non-regular Sen weights $(0,0)$, $\mu=(-1,0)$ and $\lambda=(0,0)$. Then the natural map $T_{\mu}^{\lambda}\Delta_A\hookrightarrow \Delta_A\otimes_{L}V_{1}$ is an isomorphism. If the Sen polynomial of $\Delta_A$ is $T^2-\gamma_1T+\gamma_0\in A[T]$, then $(\frc-\gamma_1^2+4\gamma_0)^2=4(\gamma_1^2-4\gamma_0)$ on $\Delta_A\otimes_LV_1$. 
\end{proposition}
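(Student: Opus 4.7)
The plan is to show that $Z(\frg)$ acts on the entire $(\varphi,\Gamma,\frg)$-module $\Delta_A\otimes_L V_1$ via the single generalized character $\chi_\lambda$; once this is established, the natural inclusion $T_{\mu}^{\lambda}\Delta_A=\pr_{|\lambda|}(\Delta_A\otimes_L V_1)\hookrightarrow \Delta_A\otimes_L V_1$ of a direct summand (Lemma \ref{lemmageneralizedeigenspace}(2)) must be an equality, giving the first assertion. The reduction to this statement is routine: by Lemma \ref{lemmastandardgstructure} the standard $\frg$-structure on $\Delta_A$ makes $Z(\frg)$ act through the character $\chi_A(\frz)=\gamma_1-1$, $\chi_A(\frc)=\gamma_1^2-4\gamma_0-1$; since the Sen weights of $\Delta_A$ are pointwise $(0,0)$ the coefficients $\gamma_0,\gamma_1$ lie in the nilradical $I$ of $A$, so $\chi_A\equiv\chi_\mu\bmod I$ and Lemma \ref{lemmageneralizedeigenspacephiGammamodules}(1) gives $\pr_{|\mu|}\Delta_A=\Delta_A$; moreover $\lambda-\mu=(1,0)$ is already dominant, so $L(\overline{\lambda-\mu})=V_1$.

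For the element $\frz$, the Leibniz rule immediately gives that $\frz$ acts as the scalar $(\gamma_1-1)+1=\gamma_1$ on $\Delta_A\otimes_L V_1$, since $\frz$ acts on both weight vectors of $V_1$ by the weight sum $1$. As $\gamma_1\in I$ is nilpotent and $\chi_\lambda(\frz)=0$, the subring $L[\frz]\subset Z(\frg)$ acts with generalized eigenvalue $\chi_\lambda$.

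The substance of the proof is the quadratic identity for the Casimir. Unwinding the coproduct of $\frc=\frh^2-2\frh+4u^+u^-$ in the basis $\{v\otimes e,\,v\otimes te\}$ of $\Delta_A\otimes_L V_1$, using $u^+\cdot e=te$, $u^-\cdot te=e$, $u^{\pm}$ zero otherwise on $V_1$, together with the standard-structure formulas $\frh=2\nabla-\gamma_1+1$, $u^+=t$, $u^-=-t^{-1}P_{\Sen}(\nabla)$, $\frc=\gamma_1^2-4\gamma_0-1$ on $\Delta_A$ (Lemma \ref{lemmastandardgstructure}), I compute with $P:=\frc-(\gamma_1^2-4\gamma_0)$:
\begin{align*}
P\cdot(v\otimes e) &= (-4\nabla+2\gamma_1)v\otimes e - 4t^{-1}P_{\Sen}(\nabla)v\otimes te,\\
P\cdot(v\otimes te) &= (4\nabla-2\gamma_1+4)v\otimes te + 4tv\otimes e.
\end{align*}
Squaring using the commutation $\nabla\cdot t=t(\nabla+1)$, the off-diagonal contributions to $P^2$ cancel because $P_{\Sen}(\nabla)$ commutes with every polynomial in $\nabla$; the diagonal contributions collapse via the elementary identity $(2\nabla-\gamma_1)^2-4P_{\Sen}(\nabla)=\gamma_1^2-4\gamma_0$ (the discriminant of the Sen polynomial) to yield $P^2=4(\gamma_1^2-4\gamma_0)\cdot\mathrm{id}$, which is the claimed formula.

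To conclude, $\gamma_1^2-4\gamma_0\in I$ is nilpotent, so $P$ is nilpotent, and hence so is $\frc=P+(\gamma_1^2-4\gamma_0)$, showing that $\frc$ acts with generalized eigenvalue $\chi_\lambda(\frc)=0$. Combined with the $\frz$-calculation, $Z(\frg)=L[\frz,\frc]$ acts with generalized character $\chi_\lambda$ on all of $\Delta_A\otimes_L V_1$, so $\pr_{|\lambda|}(\Delta_A\otimes_L V_1)=\Delta_A\otimes_L V_1$ and the inclusion is an isomorphism. The only real obstacle is bookkeeping in the $P^2$ calculation, where the off-diagonal cancellations and the discriminant identity must be tracked carefully; no conceptual difficulty is anticipated.
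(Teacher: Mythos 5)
Your proof is correct and follows essentially the same route as the paper: you arrive at the same two formulas for the action of $\frc-\gamma_1^2+4\gamma_0$ on $v\otimes e$ and $v\otimes te$ (the paper specializes its general Casimir formula from the preceding proposition, you expand the coproduct directly) and then perform the same direct squaring check, using $\nabla t=t(\nabla+1)$ and the discriminant identity. The only addition is that you spell out how the nilpotency of $\frz-\chi_\lambda(\frz)$ and of $\frc$ forces $\pr_{|\lambda|}(\Delta_A\otimes_L V_1)=\Delta_A\otimes_L V_1$, a deduction the paper leaves implicit.
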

\begin{proof}
    We follow the notation in the proof of Proposition \ref{propositionregulartononregular}. By (\ref{equationactioncasimir}) (replacing $k+2z$ by $\gamma_1$ and $(k+z-h)(z+h)$ by $\gamma_0$), we see 
    \[(\frc-\gamma_1^2+4\gamma_0). (v\otimes t^ie)=(4(2i-1)\nabla+4i(1-\gamma_1)+2\gamma_1)v\otimes t^ie+4i(2-i)tv\otimes t^{i-1}e- 4t^{-1}P_{\rm Sen}(\nabla)v\otimes t^{i+1}e.\]
    For $i=0$ we have 
    \[(\frc-\gamma_1^2+4\gamma_0). (v\otimes e)=(-4\nabla+2\gamma_1)v\otimes e- 4t^{-1}(\nabla^2-\gamma_1\nabla+\gamma_0)v\otimes te\]
    and for $i=1$ 
    \[(\frc-\gamma_1^2+4\gamma_0). (v\otimes te)=(4\nabla+4-2\gamma_1)v\otimes te+4tv\otimes e.\]
    Then $(\frc-\gamma_1^2+4\gamma_0)^2-4(\gamma_1^2-4\gamma_0)=0$ on $\Delta_A\otimes_LV_1$ by a direct check.
\end{proof}

The following proposition describes the counit map of the adjunction of the translations in the case $k=1$.
\begin{proposition}\label{propositionunitcounit}
    Let $\lambda=(0,0)$ and $\mu=(-1,0)$. Let $D_{A}$ be an almost de Rham $(\varphi,\Gamma)$-module of weight $(1,0)$ over $A$ with Sen polynomial $(T-(z-h+1))(T-(z+h))$ where $z,h$ are nilpotent. Let $\Delta_{A}\subset t^{-1}D_{A}$ be the sub $(\varphi,\Gamma)$-module of Sen weights $(0,0)$ in Proposition \ref{propositionregulartononregular} which provides the isomorphism $\Delta_{A}=T_{\lambda}^{\mu}D_{A}$. Let $c=\frc-4h^2$ which acts on $D_{A}$ by $-4h$.
    
    Then the composite $T_{\mu}^{\lambda}\Delta_{A}=T_{\mu}^{\lambda}T_{\lambda}^{\mu}D_{A}\rightarrow D_{A} \hookrightarrow \Delta_{A}$ induced by the counit map $T_{\mu}^{\lambda}T_{\lambda}^{\mu}D_{A}\rightarrow D_{A}$ is equal to $T_{\mu}^{\lambda}\Delta_{A}(=\Delta_{A}\otimes_L\cR_L^+/X^2)\stackrel{\frac{1}{4}(c-4h)}{\rightarrow }T_{\mu}^{\lambda}\Delta_{A}[c+4h]\subset \Delta_{A}\otimes_L\cR_L^+/X^2\twoheadrightarrow \Delta_{A}$.
\end{proposition}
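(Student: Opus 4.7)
The plan is to make both sides of the asserted equality completely explicit in the basis $\{e, te\}$ of $V_1 = \cR_L^+/X^2$ and to compare them term-by-term.

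First, I would describe the counit via the adjunction. The functor $T_\mu^\lambda = \pr_{|\lambda|}(- \otimes V_1)$ is left adjoint to $T_\lambda^\mu = \pr_{|\mu|}(- \otimes V_1^*)$ through the usual tensor--hom adjunction (where $V_1^* = L(\overline{\mu-\lambda}) = L(0,-1)$), and the counit $T_\mu^\lambda T_\lambda^\mu D_A \to D_A$ factors as the composite
\[
\Delta_A \otimes V_1 \;\hookrightarrow\; D_A \otimes V_1^* \otimes V_1 \stackrel{\mathrm{ev}}{\longrightarrow} D_A,
\]
where $\mathrm{ev}$ is the evaluation $V_1^* \otimes V_1 \to L$ and the first arrow uses the embedding $\Delta_A = T_\lambda^\mu D_A = \pr_{|\mu|}(D_A \otimes V_1^*) \hookrightarrow D_A \otimes V_1^*$. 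Using the isomorphism $V_1^* \simeq V_1 \otimes \det^{-1}$ (sending $(te)^* \leftrightarrow e \otimes t^{-1}$ and $e^* \leftrightarrow -te \otimes t^{-1}$) together with Lemma \ref{lemmatwist} ($D_A \otimes \det^{-1} = t^{-1} D_A$), and applying Proposition \ref{propositionregulartononregular}(1) with $k = 1$ to the $(1,1)$-sub $t\Delta_A = D_A^{(1,1)} \subset D_A$, the embedding $\Delta_A \hookrightarrow D_A \otimes V_1^*$ works out to
\[
w \;\longmapsto\; tw \otimes (te)^* - (\nabla - z + h)\,w \otimes e^*,
\]
which is well-defined since $(\nabla - z + h)w \in D_A$ for $w \in \Delta_A$. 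Plugging into the composite and applying $\mathrm{ev}$ (using $(te)^*(e) = e^*(te) = 0$ and $(te)^*(te) = e^*(e) = 1$), a direct calculation yields the counit composed with $D_A \hookrightarrow \Delta_A$ as
\[
a_0 \otimes e + a_1 \otimes te \;\longmapsto\; (-\nabla + z - h)\,a_0 + t\,a_1 \in D_A \subset \Delta_A.
\]

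On the other side, I would compute the matrix of $c = \frc - 4h^2$ on $\Delta_A \otimes V_1$ in the basis $\{e,te\}$. The standard $\frg$-structure on $\Delta_A$ has Sen polynomial $(T-(z-h))(T-(z+h))$, so $\frz = 2z$, $\frc = 4h^2 - 1$, and $u^{-} = -t^{-1}(\nabla - z + h)(\nabla - z - h)$. Combined with the action on $V_1$ and the coproduct in $U(\frg)$, or equivalently by specializing formula~(\ref{equationactioncasimir}) with $\gamma_1 = 2z$ and $\gamma_0 = z^2 - h^2$, one finds
\[
c\binom{a_0}{a_1} = \binom{(-4\nabla + 4z)\,a_0 + 4t\,a_1}{-4 t^{-1}(\nabla - z + h)(\nabla - z - h)\,a_0 + (4\nabla - 4z + 4)\,a_1}.
\]
Proposition \ref{propositionnonregulartoregular} asserts $c^2 = 16h^2$, so $(c-4h)(c+4h) = 0$ and the image of $\tfrac{1}{4}(c-4h)$ automatically lands in $(\Delta_A \otimes V_1)[c+4h]$. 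Reading off the top component (the coefficient of $e$) of $\tfrac{1}{4}(c-4h)\binom{a_0}{a_1}$ gives $(-\nabla + z - h)\,a_0 + t\,a_1$, matching the counit expression of the previous paragraph.

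The main technical hurdle is the precise identification of the embedding $\Delta_A \hookrightarrow D_A \otimes V_1^*$, which requires combining the formulas of Proposition \ref{propositionregulartononregular}(1) with the two identifications $V_1^* \simeq V_1 \otimes \det^{-1}$ and $D_A \otimes \det^{-1} \simeq t^{-1} D_A$ simultaneously; once this is set up, both the evaluation of the counit and the matrix of $c$ reduce to routine bookkeeping using only $\nabla t = t(\nabla+1)$ and the explicit $\frg$-action on $\Delta_A \otimes V_1$.
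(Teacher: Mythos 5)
Your proposal is correct and follows essentially the same route as the paper's proof: the same identification $V_1^*\simeq V_1\otimes\det^{-1}$ combined with Lemma \ref{lemmatwist}, the same explicit embedding $w\mapsto tw\otimes(te)^*-(\nabla-z+h)w\otimes e^*$ (the paper writes it as $v\mapsto v\otimes e+t^{-1}(\nabla-(z-h))v\otimes te$ in $t^{-1}D_A\otimes V_1$, quoting the proof of Proposition \ref{propositionregulartononregular} rather than twisting its $k=1$ case), the evaluation description of the counit, and the specialization of (\ref{equationactioncasimir}) with $\gamma_1=2z$, $\gamma_0=z^2-h^2$ followed by comparison of the $e$-coefficients. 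The only blemish is the harmless slip ``$\frz=2z$'' (by Lemma \ref{lemmastandardgstructure} it acts by $\gamma_1-1=2z-1$), which does not affect your Casimir matrix or the conclusion.
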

\begin{proof}
Before the proof, note that Proposition \ref{propositionregulartononregular} shows that $\Delta_{A}$ contains $D_{A}$ and identifies $\Delta_{A}$ with the preimage of $(t^{-1}D_{A}/D_{A})[\nabla=z-h]$. Also by Proposition \ref{propositionnonregulartoregular}, $(c-4h)(c+4h)$ acts as zero on $T_{\mu}^{\lambda}\Delta_A$.

Recall we identify $\cR_L^+/X^2\simeq \Sym^1L^2=Lx\oplus Ly$ where $1=y$ is the lowest weight vector, $t=X=x$. Let $(\Sym^1L^2)^{\vee}=Lx^*\oplus Ly^*$ be the dual where $x^*,y^*$ are the dual basis. We fix an isomorphism $Lt^{-1}\otimes_L\cR_L^+/X^2\simeq \det^{-1}\otimes_L\Sym^1L^2=(\Sym^1L^2)^{\vee}$ by $t^{-1}\otimes 1=x^*,t^{-1}\otimes t=-y^*$ (since $u^+.x^*=-y^*$).
The unit and counit map is induced by
\begin{align*}
        L&\rightarrow (Lx^*\oplus Ly^*)\otimes_L (Lx\oplus Ly)\rightarrow L\\ 
        1&\mapsto x^*\otimes x+y^*\otimes y
\end{align*}
where the second map is the evaluation map.

The map $T_{\mu}^{\lambda}\Delta_{A}=T_{\mu}^{\lambda}T_{\lambda}^{\mu}D_{A}\rightarrow D_{A} \hookrightarrow \Delta_{A}$ factors through (by Lemma \ref{lemmatwist})
\[ T_{\mu}^{\lambda}T_{\lambda}^{\mu}D_{A}=\Delta_{A}\otimes_L \cR_L^+/X^2\hookrightarrow t^{-1}D_{A}\otimes_L \cR_L^+/X^2\otimes_L \cR_L^+/X^2\rightarrow D_{A}\hookrightarrow \Delta_{A}\]
where $v\in \Delta_{A}$ is sent to $v\otimes e+t^{-1}(\nabla-(z-h))v\otimes te$ in $t^{-1}D_{A}\otimes_L \cR_L^+/X^2$ by the proof of Proposition \ref{propositionregulartononregular}. The image of $(v\otimes e+t^{-1}(\nabla-(z-h))v\otimes te)\otimes e\in (t^{-1}D_{A}\otimes_L \cR_L^+/X^2)\otimes_L \cR_L^+/X^2$ via the evaluation map in $D_{A}$ is $-(\nabla-(z-h))v$ and the image of $(v\otimes e+t^{-1}(\nabla-(z-h))v\otimes te)\otimes te$ is $tv$. 

On the other hand, for $v\in \Delta_{A}$, using (\ref{equationactioncasimir}) again for $\Delta_{A}\otimes_L\cR_L^+/X^2$,
\[(c\pm 4h). (v\otimes t^ie)=(4(2i-1)\nabla+4i(1-2z)+4z\pm 4h)v\otimes t^ie+4i(2-i)tv\otimes t^{i-1}e- 4t^{-1}P_{\rm Sen}(\nabla)v\otimes t^{i+1}e.\]
For $i=0$ we have 
\[(c-4h). (v\otimes e)=4(-\nabla+(z-h))v\otimes e- 4t^{-1}P_{\rm Sen}(\nabla)v\otimes te.\]
and 
\[(c- 4h). (v\otimes te)=4(\nabla+(1-z-h))v\otimes te+4tv\otimes e.\]
Hence the image of $\frac{1}{4}(c-4h)(v\otimes e)$ in $\Delta_{A}$ is $(-\nabla+z-h)v$ and the image of $\frac{1}{4}(c-4h)(v\otimes te)$ in $\Delta_{A}$ is $tv$.
\end{proof}

\section{Geometric translations for $\GL_2(\Q_p)$}\label{sectiongeometrictranslation}
We will prove our main results on the geometric translations in the $\GL_2(\Q_p)$ case (Theorem \ref{theorem}). We first make necessary preparations in \S\ref{subsectionformalcompletion} for the study of the direct image of $(\varphi,\Gamma)$-modules over formal rigid spaces in Construction \ref{construction}. We prove our main result in \S\ref{subcectiondireftimageandtranslation} which compares the direct image and the translation. The interested reader may consult the pointwise cases in \S\ref{subsectionspecialization} first or the description in Corollary \ref{corollarydirectimagephigammamodule} for the basic ideas on computations. We take $G=\GL_{2/L}$, $K=\Q_p$ and $f:\tildefrg\rightarrow \frg$.
\subsection{Formal completion of $(\varphi,\Gamma)$-modules}\label{subsectionformalcompletion}
A $(\varphi,\Gamma)$-module $D_{A}^r$ over $\cR_{A}^r$ for an affinoid algebra $A$ over $L$ is equivalently a $(\varphi,\Gamma)$-bundle on $\bbU^{r}_{\Sp(A)}=\Sp(A)\times_L \bbU^{r}$, namely a $\Gamma$-equivariant vector bundle $\cD^r_{\Sp(A)}$ on $\bbU^{r}_{\Sp(A)}$ equipped with an isomorphism $\varphi^*\cD^r_{\Sp(A)}\simeq \cD^r_{\Sp(A)}|_{\bbU^{r/p}_{\Sp(A)}}$ commuting with $\Gamma$-actions. This point of view will be more convenient for the consideration of cohomologies (but will not work for translations), and we need a similar description for formal completions of $(\varphi,\Gamma)$-modules. A basic discussion on coherent modules over formal rigid spaces can be found in Appendix \ref{sectionappendixformalfunction}.

Let $\Sp(A)\in \Rig_L$ with an ideal $I$ of $A$. Let $\frY_n=\Sp(A/I^n)$ and $\frY^{\wedge}:=\varinjlim_n \frY_n$. The latter is a ringed site with the structure sheaf $\cO_{\frY^{\wedge}}=\varprojlim_n\cO_{\frY_n}$ whose global section is $A^{\wedge}:=\varprojlim_nA/I^n$. There is a sheaf of $\cO_{\frY^{\wedge}}$-module $\cR_{\frY^{\wedge}}^r:=\varprojlim_n\cR_{\frY_n}^r$ for small enough $r>0$ whose global section is $\cR_{A^{\wedge}}^r:=\varprojlim_n\cR_{A/I^n}^r$. We define $\cR_{A^{\wedge}}:=\varinjlim_r\cR_{A^{\wedge}}^r$.

Recall that for $r$ small enough and $s<r$, $\varphi:\cR^{[s,r]}_A\rightarrow \cR^{[s/p,r/p]}_A,\cR^{r}_A\rightarrow \cR^{r/p}_A$ make the targets finite free of rank $p$ over the sources and we have $\cR_{A}^{r/p}=\oplus_{i=0}^{p-1}(1+X)^i\varphi(\cR_{A}^r)$. Taking $I$-adic completion, we get $\cR_{A^{\wedge}}^{r/p}=\oplus_{i=0}^{p-1}(1+X)^i\varphi(\cR_{A^{\wedge}}^r)$ and taking direct limit $\cR_{A^{\wedge}}=\oplus_{i=0}^{p-1}(1+X)^i\varphi(\cR_{A^{\wedge}})$.

\begin{definition}\label{definitionphigammamodulecomplete}
    Let $\frX^{\wedge}$ be the formal completion of a rigid space $\frX$ along a Zariski closed subspace defined by a coherent ideal sheaf $\cI$ as in Definition \ref{definitionformalcompletion}. 
    \begin{enumerate}
        \item A $(\varphi,\Gamma)$-module (resp. $(\varphi,\Gamma,\frg)$-module) $D_{\frX^{\wedge}}^r$ over $\cR_{\frX^{\wedge}}^r:=\varprojlim_{n}\cR_{\frX_n}^r$ is a locally finite projective $\cR^r_{\frX^{\wedge}}$-module with a $(\varphi,\Gamma)$-structure, i.e., an isomorphism 
        \[\varphi:\varphi^*D_{\frX^{\wedge}}^r:=\cR^{r/p}_{\frX^{\wedge}}\otimes_{\varphi,\cR^r_{\frX^{\wedge}}}D_{\frX^{\wedge}}^r\simeq \cR^{r/p}_{\frX^{\wedge}}\otimes_{\cR^r_{\frX^{\wedge}}}D_{\frX^{\wedge}}^r\] 
        commuting with a semilinear action of $\Gamma$ (resp. $(\varphi,\Gamma)$-structure and an action of $\frg$) such that there exist $(\varphi,\Gamma)$-modules (resp. $(\varphi,\Gamma, \frg)$-modules as in Definition \ref{definitionphgammagmodule}) $D^r_{\frX_n}$ over $\cR^r_{\frX_n}$ satisfying $D^r_{\frX_{n+1}}/\cI^{n}=D^r_{\frX_{n}}$ for all $n\geq 1$ and there exists an isomorphism $D^r_{\frX^{\wedge}}\simeq \varprojlim_n D^r_{\frX_n}$.
        \item A $(\varphi,\Gamma)$-module $D_{\frX^{\wedge}}$ over $\cR_{\frX^{\wedge}}:=\varinjlim_r\cR_{\frX^{\wedge}}^r$ is a $\cR_{\frX^{\wedge}}$-module with $(\varphi,\Gamma)$-structure such that there exists a $(\varphi,\Gamma)$-module $D^r_{\frX^{\wedge}}$ over $\cR_{\frX^{\wedge}}^r$ and $D_{\frX^{\wedge}}=\cR_{\frX^{\wedge}}\otimes_{\cR^r_{\frX^{\wedge}}}D^r_{\frX^{\wedge}}=\varinjlim_{r'<r}\cR_{\frX^{\wedge}}^{r'}\otimes_{\cR^r_{\frX^{\wedge}}}D^r_{\frX^{\wedge}}$.
    \end{enumerate}  
\end{definition}
The underlying $\cR_{\frY}^r$-module of a $(\varphi,\Gamma)$-module $D_{\frY}^r$ over $\cR_{\frY}^r$ is the $\cR_{\frY}^r$-module associated to its global section $D_{A}^r$ which is finite projective over $\cR_{A}^r$: for any affinoid open $\Sp(B)\subset \frY$, $\Gamma(\Sp(B),D_{\frY}^r)=D_{A}^r\otimes_{\cR_{A}^r}\cR_{B}^r$. A vector bundle over $\bbU_{\frY}^r$ is equivalently a compatible family of finite projective modules over $\cR^{[s,r]}_{\frY}$ or $\cR^{[s,r]}_{A}$ for $s<r$. We can similarly define the notion of vector bundles or $\varphi$-bundles over $\bbU^r_{\frY^{\wedge}}=\varinjlim_{s,n}\bbU^{[s,r]}_{\frY_n}$. 
\begin{lemma}\label{lemmacompletionphigammamodule}
    Let $g:\bbU^{r}_{\frY^{\wedge}}\rightarrow \frY^{\wedge}$ be the projection.
    \begin{enumerate}
        \item Suppose that $D_{A^{\wedge}}^r$ is a finite projective $\cR_{A^{\wedge}}^r$-module, then $D_{A^{\wedge}}^r$ is $I$-adically complete and is the global section of a vector bundle $(\cR^{[s,r]}_{A/I^n}\otimes_{\cR_{A^{\wedge}}^{r}}D_{A^{\wedge}}^r)_{s,n}$ over $\bbU^r_{\frY^{\wedge}}$. Moreover for $r'\leq r$, $\cR^{r'}_{A^{\wedge}}\otimes_{\cR^{r}_{A^{\wedge}}}D^r_{A^{\wedge}}=\varprojlim_{n}D_{A/I_n}^{r'}$. 
        \item Taking global sections on $\frY^{\wedge}$ induces an equivalence between the category of $\cR^r_{\frY^{\wedge}}$-modules of the form $D^r_{\frY^{\wedge}}=\varprojlim_nD^r_{\frY_n}$ where $D^r_{\frY_n}$ are $\cR_{\frY_n}^r$-modules associated to projective $\cR_{A/I^n}^r$-modules and $D^r_{\frY_{n+1}}/I^n=D^r_{\frY_{n}}$ for all $n$ and the category of finite projective $\cR_{A^{\wedge}}^r$-modules. 
        \item The direct image functor $g_*$ induces an equivalence of categories between $\varphi$-bundles over $\bbU^r_{\frY^{\wedge}}$ and finite projective $\varphi$-modules over $\cR_{\frY^{\wedge}}^r$. 
    \end{enumerate} 
\end{lemma}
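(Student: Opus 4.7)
The plan is to prove the three parts in order, with part (1) providing the technical backbone that makes (2) and (3) formal.

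For (1), I would first observe that $\cR^{r}_{A^{\wedge}} = \varprojlim_n \cR^{r}_{A/I^n}$ is $I$-adically complete and separated: the kernel of $\cR^r_{A^{\wedge}} \to \cR^r_{A/I^n}$ contains $I^n \cR^r_{A^{\wedge}}$, and the intersection of these kernels is zero. Any finite projective $D^r_{A^{\wedge}}$ embeds as a direct summand in some free module $(\cR^r_{A^{\wedge}})^N$, which is visibly $I$-adically complete, so $D^r_{A^{\wedge}}$ is complete as a retract. For the vector bundle claim, the base changes $D^{[s,r]}_{A/I^n} := \cR^{[s,r]}_{A/I^n} \otimes_{\cR^r_{A^{\wedge}}} D^r_{A^{\wedge}}$ are finite projective over $\cR^{[s,r]}_{A/I^n}$ and manifestly compatible in $s$ and $n$, so they glue to a vector bundle on $\bbU^r_{\frY^{\wedge}}$. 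The formula $\cR^{r'}_{A^{\wedge}} \otimes_{\cR^{r}_{A^{\wedge}}} D^r_{A^{\wedge}} = \varprojlim_n D^{r'}_{A/I^n}$ is checked for free modules by commuting $\varprojlim$ with finite direct sums; the general case follows because both sides convert direct summands into direct summands.

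For (2), I would construct the quasi-inverse functor $M^r \mapsto (M^r/I^n M^r)_n$ and show that the natural map $M^r \to \varprojlim_n M^r/I^n M^r$ is an isomorphism (which is (1) again, since $M^r$ is $I$-adically complete). In the other direction, given a compatible system $(D^r_{\frY_n})$, one sets $D^r_{A^{\wedge}} := \varprojlim_n D^r_{A/I^n}$ and must show this is finite projective over $\cR^r_{A^{\wedge}}$. Pick finitely many elements lifting a generating set of $D^r_{A/I}$, producing a surjection $p_1 : (\cR^r_{A/I})^N \twoheadrightarrow D^r_{A/I}$; by projectivity of each $D^r_{A/I^n}$, the surjections $p_n$ lift compatibly and the splittings $s_n : D^r_{A/I^n} \hookrightarrow (\cR^r_{A/I^n})^N$ can be chosen inductively compatibly (lifting $s_n$ to $s_{n+1}$ is obstructed by a class in $\Ext^1$ that vanishes by projectivity). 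Taking inverse limits exhibits $D^r_{A^{\wedge}}$ as a direct summand of $(\cR^r_{A^{\wedge}})^N$.

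For (3), a $\varphi$-bundle on $\bbU^r_{\frY^{\wedge}}$ is by definition a compatible family of vector bundles on $\bbU^{[s,r]}_{\frY_n}$ equipped with an isomorphism to its $\varphi$-pullback. Via (1) and (2), such a family corresponds to a finite projective $\cR^r_{\frY^{\wedge}}$-module, and the $\varphi$-structure transfers through this equivalence because $\varphi$ interchanges $\cR^r_{A^{\wedge}}$ and $\cR^{r/p}_{A^{\wedge}}$ and commutes with the limit over $n$.

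The main obstacle is the lifting step in (2): establishing that $\varprojlim_n D^r_{A/I^n}$ is genuinely finite projective rather than merely flat or countably generated. The non-Noetherian nature of $\cR^r_A$ (it is a Bezout domain in the field case but lacks finite presentation for its ideals in general) means one cannot appeal to standard Mittag-Leffler or flat-lifting statements blindly; the argument must use projectivity of each layer $D^r_{A/I^n}$ explicitly to splice splittings, and one must verify that the resulting limit retraction of $(\cR^r_{A^{\wedge}})^N$ does define a finite projective module in the expected sense.
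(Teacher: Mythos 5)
Your treatments of (1) and (2) are correct and close to the paper in substance. For (1), the completeness-via-direct-summand argument is exactly the paper's, and your retract trick for the $r'$-formula (check on free modules, then pass to direct summands by naturality) is a legitimate variant of the paper's chain of identifications. For (2), where the paper simply invokes its appendix Lemma \ref{lemmaflatcompletion}(2) (the general fact that a compatible system of finite projective $A/I^n$-modules has finite projective limit over $A^{\wedge}$, with the expected reductions), you reprove that fact by hand: lifting the surjection $(\cR^r_{A/I})^N\twoheadrightarrow D^r_{A/I}$ works because the cokernel $C$ of a lift satisfies $C=I^{n-1}C$ with $I^{n-1}$ nilpotent in $A/I^n$, and splicing splittings works because $\ker p_n\twoheadrightarrow \ker p_{n-1}$ is surjective, so the discrepancy between a chosen splitting and the reduction of the previous one can be corrected by projectivity. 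This exhibits $\varprojlim_n D^r_{A/I^n}$ as a direct summand of $(\cR^r_{A^{\wedge}})^N$ compatibly with the reductions, which is what the sheaf-level equivalence needs, and you are right that no Noetherian hypothesis is used. So for (1)--(2) your route differs from the paper only in proving the key commutative-algebra lemma directly instead of citing it.

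The genuine gap is in (3). At each level $n$, a $\varphi$-bundle over $\bbU^r_{\frY_n}$ is by definition a compatible family of finite projective $\cR^{[s,r]}_{A/I^n}$-modules with a $\varphi$-isomorphism; your claim that ``via (1) and (2) such a family corresponds to a finite projective $\cR^r_{\frY^{\wedge}}$-module'' is not something (1) and (2) provide. Part (1) only goes from finite projective $\cR^r$-modules to bundles, and part (2) compares systems of finite projective modules over the rings $\cR^r_{A/I^n}$; neither says that the sections over the quasi-Stein space $\bbU^r_{\frY_n}$ of a $\varphi$-bundle form a finite projective $\cR^r_{A/I^n}$-module. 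That is exactly the nontrivial finite-level input, namely the Kedlaya--Liu equivalence between $\varphi$-bundles over $\bbU^r$ and finite projective $\varphi$-modules over $\cR^r$ \cite[Prop. 2.2.7]{kedlaya2014cohomology}, which the paper applies modulo $I^n$ before passing to the limit (and uses again for fully faithfulness via $\Hom_{\cR^r_{\frY^{\wedge}}}(D_1^r,D_2^r)=\varprojlim_n\Hom_{\cR^r_{\frY_n}}(D_1^r/I^n,D_2^r/I^n)$). Without the $\varphi$-structure the assertion fails in general: a bare vector bundle on $\bbU^r$ has merely coadmissible, not finite projective, sections over $\cR^r$. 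So the $\varphi$-action is not just ``transferred through the equivalence''; it is what makes the descent from the family $\{\cR^{[s,r]}\}_s$ to $\cR^r$ possible, and your proposal neither invokes this theorem nor offers a substitute for it.
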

\begin{proof}
    (1) If $D_{A^{\wedge}}^r$ is a finite projective $\cR_{A^{\wedge}}^r$-module, we may find another finite module $D'$ such that $D_{A^{\wedge}}^r\oplus D'\simeq (\cR^r_{A^{\wedge}})^n$ for some $n$. Since $(\cR^r_{A^{\wedge}})^n$ is $I$-adically complete \cite[\href{https://stacks.math.columbia.edu/tag/05GG}{Tag 05GG}]{stacks-project}, so is its direct summand. Hence $D_{A^{\wedge}}^r=\varprojlim_n D_{A/I^n}^r=\varprojlim_{n,s}D_{A/I^n}^{[s,r]}=\varprojlim_{n,s}D_{A/I^n}^{r}\otimes_{\cR_{A/I^n}^r}\cR^{[s,r]}_{A/I^n}=\varprojlim_s(D_{A^{\wedge}}^r\otimes_{\cR_{A^{\wedge}}^r}\cR_{A^{\wedge}}^{[s,r]})$ (for the last equality we use that $D_{A^{\wedge}}^r\otimes_{\cR_{A^{\wedge}}^r}\cR_{A^{\wedge}}^{[s,r]}$ is $I$-adically complete being finite projective over $\cR_{A^{\wedge}}^{[s,r]}$). The statement for $D_{A^{\wedge}}^{r'}$ follows similarly.

    (2) Suppose that $(D_{\frY_n}^r)_n$ is a collection of finite projective $\cR_{\frY_n}^r$-modules (giving vector bundles on $\bbU^r_{\frY_n}$) such that $D_{\frY^{\wedge}}^r:=\varprojlim_nD_{\frY_n}^r$. Write $D_{A/I^n}^r:=\Gamma(\frY_n,D_{\frY_n}^r)$. By the definition of inverse limit of sheaves, for any affinoid open $\Sp(B)\subset \frY$, $\Gamma(\Sp(B/I),D_{\frY^{\wedge}}^r)=\varprojlim_n\Gamma(\Sp(B/I^n),D_{\frY_n}^r)=D_{B^{\wedge}}^r:=\varprojlim_n\cR^r_{B/I^n}\otimes_{\cR^r_{A/I^n}}D_{A/I^n}^r$. Hence the section is finite projective over $\cR_{B^{\wedge}}^r$ and $D_{B^{\wedge}}^r/I^n=\cR^r_{B/I^n}\otimes_{\cR^r_{A/I^n}}D_{A/I^n}^r$ by (2) of Lemma \ref{lemmaflatcompletion}. Then $D_{B^{\wedge}}^r=D_{A^{\wedge}}^r\otimes_{\cR_{A^{\wedge}}^r}\cR_{B^{\wedge}}^r$ (both sides are $I$-adically complete by (1)). Take $B=A$ we see the global section is finite projective. The essential surjectivity follows from (1): given a $(\varphi,\Gamma)$-module $D_{A^{\wedge}}^r$ finite projective over $\cR_{A^{\wedge}}^r$, $D_{A^{\wedge}}^r/I^n$ defines sheaves $D_{\frY_n}^r$ and $\Gamma(\frY^{\wedge},\varprojlim_nD_{\frY_n}^r)=D_{A^{\wedge}}^r$. The fully faithfulness follows as for (2) of Lemma \ref{lemmaGAGAaffinoid}.

    (3) Let $\cD^r_{\frY^{\wedge}}=(\cD^{[s,r]}_{\frY_n})_{s,n}$ be a $\varphi$-bundle on $\bbU^r_{\frY^{\wedge}}$. By \cite[Prop. 2.2.7]{kedlaya2014cohomology}, the global section of $(\cD^{[s,r]}_{\frY_n})_{s}$ for a fix $n$ defines via $g_*$ a finite projective $\varphi$-module denoted by $D_{\frY_n}^r$ over $\cR_{\frY_n}^r$. By the equivalence of \textit{loc. cit.}, we have $D_{\frY_n}^r/I^{n-1}=D^r_{\frY_{n-1}}$. Hence the $\cR_{\frY^{\wedge}}^r$-module $D_{\frY^{\wedge}}^r:=\varprojlim_n D_{\frY_{n}}^r$ is finite projective over $\cR^r_{\frY^{\wedge}}$ by (2). Thus $g_*$ sends $\varphi$-bundles to fintie projective $\varphi$-modules (see also Lemma \ref{lemmadirectimagephibundles} below). The fully faithfulness of $g_*$ follows from mod $I^n$-cases in \cite[Prop. 2.2.7]{kedlaya2014cohomology} and that for two $\varphi$-modules $D_1,D_2$ (which are $I$-adically complete by (1))
    \[\Hom_{\cR^r_{\frY^{\wedge}}}(D^r_1,D^r_2)=\varprojlim_n\Hom_{\cR^r_{\frY^{\wedge}}}(D^r_1,D^r_2/I^n)=\varprojlim_n\Hom_{\cR^r_{\frY_n}}(D^r_1/I^n,D^r_2/I^n).\]
    The essential surjectivity follows from (1) and (2). 
\end{proof}

\begin{remark}
    Since we will only deal with finite projective modules, we ignore if we can define the notion of coadmissible modules over the topological ring $\cR_{A^{\wedge}}^r=\varprojlim_{s,n}\cR_{A/I^n}^{[s,r]}$ as for modules over $\cR_{A}^r$ in \cite[\S 3]{schneider2003algebras}.
\end{remark}

We construct below our major players: some $(\varphi,\Gamma)$-module $D_{\tildefrY^{\wedge}}^r$ over a formal rigid space $\tildefrY^{\wedge}$ projective over $\frY^{\wedge}$ and we will study its direct image along the map $\tildefrY^{\wedge}\rightarrow \frY^{\wedge}$.
\begin{construction}\label{construction}
    Fix $\bh=(h_1,h_2)\in\Z^2, h_1< h_2$. Let $\frY=\Sp(A)\in \Rig_L$ be an affinoid with a $(\varphi,\Gamma)$-module $\Delta_A$ of rank $2$ over $\frY$ base changed from a $(\varphi,\Gamma)$-module $\Delta_A^r$ over $\cR_A^r$ for some $r>0$. We assume that $r$ is taken such that the number $m(r)$ is large enough for $D_{\dif}^{m(r),+}(\Delta_A^r)$ in the sense of Definition \ref{definitioninfinitedif}. 
    
    Let $I$ be the ideal of $A$ generated by zeroth and the first coefficients of the Sen polynomial of $\Delta_A$. Let $\frY_n=\Sp(A/I^n)$ and $\frY^{\wedge}=\varinjlim\frY_n$. By definition, $\frY^{\wedge}=\frY\times_{\frX_2}(\frX_2)^{\wedge}_{0}$. According to Proposition \ref{propositionfiberproduct}, $\tildefrY^{\wedge}:=f_{\bh}^{-1}(\frY^{\wedge})=\frY^{\wedge}\times_{(\frX_2)_0^{\wedge}}(\frX_2)_{\bh}^{\wedge}=\frY^{\wedge}\times_{\frg/G}\tildefrg/G$. In the following, we construct explicitly these spaces and the universal $(\varphi,\Gamma)$-module $D_{\tildefrY^{\wedge}}$ over $\cR_{\tildefrY^{\wedge}}$.
    \begin{center}
        \begin{tikzcd}
            \tildefrY^{\wedge}\arrow[r]\arrow[d,"f_{\bh}"]&(\frX_{2})^{\wedge}_{\bh}\arrow[r,"D_{\rm pdR}"]\arrow[d,"f_{\bh}"]& \tildefrg/G \arrow[d,"f"]\\ 
            \frY^{\wedge}\arrow[r]&(\frX_2)^{\wedge}_{0}\arrow[r,"D_{\rm pdR}"]&  \frg/G.
        \end{tikzcd}
    \end{center}

    Write $\Delta^r_{\frY_n}$ for the base change of $\Delta^r_A$. Proposition \ref{propositionappendixequivalencealmostdeRham} gives a vector bundle $D_{\rm pdR}(\Delta_{\frY_n})$ on $\frY_n$ together with a nilpotent operator $\nu_{\frY_n}$ such that $D_{\rm pdR}(\Delta_{\frY_n})\otimes_{\cO_{\frY_n}}\cO_{\frY_{n-1}}=D_{\rm pdR}(\Delta_{\frY_{n-1}})$ for $n\geq 2$. Let $\frY_n^{\square}$ be the $\GL_2$-torsor over $\frY_n$ trivializing $D_{\rm pdR}(D_{\frY_n})$. Then $\frY_n^{\square}\times_{\frY_n}\frY_{n-1}=\frY_{n-1}^{\square}$. The nilpotent operator $\nu_{\frY_n}$ induces $\frY_n^{\square}\rightarrow \frg$. Let $\widetilde{\frY}_n^{\square}=\frY_n^{\square}\times_{\frg}\tildefrg$ and $\widetilde{\frY}_n=[\widetilde{\frY}_n^{\square}/\GL_2]$. Then $\widetilde{\frY}_n$ is a rigid analytic space projective over $\frY_n$ and $\widetilde{\frY}_n\times_{\frY_{n}}\frY_{n-1}=\widetilde{\frY}_{n-1}$. Let $\widetilde{\frY}^{\wedge}=\varinjlim \widetilde{\frY}^{\wedge}_n$ (see Remark \ref{remarktrivialtorsor1} below). Let $\Delta_{\tildefrY_n}^r$ be the pullback and $\Delta_{\tildefrY^{\wedge}}^r:=\varprojlim_n\Delta_{\tildefrY_n}^r$. 

    On each $\widetilde{\frY}_n$, the universal $\nu$-stable filtration of $D_{\pdR}(\Delta_{\tildefrY_n})$ provided from $\widetilde{\frg}$ gives a $\Gamma$-invariant $(K_{m}\otimes_{\Q_p}\cO_{\widetilde{\frY}_n})[[t]]$-lattice of weight $\bh$ inside $D_{\dif}^m(\Delta_{\widetilde{\frY}_n})$ by Proposition \ref{propositionappendixequivalencealmostdeRham}, and by Proposition \ref{propositionappendixglueing} we obtain a modification $D_{\widetilde{\frY}_n}^r$ of $\Delta_{\widetilde{\frY}_n}^r$ on $\widetilde{\frY}_n$ which is a $(\varphi,\Gamma)$-module over $\cR_{\tildefrY_n}^r$. Let $D_{\widetilde{\frY}^{\wedge}}^r=\varprojlim_n D_{\widetilde{\frY}_n}^r$.  We write $\cD_{\widetilde{\frY}_n^{\wedge}}^r$ for $D^r_{\widetilde{\frY}_n^{\wedge}}$ viewed as coherent sheaves on $\bbU^r_{\widetilde{\frY}_n}$ and similarly $\cD_{\widetilde{\frY}^{\wedge}}^r=\varprojlim_n \cD_{\widetilde{\frY}_n}^r$.  

    Finally, let $D_{\tildefrY^{\wedge}}=\varinjlim_rD_{\tildefrY^{\wedge}}^r$, a $(\varphi,\Gamma)$-module in the sense of Definition \ref{definitionphigammamodulecomplete}.
\end{construction}

\begin{remark}\label{remarktrivialtorsor1}
    Locally on $\frY$ we can choose a trivialization of $D_{\rm pdR}(D_{A^{\wedge}})$ as in Remark \ref{remarktrivialtorsor}. With the trivialization the operator $\nu$ on $D_{\rm pdR}(D_{A^{\wedge}})\simeq (A^{\wedge})^2$ induces maps $\frY_n\rightarrow \frg$ for all $n$. Then we can describe locally the formal rigid space in the above construction via $\widetilde{\frY}_n\simeq \frY_n\times_{\frg}\widetilde{\frg}$ and $\widetilde{\frY}^{\wedge}=\mathrm{colim}_n\widetilde{\frY}_n$. We also note that in this (local) case, $A^{\wedge}$-linear operator on $(A^{\wedge})^2=D_{\pdR}(D_{A^{\wedge}})$ induces a scheme map $ \Spec(A^{\wedge})\rightarrow \frg^{\alg}$. The proper formal rigid space $\widetilde{\frY}^{\wedge}$ over $\frY^{\wedge}$ is the relative analytification of the formal completion of a projective scheme $\Spec(A^{\wedge})\times_{\frg^{\alg}}\widetilde{\frg}^{\alg}\subset\bP^1_{\Spec(A^{\wedge})}$ over $\Spec(A^{\wedge})$ in the way discussed before Corollary \ref{corollaryGAGA}.
\end{remark}

By discussions in Appendix \ref{sectionappendixformalfunction}, $\cD_{\widetilde{\frY}^{\wedge}}^r=\varprojlim_n \cD_{\widetilde{\frY}_n}^r$ is a coherent $\cO_{\bbU^r_{\widetilde{\frY}^{\wedge}}}$-module and is locally free of rank two (Lemma \ref{lemmacompletionphigammamodule}).

We still write $f_{\bh}:\widetilde{\frY}^{\wedge}\rightarrow\frY^{\wedge}$ for the morphism of ringed spaces with the sheaves of abstract rings $\cR_{\widetilde{\frY}^{\wedge}}^r$ or $\cO_{\widetilde{\frY}^{\wedge}}$, and $\cR_{\frY^{\wedge}}^r$ or $\cO_{\frY^{\wedge}}$, via the map $f_{\bh}^{-1}\cR_{\frY^{\wedge}}^r\rightarrow \cR_{\widetilde{\frY}^{\wedge}}^r$ or $f_{\bh}^{-1}\cO_{\frY^{\wedge}}\rightarrow \cO_{\widetilde{\frY}^{\wedge}}$.

\begin{definition}\label{definitiondirectimage}
    We write $Rf_{\bh,n,*}D^r_{\widetilde{\frY}_n}$, resp. $Rf_{\bh,*}D^r_{\widetilde{\frY}^{\wedge}}$, resp. $Rf_{\bh,*}D_{\widetilde{\frY}^{\wedge}}$ to be the derived direct image of the sheaf of $\cO_{\widetilde{\frY}_n}$-module $D^r_{\widetilde{\frY}_n}$, resp. $\cO_{\tildefrY^{\wedge}}$-module $D^r_{\widetilde{\frY}^{\wedge}}$, resp. $\cO_{\tildefrY^{\wedge}}$-module $D_{\widetilde{\frY}^{\wedge}}$ along the map $f_{\bh}$ \cite[\href{https://stacks.math.columbia.edu/tag/071J}{Tag 071J}]{stacks-project}. 
\end{definition}
\begin{remark}
    The direct image will only work well after Lemma \ref{lemmadirectimagephibundles} below. The sheaves $f_{\bh,n,*}D^r_{\widetilde{\frY}_n}$ and $f_{\bh,*}D^r_{\widetilde{\frY}^{\wedge}}$ remain to have the $\Gamma$-actions but may not be $(\varphi,\Gamma)$-modules as in Definition \ref{definitionphigammamodulecomplete}, namely they may not be projective over $\cR_{\frY_n}^r$ or $\cR^r_{\frY^{\wedge}}$ and the map 
    \[\varphi:R^if_{\bh,*}D_{\tildefrY^{\wedge}}^r\rightarrow R^if_{\bh,*}(\cR_{\tildefrY^{\wedge}}^{r/p}\otimes_{\cR_{\tildefrY^{\wedge}}^r}D_{\tildefrY^{\wedge}}^r)\] 
    induced by $\varphi:D_{\tildefrY^{\wedge}}^r\rightarrow \cR_{\tildefrY^{\wedge}}^{r/p}\otimes_{\cR_{\tildefrY^{\wedge}}^r}D_{\tildefrY^{\wedge}}^r$ a priori may not factor through $\cR_{\frY^{\wedge}}^{r/p}\otimes_{\cR_{\frY^{\wedge}}^r}R^if_{\bh,*}D_{\tildefrY^{\wedge}}^r$. 
\end{remark}
We use $f_{\bh,n}^r: \bbU^r_{\widetilde{\frY}_n}\rightarrow \bbU^r_{\frY_n}$ to denote the base change of $f_{\bh}$ and similarly $f_{\bh}^r: \bbU^r_{\widetilde{\frY}^{\wedge}}\rightarrow \bbU^r_{\frY^{\wedge}}$. By Corollary \ref{corollaryGAGA}, $Rf_{\bh,*}^r$ sends coherent $\cO_{\bbU^r_{\widetilde{\frY}^{\wedge}}}$-modules to coherent $\cO_{\bbU^r_{\frY^{\wedge}}}$-modules. Consider the diagram
\begin{center}
    \begin{tikzcd}
        \bbU^r_{\widetilde{\frY}^{\wedge}}\arrow[d,"f_{\bh}^r"]\arrow[r,"\widetilde{g}"]
        & \widetilde{\frY}^{\wedge}\arrow[d,"f_{\bh}"]\\ 
        \bbU^r_{\frY^{\wedge}}\arrow[r, "g"]
        & \frY^{\wedge}
    \end{tikzcd}
\end{center}
of ringed sites with structure sheaves $\cO_{\bbU^r_{\widetilde{\frY}^{\wedge}}},\cO_{\tildefrY^{\wedge}}$, etc.

\begin{lemma}\label{lemmadirectimagephibundles}
    Let $D_{\widetilde{\frY}^{\wedge}},\cD_{\widetilde{\frY}^{\wedge}}$ be as in Construction \ref{construction}.
    \begin{enumerate}
        \item For each $n\geq 1$, $R\widetilde{g}_{*}\cD^r_{\widetilde{\frY}_n}=D^r_{\widetilde{\frY}_n}$ as modules over $\cR^r_{\widetilde{\frY}_n}$. Similarly $R\widetilde{g}_{*}\cD^r_{\widetilde{\frY}^{\wedge}}=D^r_{\widetilde{\frY}^{\wedge}}$. Hence $Rf_{\bh,*}D_{\widetilde{\frY}^{\wedge}}^r=Rg_*Rf^r_{\bh,*}\cD^r_{\tildefrY^{\wedge}}$ and $Rf_{\bh,*}D_{\widetilde{\frY}_n}^r=Rg_*Rf^r_{\bh,*}\cD^r_{\tildefrY_n}$ for all $n\geq 1$. 
        \item  As sheaves of $\cR_{\frY^{\wedge}}^r$-modules with $\Gamma$-actions, $R^if_{\bh,*}D^r_{\widetilde{\frY}^{\wedge}}=g_*R^if_{\bh,*}^r\cD^r_{\widetilde{\frY}^{\wedge}}$ and are isomorphic to the inverse limit $\varprojlim_ng_*R^if_{\bh,*}^r\cD^r_{\widetilde{\frY}_n}$.
        \item For $r'\leq r$ and $i\geq 0$, the natural map $\cR^{r'}_{\frY^{\wedge}}\otimes_{\cR^{r}_{\frY^{\wedge}}}R^if_{\bh,*}D_{\tildefrY^{\wedge}}^r\rightarrow R^if_{\bh,*}(\cR^{r'}_{\tildefrY^{\wedge}}\otimes_{\cR^{r}_{\tildefrY^{\wedge}}}D_{\tildefrY^{\wedge}}^r)$ is an isomorphism provided that $R^if_{\bh,*}^r\cD^r_{\tildefrY^{\wedge}}$ is a locally finite free $\cO_{\bbU^r_{\frY^{\wedge}}}$-module. In this case, $R^if_{\bh,*}D^r_{\tildefrY^{\wedge}}$ is a $(\varphi,\Gamma)$-module over $\cR_{\frY^{\wedge}}^r$.
        \item For each $i$, $R^if_{\bh,*}D_{\widetilde{\frY}^{\wedge}}=\varinjlim_{r}R^if_{\bh,*}D^r_{\widetilde{\frY}^{\wedge}}$. Under the assumption in (3), the direct image $R^if_{\bh,*}D_{\widetilde{\frY}^{\wedge}}$ is a $(\varphi,\Gamma)$-module and $R^if_{\bh,*}D_{\widetilde{\frY}^{\wedge}}=\cR_{\frY^{\wedge}}\otimes_{\cR^{r}_{\frY^{\wedge}}}R^if_{\bh,*}D_{\tildefrY^{\wedge}}^r$.
    \end{enumerate}
\end{lemma}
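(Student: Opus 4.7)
The plan is to prove the four parts in order, exploiting the commutative diagram of ringed sites which gives $Rf_{\bh,*} \circ R\widetilde{g}_* = Rg_* \circ Rf^r_{\bh,*}$. Throughout, the key ingredients are Kiehl--Tate acyclicity for coherent sheaves on relative annuli, the formal function theorem from Appendix \ref{sectionappendixformalfunction}, and flatness of the Robba ring extensions $\cR^r_{\frY^{\wedge}} \to \cR^{r'}_{\frY^{\wedge}}$. For part (1), I would first establish $R\widetilde{g}_* \cD^r_{\widetilde{\frY}_n} = D^r_{\widetilde{\frY}_n}$ in degree zero: restricting to an affinoid open $\Sp(B) \subset \widetilde{\frY}_n$, the relative annulus $\bbU^r_{\Sp(B)}$ is exhausted by the affinoids $\bbU^{[s,r]}_{\Sp(B)}$, and Kiehl--Tate gives $H^i(\bbU^{[s,r]}_{\Sp(B)}, \cD^{[s,r]}) = 0$ for $i \geq 1$. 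Passing to the inverse limit over $s \leq r$ yields the vanishing of higher direct images together with the identification on sections. The completion version follows by combining this with Lemma \ref{lemmacompletionphigammamodule}. The identity $Rf_{\bh,*}D^r = Rg_* Rf^r_{\bh,*}\cD^r$ is then immediate from the composition-of-derived-functors spectral sequence, since $R\widetilde{g}_*$ is concentrated in degree zero.

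For part (2), the same Kiehl--Tate argument shows that $Rg_*$ is acyclic on coherent sheaves on $\bbU^r_{\frY^{\wedge}}$, so the Leray spectral sequence degenerates and gives $R^if_{\bh,*}D^r_{\widetilde{\frY}^{\wedge}} = g_* R^if^r_{\bh,*}\cD^r_{\widetilde{\frY}^{\wedge}}$. The identification with $\varprojlim_n g_* R^if^r_{\bh,n,*}\cD^r_{\widetilde{\frY}_n}$ is the relative formal function theorem (Corollary \ref{corollaryGAGA}) applied over each affinoid open of $\bbU^{[s,r]}_{\frY^{\wedge}}$; the $R^1 \varprojlim_n$ correction vanishes because the transition maps on coherent sheaves over the proper $\widetilde{\frY}_n$ are surjective, so the Mittag--Leffler condition is satisfied at each level of the cofinal system.

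For part (3), I would invoke flat base change for the proper morphism $f^r_{\bh}$: the map $\cR^r_{\frY^{\wedge}} \to \cR^{r'}_{\frY^{\wedge}}$ is flat (being the $I$-adic completion of the flat map $\cR^r_A \to \cR^{r'}_A$, which stays flat via Lemma \ref{lemmaflatcompletion}), and together with the local freeness hypothesis on $R^if^r_{\bh,*}\cD^r_{\widetilde{\frY}^{\wedge}}$ this yields the stated base change isomorphism; the resulting finite projective module with compatible $\varphi, \Gamma$-actions then satisfies Definition \ref{definitionphigammamodulecomplete}. Part (4) reduces to commutation of $R^if_{\bh,*}$ with the filtered colimit $\varinjlim_r$, which is standard since $f_{\bh}$ is quasi-compact and quasi-separated, and the $(\varphi,\Gamma)$-module structure over $\cR_{\frY^{\wedge}}$ is inherited from the one over each $\cR^r_{\frY^{\wedge}}$. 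The main obstacle is really part (2): the interchange $R^if^r_{\bh,*} \varprojlim_n = \varprojlim_n R^if^r_{\bh,n,*}$ is precisely the content of the formal function theorem and is the only step where properness of $f_{\bh}$ is truly essential; everything else is a formal consequence of Kiehl--Tate acyclicity and flat base change along the Robba ring.
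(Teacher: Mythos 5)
For parts (1), (2) and (4) your route is essentially the paper's: Kiehl-type acyclicity of the quasi-Stein relative annulus for $R\widetilde{g}_*$ and $Rg_*$, the degeneration of the composition spectral sequence, Corollary \ref{corollaryGAGA} for coherence of $R^if^r_{\bh,*}\cD^r_{\tildefrY^{\wedge}}$ and its identification with $\varprojlim_n R^if^r_{\bh,*}\cD^r_{\tildefrY_n}$, and commutation of $R^if_{\bh,*}$ with the filtered colimit over $r$. One correction in (2): your stated reason for the Mittag--Leffler condition (``the transition maps on coherent sheaves over the proper $\tildefrY_n$ are surjective'') is not what is needed -- surjectivity of $\cD^r_{\tildefrY_{n+1}}\twoheadrightarrow\cD^r_{\tildefrY_n}$ does not make the systems of higher direct images $R^if^r_{\bh,*}\cD^r_{\tildefrY_n}$ surjective or even obviously ML for $i\geq 1$. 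The ML property is exactly what the theorem on formal functions gives and is part of the content of Corollary \ref{corollaryGAGA}, which you cite anyway; the interchange of $Rg_*$ with $R\varprojlim_n$ then also uses that the exact functor $g_*$ preserves ML systems, which should be said explicitly.

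The genuine gap is in (3). You base everything on flatness of $\cR^{r}_{\frY^{\wedge}}\rightarrow\cR^{r'}_{\frY^{\wedge}}$, deduced from flatness of $\cR^r_A\rightarrow\cR^{r'}_A$ plus Lemma \ref{lemmaflatcompletion}. Neither step is available: flatness of $\cR^r_A\rightarrow\cR^{r'}_A$ is nowhere established in the paper and is a delicate assertion for a general affinoid $A$ (these are Fr\'echet--Stein, non-Noetherian rings, and a limit over $s$ of flat affinoid restrictions need not be flat), and Lemma \ref{lemmaflatcompletion} cannot supply it -- that lemma concerns $I$-adic inverse limits of flat $A/I^n$-modules over a \emph{Noetherian} ring $A$, not restriction maps between Robba rings. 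Moreover no such flatness is needed. Under the hypothesis that $R^if^r_{\bh,*}\cD^r_{\tildefrY^{\wedge}}$ is locally free, the paper argues via the dictionary of Lemma \ref{lemmacompletionphigammamodule}(3) between $\varphi$-bundles on $\bbU^r_{\frY^{\wedge}}$ and finite projective $\cR^r_{\frY^{\wedge}}$-modules: $-\otimes_{\cR^{r}_{\frY^{\wedge}}}\cR^{r'}_{\frY^{\wedge}}$ is then realized as restriction of the bundle $R^if^r_{\bh,*}\cD^r_{\tildefrY^{\wedge}}$ to the open $\bbU^{r'}_{\frY^{\wedge}}\subset\bbU^{r}_{\frY^{\wedge}}$, while $R^if_{\bh,*}(\cR^{r'}_{\tildefrY^{\wedge}}\otimes_{\cR^{r}_{\tildefrY^{\wedge}}}D^r_{\tildefrY^{\wedge}})$ is computed from the restriction of $\cD^r_{\tildefrY^{\wedge}}$ to $\bbU^{r'}_{\tildefrY^{\wedge}}$, and $Rf^{r}_{\bh,*}$ commutes with restriction to opens of the target; the two restrictions agree, giving the base change isomorphism with only the (automatic) flatness of open restrictions. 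The same mechanism with $r'=r/p$, applied to $\varphi^*\cD^r_{\tildefrY^{\wedge}}\simeq\cD^r_{\tildefrY^{\wedge}}|_{\bbU^{r/p}_{\tildefrY^{\wedge}}}$, is what produces the $\varphi$-structure on $R^if_{\bh,*}D^r_{\tildefrY^{\wedge}}$ -- a point your writeup leaves entirely implicit. You should replace the flat-base-change step by this bundle-restriction argument, or else actually prove the flatness you invoke.
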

\begin{proof}
    (1) The first statement is classical and follows from the vanishing of higher coherent cohomologies of quasi-Stein spaces $\bbU^r$ \cite{kiehl1967theorem}. The derived direct image commutes with derived inverse limit \cite[\href{https://stacks.math.columbia.edu/tag/0BKP}{Tag 0BKP}]{stacks-project}. The inverse systems $(\cD^r_{\widetilde{\frY}_n})_n$ and $(D^r_{\widetilde{\frY}_n})_n$ are Mittag-Leffler. We get that $R\widetilde{g}_*\cD_{\widetilde{\frY}^{\wedge}}^r=R\widetilde{g}_*R\varprojlim_n\cD_{\widetilde{\frY}_n}^r=R\varprojlim_nR\widetilde{g}_*\cD_{\widetilde{\frY}_n}^r=D_{\widetilde{\frY}^{\wedge}}^r$. 

    (2) By Corollary \ref{corollaryGAGA} (and its proof), for each $i\geq 0$, $R^if_{\bh,*}^r\cD^r_{\widetilde{\frY}^{\wedge}}$ is a coherent $\cO_{\bbU^r_{\frY^{\wedge}}}$-module and $R^if_{\bh,*}^r\cD^r_{\widetilde{\frY}^{\wedge}}=\varprojlim_nR^if_{\bh,*}^r\cD^r_{\widetilde{\frY}_n}$ is an inverse limit of a Mittag-Leffler inverse system. We see 
    \begin{align*}
        Rg_*R^if_{\bh,*}^r\cD^r_{\widetilde{\frY}^{\wedge}}=Rg_*R\varprojlim_nR^if_{\bh,*}^r\cD^r_{\widetilde{\frY}_n}=R\varprojlim_nRg_*R^if_{\bh,*}^r\cD^r_{\widetilde{\frY}_n}\\=R\varprojlim_ng_*R^if_{\bh,*}^r\cD^r_{\widetilde{\frY}_n}=\varprojlim_ng_*R^if_{\bh,*}^r\cD^r_{\widetilde{\frY}_n}.
    \end{align*}
    The last equality follows from that the exact functor $g_*$ sends a Mittag-Leffler system to a Mittag-Leffler system.

    (3) Suppose that $R^if_{\bh,*}^r\cD^r_{\tildefrY^{\wedge}}$ is a vector bundle. The isomorphism $\varphi:\varphi^*\cD_{\tildefrY^{\wedge}}^r\simeq \cD_{\tildefrY^{\wedge}}^r|_{\bbU_{\tildefrY^{\wedge}}^{r/p}}$ induces via $Rf_{\bh,*}^{r/p}$ isomorphisms $\varphi^*Rf_{\bh,*}^r\cD_{\tildefrY^{\wedge}}^r\simeq Rf_{\bh,*}^{r/p}(\cD_{\tildefrY^{\wedge}}^r|_{\bbU_{\tildefrY^{\wedge}}^{r/p}})=(Rf_{\bh,*}^{r}\cD_{\tildefrY^{\wedge}}^r)|_{\bbU_{\frY^{\wedge}}^{r/p}}$ (by flat base changes). By (3) of Lemma \ref{lemmacompletionphigammamodule} and (2), $R^if_{\bh,*}D^r_{\tildefrY^{\wedge}}$ is a finite projective $D^r_{\frY^{\wedge}}$-module which also ensures that $\cR^{r'}_{\frY^{\wedge}}\otimes_{\cR^{r}_{\frY^{\wedge}}}R^if_{\bh,*}D_{\tildefrY^{\wedge}}^r$ is finite projective corresponding to the restriction of $R^if_{\bh,*}^r\cD_{\tildefrY^{\wedge}}^r$ to $\bbU^{r'}_{\frY^{\wedge}}$. The isomorphism follows from that $\cR^{r'}_{\tildefrY^{\wedge}}\otimes_{\cR^{r}_{\tildefrY^{\wedge}}}D_{\tildefrY^{\wedge}}^r$ corresponds to the restriction of $\cD_{\tildefrY^{\wedge}}^r$ to $\bbU_{\tildefrY^{\wedge}}^{r'}$ via $(\widetilde{g}|_{\bbU_{\tildefrY^{\wedge}}^{r'}})_*$ and the $\varphi$-structure on $R^if_{\bh,*}D^r_{\tildefrY^{\wedge}}$ comes from the $\varphi$-structure on $R^if_{\bh,*}^r\cD^r_{\tildefrY^{\wedge}}$ via $(g|_{\bbU_{\frY^{\wedge}}^{r/p}})_*$.

    (4) The map $\widetilde{\frY}_1\rightarrow \frY_1$ is a proper map between quasi-compact spaces, the direct image of $\cO_{\tildefrY^{\wedge}}$-modules can be computed as $R^if_{\bh,*}D_{\tildefrY^{\wedge}}=R^if_{\bh,*}\varinjlim_rD^r_{\tildefrY^{\wedge}}=\varinjlim_rR^if_{\bh,*}D^r_{\tildefrY^{\wedge}}$ by \cite[\href{https://stacks.math.columbia.edu/tag/0739}{Tag 0739}, \href{https://stacks.math.columbia.edu/tag/07TA}{Tag 07TA}]{stacks-project} and \cite[Exposé V, Prop. 5.1]{grothendieck2006SGA4} (and \cite[Prop. 6.3/4]{bosch2014formal}). The last equality follows from that colimits commute with tensor product.
\end{proof}
\subsection{Direct image and translation}\label{subcectiondireftimageandtranslation}
In this subsection we prove our main results on geometric translations of $(\varphi,\Gamma)$-modules in the Construction \ref{construction}. Discussions in \S\ref{sectionflatlocalmodel} are served for the following hypothesis.
\begin{hypothesis}\label{hypothesisflat}
    We assume that the map $\frY^{\wedge,\square}:=\varinjlim_n\frY_n^{\square}\rightarrow \frg$ in Construction \ref{construction} is flat, in the sense that it satisfies the conclusion of Corollary \ref{corollaryflatlocalrings} (and hence Remark \ref{remarktrivialtorsor}).
\end{hypothesis}
The hypothesis above is satisfied for any smooth map $\frY\rightarrow \frX_2$ by Corollary \ref{corollaryflatlocalrings}. Such smooth maps exist and we know that for any point in $\frX_2(L')$ where $L'$ is a field finite over $L$, there exists an open neighborhood of the point in $\frX_2$ which receives a smooth surjection from a rigid analytic space by \cite[Thm. 5.1.20]{emerton2023introduction}.
\begin{proposition}\label{propositiondirectimagephigammamodule}
    Assume Hypothesis \ref{hypothesisflat} and let $\bh=(h_1,h_2)$ such that $ k=h_2-h_1\in\Z_{\geq 1}$. Then $ Rf_{\bh,*}^r\cD_{\widetilde{\frY}^{\wedge}}^r$ concentrates in degree zero and is a locally finite projective module of rank $4$ over $\cO_{\bbU_{\frY^{\wedge}}^r}$. 
\end{proposition}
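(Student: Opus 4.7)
The plan is to reduce the computation to Proposition \ref{propositiondirectimagelinebundle} for the Grothendieck resolution $f \colon \widetilde{\frg} \to \frg$, via the flat base change afforded by Hypothesis \ref{hypothesisflat}. The rank $4$ should emerge as $2 \times 2$: rank $2$ from $\cD^r_{\widetilde{\frY}^{\wedge}}$ itself, and rank $2$ from the direct image of the line bundles $\cO_{\widetilde{\frg}}(\pm 1)$ on the Grothendieck resolution.

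\textbf{Flat base change.} By Remark \ref{remarktrivialtorsor1} and Hypothesis \ref{hypothesisflat}, after locally trivializing the $\GL_2$-torsor $\frY^{\wedge,\square} \to \frY^{\wedge}$, we identify $\widetilde{\frY}^{\wedge}$ with the fiber product $\frY^{\wedge} \times_{\frg} \widetilde{\frg}$ along a flat morphism $\frY^{\wedge} \to \frg$. Consequently $f^r_{\bh} \colon \bbU^r_{\widetilde{\frY}^{\wedge}} \to \bbU^r_{\frY^{\wedge}}$ is locally the base change of $f \times \mathrm{id} \colon \widetilde{\frg} \times_L \bbU^r \to \frg \times_L \bbU^r$ along a flat map, and flat base change reduces computations of $Rf^r_{\bh,*}$ of pullbacks from $\widetilde{\frg}$ to Proposition \ref{propositiondirectimagelinebundle}. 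For instance, the projection formula yields
\[Rf^r_{\bh,*} \Delta^r_{\widetilde{\frY}^{\wedge}} = \Delta^r_{\frY^{\wedge}} \otimes_{\cO_{\bbU^r_{\frY^{\wedge}}}} f^r_{\bh,*} \cO_{\bbU^r_{\widetilde{\frY}^{\wedge}}},\]
which by Lemma \ref{lemmaglobalsectiontildegP}(2) is concentrated in degree $0$ and locally free of rank $4$.

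\textbf{Filtration of $\cD^r_{\widetilde{\frY}^{\wedge}}$.} By Lemma \ref{lemmatwist} we may twist to assume $h_1 = 0$ and $h_2 = k \geq 1$. The modification defining $\cD^r_{\widetilde{\frY}^{\wedge}}$ is prescribed by the universal $\nu$-stable filtration of type $(0,k)$ on $D_{\pdR}(\Delta_{\widetilde{\frY}^{\wedge}}) = f_{\bh}^* D_{\pdR}(\Delta_{\frY^{\wedge}})$, which on the Grothendieck resolution side corresponds to the tautological short exact sequence $0 \to \cO_{\widetilde{\frg}}(-1) \to f^* \cV \to \cO_{\widetilde{\frg}}(1) \to 0$ of Proposition \ref{propositiondirectimagelinebundle}(3). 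Unraveling Proposition \ref{propositionappendixequivalencealmostdeRham} and Proposition \ref{propositionappendixglueing} to track how this filtration translates into the modified $D_{\dif}^{m,+}$-lattice, we construct a $\Gamma$-stable two-step filtration of the underlying coherent $\cO_{\bbU^r_{\widetilde{\frY}^{\wedge}}}$-sheaf $\cD^r_{\widetilde{\frY}^{\wedge}}$ (not necessarily $\varphi$-stable), whose graded pieces are identified, via flat base change, with the pullbacks of $\cO_{\widetilde{\frg}}(-1)$ and $\cO_{\widetilde{\frg}}(1)$ to $\bbU^r_{\widetilde{\frY}^{\wedge}}$, possibly twisted by invertible factors from $\bbU^r$.

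\textbf{Computation.} By Proposition \ref{propositiondirectimagelinebundle}(1) and flat base change, $Rf^r_{\bh,*}$ of each graded piece of this filtration concentrates in degree $0$ and is locally free of rank $2$ over $\cO_{\bbU^r_{\frY^{\wedge}}}$. The long exact sequence associated to the two-step filtration then forces $R^i f^r_{\bh,*} \cD^r_{\widetilde{\frY}^{\wedge}} = 0$ for $i \geq 1$, and exhibits $f^r_{\bh,*} \cD^r_{\widetilde{\frY}^{\wedge}}$ as an extension of two locally free coherent sheaves of rank $2$. Since extensions of finitely-generated projective modules are projective, $f^r_{\bh,*} \cD^r_{\widetilde{\frY}^{\wedge}}$ is locally free of rank $4$, as claimed.

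\textbf{Main obstacle.} The main obstacle is the construction and identification in the second paragraph of the two-step $\Gamma$-stable filtration on $\cD^r_{\widetilde{\frY}^{\wedge}}$ whose graded pieces are pullbacks of $\cO_{\widetilde{\frg}}(\pm 1)$. Carrying this out requires a careful tracking, through Appendix \ref{sectionappendixfamilies}, of how the universal line $\cO_{\widetilde{\frg}}(-1) \hookrightarrow f^*\cV$ on $\widetilde{\frg}$ corresponds to a $\Gamma$-stable sub-$\cO$-sheaf of the modified lattice, matching the explicit identification made in the proof of Proposition \ref{propositiondirectimagelinebundle}(3). Once this compatibility is established, the remaining steps are formal applications of flat base change and Proposition \ref{propositiondirectimagelinebundle}.
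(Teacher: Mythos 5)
Your opening reduction (flat base change via Hypothesis \ref{hypothesisflat}, projection formula giving $Rf^r_{\bh,*}\Delta^r_{\tildefrY^{\wedge}}=\Delta^r_{\frY^{\wedge}}\otimes f_{\bh,*}\cO_{\tildefrY^{\wedge}}$ of rank $4$ in degree $0$) matches the paper, but the construction you yourself flag as the main obstacle --- a global $\Gamma$-stable two-step filtration of the coherent sheaf $\cD^r_{\tildefrY^{\wedge}}$ on $\bbU^r_{\tildefrY^{\wedge}}$ with locally free graded pieces given by pullbacks of $\cO_{\tildefrg}(\mp1)$ (up to twists from $\bbU^r$) --- is a genuine gap, and I do not believe such a filtration exists in general. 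The modification defining $\cD^r_{\tildefrY^{\wedge}}$ interacts with $\Fil^0\subset D_{\pdR}(\Delta_{\tildefrY^{\wedge}})$ only along the formal neighborhoods of the divisors $Q_m(X)=0$: there one indeed has the $\Gamma$-stable submodule $\Fil^0\otimes\frS_{m}$ of $D_{\dif}^{m,+}(D_{\tildefrY^{\wedge}})$ with free quotient, but away from these divisors $\cD^r_{\tildefrY^{\wedge}}=\Delta^r_{\tildefrY^{\wedge}}$ is just the pullback of $\Delta_{\frY^{\wedge}}$ and carries no distinguished $\Gamma$-stable sub-line bundle (over a point where $\Delta$ is irreducible there is certainly no canonical one), so there is no natural way to extend the local sublattices across the open part of the annulus to a subsheaf of $\cD^r_{\tildefrY^{\wedge}}$ with line-bundle quotient. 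Without that, the long exact sequence in your third paragraph is not available.

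What the paper does instead avoids any filtration of $\cD$ itself: after twisting to $\bh=(0,k)$ it uses the sandwich $t^k\Delta^r_{\tildefrY^{\wedge}}\subset\cD^r_{\tildefrY^{\wedge}}\subset\Delta^r_{\tildefrY^{\wedge}}$, whose two successive quotients are torsion sheaves supported on the thickened divisors and are identified, via (\ref{equationdifferencelattice}) and the trivialization of the torsor, with $\Fil^0\otimes(\cO\otimes_{\Q_p}K_m)[[t]]/t^k$ and $(D_{\pdR}/\Fil^0)\otimes(\cO\otimes_{\Q_p}K_m)[[t]]/t^k$, i.e.\ with pullbacks of $\cO_{\tildefrg}(\mp1)$ tensored with $K_m[[t]]/t^k$; this is where Proposition \ref{propositiondirectimagelinebundle} enters, not as graded pieces of $\cD$. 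Pushing forward (working on $\bbU^{[s,r]}$ pieces, reducing to schemes by GAGA, and using Hypothesis \ref{hypothesisflat} for flat base change and the projection formula, plus faithfully flat descent via Lemma \ref{lemmafaithfullyflatcompletion} --- steps your first paragraph compresses) kills the higher direct images and yields the exact sequences (\ref{equationdirectimage2}). Note also that the final projectivity is not an ``extension of projectives'' argument: $f_{\bh,*}\cD^r_{\tildefrY^{\wedge}}$ is exhibited as a submodule of the free rank-$4$ module $f_{\bh,*}\Delta^r_{\tildefrY^{\wedge}}$ containing $t^kf_{\bh,*}\Delta^r_{\tildefrY^{\wedge}}$ with quotient finite flat over $(\cO\otimes_{\Q_p}K_m)[[t]]/t^k$, and one concludes by Beauville--Laszlo together with Lemma \ref{lemmaprojectivelattice}. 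So the rank $4$ does come from $f_{\bh,*}\Delta=\Delta\otimes f_{\bh,*}\cO_{\tildefrY^{\wedge}}$ as you anticipated, but the passage from $\Delta$ to $\cD$ must be handled by this lattice/modification argument along the divisors rather than by the filtration you propose; repairing your write-up essentially means replacing your second and third paragraphs by that argument.
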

\begin{proof}
    By a twist, we assume that $h_1=0$. Then by construction, there are inclusions of vector bundles $t^k\Delta^r_{\widetilde{\frY}_n}\hookrightarrow \cD^r_{\widetilde{\frY}_n}\hookrightarrow \Delta^r_{\widetilde{\frY}_n}$. 
    
    Note that $\Delta^r_{\widetilde{\frY}_n}/t^k\Delta^r_{\widetilde{\frY}_n}=\prod_{m\geq m(r)} D_{\dif}^{m,+}(\Delta_{\widetilde{\frY}_n})/t^k$ (cf. \cite[Prop. 2.15]{liu2015triangulation}) and taking inverse limit $\Delta^r_{\widetilde{\frY}^{\wedge}}/t^k\Delta^r_{\widetilde{\frY}^{\wedge}}=\prod_{m\geq m(r)} D_{\dif}^{m,+}(\Delta_{\widetilde{\frY}^{\wedge}})/t^k$. The latter can be viewed as a coherent $\cO_{\bbU^r_{\widetilde{\frY}^{\wedge}}}$-module supported on disjoint divisors cut out by $Q_{m}(X)^k$ for $m\geq m(r)$, where $t=X\prod_{m\geq 1}Q_{m}(X)$, see Appendix \ref{subsectionbeauvillelaszlo}. Write $\bbU^{\wedge,m}$ for the completion of $\bbU^{r}$ along locus $\Sp(L\otimes_{\Q_p}K_{m})$ cut out by $Q_{m}(X)$. The sheaf $D_{\dif}^{m,+}(\Delta_{\widetilde{\frY}^{\wedge}})$ is exactly the $Q_{m}(X)$-adic completion of $\Delta_{\widetilde{\frY}^{\wedge}}^r$ and we can identify $D_{\dif}^{m,+}(\Delta_{\widetilde{\frY}^{\wedge}})/t$ as the pullback of $\Delta_{\widetilde{\frY}^{\wedge}}$ to $\widetilde{\frY}^{\wedge}\times_L\Sp(L\otimes_{\Q_p}K_{m})\subset \widetilde{\frY}^{\wedge}\times_L\bbU^r$. 

    On $\widetilde{\frY}^{\wedge}$ the rank two projective $\cO_{\widetilde{\frY}^{\wedge}}$-module $D_{\pdR}:=D_{\pdR}(\Delta_{\widetilde{\frY}^{\wedge}})$ is equipped with a universal submodule $\Fil^{0}$ stabilized by $\nu_{\widetilde{\frY}^{\wedge}}$ (now $\nu_{\widetilde{\frY}^{\wedge}}$ is only topologically nilpotent). Set the decreasing filtration $\Fil^{\bullet}$ on $D_{\pdR}(\Delta_{\widetilde{\frY}^{\wedge}})$ by $\Fil^{-k}=D_{\pdR}\supsetneq \Fil^{-k+1}=\cdots=\Fil^0\supsetneq\Fil^1=\{0\}$. Under the equivalence in Proposition \ref{propositionappendixequivalencealmostdeRham}, for each $m$, the filtration $\Fil^{\bullet}$ gives the projective sub-$\cO_{\widetilde{\frY}^{\wedge}\times_L\bbU^{\wedge,m}}$-module $D_{\dif}^{m,+}(D_{\widetilde{\frY}^{\wedge}})$, a modification of $D_{\dif}^{m,+}(\Delta_{\widetilde{\frY}^{\wedge}})$. By the identification in Step 1 of the proof of Proposition \ref{propositionappendixequivalencealmostdeRham} and the assumption that $m(r)$ is large enough (which holds modulo $I^n$ and after taking inverse limit as well), as $\frS_{\widetilde{\frY}^{\wedge},m}:=\cO_{\widetilde{\frY}^{\wedge}\times_L\bbU^{\wedge,m}}=(\cO_{\widetilde{\frY}^{\wedge}}\times_{\Q_p}K_m)[[t]]$-modules
    \[D_{\dif}^{m,+}(D_{\widetilde{\frY}^{\wedge}})=D_{\pdR}\otimes_{\cO_{\widetilde{\frY}^{\wedge}}}t^k\frS_{\widetilde{\frY}^{\wedge},m}+\Fil^0\otimes_{\cO_{\widetilde{\frY}^{\wedge}}}\frS_{\widetilde{\frY}^{\wedge},m}\]
    and 
    \begin{equation}\label{equationdirectimage1}
        D_{\dif}^{m,+}(\Delta_{\widetilde{\frY}^{\wedge}})=D_{\pdR}\otimes_{\cO_{\widetilde{\frY}^{\wedge}}}\frS_{\widetilde{\frY}^{\wedge},m},D_{\dif}^{m,+}(t^k\Delta_{\widetilde{\frY}^{\wedge}})=D_{\pdR}\otimes_{\cO_{\widetilde{\frY}^{\wedge}}}t^k\frS_{\widetilde{\frY}^{\wedge},m}.
    \end{equation}
    Since the sequence $0\rightarrow \Fil^0\rightarrow D_{\pdR}\rightarrow D_{\pdR}/\Fil^0\rightarrow 0$ splits locally as $\cO_{\widetilde{\frY}^{\wedge}}$-modules, the injection
    \begin{equation}\label{equationdifferencelattice}
        \Fil^{0}\otimes_{\cO_{\widetilde{\frY}^{\wedge}}}(\cO_{\widetilde{\frY}^{\wedge}}\otimes _{\Q_p}K_m)[[t]]/t^k\rightarrow D_{\dif}^{m,+}(D_{\widetilde{\frY}^{\wedge}})/t^kD_{\dif}^{m,+}(\Delta_{\widetilde{\frY}^{\wedge}})     
    \end{equation}
    is an isomorphism.

    To show that $Rf_{\bh,*}^r\cD_{\widetilde{\frY}^{\wedge}}^r$ is locally free of rank $4$, we can work locally on $\bbU^r_{\frY^{\wedge}}$.  Take $s<r$. Write $\cD_{\widetilde{\frY}^{\wedge}}^{[s,r]},\Delta_{\widetilde{\frY}^{\wedge}}^{[s,r]}$ for their restriction to $\bbU^{[s,r]}_{\widetilde{\frY}^{\wedge}}$ and $f_{\bh}^{[s,r]}: \bbU^{[s,r]}_{\widetilde{\frY}^{\wedge}}\rightarrow \bbU^{[s,r]}_{\frY^{\wedge}}$. It's enough to show $Rf_{\bh,*}^{[s,r]}\cD_{\widetilde{\frY}^{\wedge}}^{[s,r]}$ is projective of rank $4$ over $\cO_{\bbU^{[s,r]}_{\frY^{\wedge}}}$ for all $r,s$ such that $m(s)=m(r)+1$. We may assume that the torsor $\frY^{\wedge,\square}\rightarrow \frY^{\wedge}$ is trivial as in Remark \ref{remarktrivialtorsor}. Choose an affinoid open $U=\Sp(B)\subset \GL_2$ and consider the restriction $f_{\bh,U}^{[s,r]}:\bbU^{[s,r]}_{\widetilde{\frY}^{\wedge}\times U}\rightarrow \bbU^{[s,r]}_{\frY^{\wedge}\times U}$. 
    We get the diagram
    \begin{center}
        \begin{tikzcd}
            &\bbU^{[s,r]}_{\widetilde{\frY}^{\wedge}\times U}\arrow[d,"f_{\bh,U}^{[s,r]}"]\arrow[ld,"\widetilde{\alpha}"]\arrow[rd,"\widetilde{\beta}"]& \\
            \bbU^{[s,r]}_{\widetilde{\frY}^{\wedge}}\arrow[d,"f_{\bh}^{[s,r]}"]&\bbU^{[s,r]}_{\frY^{\wedge}\times U}\arrow[ld,"\alpha"]\arrow[rd,"\beta"]& \widetilde{\frg} \arrow[d,"f"]\\
            \bbU^{[s,r]}_{\frY^{\wedge}}& &\frg
        \end{tikzcd}
    \end{center}
    where the parallelograms are Cartesian (modulo $I^n$).

    The formal rigid space $\bbU^{[s,r]}_{\frY^{\wedge}}$ (resp. $\bbU^{[s,r]}_{\frY^{\wedge}\times U}$) is the completion of an affinoid space and $\bbU^{[s,r]}_{\widetilde{\frY}^{\wedge}}$ (resp. $\bbU^{[s,r]}_{\widetilde{\frY}^{\wedge}\times U}$) comes from a projective scheme over $\Spec(\cO(\bbU^{[s,r]}_{\frY^{\wedge}}))$ (resp. $\Spec(\cO(\bbU^{[s,r]}_{\frY^{\wedge}\times U}))$) in the sense before Corollary \ref{corollaryGAGA}. Hence $R^if_{\bh,*}^{[s,r]}\cD_{\widetilde{\frY}^{\wedge}}^{[s,r]}$ is a coherent $\cO_{\bbU^{[s,r]}_{\frY^{\wedge}}}$-module for all $i\geq 0$ and can be computed on the scheme level. The same statement holds similarly for $R^if_{\bh,U,*}^{[s,r]}\widetilde{\alpha}^*\cD_{\widetilde{\frY}^{\wedge}}^{[s,r]}$. Also the maps $\bbU^{[s,r]}_{\frY_n\times U}\rightarrow \frg\rightarrow \frg^{\alg}$ in Remark \ref{remarktrivialtorsor} factor through $\Spec(\cO(\bbU^{[s,r]}_{\frY_n\times U}))\rightarrow \frg^{\alg}$. Then $\alpha^*R^if_{\bh,*}^{[s,r]}\cD_{\widetilde{\frY}^{\wedge}}^{[s,r]}=R^if_{\bh,U,*}^{[s,r]}\widetilde{\alpha}^*\cD_{\widetilde{\frY}^{\wedge}}$ by the flat base change \cite[\href{https://stacks.math.columbia.edu/tag/02KH}{Tag 02KH}]{stacks-project} and $R^if_{\bh,*}^{[s,r]}\cD_{\widetilde{\frY}^{\wedge}}^{[s,r]}$ is projective of rank $4$ if and only if so is $R^if_{\bh,U,*}^{[s,r]}\widetilde{\alpha}^*\cD_{\widetilde{\frY}^{\wedge}}^{[s,r]}=R^if_{\bh,U,*}^{[s,r]}\cD_{\widetilde{\frY}^{\wedge}\times U}^{[s,r]}$ by the faithfully flat descent \cite[\href{https://stacks.math.columbia.edu/tag/058S}{Tag 058S}]{stacks-project} and Lemma \ref{lemmafaithfullyflatcompletion} below. 
    
    Consider the short exact sequences of coherent $\cO_{\bbU^{[s,r]}_{\tildefrY^{\wedge}\times U}}$-modules from the first part of the proof (e.g., (\ref{equationdifferencelattice}) and $m=m(r)$)
    \begin{align*}
        0\rightarrow t^k\Delta_{\tildefrY^{\wedge}\times U}^{[s,r]}\rightarrow \cD_{\tildefrY^{\wedge}\times U}^{[s,r]}\rightarrow \Fil^0\otimes_{\cO_{\widetilde{\frY}^{\wedge}}}(\cO_{\widetilde{\frY}^{\wedge}\times U}\otimes _{\Q_p}K_m)[[t]]/t^k\rightarrow 0,\\
        0\rightarrow \cD_{\tildefrY^{\wedge}\times U}^{[s,r]}\rightarrow \Delta_{\tildefrY^{\wedge}\times U}^{[s,r]}\rightarrow D_{\pdR}/\Fil^0\otimes_{\cO_{\widetilde{\frY}^{\wedge}}}(\cO_{\widetilde{\frY}^{\wedge}\times U}\otimes _{\Q_p}K_m)[[t]]/t^k\rightarrow 0.
    \end{align*}
    By our construction of $\widetilde{\frY}^{\wedge,\square}$, the pullback $D_{\pdR}\otimes_{\cO_{\widetilde{\frY}^{\wedge}}}\cO_{\widetilde{\frY}^{\wedge}\times U}$ admits a tautological trivialization $D_{\pdR}\otimes_{\cO_{\widetilde{\frY}^{\wedge}}}\cO_{\widetilde{\frY}^{\wedge}\times U}\simeq \cO_{\widetilde{\frY}^{\wedge}\times U}^{\oplus 2}$ and the subsheaf $\Fil^0\otimes_{\cO_{\widetilde{\frY}^{\wedge}}}\cO_{\widetilde{\frY}^{\wedge}\times U}=\widetilde{D}_{\pdR}^*\cO_{\tildefrg}(-1)$ is the tautological subbundle pulled back from $\widetilde{\frg}\rightarrow G/B=\bP^1$ where we write $D_{\pdR}$ for the flat map $\Spec(\cO(\frY^{\wedge}\times U))\rightarrow \frg$ (induced by the nilpotent operator $\nu$) and $\widetilde{D}_{\pdR}$ for the base change of $D_{\pdR}$.

    Let $f_{\bh,U}:\tildefrY^{\wedge}\times U\rightarrow \frY^{\wedge}\times U$ be the base change of $f_{\bh}$. The sheaf $\Fil^0\otimes_{\cO_{\widetilde{\frY}^{\wedge}}}(\cO_{\widetilde{\frY}^{\wedge}\times U}\otimes _{\Q_p}K_m)$ is supported on the closed subscheme of $\bbU^{[s,r]}_{\tildefrY^{\wedge}\times U}$ cut out by $Q_{m}(X)$ and 
    \[Rf_{\bh,U,*}^{[s,r]}\Fil^0\otimes_{\cO_{\widetilde{\frY}^{\wedge}}}(\cO_{\widetilde{\frY}^{\wedge}\times U}\otimes _{\Q_p}K_m)[[t]]/t^k=(Rf_{\bh,U,*}\widetilde{D}_{\pdR}^*\cO_{\tildefrg}(-1))\otimes_{\Q_p}K_m[[t]]/t^k\] 
    as a coherent sheaf supported on the closed subscheme of $\bbU^{[s,r]}_{\frY^{\wedge}\times U}$ cut out by $Q_{m}(X)^k$. By the flat base change and (1) of Proposition \ref{propositiondirectimagelinebundle}, $Rf_{\bh,U,*}\widetilde{D}_{\pdR}^*\cO_{\tildefrg}(\pm 1)=D_{\pdR}^*Rf_{*}\cO_{\tildefrg}(\pm 1)$ concentrate in degree zero and are locally free of rank two over $\frY^{\wedge}\times U$. 

    By the projection formula, $Rf^{[s,r]}_{\bh,U,*}\Delta^{[s,r]}_{\widetilde{\frY}^{\wedge}\times U}=\Delta_{\frY^{\wedge}\times U}^{[s,r]}\otimes_{\cO_{\bbU^{[s,r]}_{\frY^{\wedge}\times U}}}Rf^{[s,r]}_{\bh,U,*}Rf^{[s,r],*}_{\bh,U}\cO_{\bbU^{[s,r]}_{\frY^{\wedge}\times U}}$. Since $\beta$ is flat by the assumption that the map $\frY^{\wedge,\square}\rightarrow\frg$ is flat, $Rf^{[s,r]}_{\bh,U,*}Rf^{[s,r],*}_{\bh,U}\cO_{\bbU^{[s,r]}_{\frY^{\wedge}\times U}}=\beta^*Rf_{*}Rf^*\cO_{\frg}=\beta^*f_{*}\cO_{\tildefrg}$ is free of rank two over $\cO_{\bbU^{[s,r]}_{\frY^{\wedge}\times U}}$. Then we get short exact sequences
    \begin{align}\label{equationdirectimage2}
        0\rightarrow f^{[s,r]}_{\bh,U,*} t^k\Delta_{\tildefrY^{\wedge}\times U}^{[s,r]}\rightarrow f^{[s,r]}_{\bh,U,*}\cD_{\tildefrY^{\wedge}\times U}^{[s,r]}\rightarrow D_{\pdR}^*f_{*}\cO_{\tildefrg}(-1)\otimes _{\Q_p}K_m[[t]]/t^k\rightarrow 0,\\
        0\rightarrow f^{[s,r]}_{\bh,U,*} \cD_{\tildefrY^{\wedge}\times U}^{[s,r]}\rightarrow f^{[s,r]}_{\bh,U,*} \Delta_{\tildefrY^{\wedge}\times U}^{[s,r]}\rightarrow D_{\pdR}^*f_{*}\cO_{\tildefrg}(1)\otimes_{\Q_p}K_m[[t]]/t^k\rightarrow 0\nonumber.
    \end{align}
    Finally to see that $f^{[s,r]}_{\bh,U,*}\cD_{\tildefrY^{\wedge}\times U}^{[s,r]}$ is locally free of rank $4$ over $\cO_{\bbU^{[s,r]}_{\frY^{\wedge}\times U}}$, by \cite{beauville1995lemme}, it is enough to consider its completion along the divisor $(\widetilde{\frY}^{\wedge}\times_L U)\times_{L}\Sp(L\otimes_{\Q_p}K_m)$ cut out by $Q_{m}(X)$, which is a $(\cO_{\widetilde{\frY}^{\wedge}\times U}\times_{\Q_p}K_m)[[t]]$-module. Then the result follows from Lemma \ref{lemmaprojectivelattice}, that $f^{[s,r]}_{\bh,U,*} \Delta_{\tildefrY^{\wedge}\times U}^{[s,r]}$ is free of rank $4$ over $\cO_{\bbU^{[s,r]}_{\frY^{\wedge}\times U}}$ and that $D_{\pdR}^*f_{*}\cO_{\tildefrg}(1)\otimes_{\Q_p}K_m[[t]]/t^k$ is finite flat over $\cO_{\frY^{\wedge}\times U}\times_{\Q_p}K_m[[t]]/t^k$.
\end{proof}
\begin{lemma}\label{lemmafaithfullyflatcompletion}
    Let $g:X=\Sp(B)\rightarrow Y=\Sp(A)$ be a flat (resp. faithfully flat) morphism of affinoid spaces and let $I\subset A$ be an ideal. Let $A^{\wedge},B^{\wedge}$ be the $I$-adic completions. Then the ring maps $A\rightarrow B$ and $A^{\wedge}\rightarrow B^{\wedge}$ are flat (resp. faithfully flat). 
\end{lemma}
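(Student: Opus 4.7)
The plan is to treat the two claims separately. The first statement---that the underlying ring map $A\to B$ is itself flat (resp.\ faithfully flat)---I view as a restatement of what it means for a morphism of affinoid rigid spaces over $L$ to be flat in the conventions of this paper (cf.\ Lemma~\ref{lemmaflatlocalrings}), namely flatness of the corresponding map of Noetherian affinoid algebras; I will therefore focus on the nontrivial claim about the $I$-adic completions.

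For the flatness of $A^{\wedge}\to B^{\wedge}$, I would invoke the local criterion of flatness for complete modules: if $R$ is Noetherian, $J\subset R$ a finitely generated ideal, and $M$ a $J$-adically complete $R$-module such that $M/JM$ is flat over $R/J$ and $\Tor_1^R(R/J,M)=0$, then $M$ is flat over $R$. Taking $R=A^{\wedge}$, $J=IA^{\wedge}$, and $M=B^{\wedge}$, three of the four hypotheses are immediate: $A^{\wedge}$ is Noetherian since $A$ is, $B^{\wedge}$ is $IA^{\wedge}$-adically complete by construction, and $B^{\wedge}/IB^{\wedge}=B/IB$ is flat over $A^{\wedge}/IA^{\wedge}=A/I$ by base change of the flat map $A\to B$. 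For the Tor-vanishing, the substantive step, I would use that both $A\to A^{\wedge}$ and $B\to B^{\wedge}$ are flat by the standard theorem on completions of Noetherian rings, so that $B^{\wedge}$ is flat over $A$; then base change along the flat map $A\to A^{\wedge}$ yields
\[
\Tor_1^{A^{\wedge}}(A^{\wedge}/IA^{\wedge},B^{\wedge})
=\Tor_1^{A^{\wedge}}((A/I)\otimes_A A^{\wedge},B^{\wedge})
=\Tor_1^A(A/I,B^{\wedge})=0.
\]

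For the faithfully flat version, once flatness is in hand it remains to check that every maximal ideal of $A^{\wedge}$ is contracted from a prime of $B^{\wedge}$. I would use that $IA^{\wedge}$ lies in the Jacobson radical of $A^{\wedge}$ (for $x\in IA^{\wedge}$ the element $1-x$ is invertible via the $I$-adically convergent geometric series), so maximal ideals of $A^{\wedge}$ correspond bijectively to maximal ideals of $A/I$, and likewise for $B^{\wedge}$ and $B/IB$. If $A\to B$ is faithfully flat, the reduction $A/I\to B/IB$ inherits the same property, giving the required surjectivity on maximal spectra. The main thing I expect to need care with is isolating the correct form of the local flatness criterion that applies to a complete but not necessarily finitely generated module; once that is in place, the remaining steps reduce to bookkeeping with standard properties of completions of Noetherian rings.
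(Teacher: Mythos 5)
Your treatment of the completed map is correct, but the first assertion of the lemma is not the tautology you make it out to be, and that is the one real gap. In this paper a flat (resp.\ faithfully flat) morphism of affinoid spaces is flat on the rigid-analytic local rings at all points (resp.\ moreover surjective), not by definition a flat map of affinoid algebras; otherwise the conclusion ``$A\to B$ is flat'' would be empty, and indeed the paper spends the first half of its proof on it: the local rings $\cO_{Y,g(x)}\to\cO_{X,x}$ are Noetherian with the same completions as the Zariski local rings $A_{\frn}\to B_{\frm}$, so flatness passes to the latter, and then $A\to B$ is flat by checking at all maximal ideals (Stacks 00HT); faithful flatness then comes from flatness together with surjectivity of $g$ on maximal ideals. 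Your parenthetical appeal to Lemma~\ref{lemmaflatlocalrings} does not supply this — that lemma is a criterion (flatness of completed local ring maps implies flatness of the ring map), not a definition — though an argument of exactly that flavor is what fills the hole. Note also that your later step ``$A/I\to B/IB$ inherits faithful flatness'' silently uses the ring-level faithful flatness of $A\to B$ that you declined to prove.

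On the completion statement your route is sound but longer than the paper's. The paper simply applies its Lemma~\ref{lemmaflatcompletion}(1) (Stacks 0912) over the base $A^{\wedge}$ with ideal $IA^{\wedge}$ to the surjective system $M_n=B/I^nB$, each flat over $A^{\wedge}/I^nA^{\wedge}=A/I^n$ by ordinary base change of the flat map $A\to B$; no Tor computation is needed. Your version — fiber flatness plus $\Tor_1^{A^{\wedge}}(A^{\wedge}/IA^{\wedge},B^{\wedge})=0$, the latter via flatness of $A\to A^{\wedge}$, $B\to B^{\wedge}$ and flat base change for Tor — is correct, and the worry you flag about which local criterion applies to a non-finite complete module is resolved precisely by the dévissage (fiber flatness and $\Tor_1$-vanishing give flatness of all truncations $B^{\wedge}/I^nB^{\wedge}$ over $A/I^n$) followed by the same Tag 0912 limit lemma; so your criterion is a corollary of the paper's tool rather than an alternative to it. The faithful-flatness argument for $A^{\wedge}\to B^{\wedge}$ (Jacobson radical containment, reduction to $\Spec(B/IB)\to\Spec(A/I)$ on closed points) coincides with the paper's.
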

\begin{proof}
    By definition, for any $x\in X$ and $f(x)\in Y$ corresponding to maximal ideals $\frm\subset B,\frn\subset A$, the local ring map $\cO_{Y,f(x)}\rightarrow \cO_{X,y}$ is flat. Since both rings are Noetherian, flatness can be checked after completion and thus also on the Zariski local rings $ A_{\frn}\rightarrow B_{\frm}$. By \cite[\href{https://stacks.math.columbia.edu/tag/00HT}{Tag 00HT}]{stacks-project}, $B$ is flat over $A$. The assertion about flatness after completion is by Lemma \ref{lemmaflatcompletion}. For faithfully flatness, if $\Spec(B^{\wedge})\rightarrow \Spec(A^{\wedge})$ is surjective on closed points and flat, it is surjective by \cite[\href{https://stacks.math.columbia.edu/tag/00HS}{Tag 00HS}]{stacks-project}. As $I$ is topologically nilpotent in $A^{\wedge}$ and $A^{\wedge}$ is $I$-adically complete, $I$ is in the Jacobson radical of $A^{\wedge}$. The surjectivity on closed points follows from the surjectivity of $\Spec(B/I)\rightarrow \Spec(A/I)$.
\end{proof}

Over $\tildefrY^{\wedge}$ the topologically nilpotent operator $\nu_{\frY^{\wedge}}$ acts on the universal sub-line bundle $\Fil^0$ of $D_{\pdR}(\Delta_{\tildefrY^{\wedge}})$ and the quotient $D_{\pdR}(\Delta_{\tildefrY^{\wedge}})/\Fil^0$. This gives elements $z_{\tildefrY^{\wedge}}+h_{\tildefrY^{\wedge}}\in \cO(\tildefrY^{\wedge})\simeq \End_{\cO_{\tildefrY^{\wedge}}}(\Fil^0)$ and $z_{\tildefrY^{\wedge}}-h_{\tildefrY^{\wedge}}\in \cO(\tildefrY^{\wedge})\simeq \End_{\cO_{\tildefrY^{\wedge}}}(D_{\pdR}(\Delta_{\tildefrY^{\wedge}})/\Fil^0)$. We view $z_{\tildefrY^{\wedge}}$ and $h_{\tildefrY^{\wedge}}$ as global sections of $f_{\bh,*}\cO_{\tildefrY^{\wedge}}$ (cf. Corollary \ref{corollaryGAGA}). As a corollary of the proof of Proposition \ref{propositiondirectimagephigammamodule} and (3) of Proposition \ref{propositiondirectimagelinebundle}, we get the following explicit description of $Rf_{\bh,*}D^r_{\widetilde{\frY}^{\wedge}}$ when $\bh=(0,k)$.

\begin{corollary}\label{corollarydirectimagephigammamodule}
    Suppose $\bh=(0,k), k\geq 1$ and assume Hypothesis \ref{hypothesisflat}. Then $Rf_{\bh,*}\Delta^{r}_{\tildefrY^{\wedge}}=\Delta^{r}_{\frY^{\wedge}}\otimes_{\cO_{\frY^{\wedge}}}f_{\bh,*}\cO_{\tildefrY^{\wedge}}$ is a $(\varphi,\Gamma)$-module of rank $4$ over $\cR^r_{\frY^{\wedge}}$. And $Rf_{\bh,*}D^{r}_{\tildefrY^{\wedge}}$ is the rank $4$ sub-$(\varphi,\Gamma)$-module of $f_{\bh,*}\Delta^{r}_{\tildefrY^{\wedge}}$ containing $t^kf_{\bh,*}\Delta^{r}_{\tildefrY^{\wedge}}$ determined by
    \begin{equation}\label{equationCorollarydirectimagephigamamodule0}
        f_{\bh,*}D^{r}_{\tildefrY^{\wedge}}/t^kf_{\bh,*}\Delta^{r}_{\tildefrY^{\wedge}}=\oplus_{i=0}^{k-1}(f_{\bh,*}\Delta^{r}_{\tildefrY^{\wedge}}/t^kf_{\bh,*}\Delta^{r}_{\tildefrY^{\wedge}})[\nabla_{\Sen}=(z_{\tildefrY^{\wedge}}+h_{\tildefrY^{\wedge}})+i].
    \end{equation}
\end{corollary}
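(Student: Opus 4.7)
The plan is to read off the corollary from the proof of Proposition \ref{propositiondirectimagephigammamodule} combined with Proposition \ref{propositiondirectimagelinebundle}(3).

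First I would establish the description of $Rf_{\bh,*}\Delta^r_{\tildefrY^{\wedge}}$. Since $\Delta^r_{\tildefrY^{\wedge}} = f_{\bh}^*\Delta^r_{\frY^{\wedge}}$ by construction, the projection formula combined with flat base change---provided by Hypothesis \ref{hypothesisflat} and Remark \ref{remarktrivialtorsor}---gives
\[
Rf_{\bh,*}\Delta^r_{\tildefrY^{\wedge}} \simeq \Delta^r_{\frY^{\wedge}} \otimes^L_{\cO_{\frY^{\wedge}}} Rf_{\bh,*}\cO_{\tildefrY^{\wedge}} \simeq \Delta^r_{\frY^{\wedge}} \otimes_{\cO_{\frY^{\wedge}}} D_{\pdR}^{*}f_*\cO_{\tildefrg},
\]
and Lemma \ref{lemmaglobalsectiontildegP}(2) says $Rf_*\cO_{\tildefrg}=f_*\cO_{\tildefrg}$ is flat of rank two. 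So the derived pushforward is concentrated in degree zero and locally free of rank four; Lemma \ref{lemmadirectimagephibundles}(3) promotes it to a $(\varphi,\Gamma)$-module over $\cR^r_{\frY^{\wedge}}$.

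Second, for $Rf_{\bh,*}D^r_{\tildefrY^{\wedge}}$ I would push forward the two short exact sequences
\[
0 \to \cD^r_{\tildefrY^{\wedge}} \to \Delta^r_{\tildefrY^{\wedge}} \to (D_{\pdR}(\Delta)/\Fil^0) \otimes_{\cO_{\tildefrY^{\wedge}}} (\cO_{\tildefrY^{\wedge}}\otimes_{\Q_p}K_m)[[t]]/t^k \to 0
\]
(one per $m\geq m(r)$, as in the proof of Proposition \ref{propositiondirectimagephigammamodule}) and $0 \to t^k\Delta^r_{\tildefrY^{\wedge}}\to \cD^r_{\tildefrY^{\wedge}}\to \cD^r_{\tildefrY^{\wedge}}/t^k\Delta^r_{\tildefrY^{\wedge}}\to 0$. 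Exactness of the pushforwards follows from the vanishing of the relevant $R^1f_{\bh,*}$---by the projection-formula step above for the extremes and by Proposition \ref{propositiondirectimagephigammamodule} for $\cD$---and yields
\[
f_{\bh,*}D^r_{\tildefrY^{\wedge}}/t^kf_{\bh,*}\Delta^r_{\tildefrY^{\wedge}} \simeq f_{\bh,*}\Fil^0 \otimes_{\cO_{\frY^{\wedge}}} (\cO_{\frY^{\wedge}}\otimes_{\Q_p}K_m)[[t]]/t^k.
\]
Pulling Proposition \ref{propositiondirectimagelinebundle}(3) back flatly along $D_{\pdR}$ then identifies $f_{\bh,*}\Fil^0$ with the subsheaf $(f_{\bh,*}D_{\pdR}(\Delta))[\nu-(z_{\tildefrY^{\wedge}}+h_{\tildefrY^{\wedge}})]$ inside $f_{\bh,*}D_{\pdR}(\Delta)$.

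To convert this identification into the $\nabla_{\Sen}$-eigenspace statement, I would use the isomorphism $\Delta^r/t^k\Delta^r \simeq D_{\pdR}(\Delta)\otimes_{\cO_{\tildefrY^{\wedge}}} (\cO_{\tildefrY^{\wedge}}\otimes_{\Q_p}K_m)[[t]]/t^k$ coming from Step 1 of the proof of Proposition \ref{propositionappendixequivalencealmostdeRham} (valid because $\Delta$ has pointwise Sen weights zero and $m(r)$ is taken large enough). Under this identification $\nabla_{\Sen}(v\otimes t^j)=(\nu+j)v\otimes t^j$, so on the $t^j$-piece of the pushforward the operator $\nabla_{\Sen}-(z_{\tildefrY^{\wedge}}+h_{\tildefrY^{\wedge}})-i$ becomes $\nu-(z_{\tildefrY^{\wedge}}+h_{\tildefrY^{\wedge}})-(i-j)$: for $i=j$ its kernel is $f_{\bh,*}\Fil^0$, and for $i\ne j$ the kernel vanishes because $\nu-(z+h)$ is topologically nilpotent in the $I$-adic topology (its reduction on $\tildefrY_1$ is zero, as the Sen polynomial of $\Delta$ degenerates to $T^2$ there, forcing $z=h=0$) while $-(i-j)$ is a unit. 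Summing over $i=0,\dots,k-1$ recovers the displayed decomposition.

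The main subtlety will be this final invertibility argument: even though $z_{\tildefrY^{\wedge}}$ and $h_{\tildefrY^{\wedge}}$ need not be globally nilpotent on $\tildefrY^{\wedge}$, they vanish modulo the pulled-back ideal $I$, so $\nu-(z+h)-(i-j)$ for $i\ne j$ is a unit-shift of a topologically nilpotent operator and is therefore invertible after $I$-adic completion. Everything else is a straightforward reassembly of pieces already developed in Propositions \ref{propositiondirectimagephigammamodule} and \ref{propositiondirectimagelinebundle}.
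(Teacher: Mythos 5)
Your proposal is correct and follows essentially the same route as the paper's proof: you push forward the two short exact sequences relating $t^k\Delta^r_{\tildefrY^{\wedge}}$, $\cD^r_{\tildefrY^{\wedge}}$ and $\Delta^r_{\tildefrY^{\wedge}}$ from the proof of Proposition \ref{propositiondirectimagephigammamodule}, identify the quotient with $D_{\pdR}^*f_*\cO_{\tildefrg}(-1)$ over the chart $\frY^{\wedge}\times U$ using Hypothesis \ref{hypothesisflat}, flat base change and Proposition \ref{propositiondirectimagelinebundle}(3), and then translate into the $\nabla_{\Sen}$-eigenspace statement via the identification of $\Delta/t^k$ with $D_{\pdR}\otimes_{\Q_p}K_m[[t]]/t^k$; your explicit check that $\nu-(z_{\tildefrY^{\wedge}}+h_{\tildefrY^{\wedge}})-(i-j)$ is invertible for $i\neq j$ merely spells out a step the paper leaves implicit. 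One small correction: $h_{\tildefrY^{\wedge}}$ does not vanish modulo $I$ (only $z_{\tildefrY^{\wedge}}$ and $h_{\tildefrY^{\wedge}}^2$ lie in $I\cO_{\tildefrY^{\wedge}}$, as one sees from the fiber $k(x)[h]/h^2$ over a nonzero nilpotent in Lemma \ref{lemmapropertygrothendieckresolutiongl2}), but since $\nu$ is nilpotent modulo each $I^n$ and $z_{\tildefrY^{\wedge}}+h_{\tildefrY^{\wedge}}$ is its action on the line bundle $\Fil^0$, the operator $\nu-(z_{\tildefrY^{\wedge}}+h_{\tildefrY^{\wedge}})$ is still topologically nilpotent and your invertibility argument goes through unchanged.
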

\begin{proof}
    We write $\widetilde{\Delta}_{\frY^{\wedge}}^r$ for $\Delta^{r}_{\frY^{\wedge}}\otimes_{\cO_{\frY^{\wedge}}}f_{\bh,*}\cO_{\tildefrY^{\wedge}}$. By Proposition \ref{propositiondirectimagephigammamodule}, the map $f_{\bh,*}D^{r}_{\tildefrY^{\wedge}}\rightarrow \widetilde{\Delta}_{\frY^{\wedge}}^r$ induced by $D^{r}_{\tildefrY^{\wedge}}\subset \Delta_{\tildefrY^\wedge}^r$ is injective and the image contains $t^k\widetilde{\Delta}_{\frY^{\wedge}}^r$. Hence it suffices to determine the image of 
    \begin{equation}\label{equationCorollarydirectimagephigamamodule1}
        f_{\bh,*}D^{r}_{\tildefrY^{\wedge}}/t^k\widetilde{\Delta}_{\frY^{\wedge}}^r\hookrightarrow \widetilde{\Delta}_{\frY^{\wedge}}^r/t^k\widetilde{\Delta}_{\frY^{\wedge}}^r=\prod_{m\geq m(r)}D_{\dif}^{m,+}(\widetilde{\Delta}_{\frY^{\wedge}})/t^k.
    \end{equation} 
    The operator $\nabla_{\Sen}$ acts on each $D_{\dif}^{m,+}(\widetilde{\Delta}_{\frY^{\wedge}})/t^k$. Under the identification (\ref{equationdirectimage1}), $D_{\pdR}(\widetilde{\Delta}_{\frY^{\wedge}})\otimes_{\Q_p}K_m[[t]]/t^k=(D_{\pdR}(\Delta_{\frY^{\wedge}})\otimes_{\Q_p}K_m[[t]]/t^k)\otimes_{\cO_{\frY^{\wedge}}}f_{\bh,*}\cO_{\tildefrY^{\wedge}}$, and the Sen operator $\nabla_{\Sen}$ corresponds to the topologically nilpotent operator $\nu_{\frY^{\wedge}}$ on $D_{\pdR}(\Delta_{\frY^{\wedge}})$ where we extend the action of $\nu_{\frY^{\wedge}}$ on $v\otimes g\in D_{\pdR}(\Delta_{\frY^{\wedge}})\otimes_{A^{\wedge}}(A^{\wedge}\otimes_{\Q_p}K_m)[[t]]$ by $\nu_{\frY^{\wedge}}(v\otimes g)=\nu_{\frY^{\wedge}}(v)\otimes g+v\otimes \nabla(g)$ (see Step 1 of the proof of Proposition \ref{propositionappendixequivalencealmostdeRham}, and note $\nabla_{\Sen}(tx)=t(\nabla_{\Sen}+1)(x)$). The identity (\ref{equationCorollarydirectimagephigamamodule0}) will follow from that the image of (\ref{equationCorollarydirectimagephigamamodule1}) is equal to the submodule
    \[ \prod_{m\geq m(r)}\oplus_{i=0}^{k-1}(D_{\pdR}(\widetilde{\Delta}_{\frY^{\wedge}})\otimes_{\Q_p}K_m[[t]]/t^k)[\nu_{\frY^{\wedge}}=(z_{\tildefrY^{\wedge}}+h_{\tildefrY^{\wedge}}+i)].\]
 
    We view objects appeared in (\ref{equationCorollarydirectimagephigamamodule1}) as coherent sheaves on $\bbU^r_{\frY^{\wedge}}$ supported on divisors cut out by $Q_{m}(X)$ for $m\geq m(r)$. By faithfully flat descent, we only need to verify the equality after base change to $\frY^{\wedge}\times U$ as in the proof of Proposition \ref{propositiondirectimagephigammamodule} and we adapt the notation there. By (\ref{equationdirectimage2}), we see
    \[f_{\bh,U,*}D^{r}_{\tildefrY^{\wedge}\times U}/t^k\widetilde{\Delta}_{\frY^{\wedge}\times U}^r=
    \prod_{m\geq m(r)}D_{\pdR}^*f_{*}\cO_{\tildefrg}(-1)\otimes _{\Q_p}K_m[[t]]/t^k.\]
    By (3) of Proposition \ref{propositiondirectimagelinebundle}, the righthand side is the sheaf
    \[\prod_{m\geq m(r)}D_{\pdR}^*((f_*f^*(\cO_{\frg}^{\oplus 2}))[\nu=(h+z)])\otimes_{\Q_p}K_m[[t]]/t^k.\]
    Since $D_{\pdR}$ is flat, $D_{\pdR}^*f_*f^*(\cO_{\frg}^{\oplus 2})=f_{\bh,U,*}f_{\bh,U}^*D_{\pdR}(\Delta_{\frY^{\wedge}\times U})=D_{\pdR}(\widetilde{\Delta}_{\frY^{\wedge}\times U})$ (under the canonical trivialization $D_{\pdR}(\Delta_{\frY^{\wedge}\times U})\simeq D_{\pdR}^*\cO_{\frg}^{\oplus 2}$) and that $\nu$, $z$, and $h$ are pulled back to $\nu_{\frY^{\wedge}\times U}$, $z_{\tildefrY^{\wedge}}$, and $h_{\tildefrY^{\wedge}}$ respectively, we get
    \begin{align*}
        f_{\bh,U,*}D^{r}_{\tildefrY^{\wedge}\times U}/t^k\widetilde{\Delta}_{\frY^{\wedge}\times U}^r&=\prod_{m\geq m(r)}D_{\pdR}(\widetilde{\Delta}_{\frY^{\wedge}\times U})[\nu_{\frY^{\wedge}\times U}=(h_{\tildefrY^{\wedge}}+z_{\tildefrY^{\wedge}})]\otimes_{\Q_p}K_m[[t]]/t^k\\
        &=\prod_{m\geq m(r)}\oplus_{i=0}^{k-1}(t^iD_{\pdR}(\widetilde{\Delta}_{\frY^{\wedge}\times U})\otimes_{\Q_p}K_m)[\nu_{\frY^{\wedge}\times U}=(h_{\tildefrY^{\wedge}}+z_{\tildefrY^{\wedge}})+i]
    \end{align*}
    Hence the description (\ref{equationCorollarydirectimagephigamamodule0}) holds.
\end{proof}
We define translations of formal completions of $(\varphi,\Gamma)$-modules.
\begin{definition}
    Let $\frX^{\wedge}=\varinjlim_n\frX_n$ be the formal completion of a quasi-compact rigid space $\frX$ with respect to a coherent ideal sheaf $\cI$ (Definition \ref{definitionformalcompletion}). Let $D^r_{\frX^{\wedge}}$ be a $(\varphi,\Gamma,\frg)$-module over $\cR^r_{\frX^{\wedge}}$ (Definition \ref{definitionphigammamodulecomplete}). Assume that $D_{\frX_1}^r$ is locally $Z(\frg)$-finite (in the sense that there is a finite admissible covering of $\frX_1$ by open affinoids such that $D_{\frX_1}^{r}$ is locally $Z(\frg)$-finite when restricted to these affinoids). Let $\lambda,\mu\in X^*(\frt)$ be integral weights. We define the translation of $D^r_{\frX^{\wedge}}$ from the infinitesimal character associated to $\lambda$ to that of $\mu$ by 
    \[T_{\lambda}^{\mu} D^r_{\frX^{\wedge}}:=\varprojlim_n T_{\lambda}^{\mu}D^r_{\frX_n}.\]
    And if $D_{\frX^{\wedge}}=\cR_{\frX^{\wedge}}\otimes_{\cR^r_{\frX^{\wedge}}}D^r_{\frX^{\wedge}}$,
    \[T_{\lambda}^{\mu} D_{\frX^{\wedge}}:=\varinjlim_{r'\leq r}T_{\lambda}^{\mu} D^{r'}_{\frX^{\wedge}}.\]
    The translation is always a $(\varphi,\Gamma)$-module with a $\frg$-action (cf. Proposition \ref{proptranslationphigammamodule}). 
\end{definition}
Recall that if $\lambda-\mu$ is $(0,k)$ or $(k,0)$ for some $k\geq 0$, then $T_{\lambda}^{\mu}D_{\frX_n}^r=\pr_{|\mu|}(\pr_{|\lambda|}D_{\frX_n}^r\otimes_{L}V_k)$.
\begin{lemma}\label{lemmatranslationdirectimage}
    Let $f_{\bh}:\tildefrY^{\wedge}\rightarrow \frY^{\wedge}$ be the map in Construction \ref{construction}. Suppose that $D^{',r}_{\tildefrY^{\wedge}}$ is a $(\varphi,\Gamma,\frg)$-module over $\cR^{r}_{\tildefrY^{\wedge}}$ such that $Rf_{\bh,*}D^{',r}_{\tildefrY^{\wedge}}$ is a $(\varphi,\Gamma)$-module over $\frY^{\wedge}$ (cf. Lemma \ref{lemmadirectimagephibundles}). 
    \begin{enumerate}
        \item The natural isomorphism $Rf_{\bh,*}(D_{\tildefrY^{\wedge}}^{',r}\otimes_{L}V_k)\simeq (Rf_{\bh,*}D_{\tildefrY^{\wedge}}^{',r})\otimes_{L}V_k$ of $\cO_{\frY^{\wedge}}$-modules (provided by $V_k\simeq L^{\oplus (k+1)}$) is an isomorphism of $(\varphi,\Gamma,\frg)$-modules.
        \item Furthermore, if $D_{\tildefrY_1}^{',r}$ is locally $Z(\frg)$-finite with a generalized infinitesimal character given by $\lambda$, then the isomorphism in (1) induces an isomorphism $Rf_{\bh,*}T_{\lambda}^{\mu}D_{\tildefrY^{\wedge}}^{',r}\simeq T_{\lambda}^{\mu}Rf_{\bh,*}D_{\tildefrY^{\wedge}}^{',r}$.
    \end{enumerate}
\end{lemma}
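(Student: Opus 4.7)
For (1), the underlying $\cO_{\frY^{\wedge}}$-module isomorphism is immediate from $V_k \simeq L^{\oplus(k+1)}$ and the $L$-linearity of $Rf_{\bh,*}$, and the hypothesis that $Rf_{\bh,*}D^{',r}_{\tildefrY^{\wedge}}$ is a $(\varphi,\Gamma)$-module then forces $Rf_{\bh,*}(D^{',r}_{\tildefrY^{\wedge}} \otimes V_k)$ to be concentrated in degree zero as well. The content is compatibility with the $\cR^r$-, $\varphi$-, $\Gamma$- and $\frg$-actions constructed in Proposition \ref{proptensoralgebraic}. My plan is to observe that each of these structure maps decomposes explicitly as a finite $L$-linear combination of operators of the form $\alpha \otimes \beta$, where $\alpha$ is an operator on the $D^{',r}_{\tildefrY^{\wedge}}$ factor pulled back from $\frY^{\wedge}$ via $f_{\bh}$ (for instance multiplication by $(X+1)^i f^{(i)}(X)$, or $\varphi$, or $\gamma \in \Gamma$, or a Lie algebra element), and $\beta$ is an $L$-linear endomorphism of $V_k$ (for instance multiplication by $X^i$). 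The explicit formula $f(X)(g\otimes v) = \sum_{i=0}^{k}\tfrac{1}{i!}(X+1)^{i}f^{(i)}(X)g\otimes X^{i}v$ from the proof of Proposition \ref{proptensoralgebraic} is the prototype. Compatibility of $Rf_{\bh,*}$ with the pullback $f_{\bh}^{-1}\cR^r_{\frY^{\wedge}} \to \cR^r_{\tildefrY^{\wedge}}$ and with $\varphi$, $\Gamma$, $\frg$, together with $L$-linearity, ensures that every such operator commutes with the natural isomorphism, upgrading it to one of $(\varphi,\Gamma,\frg)$-modules.

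For (2), writing $T_{\lambda}^{\mu}M = \pr_{|\mu|}(\pr_{|\lambda|}M \otimes_L L(\overline{\mu-\lambda}))$, I first claim $\pr_{|\lambda|}$ is the identity on both $D^{',r}_{\tildefrY^{\wedge}}$ and $Rf_{\bh,*}D^{',r}_{\tildefrY^{\wedge}}$. For the former, this is the hypothesis at the level $\tildefrY_1$ extended to all $\tildefrY_n$ via Lemma \ref{lemmageneralizedeigenspace}(1): the $Z(\frg)$-action on the nilpotent thickening is given by a character congruent to $\chi_\lambda$ modulo the nilradical of $\cO(\tildefrY_n)$, so the generalized $\chi_\lambda$-eigenspace is everything. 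For the latter, the induced $Z(\frg)$-action on $Rf_{\bh,*}D^{',r}_{\tildefrY^{\wedge}}$ is $\cO_{\frY^{\wedge}}$-linear and inherited from the source, and the same argument applies. Both sides of the desired isomorphism thus collapse to $\pr_{|\mu|}$ of the corresponding tensor product, and invoking (1) reduces the statement to showing that $\pr_{|\mu|}$ commutes with $Rf_{\bh,*}$.

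For this commutation, at each finite level $\tildefrY_n$ the module $D^{',r}_{\tildefrY_n} \otimes_L L(\overline{\mu-\lambda})$ is locally $Z(\frg)$-finite and, by Lemma \ref{lemmageneralizedeigenspace}(2) together with the discussion following Lemma \ref{lemmatranslationliealgebra}, decomposes as a finite direct sum of generalized $Z(\frg)$-eigenspaces indexed by the characters $\chi_{\lambda+\nu'}$ as $\nu'$ ranges over the $\frt$-weights of $L(\overline{\mu-\lambda})$; this decomposition is compatible with the transition maps in $n$. Since $R^if_{\bh,n,*}$ commutes with finite direct sums and the $Z(\frg)$-action is preserved by the direct image, applying $R^if_{\bh,n,*}$ yields the corresponding eigenspace decomposition on the direct image, so $\pr_{|\mu|}$ commutes with $R^if_{\bh,n,*}$ at each level. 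The main obstacle is passing to $\tildefrY^{\wedge}$: one must check that the inverse limit of the generalized eigenspace summands computes the generalized eigenspace on the formal completion, which I plan to deduce from Lemma \ref{lemmadirectimagephibundles}(2) and the Mittag-Leffler property of the relevant inverse systems of coherent direct summands. Combining this commutation with the isomorphism in (1) then concludes.
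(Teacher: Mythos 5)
Your proposal is correct and follows essentially the same route as the paper: for (1) you verify compatibility of the $\cR^r_{\frY^{\wedge}}$-, $\varphi$-, $\Gamma$- and $\frg$-structures through the explicit extension formula of Proposition \ref{proptensoralgebraic}, exactly as the paper does (the paper just makes the inverse-limit bookkeeping over the $\frY_n$ via Lemma \ref{lemmadirectimagephibundles} explicit), and for (2) you use the finite decomposition of $D^{',r}\otimes_L V_k$ into generalized $Z(\frg)$-eigenspace summands, push it through the direct image at each finite level, and pass to the limit, which is the paper's argument (with local $Z(\frg)$-finiteness of the direct image coming from the quasi-compactness of $\tildefrY_n$, and the limit step being essentially definitional given how translations over formal completions are defined). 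No gaps beyond routine care at those two points.
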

\begin{proof}
    (1) We verify the isomorphism $f_{\bh,*}(D_{\tildefrY^{\wedge}}^{',r}\otimes_{L}V_k)\simeq (f_{\bh,*}D_{\tildefrY^{\wedge}}^{',r})\otimes_{L}V_k$ is an isomorphism of $(\varphi,\Gamma,\frg)$-module. Recall $V^k=\Sym^kL^2=\cR_L^+/X^{k+1}$. Then $D_{\tildefrY^{\wedge}}^{',r}\otimes_{L}V_k=\oplus_{i=0}^k D_{\tildefrY^{\wedge}}^{',r} \otimes_L X^iL$ and $f_{\bh,*}(D_{\tildefrY^{\wedge}}^{',r}\otimes_{L}V_k)=\oplus_{i=0}^k f_{\bh,*}D_{\tildefrY^{\wedge}}^{',r} \otimes_L X^iL\simeq f_{\bh,*}D_{\tildefrY^{\wedge}}^{',r} \otimes_L V_k$ with obvious maps. And the sheaf $f_{\bh,*}(D_{\tildefrY^{\wedge}}^{',r}\otimes_{L}V_k)$ is determined by its section over $\frY^{\wedge}$ as for $f_{\bh,*}D_{\tildefrY^{\wedge}}^{',r}$ (Lemma \ref{lemmacompletionphigammamodule}). For $g\in \frg$ and $\sum_{i=0}^{k}a_i\otimes X^i\in f_{\bh,*}D_{\tildefrY^{\wedge}}^{',r} \otimes_L V_k$, we have $g.(\sum_{i=0}^{k}a_i\otimes X^i)=\sum_{i=0}^{k}g.a_i\otimes X^i+\sum_{i=0}^ka_i\otimes g.X^i=\sum_{i=0}^k (g.a_i+\sum_{j}c_{ji}a_j)\otimes X^i$ where $g.X^i=\sum_{i,j}c_{ij}X^j$. The $\frg$ action on $\sum_{i=0}^{k}a_i\otimes X^i\in f_{\bh,*}(D_{\tildefrY^{\wedge}}^{',r}\otimes_{L}V_k)$ (resp. on $a_i\in f_{\bh,*}D_{\tildefrY^{\wedge}}^{',r}$) is given by the same formula viewing $\sum_{i=0}^{k}a_i\otimes X^i$ (resp. $a_i$) as global sections on $\tildefrY^{\wedge}$. Hence the $\frg$-actions coincide. Similar statements hold for the actions of $\varphi$ and $\Gamma$. It remains to show that the map is $\cR^r_{\frY^{\wedge}}$-linear. By Lemma \ref{lemmadirectimagephibundles}, we have $f_{\bh,*}(D_{\tildefrY^{\wedge}}^{',r}\otimes_{L}V_k)=\varprojlim_n f_{\bh,*}(D_{\tildefrY_n}^{',r}\otimes_{L}V_k)$ and $f_{\bh,*}D_{\tildefrY^{\wedge}}^{',r}=\varprojlim_n f_{\bh,*}D_{\tildefrY_n}^{',r}$ as sheaves of $\cR^r_{\frY^{\wedge}}$-module. Let $f=(f_n)_n\in \cR_{\frY^{\wedge}}^{r}=\varprojlim_n\cR_{\frY_n}^r$ act on $\varprojlim_n f_{\bh,*}D_{\tildefrY_n}^{',r}$. For $\sum_{i=0}^{k}a_i\otimes X^i\in f_{\bh,*}D_{\tildefrY^{\wedge}}^{',r} \otimes_L V_k$, write similarly $a_i=(a_{i,n})_n\in \varprojlim_n f_{\bh,*}D_{\tildefrY_n}^{',r}$. By the proof of Proposition \ref{proptensoralgebraic}, the action of $f_n$ on $a_{i,n}\otimes X^i$ is given by $f_n.(a_{i,n}\otimes X^i)=\sum_{j=0}^k\frac{1}{j!}(X+1)^jf^{(j)}_n(X)a_{i,n}\otimes X^{j+i}$. Same equation holds if we view $a_{i,n}\otimes X^i$ as a global section of $D_{\tildefrY_n}^{',r}\otimes_L V_k$ for the action of $f_n$ via $f_{\bh}^{-1}\cR^r_{\frY_n}\rightarrow \cR^r_{\tildefrY_n}$. Taking inverse limit we get the compatibility of $\cR^r_{\frY^{\wedge}}$-actions. 
    
    (2) If $D^{',r}_{\tildefrY_n}$ is locally $Z(\frg)$-finite, since $\tildefrY_n$ is quasi-compact, $R^if_{\bh,*}D^{',r}_{\tildefrY_n}$ is locally $Z(\frg)$-finite (one can choose a finite affinoid covering of $\tildefrY_n$ to calculate the cohomology). By taking inverse limit, we have a direct sum decomposition $D_{\tildefrY^{\wedge}}^{',r}\otimes_LV_k=\oplus_{\mu'} T_{\lambda}^{\mu'}D_{\tildefrY^{\wedge}}^{',r}$ for finitely many $\mu'$. We conclude by noticing that $Z(\frg)$ acts on $f_{\bh,*}T_{\lambda}^{\mu}D_{\tildefrY^{\wedge}}^{',r}=\varprojlim_n f_{\bh,*}T_{\lambda}^{\mu}D_{\tildefrY_n}^{',r}$ locally profinitely with generalized infinitesimal character $\mu$.
\end{proof}
\begin{theorem}\label{theorem}
    Let $\lambda=\lambda_{\bh}=(h_2-1,h_1)$ for $\bh=(h_1,h_2)\in \Z^2,h_1<h_2$ and $\mu=\lambda_{(0,0)}=(-1,0)$ be weights in $X^*(\frt)$. Let $f_{\bh}:\tildefrY^{\wedge}\rightarrow \frY^{\wedge}$ and $D_{\tildefrY^{\wedge}}^r, \Delta_{\frY^{\wedge}}^r$ be the map and $(\varphi,\Gamma)$-modules in Construction \ref{construction}. We equip $D_{\tildefrY^{\wedge}}^r$ and  $\Delta_{\frY^{\wedge}}^r$ with the standard $\frg$-module structures in Definition \ref{definitionstandardgmodulestructure}.
    \begin{enumerate}
        \item There is an isomorphism of $(\varphi,\Gamma,\frg)$-modules 
        \begin{equation}\label{equaitontheorem1}
            T_{\lambda}^{\mu}D_{\tildefrY^{\wedge}}^r\simeq f_{\bh}^*\Delta^r_{\frY^{\wedge}}:=\Delta^r_{\tildefrY^{\wedge}}=\varprojlim_n\Delta_{\tildefrY_n}^r
        \end{equation} 
        over $\cR_{\tildefrY^{\wedge}}^r$. 
        \item Under Hypothesis \ref{hypothesisflat}, the adjunction of $f_{\bh}^*T_{\mu}^{\lambda}\Delta^r_{\frY^{\wedge}}=T_{\mu}^{\lambda}\Delta^r_{\tildefrY^{\wedge}}\rightarrow D^r_{\tildefrY^{\wedge}}$ (as $(\varphi,\Gamma)$-bundles over $\bbU^r_{\tildefrY^{\wedge}}$) induces an isomorphism 
        \begin{equation}\label{equationtheorem2}
            T_{\mu}^{\lambda}\Delta^r_{\frY^{\wedge}}\simeq Rf_{\bh,*}D^r_{\tildefrY^{\wedge}}
        \end{equation}
        of $(\varphi,\Gamma,\frg)$-modules of rank $4$ over $\cR^r_{\frY^{\wedge}}$.
    \end{enumerate}
\end{theorem}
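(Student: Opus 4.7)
The plan is to treat parts (1) and (2) in turn, leveraging the explicit calculations of \S\ref{sectiontranslationinfamily} together with the structural description of the direct image in Corollary \ref{corollarydirectimagephigammamodule}.

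For part (1), I would first twist by $\det^{-h_1}$ to reduce to the case $h_1 = 0$, $h_2 = k \geq 1$, using Lemma \ref{lemmatwist} and the compatibility of translation functors with algebraic twists. In this case $\overline{\mu - \lambda} = (0, -k)$, so $L(\overline{\mu - \lambda}) = V_k \otimes \det^{-k}$ and
\[
T_\lambda^\mu D^r_{\tildefrY^\wedge} = \pr_{|\mu|}(D^r_{\tildefrY^\wedge} \otimes V_k \otimes \det^{-k}) = t^{-k}\cdot \pr_{|(k-1,k)|}(D^r_{\tildefrY^\wedge} \otimes V_k).
\]
Proposition \ref{propositionregulartononregular}(1) identifies $\pr_{|(k-1,k)|}(D^r_{\tildefrY^\wedge} \otimes V_k)$ with the unique weight-$(k,k)$ $(\varphi,\Gamma)$-submodule of $D^r_{\tildefrY^\wedge}$ sharing its localization. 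By the construction of $D^r_{\tildefrY^\wedge}$ via the universal Hodge filtration of type $\bh$, one has the chain $\Delta^r_{\tildefrY^\wedge} \supset D^r_{\tildefrY^\wedge} \supset t^k \Delta^r_{\tildefrY^\wedge}$ inside $\Delta^r_{\tildefrY^\wedge}[\tfrac1t]$, and $t^k \Delta^r_{\tildefrY^\wedge}$ has Sen weights $(k,k)$, so by the uniqueness this submodule must be $t^k \Delta^r_{\tildefrY^\wedge}$. Hence $T_\lambda^\mu D^r_{\tildefrY^\wedge} \simeq \Delta^r_{\tildefrY^\wedge}$. The formulation over the formal space follows by applying this level-by-level modulo $I^n$ and passing to the inverse limit, since the generalized eigenspace functor for $Z(\frg)$ is exact and commutes with base change (Lemma \ref{lemmageneralizedeigenspacephiGammamodules}).

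For part (2), combining (1) with the counit of the $T_\mu^\lambda \dashv T_\lambda^\mu$ adjunction yields a map $T_\mu^\lambda f_\bh^*\Delta^r_{\frY^\wedge} = T_\mu^\lambda T_\lambda^\mu D^r_{\tildefrY^\wedge} \to D^r_{\tildefrY^\wedge}$, and passing to the adjoint under $f_\bh^* \dashv Rf_{\bh,*}$ gives the comparison morphism $\alpha \colon T_\mu^\lambda \Delta^r_{\frY^\wedge} \to Rf_{\bh,*} D^r_{\tildefrY^\wedge}$. Both sides are locally free $(\varphi,\Gamma,\frg)$-modules of rank $4$: the target by Proposition \ref{propositiondirectimagephigammamodule} (invoking Hypothesis \ref{hypothesisflat}), the source by Corollary \ref{corollarypointwisesDing}(2) applied to each $\frY_n$ together with the compatibility of $\pr_{|\lambda|}$ with base change and Noetherian inverse limits. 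To show that $\alpha$ is an isomorphism I would fit both sides inside the common ambient $(\varphi,\Gamma)$-module $f_{\bh,*}\Delta^r_{\tildefrY^\wedge} \simeq \Delta^r_{\frY^\wedge} \otimes_{\cO_{\frY^\wedge}} f_{\bh,*}\cO_{\tildefrY^\wedge}$: the target is cut out there by the explicit eigenspace conditions of Corollary \ref{corollarydirectimagephigammamodule}, while the source can be described by unrolling $\pr_{|\lambda|}(\Delta \otimes V_k)$ using Proposition \ref{proptensoralgebraic} and the Casimir eigenvector relations already appearing in the proof of Proposition \ref{propositionregulartononregular} (with the roles of $D$ and $\Delta$ reversed, and $V_k$ replaced by the rank-$2$ module $f_{\bh,*}\cO_{\tildefrY^\wedge}$ via the isomorphism of Proposition \ref{propositiondirectimagelinebundle}(3)).

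The main obstacle will be matching the adjunction map $\alpha$ against these two explicit descriptions, since the naive $(\varphi,\Gamma)$-ambient modules on the two sides are different ($\Delta \otimes V_k$ vs.\ $\Delta \otimes f_{\bh,*}\cO_{\tildefrY^\wedge}$, where $f_{\bh,*}\cO_{\tildefrY^\wedge}$ encodes a universal $\nu$-eigenspace via Proposition \ref{propositiondirectimagelinebundle}(2)). I expect to avoid a full head-on calculation by first checking $\alpha$ is injective — reducing modulo $I^n$ and invoking the pointwise results, plus the fact that after inverting $t$ both sides reduce to the same $\Delta[\tfrac1t] \otimes f_{\bh,*}\cO_{\tildefrY^\wedge}$-piece — after which equality of ranks and local freeness forces $\alpha$ to be an isomorphism. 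As a sanity check, one should recover Proposition \ref{propositionunitcounit} in the case $k=1$, where the Casimir operator $c - 4h$ visibly intertwines $T_\mu^\lambda \Delta$ with the explicit submodule description of $f_{\bh,*}D$.
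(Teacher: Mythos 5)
Your treatment of part (1) is correct and is essentially the paper's own argument (twist to $\bh=(0,k)$, apply Proposition \ref{propositionregulartononregular} together with the chain $t^k\Delta^r_{\tildefrY^\wedge}\subset D^r_{\tildefrY^\wedge}\subset \Delta^r_{\tildefrY^\wedge}$ coming from the construction, then pass to the limit using Lemma \ref{lemmageneralizedeigenspacephiGammamodules}). In part (2) your construction of the comparison map $\alpha$ via the counit and the adjunction $f_{\bh}^*\dashv Rf_{\bh,*}$, and the rank-$4$ statements for both sides, also agree with the paper. The gap is in your concluding step: ``check $\alpha$ is injective \dots after which equality of ranks and local freeness forces $\alpha$ to be an isomorphism.'' This is false over $\cR^r_{\frY^\wedge}$ (or any of these rings): multiplication by $t$ is injective on a free module of any rank, and in the present situation the containments $t\widetilde{\Delta}^r_{\frY^\wedge}\subset f_{\bh,*}D^r_{\tildefrY^\wedge}\subset \widetilde{\Delta}^r_{\frY^\wedge}$ exhibit many injections between locally free rank-$4$ modules that are not surjective. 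Likewise, knowing that both sides agree after inverting $t$ only says the image of $\alpha$ is some lattice in $\widetilde{\Delta}^r_{\frY^\wedge}[\frac1t]$; two distinct lattices are abstractly isomorphic, so no rank or freeness count can identify the image. One genuinely has to pin down the image of $\alpha$ inside $\widetilde{\Delta}^r_{\frY^\wedge}=\Delta^r_{\frY^\wedge}\otimes_{\cO_{\frY^\wedge}}f_{\bh,*}\cO_{\tildefrY^\wedge}$ and match it with the explicit description of $f_{\bh,*}D^r_{\tildefrY^\wedge}$ in Corollary \ref{corollarydirectimagephigammamodule}. This is exactly the ``head-on calculation'' you hoped to avoid, and it is where the content lies: in the paper it is done (for $k=1$) by computing the counit explicitly as $\frac14(\frc-4h)$ via Proposition \ref{propositionunitcounit}, so that $v_0\otimes 1+v_1\otimes t\mapsto(-\nabla+z-h)v_0+tv_1$, checking injectivity and that the image contains $t\widetilde{\Delta}$, and then identifying the image modulo $t$ with the $\nabla=(z+h)$ eigenspace using Proposition \ref{propositiondirectimagelinebundle}(2) (image of $\nu-(z-h)$ equals kernel of $\nu-(z+h)$), which is precisely $f_{\bh,*}D/t\widetilde{\Delta}$.

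A second, smaller issue: you propose to handle general $k$ directly by ``unrolling $\pr_{|\lambda|}(\Delta\otimes V_k)$,'' but none of the results you cite describe $T_\mu^{\lambda}\Delta$ for $k>1$ (Proposition \ref{propositionnonregulartoregular} is only the case $k=1$, and the Casimir computations of Proposition \ref{propositionregulartononregular} concern translating a regular-weight module onto the wall, not off it). The paper avoids this by proving the $k=1$ case and then bootstrapping: $D^{',r}_{\tildefrY^\wedge}\simeq T_{\lambda}^{\lambda'}D^r_{\tildefrY^\wedge}$, push forward using Lemma \ref{lemmatranslationdirectimage}, and compose translations via Lemma \ref{lemmatranslationliealgebra}. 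You would either need that reduction as well, or supply the missing general-$k$ computation of the source.
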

\begin{proof}
    (1) The statement follows from the construction of $D_{\tildefrY^{\wedge}}^r=\varprojlim_n D_{\tildefrY_n}^r, T_{\lambda}^{\mu}D_{\tildefrY^{\wedge}}^r=\varprojlim_n T_{\lambda}^{\mu}D_{\tildefrY_n}^r$ and Proposition \ref{propositionregulartononregular}.
    
    (2) We first assume that $\bh=(0,1)$. By Proposition \ref{propositiondirectimagephigammamodule} and Hypothesis \ref{hypothesisflat}, $Rf_{\bh,*}D^r_{\tildefrY^{\wedge}}=f_{\bh,*}D^r_{\tildefrY^{\wedge}}$. Write $\widetilde{\Delta}_{\frY^{\wedge}}^r$ for $\Delta^{r}_{\frY^{\wedge}}\otimes_{\cO_{\frY^{\wedge}}}f_{\bh,*}\cO_{\tildefrY^{\wedge}}$. The unit map for $(\varphi,\Gamma)$-bundles and the projection formula induce a map $T_{\mu}^{\lambda}\Delta_{\frY^{\wedge}}^r\rightarrow T_{\mu}^{\lambda}\Delta_{\frY^{\wedge}}^r\otimes_{\cO_{\frY^{\wedge}}} f_{\bh,*}f_{\bh}^*\cO_{\frY^{\wedge}}=T_{\mu}^{\lambda}\widetilde{\Delta}_{\frY^{\wedge}}^r=f_{\bh,*}T_{\mu}^{\lambda}\Delta_{\tildefrY^{\wedge}}^r$ (the last equation is by Lemma \ref{lemmatranslationdirectimage}). The isomorphism $\Delta^r_{\tildefrY^{\wedge}}\simeq T_{\lambda}^{\mu}D_{\tildefrY^{\wedge}}^r$ induces $T_{\mu}^{\lambda}\Delta^r_{\tildefrY^{\wedge}}\rightarrow D_{\tildefrY^{\wedge}}^r$ and hence $T_{\mu}^{\lambda}f_{\bh,*}\Delta^r_{\tildefrY^{\wedge}}\rightarrow f_{\bh,*}D_{\tildefrY^{\wedge}}^r$. The composite of the two maps gives the desired $\frg$-map 
    \[T_{\mu}^{\lambda}\Delta_{\frY^{\wedge}}^r\rightarrow f_{\bh,*}T_{\mu}^{\lambda}\Delta^r_{\tildefrY^{\wedge}}\rightarrow f_{\bh,*}D_{\tildefrY^{\wedge}}^r.\] 
    We show that this map is an isomorphism of $(\varphi,\Gamma)$-modules over $\cR_{\frY^{\wedge}}^r$.  
    
    By Hypothesis \ref{hypothesisflat}, flat base change and faithfully flat descent as in the proof of Proposition \ref{propositiondirectimagephigammamodule}, statements of Proposition \ref{propositiondirectimagelinebundle} hold replacing $f:\tildefrg\rightarrow \frg$ by $f_{\bh}:\tildefrY\rightarrow\frY$ after suitable modifications. In particular, $f_{\bh,*}\cO_{\tildefrY^{\wedge}}$ is locally free of rank two over $\cO_{\frY^{\wedge}}$ generated by the element $h_{\tildefrY^{\wedge}}$ defined before Corollary \ref{corollarydirectimagephigammamodule}. Write for short $z=z_{\tildefrY^{\wedge}}$ and $h=h_{\tildefrY^{\wedge}}$. The composite $T_{\mu}^{\lambda}\Delta^r_{\frY^{\wedge}}\rightarrow T_{\mu}^{\lambda}\widetilde{\Delta}^r_{\frY^{\wedge}}\stackrel{\frac{1}{4}\frc-h^2-h}{\rightarrow}T_{\mu}^{\lambda}\widetilde{\Delta}^r_{\frY^{\wedge}}\twoheadrightarrow \widetilde{\Delta}^r_{\frY^{\wedge}}$ sends $v_0\otimes 1+v_1\otimes t\in T_{\mu}^{\lambda}\Delta^r_{\frY^{\wedge}}$ to $(-\nabla+z-h)v_0+tv_1$ by the proof of Proposition \ref{propositionunitcounit} (all these maps are inverse limits of maps modulo $I^n$). This map is an injection ($\widetilde{\Delta}_{\frY^{\wedge}}=\Delta_{\frY^{\wedge}}\oplus h \Delta_{\frY^{\wedge}}$) and the image contains $t\widetilde{\Delta}_{\frY^{\wedge}}$ (for $thv_0\in th\Delta_{\frY^{\wedge}}$, let $v_1=\frac{(\nabla-z)tv_0}{t}$). Modulo $t\widetilde{\Delta}_{\frY^{\wedge}}^r$, the image of $T_{\mu}^{\lambda}\Delta_{\frY^{\wedge}}^r$ as a sub-$\cO_{\frY^{\wedge}}$-module of $\widetilde{\Delta}_{\frY^{\wedge}}^r/t=\prod_{m\geq m(r)}D^{m}_{\Sen}(\widetilde{\Delta}_{\frY^{\wedge}})\otimes_{\Q_p}K_m$ is $\prod_{m\geq m(r)}(\nabla-(z-h))D^{m}_{\Sen}(\Delta_{\frY^{\wedge}})\otimes_{\Q_p}K_m$. By (2) of Proposition \ref{propositiondirectimagelinebundle} and the flatness Hypothesis \ref{hypothesisflat}, this image is identified with $\prod_{m\geq m(r)}D^{m}_{\Sen}(\widetilde{\Delta}_{\frY^{\wedge}})[\nabla=(z+h)]\otimes_{\Q_p}K_m$, i.e., is equal to $f_{\bh,*}D_{\widetilde{\frY}^{\wedge}}^r/t\widetilde{\Delta}_{\frY^{\wedge}}^r$ by Corollary \ref{corollarydirectimagephigammamodule}. By Proposition \ref{propositionunitcounit} and taking direct image, the composite $f_{\bh,*}T_{\mu}^{\lambda}\Delta^r_{\tildefrY^{\wedge}}\rightarrow f_{\bh,*}D_{\tildefrY^{\wedge}}^r\hookrightarrow f_{\bh,*}\Delta_{\tildefrY^{\wedge}}^r$ is identified with $T_{\mu}^{\lambda}\widetilde{\Delta}^r_{\frY^{\wedge}}\stackrel{\frac{1}{4}\frc-h^2-h}{\rightarrow}T_{\mu}^{\lambda}\widetilde{\Delta}^r_{\frY^{\wedge}}\twoheadrightarrow \widetilde{\Delta}^r_{\frY^{\wedge}}$. Hence the composite $T_{\mu}^{\lambda}\Delta_{\frY^{\wedge}}^r\rightarrow f_{\bh,*}T_{\mu}^{\lambda}\Delta^r_{\tildefrY^{\wedge}}\rightarrow f_{\bh,*}D_{\tildefrY^{\wedge}}^r$ is an isomorphism, with the same image in $\widetilde{\Delta}^r_{\frY^{\wedge}}$ given by Corollary \ref{corollarydirectimagephigammamodule}. We have finished the case $\bh=(0,1)$.

    Finally, we treat general $\bh'=(0,k), k\geq 1$. Write $\lambda'=\lambda_{\bh'}$. We have simply $f_{\bh}=f_{\bh'}:\tildefrY^{\wedge}\rightarrow \frY^{\wedge}$. Write $D^{',r}_{\tildefrY^{\wedge}}$ for the universal $(\varphi,\Gamma)$-module over $\cR^r_{\tildefrY^{\wedge}}$ of weights $\bh'$ and write $D^{r}_{\tildefrY^{\wedge}}$ for the one with weights $\bh=(0,1)$. By Proposition \ref{propositionregulartononregular}, $D_{\tildefrY^{\wedge}}^{',r}\simeq T_{\lambda}^{\lambda'}D_{\tildefrY^{\wedge}}^{r}$. Then $f_{\bh',*}D_{\tildefrY^{\wedge}}^{',r}\simeq f_{\bh',*}T_{\lambda}^{\lambda'}D_{\tildefrY^{\wedge}}^{r}\simeq T_{\lambda}^{\lambda'}f_{\bh',*}D_{\tildefrY^{\wedge}}^{r}$ by Lemma \ref{lemmatranslationdirectimage}. Since $T_{\mu}^{\lambda}\Delta_{\frY^{\wedge}}^r\simeq f_{\bh,*}D_{\tildefrY^{\wedge}}^{r}$, we see $f_{\bh',*}D_{\tildefrY^{\wedge}}^{',r}\simeq T_{\lambda}^{\lambda'}T_{\mu}^{\lambda}\Delta_{\frY^{\wedge}}^r\simeq T_{\mu}^{\lambda'}\Delta_{\frY^{\wedge}}^r$ (Lemma \ref{lemmatranslationliealgebra}). Note that this isomorphism is induced by $T_{\mu}^{\lambda}\Delta_{\frY^{\wedge}}^r\otimes_L V_{k-1}\simeq f_{\bh,*}D_{\tildefrY^{\wedge}}^{r}\otimes_{L}V_{k-1}$ which, by the case for $\bh$, is induced by the adjunction of $T_{\mu}^{\lambda}\Delta_{\tildefrY^{\wedge}}^r\otimes_L V_{k-1}\rightarrow D_{\tildefrY^{\wedge}}^{r}\otimes_{L}V_{k-1}$. Taking $\pr_{|\lambda'|}$ everywhere we see the isomorphism $f_{\bh',*}D_{\tildefrY^{\wedge}}^{',r}\simeq T_{\mu}^{\lambda'}\Delta_{\frY^{\wedge}}^r$ is induced by $T_{\mu}^{\lambda'}\Delta_{\tildefrY^{\wedge}}^r\rightarrow D_{\tildefrY^{\wedge}}^{',r}$.
\end{proof}
Taking colimit for $r>0$ and considering Lemma \ref{lemmadirectimagephibundles}, we get the following corollary.
\begin{corollary}\label{corollary}
    Under Hypothesis \ref{hypothesisflat}, we have
    \[T_{\lambda}^{\mu}D_{\tildefrY^{\wedge}}\simeq f_{\bh}^*\Delta_{\frY^{\wedge}}\]
    and
    \[T_{\mu}^{\lambda}\Delta_{\frY^{\wedge}}\simeq Rf_{\bh,*}D_{\tildefrY^{\wedge}}.\]        
\end{corollary}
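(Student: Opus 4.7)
The plan is to deduce Corollary \ref{corollary} from Theorem \ref{theorem} by passing to the filtered colimit over $r>0$. First I would observe that by the very definitions recalled just before Theorem \ref{theorem}, we have $D_{\tildefrY^\wedge}=\varinjlim_{r'\leq r}D^{r'}_{\tildefrY^\wedge}$ and $\Delta_{\frY^\wedge}=\varinjlim_{r'\leq r}\Delta^{r'}_{\frY^\wedge}$, and similarly for $T_\lambda^\mu$ applied to these objects. Since formation of $T_\lambda^\mu$ commutes with restriction in $r$ (it is defined fibrewise as tensoring with the finite-dimensional $\frg$-module $V_k$ followed by projection onto a generalized eigenspace of $Z(\frg)$, both of which are compatible with the base change $\cR^r_{\frY^\wedge}\to\cR^{r'}_{\frY^\wedge}$; cf. Lemma \ref{lemmabasechangetensor} and Lemma \ref{lemmageneralizedeigenspacephiGammamodules}(3)), taking $\varinjlim_{r'\leq r}$ of the isomorphism (\ref{equaitontheorem1}) immediately yields
\[T_\lambda^\mu D_{\tildefrY^\wedge}\simeq f_{\bh}^*\Delta_{\frY^\wedge}.\]

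For the second isomorphism, I would start from (\ref{equationtheorem2}), which gives $T_\mu^\lambda\Delta^r_{\frY^\wedge}\simeq Rf_{\bh,*}D^r_{\tildefrY^\wedge}$ (concentrated in degree zero and a rank $4$ $(\varphi,\Gamma)$-module, by Proposition \ref{propositiondirectimagephigammamodule}). By Lemma \ref{lemmadirectimagephibundles}(4), applied under the Hypothesis \ref{hypothesisflat} which guarantees the hypotheses of Lemma \ref{lemmadirectimagephibundles}(3)--(4) are met, we have
\[Rf_{\bh,*}D_{\tildefrY^\wedge}=\varinjlim_{r'\leq r}Rf_{\bh,*}D^{r'}_{\tildefrY^\wedge}=\cR_{\frY^\wedge}\otimes_{\cR^r_{\frY^\wedge}}Rf_{\bh,*}D^r_{\tildefrY^\wedge},\]
and this is again a $(\varphi,\Gamma)$-module. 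Taking $\varinjlim_{r'\leq r}$ of the isomorphism (\ref{equationtheorem2}) then yields
\[T_\mu^\lambda\Delta_{\frY^\wedge}\simeq Rf_{\bh,*}D_{\tildefrY^\wedge},\]
which is what we want.

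The only step requiring a small verification is that the translation functor $T_\mu^\lambda$ commutes with the filtered colimit $\varinjlim_{r'\leq r}$ when applied to the family $\{\Delta^{r'}_{\frY^\wedge}\}$; I would record this as follows. The tensor product with the finite-dimensional $\frg$-module $V_{k-1}$ (where $k=h_2-h_1$) visibly commutes with the base change $\cR^r_{\frY^\wedge}\to\cR^{r'}_{\frY^\wedge}$, as does the projection onto the generalized eigenspace $\{Z(\frg)=\chi_\lambda\}$ by Lemma \ref{lemmageneralizedeigenspacephiGammamodules}(3) (applied levelwise on $\frY_n$ and then passed through $\varprojlim_n$, using that the relevant finite projective summand is preserved). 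The compatibility with the adjunction maps that realize the isomorphism (\ref{equationtheorem2}) is formal. I do not expect any serious obstacle here; the substance lies entirely in Theorem \ref{theorem}, and this corollary is a clean colimit formality.
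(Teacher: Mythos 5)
Your argument is precisely the paper's: the corollary is deduced by taking the colimit over $r$ of the two isomorphisms of Theorem \ref{theorem}, with Lemma \ref{lemmadirectimagephibundles} giving $Rf_{\bh,*}D_{\tildefrY^{\wedge}}=\varinjlim_{r}Rf_{\bh,*}D^{r}_{\tildefrY^{\wedge}}=\cR_{\frY^{\wedge}}\otimes_{\cR^{r}_{\frY^{\wedge}}}Rf_{\bh,*}D^{r}_{\tildefrY^{\wedge}}$ (its hypotheses being met via Proposition \ref{propositiondirectimagephigammamodule} under Hypothesis \ref{hypothesisflat}), and the definitions of $T_{\lambda}^{\mu}$, $T_{\mu}^{\lambda}$ on completed modules making the translation commute with the colimit. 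One cosmetic slip: for $T_{\mu}^{\lambda}$ the relevant finite-dimensional tensor factor is $V_{k}$ (up to a determinant twist), not $V_{k-1}$, but this has no bearing on the colimit formality.
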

\subsection{Specialization to points}\label{subsectionspecialization}
We show how to use Theorem \ref{theorem} to recover some results of Ding. Suppose we are in the situation of Construction \ref{construction}. We fix an $L$-point $y$ of $\frY^{\wedge}$ and write $\Delta=\Delta_y$ for the specialization of $\Delta_{\frY^{\wedge}}$ at the point $y$. Let $i_y:y\hookrightarrow \frY_1\hookrightarrow\frY^{\wedge}$ be the closed embedding and consider the restriction $f_y:f_{\bh}^{-1}(y)\rightarrow y$ of $f_{\bh}$.
\begin{center}
    \begin{tikzcd}
        f_{\bh}^{-1}(y) \arrow[r,hook,"i'_y"] \arrow[d,"f_y"]
        & \tildefrY^{\wedge}\arrow[d,"f_{\bh}"]\\
        y\arrow[r, "i_y",hook]
        &  \frY^{\wedge}
    \end{tikzcd}
\end{center}

By the projection formula \cite[\href{https://stacks.math.columbia.edu/tag/08EU}{Tag 08EU}]{stacks-project} (and the arguments in the proof of Proposition \ref{propositiondirectimagephigammamodule} of reducing to scheme-theoretic cohomologies), we have
\begin{equation}\label{equationbasechange}
    Rf_{\bh,*}D^r_{\tildefrY^{\wedge}}\otimes_{\cO_{\frY}}^Li_{y,*}\cO_{y}\simeq Rf_{\bh,*}(D^r_{\tildefrY^{\wedge}}\otimes_{\cO_{\tildefrY^{\wedge}}}^LLf_{\bh}^*i_{y,*}\cO_{y}).  
\end{equation}
By Theorem \ref{theorem}, the left-hand side equals $T_{\mu}^{\lambda}\Delta^r_y$ assuming Hypothesis \ref{hypothesisflat}. 
\begin{lemma}\label{lemmaderivedfiber}
    Assume Hypothesis \ref{hypothesisflat}.
    \begin{enumerate}
        \item If $\Delta_y$ is de Rham, then $f_{\bh}^{-1}(y)=G/B=\bP^1$ is reduced and $Lf_{\bh}^*i_{y,*}\cO_{y}$ fits into an exact triangle in the derived category of coherent $
        \cO_{\tildefrY^{\wedge}}$-modules:
        \[i'_{y,*}\cO_{\bP^1}(-2)[1]\rightarrow Lf_{\bh}^*i_{y,*}\cO_{y}\rightarrow \cO_{\bP^1}[0]\rightarrow (i'_{y,*}\cO_{\bP^1}(-2)[1])[1]\] 
        where $\cO_{\bP^1}(-2)[1]$ denotes the line bundle $\cO_{\bP^1}(-2)$ sitting in cohomological degree $-1$.
        \item If $\Delta_y$ is not de Rham, then $f_{\bh}^{-1}(y)$ is a finite ramified cover of degree $2$ over $y$ and $Lf_{\bh}^*i_{y,*}\cO_{y}=f_{\bh}^*i_{y,*}\cO_{y}=i'_{y,*}\cO_{f_{\bh}^{-1}(y)}$.
    \end{enumerate}
\end{lemma}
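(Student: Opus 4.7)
The plan is to reduce to a Koszul calculation on the Grothendieck--Springer resolution $f\colon\tildefrg\to\frg$ itself, using Hypothesis~\ref{hypothesisflat} to transport the computation to the universal situation, and then to analyse the derived fibre case by case.

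First I would unwind Construction~\ref{construction} using Hypothesis~\ref{hypothesisflat} and Remark~\ref{remarktrivialtorsor}: after trivializing $D_{\pdR}(\Delta_{\frY^{\wedge}})$ near $y$ I obtain a flat morphism $\frY^{\wedge,\square}\to\frg$ classifying $\nu_{\frY^{\wedge}}$, giving a Cartesian square $\tildefrY^{\wedge,\square}=\frY^{\wedge,\square}\times_{\frg}\tildefrg$. Let $y^{\square}\in\frY^{\wedge,\square}(L)$ be a lift of $y$, which maps to $\nu_y\in\frg(L)$. Flat base change identifies $L(f_{\bh}^{\square})^{*}(i_{y^{\square},*}\cO_{y^{\square}})$ with the pullback of $Lf^{*}i_{\nu_y,*}\cO_{\nu_y}$ along $\tildefrY^{\wedge,\square}\to\tildefrg$, and faithfully flat descent along $\frY^{\wedge,\square}\to\frY^{\wedge}$ reduces both statements to the computation of $Lf^{*}i_{\nu_y,*}\cO_{\nu_y}$ on $\tildefrg$. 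By the remark after Proposition~\ref{propositionintroductionfiberproduct}, $\Delta_y$ is de Rham precisely when $\nu_y=0$, so the two cases correspond respectively to $\nu_y=0$ and $0\neq\nu_y\in\cN$.

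Case~(2) is then immediate from Lemma~\ref{lemmapropertygrothendieckresolutiongl2}: when $\nu_y\neq 0$ it lies in $\frg^{\mathrm{reg}}$, where $f$ is finite flat of degree~$2$ with ramified fibre of length~$2$ over $\nu_y$; flatness near $\nu_y$ yields $Lf^{*}=f^{*}$, and the scheme-theoretic fibre is the desired ramified double cover of $y$.

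For case~(1) with $\nu_y=0$, I would factor $f$ as $\tildefrg\stackrel{i}{\hookrightarrow}\frg\times\bP^1\stackrel{p_1}{\longrightarrow}\frg$. Since the tangent bundle of $G/B=\bP^1$ is $\cO_{\bP^1}(2)\simeq\frg/\frb$ as a $G$-equivariant line bundle, the closed immersion $i$ is a regular embedding of codimension one cut out by a section $\sigma$ of $p_2^{*}\cO_{\bP^1}(2)$, yielding the Koszul resolution
\[
0\to p_2^{*}\cO_{\bP^1}(-2)\stackrel{\sigma}{\longrightarrow}\cO_{\frg\times\bP^1}\to i_{*}\cO_{\tildefrg}\to 0.
\]
Smoothness of $p_1$ gives $p_1^{*}i_{0,*}\cO_0=\cO_{\{0\}\times\bP^1}$, and the key observation is that $\sigma|_{\{0\}\times\bP^1}=0$, because the zero section $\{0\}\times G/B$ lies inside $\tildefrg$. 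Tensoring the Koszul complex with $\cO_{\{0\}\times\bP^1}$ therefore kills the differential, and one reads off $\Tor_0=\cO_{\bP^1}$, $\Tor_1=\cO_{\bP^1}(-2)$, and $\Tor_{\geq 2}=0$; the $\Tor_0$ calculation in particular shows that the scheme-theoretic fibre $f^{-1}(0)$ is the reduced $\bP^1$. Finally, the resulting two-term object splits as a direct sum because the obstruction lies in $\Ext^2_{\cO_{\bP^1}}(\cO_{\bP^1},\cO_{\bP^1}(-2))=H^2(\bP^1,\cO_{\bP^1}(-2))=0$, which vanishes for dimension reasons.

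The main difficulty is the bookkeeping in the first paragraph: one has to check carefully that Hypothesis~\ref{hypothesisflat}, together with Corollary~\ref{corollaryflatlocalrings} and Remark~\ref{remarktrivialtorsor}, legitimately reduces the derived pullback of $i_{y,*}\cO_y$ along $f_{\bh}\colon\tildefrY^{\wedge}\to\frY^{\wedge}$ to that of $i_{\nu_y,*}\cO_{\nu_y}$ along $f\colon\tildefrg\to\frg$. Once this reduction is in place, the Koszul calculation itself is entirely local and formal.
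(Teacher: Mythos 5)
Your case split and the Koszul computation are essentially the paper's own argument: the paper also realizes $\tildefrg\hookrightarrow\frg\times G/B$ as a Cartier divisor whose ideal sheaf restricts to $\cO_{\bP^1}(-2)$ along $G/B$ (computed there by the explicit cocycle $c\mapsto x^2c$ rather than via the identification of the normal bundle with $G\times^B(\frg/\frb)\simeq T_{G/B}$), and the vanishing of the differential after restriction is exactly the paper's inclusion $\cI\subset\cJ$, i.e.\ your observation that the fibre over $0$ lies inside $\tildefrg$. Once the differential is zero the two-term complex is literally a direct sum, so your $\Ext^2$ splitting step is redundant (though correct). Case (2) via finite flatness of $f$ over $\frg^{\rm reg}$ (Lemma \ref{lemmapropertygrothendieckresolutiongl2}) is also the paper's argument.

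The genuine problem is the reduction in your first paragraph. Flat base change does \emph{not} identify $L(f_{\bh}^{\square})^{*}(i_{y^{\square},*}\cO_{y^{\square}})$ with the pullback of $Lf^{*}i_{\nu_y,*}\cO_{\nu_y}$ along $\widetilde{\beta}\colon\tildefrY^{\wedge,\square}\to\tildefrg$: by functoriality and flatness of $\beta$, that pullback is $L(f_{\bh}^{\square})^{*}\,i_{\beta^{-1}(\nu_y),*}\cO_{\beta^{-1}(\nu_y)}$, which is supported on the preimage of the whole scheme-theoretic fibre $\beta^{-1}(\nu_y)$ — a positive-dimensional space (e.g.\ the de Rham locus when $\nu_y=0$) — not on the fibre over the single point $y^{\square}$, so the two objects cannot be isomorphic. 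What is true, and what rescues your plan, is the Tor-independence/transitivity statement: flatness of $\beta$ gives $\cO_{\tildefrY^{\wedge,\square}}\simeq\cO_{\frY^{\wedge,\square}}\otimes^{L}_{\cO_{\frg}}\cO_{\tildefrg}$, hence $\cO_{\tildefrY^{\wedge,\square}}\otimes^{L}_{\cO_{\frY^{\wedge,\square}}}k(y^{\square})\simeq\cO_{\tildefrg}\otimes^{L}_{\cO_{\frg}}k(\nu_y)$, i.e.\ the derived fibre of $f_{\bh}^{\square}$ over $y^{\square}$ is the derived fibre of $f$ over $\nu_y$; that is the statement you should prove and then feed into your Koszul computation. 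Relatedly, your descent bookkeeping is off: the faithfully flat pullback of $i_{y,*}\cO_y$ along $\alpha\colon\frY^{\wedge,\square}\to\frY^{\wedge}$ is $i_{\alpha^{-1}(y),*}\cO_{\alpha^{-1}(y)}$, the structure sheaf of the whole $\GL_2$-fibre, not the skyscraper at a chosen lift $y^{\square}$, so computing $L(f_{\bh}^{\square})^{*}i_{y^{\square},*}\cO_{y^{\square}}$ is not by itself the object that descends to $Lf_{\bh}^{*}i_{y,*}\cO_y$. Either run the argument for $\alpha^{-1}(y)\times G/B$ as the paper does (using that every point of $\alpha^{-1}(y)$ maps to $0$, resp.\ into $\frg^{\rm reg}$), or avoid the torsor by using the trivialization of Remark \ref{remarktrivialtorsor}, so that $\tildefrY^{\wedge}=\frY^{\wedge}\times_{\frg}\tildefrg$ with the relevant maps to $\frg$ flat, and apply the transitivity argument directly; in either case the Tor computations must be transported to the scheme side via Appendix \ref{sectionappendixformalfunction}, as the paper does around (\ref{equationbasechange}). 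With these corrections your computation does give the lemma.
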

\begin{proof}
    Consider the diagram (of schemes, to be lazy) below:
    \begin{center}
        \begin{tikzcd}
            f_{\bh}^{-1}(y) \arrow[r,hook,"i'_y"] \arrow[d,"f_y"]
            & \tildefrY^{\wedge}\arrow[d,"f_{\bh}"]
            & \tildefrY^{\wedge,\square}\arrow[d,"f_{\bh}^{\square}"]\arrow[l,"\widetilde{\alpha}"]\arrow[r,"\widetilde{\beta}"]& \widetilde{\frg}\arrow[d,"f"]\\ 
            y\arrow[r, "i_y",hook]
            &  \frY^{\wedge}&\frY^{\wedge,\square}\arrow[l,"\alpha"]\arrow[r,"\beta"]
            &\frg
        \end{tikzcd}
    \end{center}
    where all squares are Cartesian. Choose a lift $y^{\square}$ of $y$ in $\frY^{\wedge,\square}$ and let $\nu_{y}$ be the image of $y$ in $\frg$. If the nilpotent element $\nu_y\neq 0$, all vertical maps are finite flat of rank $2$ near $y,y^{\square}$ or $\nu_y$ by the statement on $\frg$ (Lemma \ref{lemmapropertygrothendieckresolutiongl2}) and we get (2). 
    
    From now on we assume $\nu_y=0$ and prove (1). Consider the embedding $\widetilde{\frY}^{\wedge,\square}\stackrel{j}{\rightarrow} H:=\frY^{\wedge,\square}\times G/B$ and still write $\beta:\frY^{\wedge,\square}\times G/B\rightarrow \frg\times G/B$. Let $\cI_0$ be the ideal sheaf for the regular closed embedding $\tildefrg\hookrightarrow \frg\times G/B$, which is locally free of rank one. Since $\beta$ is flat, $\cI:=\beta^{*}\cI_0$ is the ideal sheaf cutting out $\widetilde{\frY}^{\wedge,\square}$ from $\frY^{\wedge,\square}\times G/B$. Let $\cJ$ be the ideal sheaf for the closed embedding $\alpha^{-1}(y)\times G/B\hookrightarrow \frY^{\wedge,\square}\times G/B$. Then $\cI\subset \cJ$ as points on $\alpha^{-1}(y)$ are all de Rham ($\alpha^{-1}(y)\times G/B\hookrightarrow \tildefrY^{\wedge,\square}$). 
    \begin{center}
        \begin{tikzcd}
            f_{\bh}^{\square,-1}(\alpha^{-1}(y))\arrow[d,"f_{\bh}^{\square}"]=\alpha^{-1}(y)\times G/B\arrow[r]
            & \tildefrY^{\wedge,\square}\arrow[d,"f_{\bh}^{\square}"]\arrow[r,hook, "j"]&H:=\frY^{\wedge,\square} \times G/B \arrow[dl,"k"]\\ 
            \alpha^{-1}(y)\arrow[r, "i_{\alpha^{-1}(y)}"]&\frY^{\wedge,\square}
        \end{tikzcd}
    \end{center}
    In the notation of the above diagram, using that $k$ is smooth, we get 
    \[Lf^{\square,*}_{\bh}i_{\alpha^{-1}(y),*}\cO_{\alpha^{-1}(y)}=Lj^*k^*i_{\alpha^{-1}(y),*}\cO_{\alpha^{-1}(y)}=Lj^*\cO_{H}/\cJ.\]
    While
    \[Lj^*\cO_{H}/\cJ=\cO_{H}/\cI\otimes^L_{\cO_{H}}\cO_{H}/\cJ=[\cI\rightarrow \cO_H]\otimes_{\cO_H}\cO_{H}/\cJ=[\cI\otimes_{\cO_H}\cO_H/\cJ\rightarrow \cO_H/\cJ].\]
    Since $\cI\subset \cJ$, the map $\cI\otimes_{\cO_H}\cO_H/\cJ\rightarrow \cO_H/\cJ$ is zero. We calculate the restriction of the line bundle $\cI$ to $\alpha^{-1}(y)\times G/B$ whose structure sheaf is $\cO_{H}/\cJ$. The ideal $\cI_0$ on $\frg\times G/B$ corresponds to the condition that $(\nu,gB)\in\frg\times G/B$ such that $\Ad(g^{-1})(\nu)\in\frb.$ Write $\Ad(g^{-1})(\nu)=\mtwo{a}{b}{c}{d}$, then change $g$ to $gh$ for $h=\mtwo{x}{y}{}{x^{-1}}\in B$ sends a local generator $c\in \cI_0$ to $x^2c$. This means that the restriction of $\cI_0$ to $G/B$ is $\cO_{\bP^1}(-2)$ ($G\times^{B}\lambda$ for $\lambda=(1,-1)$). Thus $H^{-1}(Lf^{\square,*}_{\bh}i_{\alpha^{-1}(y),*}\cO_{\alpha^{-1}(y)})= \cO_{f_{\bh}^{\square,-1}(\alpha^{-1}(y))}(-2)$, $H^{0}(Lf^{\square,*}_{\bh}i_{\alpha^{-1}(y),*}\cO_{\alpha^{-1}(y)})=\cO_{f_{\bh}^{\square,-1}(\alpha^{-1}(y))}$ and $H^{i}(Lf^{\square,*}_{\bh}i_{\alpha^{-1}(y),*}\cO_{\alpha^{-1}(y)})=0$ for $i\neq 0,-1$. The statement of the exact triangle for $Lf_{\bh}^*i_{y,*}\cO_{y}$ follows from \cite[\href{https://stacks.math.columbia.edu/tag/08J5}{Tag 08J5}]{stacks-project} and descent.
\end{proof}
\begin{remark}
   Write $i_0:\{0\}\hookrightarrow \frg$ and $\bP^1=f^{-1}(0)$. With the proof of the lemma above we see there exists a short exact sequence $0\rightarrow \cO_{\{0\}}\rightarrow Li_0^*Rf_*\cO_{\tildefrg}\rightarrow \cO_{\{0\}}\rightarrow 0$. The calculation matches the fact that $Rf_*\cO_{\tildefrg}(s)$ is locally free of rank two for $s=0,\pm 1$ in \S \ref{sectinGrothendieckresolution}. 
\end{remark}
Write $D_{\bP^1}$ for the universal $(\varphi,\Gamma)$-module on $f_{\bh}^{-1}(y)$ (the restriction of $D_{\frY_1}$ to $f_{\bh}^{-1}(y)$).
\begin{lemma}\label{lemmaderhampushforward}
    If $\bh=(0,1)$, then $Rf_{y,*}(D^r_{\bP^1})=t\Delta^r$ and $Rf_{y,*}(D^r_{\bP^1}(-2)[1])=\Delta^r$.
\end{lemma}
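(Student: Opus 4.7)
The plan is to specialize the two short exact sequences from the proof of Proposition \ref{propositiondirectimagephigammamodule} to the fiber $\bP^1 = f_{\bh}^{-1}(y)$, and then use the projection formula together with the cohomology of line bundles on $\bP^1$. First I would unpack the construction of $D^r_{\tildefrY^{\wedge}}$ on the fiber: since $\Delta_y$ is de Rham, the universal $\nu$-stable line sub-bundle $\Fil^0 \subset D_{\pdR}(\Delta_{\widetilde{\frY}^{\wedge}})$ restricts on $\bP^1 = G/B$ to the tautological bundle $\cO_{\bP^1}(-1) \subset L^2 \otimes_L \cO_{\bP^1}$, and the quotient becomes $\cO_{\bP^1}(1)$. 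Writing $\Delta^r_{\bP^1} := \Delta^r \otimes_L \cO_{\bP^1}$, the short exact sequences of the proof of Proposition \ref{propositiondirectimagephigammamodule} with $k=1$ specialize on $\bbU^r \times \bP^1$ to
\begin{align*}
0 &\to t\Delta^r_{\bP^1} \to D^r_{\bP^1} \to \cO_{\bP^1}(-1) \otimes_L N_r \to 0,\\
0 &\to D^r_{\bP^1} \to \Delta^r_{\bP^1} \to \cO_{\bP^1}(1) \otimes_L N_r \to 0,
\end{align*}
where $N_r = \prod_{m \geq m(r)}(L \otimes_{\Q_p} K_m)[[t]]/t$ is a coherent $\cO_{\bbU^r}$-module supported on the divisors cut out by $Q_m(X)$ for $m \geq m(r)$.

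Next I would apply the projection formula in the form
\[
Rf_{y,*}\bigl(\cO_{\bP^1}(n) \otimes_L M\bigr) \;=\; M \otimes_L R\Gamma(\bP^1, \cO_{\bP^1}(n))
\]
for any $\cO_{\bbU^r}$-module $M$, together with the standard computation $R\Gamma(\bP^1, \cO_{\bP^1}) = L$, $R\Gamma(\bP^1, \cO_{\bP^1}(-1)) = 0$, and $R\Gamma(\bP^1, \cO_{\bP^1}(-2)) = L[-1]$ (the last by Serre duality, since $\omega_{\bP^1} = \cO_{\bP^1}(-2)$).

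For the first identity, apply $Rf_{y,*}$ to the first short exact sequence: the rightmost term vanishes since $R\Gamma(\bP^1, \cO_{\bP^1}(-1)) = 0$, and $Rf_{y,*}(t\Delta^r_{\bP^1}) = t\Delta^r$ by projection formula, yielding $Rf_{y,*} D^r_{\bP^1} = t\Delta^r$. For the second identity, tensor the second short exact sequence with $\cO_{\bP^1}(-2)$ to obtain
\[
0 \to D^r_{\bP^1}(-2) \to \Delta^r_{\bP^1}(-2) \to \cO_{\bP^1}(-1) \otimes_L N_r \to 0,
\]
apply $Rf_{y,*}$, and observe that the right-hand term vanishes while the middle term pushes forward to $\Delta^r[-1]$, giving $Rf_{y,*}(D^r_{\bP^1}(-2)) = \Delta^r[-1]$; shifting yields the claim.

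The only real subtlety is justifying that the two short exact sequences from the proof of Proposition \ref{propositiondirectimagephigammamodule} indeed restrict to honest short exact sequences on $\bP^1$; this is not automatic because the right-hand terms are $t$-torsion. However, all terms are finite locally free over $\cO_{\widetilde{\frY}^{\wedge}}$ (this uses Hypothesis \ref{hypothesisflat} and the identification of $\Fil^0$ as a line bundle), so the sequences are short exact sequences of locally free $\cO_{\widetilde{\frY}^{\wedge}}$-modules on $\bbU^{[s,r]}$ and remain exact upon base change to $\bP^1$; the rest of the argument is then a straightforward application of projection formula and Bott vanishing.
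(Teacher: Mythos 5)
Your proposal is correct and follows essentially the same route as the paper: the same two short exact sequences $t\Delta^r_{\bP^1}\subset D^r_{\bP^1}\subset \Delta^r_{\bP^1}$ with quotients identified as $\cO_{\bP^1}(\mp 1)$ tensored with the $t$-torsion module $\cR_L^r/t$, followed by the projection formula and the cohomology of $\cO_{\bP^1}$, $\cO_{\bP^1}(-1)$, $\cO_{\bP^1}(-2)$. Your twist of the second sequence by $\cO_{\bP^1}(-2)$ is exactly the paper's computation of $Rf_{y,*}(\Delta^r_{\bP^1}/D^r_{\bP^1}(-2))=0$, so there is nothing substantive to add.
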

\begin{proof}
    The proof goes as for Proposition \ref{propositiondirectimagephigammamodule} and Corollary \ref{corollarydirectimagephigammamodule}. We only do formal calculations here. Let $\Delta_{\bP^1}$ be the pullback of $\Delta$. The inclusions $t\Delta_{\bP^1}^r\subset D_{\bP^1}^r \subset \Delta_{\bP^1}^r$ gives short exact sequences 
    \[0\rightarrow t\Delta_{\bP^1}^r\rightarrow D_{\bP^1}^r\rightarrow D_{\bP^1}^r/t\Delta^r_{\bP^1}\rightarrow 0 \]
    and 
    \[ 0\rightarrow D_{\bP^1}^r\rightarrow \Delta_{\bP^1}^r\rightarrow D_{\bP^1}^r/\Delta_{\bP^1}^r\rightarrow 0.\]
    The inclusion $D_{\bP^1}^r/t\Delta^r_{\bP^1}\hookrightarrow \Delta^r_{\bP^1}/t\Delta^r_{\bP^1}=\cR_L^r/t\widehat{\otimes}_L \cO_{\bP^1}^2$ has image $\cR_L^r/t\widehat{\otimes}_L\cO_{\bP^1}(-1)$ and the quotient $\Delta^r_{\bP^1}/t\Delta^r_{\bP^1}\twoheadrightarrow \Delta^r_{\bP^1}/D^r_{\bP^1}$ corresponds to the quotient $\cR_L^r/t\widehat{\otimes}_L\cO_{\bP^1}^2\twoheadrightarrow \cR_L^r/t\widehat{\otimes}_L\cO_{\bP^1}(1)$. Hence we have
    \begin{align*}
        &Rf_{y,*}(D_{\bP^1}^r/t\Delta^r_{\bP^1})=Rf_{y,*}(\cR_L^r/t\widehat{\otimes}_L \cO_{\bP^1}(-1))=\cR_L^r/t\widehat{\otimes}_LRf_{y,*}\cO_{\bP^1}(-1)=0,\\
        &Rf_{y,*}(\Delta_{\bP^1}^r/D_{\bP^1}^r(-2))=Rf_{y,*}(\cR_L^r/t\widehat{\otimes}_L\cO_{\bP^1}(-1))=\cR_L^r/t\widehat{\otimes}_LRf_{y,*}\cO_{\bP^1}(-1)=0.
    \end{align*}
    Moreover,
    \begin{align*}
        &Rf_{y,*}t\Delta^r_{\bP^1}=t\Delta^r\widehat{\otimes}_L Rf_{y,*}\cO_{\bP^1}=t\Delta^r;\\
        &Rf_{y,*}(\Delta^r_{\bP^1}(-2)[1])=\Delta^r\widehat{\otimes}_LRf_{y,*}\cO_{\bP^1}(-2)[1]=\Delta^r.
    \end{align*} 
    The result follows.
\end{proof}

We recover part of \cite[Lem. 2.17]{ding2023change} below.
\begin{proposition}\label{propositiontranslationpointDing}
    Suppose that $\bh=(0,1)$ and assume Hypothesis \ref{hypothesisflat}.
    \begin{enumerate}
        \item If $\Delta=\Delta_y$ is de Rham, then $T_{\mu}^{\lambda}\Delta=i_y^*f_{\bh,*}D_{\tildefrY^{\wedge}}$ is an extension of $t\Delta$ by $\Delta$.
        \item If $\Delta=\Delta_y$ is not de Rham, then $T_{\mu}^{\lambda}\Delta=i_y^*f_{\bh,*}D_{\tildefrY^{\wedge}}$ is a self extension of $D_y$, where $D_y$ is the unique $(\varphi,\Gamma)$-module of rank two over $\cR_L$ of weight $\bh$ such that $T_{\lambda}^{\mu}D_y=\Delta$.
    \end{enumerate}
\end{proposition}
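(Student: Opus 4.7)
The plan is to specialize Theorem \ref{theorem} at the $L$-point $y$ and analyze the two cases via Lemma \ref{lemmaderivedfiber}. Under Hypothesis \ref{hypothesisflat}, Proposition \ref{propositiondirectimagephigammamodule} and Theorem \ref{theorem}~(2) provide a natural isomorphism $T_{\mu}^{\lambda}\Delta^r_{\frY^{\wedge}}\simeq f_{\bh,*}D^r_{\tildefrY^{\wedge}}$ of locally free rank-$4$ $(\varphi,\Gamma)$-modules over $\cR^r_{\frY^{\wedge}}$. Since both sides are locally free, the derived pullback $Li_y^{*}$ agrees with $i_y^{*}$. On the left, the base-change compatibility in Proposition \ref{proptranslationphigammamodule} identifies $i_y^{*}T_{\mu}^{\lambda}\Delta^r_{\frY^{\wedge}}$ with $T_{\mu}^{\lambda}\Delta^r$. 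On the right, formula (\ref{equationbasechange}) combined with Lemma \ref{lemmaderivedfiber} reduces the problem to computing $Rf_{y,*}$ of $D^r_{\bP^1}\otimes^{L} Lf^{*}_{\bh}i_{y,*}\cO_{y}$, which depends on whether $\nu_y$ vanishes.

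For (1), the assumption that $\Delta$ is de Rham gives $\nu_y=0$, so Lemma \ref{lemmaderivedfiber} yields $Lf^{*}_{\bh}i_{y,*}\cO_{y}=i'_{y,*}(\cO_{\bP^1}\oplus\cO_{\bP^1}(-2)[1])$. The projection formula and Lemma \ref{lemmaderhampushforward} then compute
\[
i_{y}^{*}f_{\bh,*}D^r_{\tildefrY^{\wedge}}=Rf_{y,*}\bigl(D^r_{\bP^1}\oplus D^r_{\bP^1}(-2)[1]\bigr)=t\Delta^r\oplus\Delta^r,
\]
which upon passing to $\cR_L$-coefficients gives the claimed identification $T_{\mu}^{\lambda}\Delta=\Delta\oplus t\Delta$ (where $t\Delta$ denotes the $\det$-twist by Lemma \ref{lemmatwist}).

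For (2), $\Delta$ is not de Rham, so $\nu_y\neq 0$ and Lemma \ref{lemmapropertygrothendieckresolutiongl2} together with faithfully flat descent via the trivialization in Remark \ref{remarktrivialtorsor} identifies $f_{\bh}^{-1}(y)$ with $\Sp(L[h]/h^2)$. Lemma \ref{lemmaderivedfiber} then gives $Lf^{*}_{\bh}i_{y,*}\cO_{y}=i'_{y,*}\cO_{f_{\bh}^{-1}(y)}$, and the projection formula produces $i_{y}^{*}f_{\bh,*}D^r_{\tildefrY^{\wedge}}=f_{y,*}D^r_{f_{\bh}^{-1}(y)}$ where $D^r_{f_{\bh}^{-1}(y)}$ is the universal $(\varphi,\Gamma)$-module over $\cR_L[h]/h^2$. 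The short exact sequence $0\to h\cO_{f_{\bh}^{-1}(y)}\to \cO_{f_{\bh}^{-1}(y)}\to\cO_{y}\to 0$, tensored with $D^r_{f_{\bh}^{-1}(y)}$ and pushed forward, yields a short exact sequence $0\to D_y^r\to f_{y,*}D^r_{f_{\bh}^{-1}(y)}\to D_y^r\to 0$ where multiplication by $h$ identifies the submodule with $D_y^r$ and $D_y$ is the restriction to the reduced point of $f_{\bh}^{-1}(y)$. Specializing Theorem \ref{theorem}~(1) at this reduced point gives $T_{\lambda}^{\mu}D_y=\Delta$, and the uniqueness of such $D_y$ follows from the uniqueness part of Proposition \ref{propositionregulartononregular}~(1).

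The main technical point — and the only part requiring care — is the compatibility of base change with taking $T_\mu^\lambda$ and with the identification of Theorem \ref{theorem}: these rest on the locally free conclusion of Proposition \ref{propositiondirectimagephigammamodule} (so that $Li_y^{*}$ is underived) and the base-change statement of Proposition \ref{proptranslationphigammamodule}. Everything else is a direct chase through the projection formula and the structure of the fiber $f_{\bh}^{-1}(y)$ dictated by $\nu_y$.
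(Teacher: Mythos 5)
Your argument is correct and is essentially the paper's own proof: specialize via the base-change identity (\ref{equationbasechange}) together with Theorem \ref{theorem} and Proposition \ref{proptranslationphigammamodule}, use flatness of $D^r_{\tildefrY^{\wedge}}$ and Lemma \ref{lemmaderivedfiber} to compute the derived fiber, and conclude with Lemma \ref{lemmaderhampushforward} in the de Rham case and $\cO_{f_{\bh}^{-1}(y)}\simeq L[h]/h^2$ in the non-de Rham case; you merely spell out case (2) in more detail than the paper does. One small correction: the uniqueness of $D_y$ with $T_{\lambda}^{\mu}D_y=\Delta$ is not the uniqueness statement of Proposition \ref{propositionregulartononregular}(1) (which concerns the submodule of a given $D$), but rather the fact that, since $\Delta$ is not de Rham, the nonzero nilpotent $\nu_y$ stabilizes a unique line in $D_{\pdR}(\Delta)$ — equivalently $f_{\bh}^{-1}(y)$ has a single point (Lemma \ref{lemmapropertygrothendieckresolutiongl2}) — so by Proposition \ref{propositionfiberproduct} there is a unique weight-$\bh$ modification of $\Delta$.
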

\begin{proof}
    Since $D^r_{\tildefrY^{\wedge}}$ is flat over $\cO_{\tildefrY^{\wedge}}$, there is an exact triangle 
    \[i'_{y,*}(D^r_{\bP^1}(-2)[1])\rightarrow D^r_{\tildefrY^{\wedge}}\otimes_{\cO_{\tildefrY^{\wedge}}}^LLf_{\bh}^*i_{y,*}\cO_{y}\rightarrow i'_{y,*}D^r_{\bP^1}\rightarrow (i'_{y,*}(D^r_{\bP^1}(-2)[1]))[1]\] 
    if $\Delta_y$ is de Rham and $D^r_{\tildefrY^{\wedge}}\otimes_{\cO_{\tildefrY^{\wedge}}}^LLf_{\bh}^*i_{y,*}\cO_{y}$ is equal to $D_{f_{\bh}^{-1}(y)}$ otherwise by Lemma \ref{lemmaderivedfiber}. By Lemma \ref{lemmaderhampushforward}, we have $Rf_{\bh,*}(D_{\tildefrY^{\wedge}}^r\otimes_{\cO_{\tildefrY^{\wedge}}}^LLf^{*}_{\bh}i_{y,*}\cO_{y})$ is an extension of $i_{y,*}Rf_{y,*} D_{\bP^1}^r=t\Delta^r$ by $i_{y,*}Rf_{y,*}D^r_{\bP^1}(-2)[1]= \Delta^r$ in the de Rham case.  Use (\ref{equationbasechange}) and take the direct limit over $r$, we get (1). (2) follows similarly using that $\cO_{f_{\bh}^{-1}(y)}\simeq L[h]/h^2$. Note that by Proposition \ref{propositionunitcounit} and the proof of Theorem \ref{theorem}, the Casimir $\frc$ acts by $4h^2-4h$ on $T_{\mu}^{\lambda}\Delta$ for some choice of $h$.
\end{proof}
\begin{remark}
    In general for $\bh=(0,k)$ and in the de Rham case, there is a filtration $t^k\Delta_{\bP^1}\subset D_{(k-1,k)}\subset \cdots\subset D_{(1,k)}\subset D_{(0,k)}=D_{\bP^1}$ of $(\varphi,\Gamma)$-modules of rank two where $D_{(i,k)}$ has Hodge-Tate weight $(i,k)$ by the proof of Proposition \ref{propositionregulartononregular}. The graded pieces $D_{(i,k)}/D_{(i+1,k)}$ are isomorphic to $\cO_{\bP^1}(-1)\widehat{\otimes}_L \cR_L/t$ as in Lemma \ref{lemmaderhampushforward} (see (\ref{equationdifferencelattice})) and we can get similarly $T_{\mu}^{\lambda}\Delta$ is an extension of $t^k\Delta$ by $\Delta$ as in \cite[Prop. 2.19 (4)]{ding2023change}.
\end{remark}

\subsection{Translation of $D\boxtimes\bP^1(\Q_p)$}
Let $A$ be an affinoid algebra over $L$ and $D_A$ be a $(\varphi,\Gamma)$-module over $\cR_A$ of rank $2$. Write $\omega=\omega_A$ be the character such that $\cR_{A}(\omega_A\epsilon)=\det(D_A)$. Pointwisely for $x\in\Sp(A)$, Colmez constructed a $\GL_2(\Q_p)$-representation (or a $\GL_2(\Q_p)$-equivariant sheaf on $\bP^1(\Q_p)$) $D_x\boxtimes_{\omega}\bP^1(\Q_p)$ (\cite{colmez2016representations,colmez2018poids,colmez2010representations}). It is expected that the construction can vary in family and obtain a $\GL_2(\Q_p)$-module $D_A\boxtimes_{\omega} \bP^1(\Q_p)$. We will not discuss the $\GL_2(\Q_p)$-module, but only construct a $U(\frg)$-module $D_A\boxtimes\bP^1(\Q_p)$. Recall $\bP^1(\Q_p)\setminus\Z_p=\Pi.\Z_p$ where $\Pi=\mtwo{}{1}{p}{}=\mtwo{}{1}{1}{}\mtwo{p}{}{}{1}\in\GL_2(\Q_p)$.

\begin{definition}
    Equip $D_A$ with the standard $\frg$-module structure in Lemma \ref{lemmastandardgstructure}. Define $D_A\boxtimes\Z_p=D_A$ to be the $U(\frg)\otimes_LA$-module $D_A$. Let $D_A\boxtimes(\bP^1(\Q_p)\setminus\Z_p)$ be the $U(\frg)\otimes_LA$-module $\Pi.D_A$ which has the underlying $A$-module $D_A$ and $\frg$ acts on $\Pi x,x\in D_A$ by $g.\Pi x=\Pi(\Ad(\Pi^{-1})(g).x)$. Define $D_A\boxtimes\bP^1(\Q_p)=D_A\boxtimes\Z_p\oplus D_A\boxtimes(\bP^1(\Q_p)\setminus\Z_p)$.
\end{definition}
\begin{remark}
    The character $\omega$ is not important for the $U(\frg)$-modules due to our definition. But it matters for $\GL_2(\Q_p)$-representations, namely for how $\mtwo{}{1}{1}{}$ acts on $D\boxtimes\Z_p^{\times}$.
\end{remark}
Hence $D_A\boxtimes\bP^1(\Q_p)$ is just two copies of $D_A$ with certain $\frg$-action. We can similarly define $D_A^r\boxtimes\bP^1(\Q_p)$ ($=D_A^r\oplus \mtwo{}{1}{1}{}\varphi(D_A^r)$). Let us go back to the situation in Construction \ref{construction}. Sheafify the above definition we obtain sheaves of $U(\frg)$-modules $D_{\widetilde{\frY}^{\wedge}}\boxtimes\bP^1(\Q_p)$ and $\Delta_{\frY^{\wedge}}\boxtimes\bP^1(\Q_p)$.  
\begin{corollary}\label{corollaryDboxtimesP1}
    Under Hypothesis \ref{hypothesisflat}, there are isomorphisms of sheaves of $U(\frg)$-modules 
    \[T_{\lambda}^{\mu}(D_{\tildefrY^{\wedge}}\boxtimes\bP^1(\Q_p))\simeq Rf_{\bh}^*(\Delta_{\frY^{\wedge}}\boxtimes\bP^1(\Q_p))\]
    and
    \[T_{\mu}^{\lambda}(\Delta_{\frY^{\wedge}}\boxtimes\bP^1(\Q_p))\simeq Rf_{\bh,*}(D_{\tildefrY^{\wedge}}\boxtimes\bP^1(\Q_p)),\]
    where all notations appeared should be understood as for sheaves of $(\varphi,\Gamma)$-modules with certain $\frg$-actions.  
\end{corollary}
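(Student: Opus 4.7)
The plan is to reduce the statement to Corollary~\ref{corollary} via the decomposition
\[
D\boxtimes\bP^1(\Q_p)\;=\;D\boxtimes\Z_p\;\oplus\;D\boxtimes(\bP^1(\Q_p)\setminus\Z_p)\;=\;D\;\oplus\;\Pi.D,
\]
where the first summand carries the standard $\frg$-action on $D$ and the second has the same underlying $A$-module (and the same $(\varphi,\Gamma)$-module structure) as $D$, but with $\frg$ acting via $\Ad(\Pi^{-1})$. The four operations appearing in the corollary --- the pullback $f_{\bh}^{*}$, the derived pushforward $Rf_{\bh,*}$, and the translations $T_{\lambda}^{\mu}$ and $T_{\mu}^{\lambda}$ --- are all additive, so they commute with this direct-sum decomposition. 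Hence it suffices to treat each summand separately.

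For the ``$\Z_p$-summand'' the two isomorphisms are exactly Corollary~\ref{corollary}, applied respectively to $D_{\widetilde{\frY}^{\wedge}}$ and $\Delta_{\frY^{\wedge}}$ equipped with their standard $\frg$-structures. For the ``$\bP^1(\Q_p)\setminus\Z_p$-summand'', I would first note that since $\Pi\in\GL_2(L)$, the automorphism $\Ad(\Pi^{-1})$ of $\frg$ is \emph{inner}, so it acts trivially on $Z(\frg)$; in particular the generalized infinitesimal characters $\chi_{\lambda}$ and $\chi_{\mu}$ are preserved under the $\Pi$-twist. Second, because $V_k$ is an algebraic $\GL_2$-representation, the action of $\Pi$ on $V_k$ yields a $\frg$-equivariant isomorphism $(\Pi.D)\otimes_{L}V_k\simeq \Pi.(D\otimes_{L}V_k)$ (where on the right $\Pi$ acts diagonally, which by the previous remark preserves the $Z(\frg)$-decomposition). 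Applying the projector $\pr_{|\nu|}$ to each side yields a canonical isomorphism
\[
T_{\lambda}^{\mu}(\Pi.D)\;\simeq\;\Pi.\,T_{\lambda}^{\mu}(D)
\]
of $U(\frg)$-modules, and likewise for $T_{\mu}^{\lambda}$.

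Finally, the pullback $f_{\bh}^{*}$ and pushforward $Rf_{\bh,*}$ of Definition~\ref{definitiondirectimage} are $\cR$-linear operations acting on the underlying $(\varphi,\Gamma)$-module structure; they do not involve the $\frg$-action, so they commute formally with the $\Pi$-twist: $f_{\bh}^{*}(\Pi.\Delta_{\frY^{\wedge}})=\Pi.f_{\bh}^{*}\Delta_{\frY^{\wedge}}$ and $Rf_{\bh,*}(\Pi.D_{\widetilde{\frY}^{\wedge}})=\Pi.Rf_{\bh,*}D_{\widetilde{\frY}^{\wedge}}$ as sheaves of $U(\frg)$-modules. Combining these identifications with the isomorphisms of Corollary~\ref{corollary} applied on the $D_{\widetilde{\frY}^{\wedge}}$ side, and then twisting by $\Pi$, gives the two desired isomorphisms on the $\Pi$-summand, and summing with the $\Z_p$-summand finishes the proof. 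There is no real obstacle here: the only point requiring attention is the bookkeeping around the $\Pi$-twist, which becomes transparent once one notes that $\Ad(\Pi)$ is an inner automorphism of $\frg$ and that $\Pi$ acts on the algebraic representations $V_k$ used in the construction of the translation functors.
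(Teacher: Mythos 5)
Your proposal is correct and follows essentially the same route as the paper: reduce to the two summands $D\boxtimes\Z_p$ and $\Pi.D$, apply Corollary \ref{corollary} to the first, and handle the $\Pi$-twist by observing that $(\Pi.D)\otimes_L V_k\simeq \Pi.(D\otimes_L V_k)$ (because the $\frg$-action on $V_k$ integrates to $\GL_2(\Q_p)$) and that $\Ad(\Pi)$ acts trivially on $Z(\frg)$, so the translation projectors commute with the twist. Your extra remark that $f_{\bh}^*$ and $Rf_{\bh,*}$ commute with the $\Pi$-twist is left implicit in the paper but is the same bookkeeping.
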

\begin{proof}
    Over $\Z_p$, this is just Corollary \ref{corollary}. For the copy on $\bP^1(\Q_p)\setminus \Z_p$, just notice that for $k\geq 1$, $(\Pi.D_A)\otimes_L\Sym^kL^2\simeq \Pi.(D_A\otimes_L\Sym^kL^2)$ as $U(\frg)$-modules (since the $\frg$-action on $\Sym^kL^2$ integrates to a $\GL_2(\Q_p)$-action) and the adjoint action of $\Pi$ acts trivially on the center of $U(\frg)$. 
\end{proof}

\appendix
\section{On families of $(\varphi,\Gamma_K)$-modules}\label{sectionappendixfamilies}
\subsection{Beauville-Laszlo glueing}\label{subsectionbeauvillelaszlo}
We let $K$ be a $p$-adic local field. Recall 
\[t=\log([\epsilon])=X_{\Q_p}\prod_{m\geq 1}Q_m(X_{\Q_p})/p.\] 
Here $Q_m$ is the minimal polynomial of $\zeta_{p^m}-1$ over $\Q_p$, $X_{\Q_p}=[\epsilon]-1,\epsilon=(1,\zeta_p,\cdots,\zeta_{p^m},\cdots)\in \cO_{\widehat{K}_{\infty}}^{\flat}$ where $\widehat{K}_{\infty}$ denotes the $p$-adic completion and $\zeta_{p^m}$ is a primitive $p^m$-th root of unity. For $m\geq m(r)$ (see \S\ref{subsectionnotation} for $m(r)$), the continuous $\Gamma_K$-equivariant injection $\iota_{m}:\cR_{\Q_p,K}^{r}\hookrightarrow K_{m}[[t]]$ can be seen as the completion with respect to the kernel $(Q_m(X_{\Q_p}))$ of $\cR_{\Q_p,K}^{r}\rightarrow K_m=\cR_{\Q_p,K}^{r}/Q_m(X_{\Q_p})$ (see \cite[\S1.2]{berger2008construction} and \cite[Lem. 4.9]{berger2002representations}). For an affinoid algebra $A$ over $\Q_p$, the ring $(A\otimes_{\Q_p}K_m)[[t]]$ is the completion of $\cR_{A,K}^r$ with respect to the ideal $(Q_m(X_{\Q_p}))$ and we still write $\iota_m:\cR_{A,K}^r\rightarrow (A\otimes_{\Q_p}K_m)[[t]]$.\par

Suppose that $D_A$ is a $(\varphi,\Gamma_K)$-module over $\cR_{A,K}$ of rank $n$, base changed from a $(\varphi,\Gamma_K)$-module $D_A^{r}$ from $\cR_{A,K}^{r}$ for some $r$. 
Define $D_{\mathrm{dif}}^{m,+}(D_A):=(A\widehat{\otimes}K_m)[[t]]\otimes_{\iota_m,\cR_{A,K}^{r}}D_A^{r}$ for $m\geq m(r)$. Since by definition $\iota_{m+1}=\iota_m\circ \varphi^{-1}$, the map $K_m[[t]]\rightarrow K_{m+1}[[t]]$ induced by $\varphi:\cR_{K}^{r}\rightarrow\cR_{K}^{r/p}$ is $K_m[[t]]$-linear and $\varphi^*D_A^{r}=\cR_{A,K}^{r/p}\otimes_{\varphi,\cR_{A,K}^{r}}D^{r}_A\simeq D^{r/p}_A$ induces a $\Gamma_K$-linear isomorphism $(A\otimes_{\Q_p}K_{m+1})[[t]]\otimes_{(A\otimes_{\Q_p}K_m)[[t]]}D_{\mathrm{dif}}^{m,+}(D_A) \simeq D_{\mathrm{dif}}^{m+1,+}(D_A)$.
\begin{definition}
    A $(\varphi,\Gamma_K)$-module $M_A$ (resp. $M_A^r$) over $\cR_{A,K}[\frac{1}{t}]$ (resp. $\cR_{A,K}^r[\frac{1}{t}]$) is a finite projective $\cR_{A,K}[\frac{1}{t}]$-module (resp. finite projective $\cR_{A,K}^{r}[\frac{1}{t}]$-module) equipped with commuting continuous semi-linear actions of $\varphi,\Gamma_K$ (resp. $\Gamma_K$-isomorphism $\varphi^*M_A^r\simeq M_A^{r/p}$) such that there exists a $(\varphi,\Gamma_K)$-module $D_A^{r}$ over $\cR_A^{r}$ and an isomorphism $M_A\simeq \cR_{A,K}[\frac{1}{t}]\otimes_{\cR_{A,K}^{r}}D_A^{r}$ (resp. $M_A^r\simeq \cR_{A,K}^r[\frac{1}{t}]\otimes_{\cR_{A,K}^{r}}D_A^{r}$). 
\end{definition}

\begin{lemma}
    The ring $\cR_{A,K}^r$ is $t$ torsion-free.
\end{lemma}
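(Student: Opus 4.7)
The plan is to reduce the question to the affinoid-level rings $\cR_{A,K}^{[s,r]}$ and there factor $t$ into a product of non-zero-divisors whose non-zero-divisor property is preserved under completed tensor with $A$. Since $\cR_{A,K}^r = \varprojlim_{s<r} \cR_{A,K}^{[s,r]}$ and an inverse limit of injective maps of abelian groups is injective, it is enough to show that multiplication by $t$ is injective on each $\cR_{A,K}^{[s,r]} = \cR_{\Q_p,K}^{[s,r]} \widehat{\otimes}_{\Q_p} A$ for $0 < s < r$.

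On the annulus $\bbU_{K_0'}^{[s,r]} = \{p^{-r/(p-1)} \leq |X| \leq p^{-s/(p-1)}\}$ the variable $X = X_{\Q_p}$ does not vanish, so $X$ is a unit in $\cR_{\Q_p,K}^{[s,r]}$. In the factorization $t = X \prod_{m \geq 1} Q_m(X)/p$, each factor $Q_m(X)/p$ with $m$ outside the finite set $S := \{m : |\zeta_{p^m}-1| \in [p^{-r/(p-1)}, p^{-s/(p-1)}]\}$ has no zero on $\bbU^{[s,r]}$ and is therefore a unit. Consequently one can write $t = u \cdot \prod_{m \in S} Q_m(X)$ in $\cR_{\Q_p,K}^{[s,r]}$ with $u$ a unit and $S$ finite, so it suffices to show that each $Q_m$ is a non-zero-divisor in $\cR_{A,K}^{[s,r]}$.

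In $\cR_{\Q_p,K}^{[s,r]}$ the element $Q_m$ generates the ideal of a single closed point, and the quotient $\cR_{\Q_p,K}^{[s,r]}/Q_m$ is isomorphic to the residue field $L \otimes_{\Q_p} K_m$, a finite $L$-algebra; in particular the sequence
\[
0 \longrightarrow \cR_{\Q_p,K}^{[s,r]} \xrightarrow{\;\cdot Q_m\;} \cR_{\Q_p,K}^{[s,r]} \longrightarrow L \otimes_{\Q_p} K_m \longrightarrow 0
\]
is exact. I would then invoke the flatness of the affinoid completed tensor product: since $\cR_{\Q_p,K}^{[s,r]} \widehat{\otimes}_{\Q_p} A$ is the coordinate ring of the fibre product $\bbU_{K_0'}^{[s,r]} \times_L \Sp(A)$ and coincides on finitely generated modules with $-\otimes_{\cR_{\Q_p,K}^{[s,r]}} \cR_{A,K}^{[s,r]}$, and since $\cR_{A,K}^{[s,r]}$ is flat over the Noetherian affinoid $\cR_{\Q_p,K}^{[s,r]}$, the displayed exact sequence remains exact after base change to $A$. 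Hence $Q_m$ is a non-zero-divisor in $\cR_{A,K}^{[s,r]}$, and $t$, being a unit times a finite product of such elements, is a non-zero-divisor as well.

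The only mildly delicate step is the flatness statement used to pass from $\cR_{\Q_p,K}^{[s,r]}$ to $\cR_{A,K}^{[s,r]}$; this is standard for affinoid algebras over a field (completed tensor with an affinoid algebra over a field is flat on finitely generated modules, because such modules are already complete for their canonical topology), but is the one place where some care is required. Everything else is a matter of unwinding the factorization of $t$ and using that the annulus avoids the origin.
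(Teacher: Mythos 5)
Your proof is correct in substance but takes a different route from the paper's. The paper's argument is shorter: it observes that the sequence $0\to\cR_{K}^{[s,r]}\xrightarrow{\times t}\cR_{K}^{[s,r]}\to\cR_{K}^{[s,r]}/t\to 0$ splits as a sequence of $\Q_p$-Banach spaces (as in the proof of Prop.\ 2.15 of Liu's triangulation paper), so applying $-\widehat{\otimes}_{\Q_p}A$ preserves injectivity of multiplication by $t$, and then it passes to the inverse limit exactly as you do. You instead deduce injectivity from flatness of $\cR_{\Q_p,K}^{[s,r]}\to\cR_{A,K}^{[s,r]}$. That flatness is true and standard, but note that its proof rests on the very same functional-analytic input the paper uses directly: finitely generated modules over the Noetherian Banach algebra $\cR_{\Q_p,K}^{[s,r]}$ carry their canonical Banach topology, short exact sequences of such modules are strict, and strict exact sequences of $\Q_p$-Banach spaces split topologically (potential orthonormalizability over a discretely valued field), whence $-\widehat{\otimes}_{\Q_p}A$ is exact on them and $-\otimes_{\cR_{\Q_p,K}^{[s,r]}}\cR_{A,K}^{[s,r]}$ is exact on finitely generated modules. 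So your route buys more (every non-zero-divisor of $\cR_{\Q_p,K}^{[s,r]}$ remains a non-zero-divisor in $\cR_{A,K}^{[s,r]}$) at the cost of establishing the flatness statement, while the paper only needs the splitting of the one sequence for $t$. Also, once flatness is in hand, the factorization $t=X\prod_m Q_m(X)/p$ is an unnecessary detour: it suffices that $t$ is a nonzero element of the domain $\cR_{\Q_p,K}^{[s,r]}$ (functions on a connected annulus over $K_0'$). This is just as well, because the details of that step are only literally correct for $K=\Q_p$: for general $K$ the element $X_{\Q_p}=[\epsilon]-1$ is not the coordinate of $\bbU_{K_0'}^{[s,r]}$, and the quotient $\cR_{\Q_p,K}^{[s,r]}/Q_m(X_{\Q_p})$ is (a product of copies of) $K_m$ rather than $L\otimes_{\Q_p}K_m$ --- the coefficient field only enters after tensoring with $A$. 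None of these slips affects the validity of your argument.
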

\begin{proof}
    The short exact sequence $0\rightarrow \cR_{K}^{[s,r]}\stackrel{\times t}{\rightarrow}\cR_{K}^{[s,r]}\rightarrow \cR_{K}^{[s,r]}/t\rightarrow 0$ splits as $\Q_p$-Banach spaces as in the proof of \cite[Prop. 2.15]{liu2015triangulation}. Taking the completed tensor with $A$ we see $\cR_{A,K}^{[s,r]}$ is $t$-torsion free for all $r,s$. The limit $\cR_{A,K}^{r}=\varprojlim_s\cR_{A,K}^{[s,r]}$ is still $t$-torsion free.
\end{proof}
The functor $D_{\dif}^{m}(-)$ extends naturally for $(\varphi,\Gamma_K)$-modules over $\cR^{r}_{A,K}[\frac{1}{t}]$ and $m>m(r)$ by inverting $t$. And the $\varphi$-action induces $D_{\dif}^{m+1}(M_A)\simeq (A\otimes_{\Q_p} K_{m+1})[[t]][\frac{1}{t}]\otimes_{(A\otimes_{\Q_p} K_{m})[[t]][\frac{1}{t}]}D_{\dif}^{m}(M_A)$.  
\begin{proposition}\label{propositionappendixglueing}
    Suppose that $r$ is chosen such that $t$ is invertible in $\cR_{A,K}^{[r,r]}$ and $m=m(r)$. The functor 
    \[D_A^{r}\mapsto (M_A^{r}:=D_A^{r}[\frac{1}{t}],D_{\dif}^{m,+}(D_A^r),D_{\dif}^{m}(M_A^{r})=D_{\dif}^{m,+}(D_A^{r})[\frac{1}{t}])\] 
    induces an equivalence of categories 
    \[\Phi\Gamma_{A,K}^{r,+}\simeq \Phi\Gamma_{A,K}^{r}\times_{\mathrm{Rep}_{\mathrm{dif},A}^{m}(\Gamma_K)}\mathrm{Rep}_{\mathrm{dif},A}^{m,+}(\Gamma_K)\]
    between the category of $(\varphi,\Gamma_K)$-modules over $\cR_{A,K}^{r}$ and the category of triples $(M^{r}_A, D_{\dif,A}^{m,+},\alpha_m)$ where $M_A^r$ is a $(\varphi,\Gamma_K)$-module over $\cR_{A,K}^{r}[\frac{1}{t}]$, $D_{\dif,A}^{m,+}$ is a continuous semilinear $\Gamma_K$-representation over a projective $(A\otimes_{\Q_p}K_{m})[[t]]$-module of rank $n$ and $\alpha_m:D_{\dif,A}^{m,+}[\frac{1}{t}]\simeq D_{\dif}^m(M_A^r)$. The inverse functor is given by (write $D_{\dif,A}^{m',+}:=(K_{m'}\otimes_{\Q_p}A)[[t]]\otimes_{(K_m\otimes_{\Q_p}A)[[t]]}D_{\dif,A}^{m,+}$ for $m'\geq m$)
    \[(M_A^r,D_{\dif,A}^{m,+},\alpha_m)\mapsto \{x\in M_A^r\mid \iota_{m'}(x)\in D_{\dif,A}^{m',+}\subset D_{\dif,A}^{m',+}[\frac{1}{t}]\stackrel{\alpha_m}{\simeq} D_{\dif}^{m'}(M_A^r),\forall m'\geq m\}.\]
    Moreover, the equivalence commutes with arbitrary base change.
\end{proposition}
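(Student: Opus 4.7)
The plan is to realize the equivalence as a Beauville-Laszlo patching along the divisors cut out by the factors $Q_{m'}(X)$ of $t$ in $\bbU^r$, via the equivalence of \cite[Prop.~2.2.7]{kedlaya2014cohomology} between $(\varphi,\Gamma_K)$-modules over $\cR^r_{A,K}$ and $(\varphi,\Gamma_K)$-bundles on $\bbU^r_{\Sp(A)}$. The key geometric observation is that under the hypothesis $t \in (\cR^{[r,r]}_{A,K})^\times$, the zero locus of $t$ in $\bbU^r$ consists precisely of the pairwise disjoint divisors $Q_{m'}(X) = 0$ for $m' \geq m(r)$ (each cutting out $\Sp(K_{m'})$), and $(A \otimes_{\Q_p} K_{m'})[[t]]$ is the $Q_{m'}(X)$-adic completion of $\cR^{[s,r]}_{A,K}$ for any $s$ with $m(s) > m'$.

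Given a triple $(M_A^r, D_{\dif,A}^{m,+}, \alpha_m)$ with $m = m(r)$, I would first use the $\varphi$-structure on $M_A^r$ and the identity $\iota_{m'+1} = \iota_{m'} \circ \varphi^{-1}$ to propagate $D_{\dif,A}^{m,+}$ to canonical $\Gamma_K$-stable lattices $D_{\dif,A}^{m',+} \subset D_{\dif}^{m'}(M_A^r)$ for every $m' \geq m$, all matching $\alpha_m$ after inverting $t$. Next, on each sub-annulus $\bbU^{[s,r]}_{\Sp(A)}$ only finitely many divisors $Q_{m'} = 0$ with $m(r) \leq m' < m(s)$ appear, and these are pairwise coprime regular. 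Applying the classical Beauville-Laszlo glueing \cite{beauville1995lemme} along these finitely many divisors produces a finite projective $\cR^{[s,r]}_{A,K}$-module $D_A^{[s,r]}$ with a $\Gamma_K$-action and a canonical identification with $M_A^r$ after inverting $t$. These modules glue canonically as $s$ varies, and the resulting $\Gamma_K$-bundle on $\bbU^r_{\Sp(A)}$ inherits a $\varphi$-structure because $\varphi$ carries the divisor $Q_{m'+1} = 0$ bijectively to $Q_{m'} = 0$ and intertwines $D_{\dif,A}^{m'+1,+}$ with $D_{\dif,A}^{m',+}$ by construction.

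The explicit intersection formula $D_A^r = \{x \in M_A^r \mid \iota_{m'}(x) \in D_{\dif,A}^{m',+} \ \forall m' \geq m\}$ is built into the Beauville-Laszlo description, which makes the mutual-inverse check between $\Phi$ and the construction above essentially formal: $\Phi$ of the glued object recovers the triple by construction, and any $D_A^r$ satisfies the intersection description by faithful flatness of $\cR^{[s,r]}_{A,K} \to \cR^{[s,r]}_{A,K}[\tfrac{1}{t}] \times \prod_{m'}(A \otimes_{\Q_p} K_{m'})[[t]]$. Base change compatibility is automatic, because the coefficients of $Q_{m'}(X)$ lie in $\Q_p$, so $Q_{m'}$ remains a regular non-zero-divisor after any affinoid base change $A \to B$, and Beauville-Laszlo glueing commutes with such base changes.

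The main technical obstacle I anticipate is controlling finite projectivity of the patched object in the family setting, since $\cR^{[s,r]}_{A,K}$ is generally non-Noetherian. This is handled precisely by faithfully flat descent along the cover above, using the given finite projectivity of $M_A^r$ and of each $D_{\dif,A}^{m',+}$; the glueing data $\alpha_{m'}$ (extended from $\alpha_m$ via $\varphi$) then supplies the descent datum, and the output is finite projective by descent of the property.
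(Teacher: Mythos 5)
Your proposal is correct and follows essentially the same route as the paper: the paper's proof also passes to $(\varphi,\Gamma)$-bundles on the relative annulus via \cite[Prop.~2.2.7]{kedlaya2014cohomology}, glues by Beauville--Laszlo along the divisors cut out by $Q_{m'}(X_{\Q_p})$, and uses the $\varphi$-action to reduce all modifications to the single lattice $D_{\dif}^{m,+}$ at $m=m(r)$ (referring to \cite[Thm.~5.11]{fruttidoro2023non} for the details you spell out).
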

\begin{proof}
    This follows from the Beauville-Laszlo lemma \cite{beauville1995lemme} and the consideration of $\varphi$-actions. A finite projective $\varphi$-module $D_{A,K}^{r}$ over $\cR_A^{r}$ is equivalently the global section of a $\varphi$-bundle $\cD_{A}^r$ over the relative annulus $\bbU^{r}\times_{\Q_p}\Sp(K_0'\otimes_{\Q_p}A)$ (see \cite[Prop. 2.2.7]{kedlaya2014cohomology}). In terms of vector bundles, a $\varphi$-module $M_A^r$ over $\cR_{A,K}^r[\frac{1}{t}]$ corresponds to a $\varphi$-bundle over $\bbU^{r}\times_{\Q_p}\Sp(K_0'\otimes_{\Q_p}A)$ up to modifications along the divisors cut out by $Q_{m'}(X_{\Q_p})$ for $m'\geq m=m(r)$. Using the $\varphi$-action, modifications of $D_{A}^r$ are determined by the modification at $Q_m(X_{\Q_p})$ which is recorded as the $\Gamma_K$-lattice $D_{\dif}^{m,+}(D_A^r)$ inside $D_{\dif}^{m}(D_A^r)$. See \cite[Thm. 5.11]{fruttidoro2023non} for more details.
\end{proof}
\subsection{Almost de Rham families}\label{subsectionalmostderham}
We prove \cite[Prop. 5.3.28]{emerton2023introduction}. We need to write the proof here because some constructions in the proof are used for the main theorem. 

Let $K/\Q_p$ be a local field, $A$ be an affinoid algebra over $L$. We fix an embedding $\tau:K\hookrightarrow L\subset A$. A $\Gamma_K$-representation $D^{m,+}_{\dif, A}$ of rank $n$ with coefficients in $A\otimes_KK_m$ is a finite projective $(A\otimes_KK_m)[[t]]$-module of rank $n$ with a continuous semilinear action of $\Gamma_K$, where $\Gamma_K$ acts on $K_m$ by the Galois action and on $t$ via the cyclotomic character. Write $D_{\Sen,A}^{m,+}=D_{\dif,A}^{m,+}/t$. As in \cite[Prop. 3.7]{fontaine2004arithmetique}, differentiate the $\Gamma_{K}$-action, we can obtain a connection $\nabla$ on $D_{\dif,A}^{m,+}=\varprojlim_{k}D_{\dif,A}^{m,+}/t^k$ which is the Sen operator after modulo $t$. We say that $D_{\dif,A}^{m,+}$ is almost de Rham of weights $\bh=(h_1,\cdots,h_n)\in\Z^n, h_1\leq  \cdots\leq h_n$ if all the Sen polynomial of the specialization $D_{\Sen,x}^{m,+}$ is equal to $\prod_{i=1}^n(T-h_i)$ for any $x\in \Sp(A)$. Write $D_{\dif,A}^{m}=D_{\dif,A}^{m,+}[\frac{1}{t}]$. Denote by $\frS_{m,A}=(A\otimes_KK_m)[[t]]$. Let $\Rep_{\frS_{m,A}}(\Gamma_K)_{\bh}$ be the groupoid of almost de Rham semilinear $\Gamma_K$-representations over projective $\frS_{m,A}$-modules of weight $\bh$. For $m'\geq m$, the tensor product $-\otimes_{K_m}K_{m'}$ induces a functor $\Rep_{\frS_{m,A}}(\Gamma_K)_{\bh}\rightarrow \Rep_{\frS_{m',A}}(\Gamma_K)_{\bh}$.
\begin{definition}\label{definitioninfinitedif}
    Write $\frS_{\infty,A}=(A\otimes_KK_{\infty})[[t]]$.
    \begin{enumerate}
        \item Define the groupoid $\Rep_{\frS_{\infty,A}}(\Gamma_K)_{\bh}:=\varinjlim_m \Rep_{\frS_{m,A}}(\Gamma_K)_{\bh}$. Objects of $\Rep_{\frS_{\infty,A}}(\Gamma_K)_{\bh}$ consist of representations $D_{\dif,A}^{m,+}\in \Rep_{\frS_{m,A}}(\Gamma_K)_{\bh}$ for some $m$ identified with $D_{\dif,A}^{m,+}\otimes_{K_m}K_m'\in \Rep_{\frS_{m',A}}(\Gamma_K)_{\bh}$ for all $m'\geq m$. And for $D_{\dif,A}^{',m',+}\in \Rep_{\frS_{m',A}}(\Gamma_K)_{\bh}$, 
        \begin{align*}
            \Hom_{\Rep_{\frS_{\infty,A}}(\Gamma_K)_{\bh}}(D_{\dif,A}^{m,+},D_{\dif,A}^{',m',+})=\varinjlim_{m''\geq m,m'}\Hom_{\Rep_{\frS_{m'',A}}(\Gamma_K)_{\bh}}(D_{\dif,A}^{m'',+},D_{\dif,A}^{',m'',+})
        \end{align*}
        where $D_{\dif,A}^{m'',+}:=D_{\dif,A}^{m,+}\otimes_{K_m}K_{m''},D_{\dif,A}^{',m'',+}:=D_{\dif,A}^{',m',+}\otimes_{K_{m'}}K_{m''}$.
        \item For $D_{\dif,A}^{m,+}\in \Rep_{\frS_{m,A}}(\Gamma_K)_{\bh}$, we say that $m$ is large enough if the action of $\Gamma_{K_{m}}$ on $D_{\dif,A}^{m,+}/t$ is analytic, namely for any $\gamma\in \Gamma_{K_m}$, the action of $\gamma$ is given by the convergent series $\exp\left(\log(\epsilon(\gamma))\nabla\right)=\sum_{i=0}^{\infty}\frac{\left(\log(\epsilon(\gamma))\nabla\right)^i}{i!}$. 
    \end{enumerate}
\end{definition}
For any $D_{\dif,A}^{m,+}\in \Rep_{\frS_{m,A}}(\Gamma_K)_{\bh}$, there exists always $m'>m$ such that $m''$ is large enough for $D_{\dif,A}^{m'',+}$ for any $m''\geq m'$ (such that the series $\log(\gamma_{K_{m'}})$ in $\End_{A\times_K K_{m'}}(D_{\dif,A}^{m',+}/t)$ converges and $\gamma_{K_{m'}}=\exp(\log(\gamma_{K_{m'}}))$, cf. \cite[Prop. 2.2.14]{kedlaya2014cohomology}).

We take $G=\GL_{n/L}$ and $P_{\bh}$ a standard parabolic subgroup of $G$ such that the Weyl group of the Levi of $P_{\bh}$ is the stabilizer of $\bh$ in $\mathcal{S}_n$ as in \S\ref{sectionchangeofweight}. Let $ (\tildefrg_{\bh}/G)^{\wedge}_{0}(A)$ be the groupoid of triples $(D_{\rm pdR,A}, \nu_A,\Fil^{\bullet}D_{\pdR,A})$ where $D_{\rm pdR,A}$ is a finite projective $A$-module of rank $n$, $\Fil^{\bullet}D_{\pdR,A} $ is a decreasing filtration of projective sub-$A$-modules of type $\bh$ as in Definition \ref{defstacknilpotent} and $\nu_A\in \End_A(D_{\pdR,A})$ is a nilpotent endomorphism which keeps the filtration. 

We define the functor $D_{\rm pdR}: \Rep_{\frS_{\infty,A}}(\Gamma_K)_{\bh}\rightarrow (\tildefrg_{\bh}/G)^{\wedge}_{0}(A)$. 
\begin{definition}\label{definitionappendixDpdR}
    Let $\log(t)$ be the formal variable such that $\gamma.\log(t)=\log(\epsilon(\gamma))+\log(t)$ for $\gamma\in\Gamma_K$ and let the operator $\nu_A$ act on $\frS_{m,A}[\log(t)]$ as a $\frS_{m,A}$-linear derivative such that $\nu_A(\log(t))=-1$. 
    \begin{enumerate}
        \item For $D_{\dif,A}^{m,+}\in \Rep_{\frS_{m,A}}(\Gamma_K)_{\bh}$ such that $m$ is large enough, define the  $A$-module with a filtration
        \begin{align*}
            &D_{\rm pdR}(D_{\dif,A}^{m,+})=D_{\rm pdR}(D_{\dif,A}^{m}):=(D_{\dif,A}^{m}\otimes_{\frS_{m,A}}\frS_{m,A}[\log(t)])^{\Gamma_K},\\
            &\Fil^iD_{\rm pdR}(D_{\dif,A}^{m,+}):= (t^iD_{\dif,A}^{m,+}\otimes_{\frS_{m,A}}\frS_{m,A}[\log(t)])^{\Gamma_K}  
        \end{align*}
        for $i\in \Z$, equipped with an $A$-linear operator $\nu_A$ induced by the derivative $\nu_A$ on $\frS_{m,A}[\log(t)]$. 
        \item Given $(D_{\rm pdR,A}, \nu_A,\Fil^{\bullet}D_{\pdR,A})\in (\tildefrg_{\bh}/G)^{\wedge}_{0}(A)$, we let 
        \[D_{\dif}^{m}(D_{\rm pdR,A}, \nu_A):= (D_{\pdR,A}\otimes_{A}\frS_{m,A}[\frac{1}{t}][\log(t)])^{\nu_A=0}\]
        and 
        \[D_{\dif}^{m,+}(D_{\rm pdR,A}, \nu_A,\Fil^{\bullet}D_{\pdR,A}):= (\sum_{i\in\Z} \Fil^iD_{\pdR,A}\otimes_{A}t^{-i}\frS_{m,A}[\log(t)])^{\nu_A=0}\]
        be $\frS_{m,A}$-modules equipped with actions of $\Gamma_K$. 
    \end{enumerate}    
\end{definition}

\begin{proposition}\label{propositionappendixequivalencealmostdeRham}
    The functors $D_{\pdR}(-),D_{\dif}^{m,+}(-)$ are well defined and the following statements hold.
    \begin{enumerate}
        \item If $m$ is large enough for $D_{\dif,A}^{m,+}\in \Rep_{\frS_{m,A}}(\Gamma_K)_{\bh}$ in the sense of Definition \ref{definitioninfinitedif}, then the natural map
        \[\rho_{\pdR,m}:\sum_{i\in\Z}\Fil^iD_{\pdR}(D_{\dif,A}^{m,+})\otimes_A t^{-i}\frS_{m,A}[\log(t)]\rightarrow  D_{\dif,A}^{m,+}\otimes_{\frS_{m,A}}\frS_{m,A}[\log(t)].\]
        is an isomorphism and induces an isomorphism in $\Rep_{\frS_{m,A}}(\Gamma_K)_{\bh}$:
        \[D_{\dif}^{m,+}(D_{\rm pdR}(D_{\dif,A}^{m,+}), \nu_A,\Fil^{\bullet}D_{\pdR}(D_{\dif,A}^{m,+}))\simeq D_{\dif,A}^{m,+}.\]
        \item The functors $D_{\pdR}(-)$ and $D_{\dif}^{m,+}(-)$ induce an equivalence of groupoids $\Rep_{\frS_{\infty,A}}(\Gamma_K)_{\bh}\simeq (\tildefrg_{\bh}/G)^{\wedge}_{0}(A)$. Moreover, the equivalence commutes with arbitrary base change.
    \end{enumerate}
\end{proposition}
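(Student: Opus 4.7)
The plan is to adapt Fontaine's pointwise theory of almost de Rham representations to the family setting, verifying that all constructions preserve projectivity of modules and commute with base change. The functoriality and base change assertions in (2) will follow from the universality of the definitions (tensor products, kernels of $A$-linear operators on projective modules), so the main work is in establishing statement (1) plus the essential surjectivity of $D_{\pdR}$, after which full faithfulness and the equivalence assemble formally.

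For statement (1), assuming $m$ is large enough, the strategy is to filter $D_{\dif,A}^{m,+}$ by generalized weight spaces and reduce to a ``weight zero'' situation. The Sen polynomial $\prod_i(T-h_i)$ factors into pairwise coprime monic polynomials after grouping equal weights, so applying the Chinese remainder argument of \S\ref{subsectiongeneralchange} inductively (whose key projectivity input is Lemma \ref{lemmaprojectivelattice}) yields a $\Gamma_K$-stable filtration of $D_{\dif,A}^{m,+}$ whose graded pieces are projective $\frS_{m,A}$-modules on which $\nabla-h$ acts locally nilpotently for a single integer weight $h$. Multiplying each graded piece by $t^{-h}$ reduces to the case when $\nabla$ itself is locally nilpotent. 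For such a module $M$ (on which $\Gamma_{K_m}$ acts via $\gamma=\exp(\log(\epsilon(\gamma))\nabla)$ since $m$ is large), the exponential
\[
v \longmapsto \exp(-\log(t)\nabla)\,v = \sum_{i\ge 0}\frac{(-\log(t))^i}{i!}\nabla^i v
\]
is a well-defined $A$-linear map $M^{\nabla=0,\Gamma_{K/K_m}} \hookrightarrow M\otimes_{\frS_{m,A}}\frS_{m,A}[\log(t)]$ whose image lands in the $\Gamma_K$-invariants (using $\gamma.\log(t)=\log(\epsilon(\gamma))+\log(t)$). A standard formal computation shows this identifies $(M\otimes\frS_{m,A}[\log(t)])^{\Gamma_K}$ with a finite projective $A$-module of the same rank as $M$ over $\frS_{m,A}$, carrying a locally nilpotent endomorphism $\nu_A$ induced from $-d/d\log(t)$.

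For the isomorphism $\rho_{\pdR,m}$, I would first check it becomes an isomorphism after inverting $t$ by a pointwise rank count (using faithful flatness of $\frS_{m,A}[\log(t)][t^{-1}]$ over $\frS_{m,A}[t^{-1}]$), and then verify that the filtration induced on the left from $\Fil^{\bullet}D_{\pdR}$ matches the $t$-adic lattice on the right piece by piece through the weight filtration constructed above; this simultaneously shows the filtration $\Fil^{\bullet}D_{\pdR}(D_{\dif,A}^{m,+})$ is of type $\bh$ and yields the inverse identification $D_{\dif}^{m,+}(D_{\pdR},\nu,\Fil^\bullet)\simeq D_{\dif,A}^{m,+}$. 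For essential surjectivity in (2), starting from a triple $(D_{\pdR,A},\nu_A,\Fil^\bullet)$, the formula of Definition \ref{definitionappendixDpdR}(2) produces a projective $\frS_{m,A}$-module with the prescribed Sen weights by a direct computation on the $t$-adic associated graded of $\sum_i \Fil^i\otimes t^{-i}\frS_{m,A}[\log(t)]$; the type $\bh$ condition on $\Fil^{\bullet}$ translates exactly into the correct multiplicities of the weights $h_i$. Base change compatibility is then immediate from the formulas. The main obstacle I anticipate is the bookkeeping in the weight-filtration step, namely lifting the generalized eigenspace decomposition from $D_{\Sen,A}^{m,+}$ to $D_{\dif,A}^{m,+}$ while keeping $\Gamma_K$-stability and projectivity of the graded pieces over $\frS_{m,A}$; once this is in place, the remaining steps are essentially formal.
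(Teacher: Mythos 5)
There is a genuine gap, and it sits exactly at the step you flag yourself as "the main obstacle": the weight filtration of the lattice. The Chinese remainder argument of \S\ref{subsectiongeneralchange} (with Lemma \ref{lemmaprojectivelattice}) only decomposes $D_{\dif,A}^{m,+}/t$ into $\Gamma_K$-stable projective summands and then produces \emph{modifications} of the lattice (new lattices squeezed between $tD_{\dif,A}^{m,+}$ and $D_{\dif,A}^{m,+}$, i.e.\ changes of weights); it does not produce a $\Gamma_K$-stable filtration of $D_{\dif,A}^{m,+}$ by $\frS_{m,A}$-submodules whose graded pieces are projective over $\frS_{m,A}$ of a single Sen weight. (As stated, "$\nabla-h$ acts locally nilpotently" on a nonzero projective $\frS_{m,A}$-module is impossible, since $\nabla(tv)=t(\nabla+1)v$; what you must mean is purity of the Sen weight of the graded piece.) The generalized eigenspace decomposition exists canonically only modulo $t$, and lifting it to a $\Gamma_K$-stable filtration of the lattice with projective graded pieces in a family is essentially equivalent to the proposition you are trying to prove: under the (yet unproven) equivalence it amounts to the $\nu$-stability of the Hodge filtration, and no elementary argument such as Hensel/CRT supplies it. Moreover, even granting such a filtration, the remaining steps are not "essentially formal": $(-\otimes_{\frS_{m,A}}\frS_{m,A}[\log(t)])^{\Gamma_K}$ is a priori only left exact, so passing from the pure graded pieces back to $D_{\dif,A}^{m,+}$ (and matching $\rho_{\pdR,m}$ filtration-by-filtration) requires a surjectivity/decompletion statement on $\Gamma_K$-invariants of the extensions, which is precisely where the real work lies; likewise, "a pointwise rank count" does not show $\rho_{\pdR,m}$ is an isomorphism after inverting $t$ over these non-affinoid rings.

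For comparison, the paper avoids any weight filtration of the lattice. It uses the eigenspace decomposition only at the level of $D_{\Sen,A}^{m}$, proves the key projectivity and invariance statements there (Lemma \ref{lemmaappendixDsen}), and then passes from $D_{\Sen,A}^{m}$ to $D_{\dif,A}^{m,+}$ by the lifting argument with the approximation operators $\beta_k$ of \cite[Prop.~A.10]{wu2021local}, which shows directly that $\mathrm{gr}^iD_{\pdR}(D_{\dif,A}^{m,+})\simeq (t^iD_{\Sen,A}^{m}\otimes_{K_m}K_m[\log(t)])^{\Gamma_K}$; injectivity of $\rho_{\pdR,m}$ is then obtained by reduction mod $t$ plus $t$-adic separatedness, and surjectivity via the explicit description of the $\nu=0$ part. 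Your Step-1-type construction of $D_{\dif}^{m,+}(-)$ from a triple, the essential surjectivity, and the base change assertions are fine and match the paper; but to repair the proposal you would need to replace the CRT filtration step by a decompletion argument of this kind (or prove the existence of the pure-weight filtration of the lattice in families, which is not easier).
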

\begin{proof}
\textit{Step 1}. We show that the functor $D_{\dif}^{m,+}(-)$ is well-defined. Suppose that 
\[(D_{\rm pdR,A}, \nu_A,\Fil^{\bullet}D_{\pdR,A})\in (\tildefrg_{\bh}/G)^{\wedge}_{0}(A).\] 
Note that $D_{\dif}^{m}(D_{\rm pdR,A}, \nu_A)=(D_{\pdR,A}\otimes_{A}\frS_{m,A}[\log(t)])^{\nu_A=0}[\frac{1}{t}]$. We show that $(D_{\pdR,A}\otimes_{A}\frS_{m,A}[\log(t)])^{\nu_A=0}$ is an almost de Rham $\Gamma_K$-representation of weight $0$. First, 
\[\nu_A(\sum_{i\geq 0}x_i\log(t)^i)=\sum_{i\geq 0}(\nu_A(x_i)\log(t)^i-x_{i+1}(i+1)\log(t)^{i})=0\] 
for $x_i\in D_{\pdR,A}\otimes_A\frS_{m,A}$ if and only if $x_{i+1}=\frac{1}{i+1}\nu_A(x_{i})$ for $i\geq 0$. Since $\nu_A$ is nilpotent on $D_{\pdR}\otimes_A \frS_{m,A}$, we have an identification
\begin{align*}
     D_{\pdR,A}\otimes_{A}\frS_{m,A}&\simeq(D_{\pdR,A}\otimes_{A}\frS_{m,A}[\log(t)])^{\nu_A=0}\\
    x&\mapsto \sum_{i\geq 0}\frac{1}{i!}\nu_A^i(x)\log(t)^i.    
\end{align*}
as $\frS_{m,A}$-modules (but not $\Gamma_K$-equivariantly). Under this identification, the connection $\nabla$ on $(D_{\pdR,A}\otimes_{A}\frS_{m,A}[\log(t)])^{\nu_A=0}$ corresponds to $\nabla+\nu_A$ on $D_{\pdR,A}\otimes_{A}\frS_{m,A}$: $\nabla(\sum_ix_i\log(t)^i)=\sum_{i}(\nabla(x_i)+(i+1)x_{i+1})\log(t)^i=\sum_{i}(\nabla(x_i)+\nu_A(x_i))\log(t)^i$ where $\nu_A$ is linear for $\frS_{m,A}$ and $\nabla$ kills $D_{\pdR,A}$. Since $\nu_A$ is nilpotent, the Sen weights are pointwisely all zero.

Same argument shows that 
\[D_{\dif}^{m,+}(D_{\rm pdR,A}, \nu_A,\Fil^{\bullet}D_{\pdR,A})\simeq \sum_i \Fil^iD_{\pdR,A}\otimes_{A}t^{-i}\frS_{m,A}\] 
as an $\frS_{m,A}$-module. Since $\Fil^{\bullet}D_{\pdR,A}$ has projective graded pieces, we may choose a splitting $D_{\pdR,A}=D_{1}\oplus D_2\cdots\oplus D_{s}$ where $0\neq D_{i}=\Fil^{k_i}D_{\pdR,A}/\Fil^{k_i+1}D_{\pdR,A}$ and $k_1>\cdots>k_s$ such that $\{-k_1,\cdots,-k_s\}=\{h_1,\cdots,h_n\}$ as sets. Then 
\[D_{\dif}^{m,+}(D_{\rm pdR,A}, \nu_A,\Fil^{\bullet}D_{\pdR,A})=D_1\otimes_{A}t^{-k_1}\frS_{m,A}\oplus \cdots \oplus D_s\otimes_At^{-k_s}\frS_{m,A}\] 
is projective of rank $n$ over $\frS_{m,A}$ of weights $\bh$. If $m'\geq m$, by the above description, we see 
\[D_{\dif}^{m',+}(D_{\rm pdR,A}, \nu_A,\Fil^{\bullet}D_{\pdR,A})=D_{\dif}^{m,+}(D_{\rm pdR,A}, \nu_A,\Fil^{\bullet}D_{\pdR,A})\otimes_{K_m}K_{m'}.\]
Hence the image in $\Rep_{\frS_{\infty,A}}(\Gamma_K)_{\bh}$ is independent of $m$.

\textit{Step 2}. Take $D_{\dif,A}^{m,+}\in \Rep_{\frS_{m,A}}(\Gamma_K)_{\bh}\subset \Rep_{\frS_{\infty,A}}(\Gamma_K)_{\bh}$ such that $m$ is large enough. We show that $(D_{\rm pdR}(D_{\dif,A}^{m,+}),\nu_A,\Fil^{\bullet}D_{\rm pdR}(D_{\dif,A}^{m,+}))\in (\tildefrg_{\bh}/G)^{\wedge}_{0}(A)$ and is independent of $m$. 

Recall $D_{\Sen,A}^{m}=D_{\dif,A}^{m,+}/t$. Consider the $\Gamma_K$-representation $t^iD_{\Sen,A}^{m}$. There is a canonical $\Gamma_K$-decomposition $D_{\Sen,A}^{m}=\oplus_{i=1}^sD_{\Sen,A}^{m}\{\nabla_{\Sen}=-k_i\}$ according to the generalized eigenvalues of $\nabla_{\Sen}$ where each $D_{\Sen,A}^{m}\{\nabla_{\Sen}=-k_i\}$ is projective over $A\otimes_KK_m$ of rank $m_i$. 

By Lemma \ref{lemmaappendixDsen}, each $(t^iD_{\Sen,A}^{m}\otimes_{K_m}K_m[\log(t)])^{\Gamma_{K}}$ is finite projective over $A$ of rank the multiplicity of $-i$ in $\bh$ and the map
\begin{equation}\label{equationappendixDsen}
    \oplus_{i\in\Z}t^{-i}(t^iD_{\Sen,A}^{m}\otimes_{K_m}K_m[\log(t)])^{\Gamma_{K}}\otimes_{K}K_m[\log(t)]\rightarrow D_{\Sen,A}^{m}\otimes_{K_m}K_m[\log(t)]
\end{equation}
is an isomorphism of $\Gamma_K$-representations over $(A\otimes_KK_m)[\log(t)]$. Set
\[\mathrm{gr}^iD_{\pdR}(D_{\dif,A}^{m,+}):=(t^iD_{\dif,A}^{m,+}\otimes_{\frS_{m,A}}\frS_{m,A}[\log(t)])^{\Gamma_K}/(t^{i+1}D_{\dif,A}^{m,+}\otimes_{\frS_{m,A}}\frS_{m,A}[\log(t)])^{\Gamma_K}.\] 
There is an injection for all $i$
\[\mathrm{gr}^iD_{\pdR}(D_{\dif,A}^{m,+})\hookrightarrow (t^iD_{\Sen,A}^{m}\otimes_{K_m}K_m[\log(t)])^{\Gamma_K}\] 
which we claim is an isomorphism. 

Suppose that $v\in t^iD_{\dif,A}^{m,+}$ with image $\overline{v}\in t^iD_{\Sen, A}^m$ such that \[\sum_{i=0}^{\infty}\frac{(-1)^i}{i!}\nabla^i(\overline{v})\log(t)^i\in (t^iD_{\Sen,A}^{m}\otimes_{K_m}K_m[\log(t)])^{\nabla=0}.\] 
Take $a\leq b\in\Z$ such that $i,h_1,\cdots,h_n\in [a,b]$. In the beginning of the proof of \cite[Prop. A.10]{wu2021local}, there exists $l\geq 1$ and maps $\beta_{k}:t^{-b}D_{\dif,A}^{m,+}\rightarrow t^{-b}D_{\dif,A}^{m,+},k\geq 1$ such that $\beta_{k}(v)-\beta_{k+1}(v)\in t^{k+1-a}D_{\dif,A}^{m,+}$ and $(\gamma_{K_m}-1)^l\beta_k(x)\in t^{k+1-b}D_{\dif,A}^{m,+}$. Moreover, $l$ is large enough such that  $(\gamma_{K_m}-1)^l(t^{i}D_{\dif,A}^{m,+}/t^{i+1}D_{\dif,A}^{m,+})^{(\gamma_{K_m}-1)-\mathrm{nil}}=0$. By the construction, $\beta_k$ maps $t^{i}D_{\dif,A}^{m,+}$ to $t^{i}D_{\dif,A}^{m,+}$ and induces an automorphism of $(t^{i}D_{\dif,A}^{m,+}/t^{i+1}D_{\dif,A}^{m,+})^{(\gamma_{K_m}-1)-\mathrm{nil}}$ independent of $k$. Take any $v'$ such that $\beta_k(v')$ has image $\overline{v}$ in $t^{i}D_{\Sen,A}^m$. Then $\tilde{v}:=\varinjlim_k \beta_k(v')$ is a lift of $\overline{v}$ in $(t^{i}D_{\dif,A}^{m,+})^{(\gamma_{K_m}-1)-\mathrm{nil}}$. We get that $\nabla$ acts nilpotently on $\tilde{v}$ and $\sum_{i=0}^{\infty}\frac{(-1)^i}{i!}\nabla^i(\tilde{v})\log(t)^i\in (t^iD_{\dif,A}^{m,+}\otimes_{K_m}K_m[\log(t)])^{\Gamma_{K_m}}$ (cf. the proof of \cite[Lem. A.2]{wu2021local}). We conclude that the map $(t^iD_{\dif,A}^{m,+}\otimes_{\frS_{m,A}}\frS_{m,A}[\log(t)])^{\Gamma_{K_m}}\rightarrow (t^iD_{\Sen,A}^{m}\otimes_{K_m}K_m[\log(t)])^{\Gamma_{K_m}}$ is surjective. Take $\Gamma_K$-invariants we see
\[ \mathrm{gr}^iD_{\pdR}(D_{\dif,A}^{m,+})\simeq (t^iD_{\Sen,A}^{m}\otimes_{K_m}K_m[\log(t)])^{\Gamma_K}.\]
Thus $D_{\pdR}(D_{\dif,A}^{m,+})$ equipped with the filtration $\Fil^iD_{\pdR}(D_{\dif,A}^{m,+})=(t^iD_{\dif,A}^{m,+}\otimes_{\frS_{m,A}}\frS_{m,A}[\log(t)])^{\Gamma_K}$ has type $\bh$ (we use that extensions of projective modules are still projective). And if $m'\geq m$, $\mathrm{gr}^iD_{\pdR}(D_{\dif,A}^{m,+})=\mathrm{gr}^{i}D_{\pdR}(D_{\dif,A}^{m',+})$ for all $i$ by Lemma \ref{lemmaappendixDsen}. Hence the functor $D_{\pdR}(-)$ is independent of large enough $m$.

\textit{Step 3.} We show that the functors induce an equivalence of categories. We first show that the map $\rho_{\pdR,m}$ in (1) is an isomorphism.  By Step 1, both sides are finite projective over $\frS_{m,A}[\log(t)]$. We may choose a splitting of the filtration $\Fil^{\bullet}D_{\pdR}(D_{\dif,A}^{m,+})$ with graded pieces identified with $(t^iD_{\Sen,A}^{m}\otimes_{K_m}K_m[\log(t)])^{\Gamma_K}$. Modulo $t$, $\rho_{\pdR,m}$ coincides with the isomorphism (\ref{equationappendixDsen}), which is an injection. We get that $\rho_{\pdR,m}$ is an injection itself (since the source is $t$-adically separated). To see the surjectivity, we only need to show that $\nu=0$ part of the lefthand side $D_{\dif,A}^{m,+}$ is contained in the image. The $\nu=0$ part of the righthand side, which is a projective $\frS_{m,A}$-module, admits an explicit description in terms of $\mathrm{gr}^{\bullet}D_{\pdR}(D_{\dif,A}^{m,+})$ by Step 1. Modulo $t$, the map between $\nu=0$ part is given by the $\nu=0$ part of (\ref{equationappendixDsen}), the surjectivity follows. This also shows that the map $D_{\pdR}(-)$ is essentially surjective. By the definition of the groupoid $\Rep_{\frS_{\infty,A}}(\Gamma_K)_{\bh}$, $\Hom(D_{\dif,A}^{m,+},D_{\dif,A}^{m',+})=\varinjlim_{m''}\Hom(D_{\dif,A}^{m,+}\otimes_{K_m}K_{m''},D_{\dif,A}^{m',+}\otimes_{K_{m'}}K_{m''})$. To show fully faithfulness, we only need to show that for $m$ large enough, the map $\End_{\Rep_{\frS_{m,A}}(\Gamma_K)_{\bh}}(D_{\dif,A}^{m,+})\rightarrow \mathrm{End}_{(\tildefrg_{\bh}/G)^{\wedge}_{0}(A)}(D_{\pdR}(D_{\dif,A}^{m,+}))$ is an isomorphism. The map is an injection by taking $\nu=0$ part of the canonical isomorphism $\rho_{\pdR,m}$. The surjectivity follows similarly using $\rho_{\pdR,m}$. 

\textit{Step 4.} Assume that $\Sp(B)\rightarrow\Sp(A)$ is a morphism of affinoids. We need to show that for $m$ large enough, the natural map $\Fil^iD_{\pdR}(D_{\dif,A}^{m,+})\otimes_AB\rightarrow \Fil^iD_{\pdR}(D_{\dif,A}^{m,+}\otimes_{\frS_{m,A}}\frS_{m,B})$ is an isomorphism. Both sides are finite projective $B$-modules of the same rank, we can reduce to show that $\mathrm{gr}^iD_{\pdR}(D_{\dif,A}^{m,+})\otimes_AB\rightarrow \mathrm{gr}^iD_{\pdR}(D_{\dif,A}^{m,+}\otimes_{\frS_{m,A}}\frS_{m,B})$ is an isomorphism for all $i$. This follows from the proof of Lemma \ref{lemmaappendixDsen} (see also \cite[\S 3]{bellovin2015p}).
\end{proof}
\begin{lemma}\label{lemmaappendixDsen}
    Let $D_{\Sen, 0}^m$ be a semilinear $\Gamma_K$-representation over a finite projective $A\otimes_KK_m$-module of rank $n$ and with Sen weights pointwisely $0$ such that $m$ is large enough as in Definition \ref{definitioninfinitedif}. Then for $i\neq 0$, we have $(t^iD_{\Sen,0}^{m}\otimes_{K_m}K_{\infty}[\log(t)])^{\Gamma_{K}}=0$, and the map
    \[(D_{\Sen,0}^{m}\otimes_{K_m}K_{m}[\log(t)])^{\Gamma_{K}}\otimes_{K}K_{m}[\log(t)]\rightarrow D_{\Sen,0}^{m}\otimes_{K_m}K_{m}[\log(t)]\]
    is an isomorphism of $\Gamma_K$-representations. Moreover, for $m'\geq m$, $(D_{\Sen,0}^{m}\otimes_{K_m}K_{m}[\log(t)])^{\Gamma_{K}}=(D_{\Sen,0}^{m}\otimes_{K_m}K_{m'}[\log(t)])^{\Gamma_{K}}$.
\end{lemma}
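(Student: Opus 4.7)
The plan is to exploit the vanishing of Sen weights on $D_{\Sen,0}^m$ to reduce $\Gamma_K$-invariance to a kernel computation for a locally nilpotent differential operator, followed by Galois descent along the finite extension $K_m/K$.

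First I would introduce the total Sen operator
\[\nabla := \nabla_D \otimes 1 + 1 \otimes \partial_{\log(t)}\]
on $D_{\Sen,0}^m \otimes_{K_m} K_m[\log(t)]$, where $\nabla_D$ is the Sen operator on $D_{\Sen,0}^m$ and $\partial_{\log(t)}$ is the $K_m$-linear derivation sending $\log(t)$ to $1$. Since the Sen weights are pointwise $0$, Cayley--Hamilton combined with Lemma \ref{lemmanilpotentmatrix} forces $\nabla_D$, hence $\nabla$, to be locally nilpotent on polynomials in $\log(t)$. Since $m$ is large enough, any $\gamma_{K_m} \in \Gamma_{K_m}$ acts as $\exp(a\nabla)$ with $a := \log\epsilon(\gamma_{K_m})$ a unit in $K_m$; expanding $\exp(a\nabla) - 1 = a\nabla \cdot (1 + a\nabla/2 + \cdots)$ with unit remainder identifies $\ker(\gamma_{K_m} - 1) = \ker(\nabla)$, so $\Gamma_{K_m}$-invariance reduces to the differential equation $\nabla v = 0$. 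Solving the recursion $(j+1)v_{j+1} = -\nabla_D(v_j)$ coming from $\nabla(\sum v_j \log(t)^j) = 0$ then yields an $A\otimes K_m$-linear isomorphism
\[D_{\Sen,0}^m \xrightarrow{\sim} (D_{\Sen,0}^m \otimes_{K_m} K_m[\log(t)])^{\Gamma_{K_m}}, \qquad v_0 \mapsto \exp(-\log(t)\nabla_D) v_0.\]

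Next I would pass to the full $\Gamma_K$-action. Using abelianness of $\Gamma_K$ (hence $\gamma\nabla_D = \nabla_D\gamma$) together with the formal rule $\gamma(\log(t)) = \log(t) + \log\epsilon(\gamma)$, the identification above conjugates the residual $\Gamma_K$-action on $D_{\Sen,0}^m$ into the twisted action $\gamma' := \exp(-\log\epsilon(\gamma)\nabla_D) \circ \gamma$. A direct cocycle check using additivity of $\log$ and commutation of $\nabla_D$ with $\gamma$ shows $\gamma \mapsto \gamma'$ is a group homomorphism that is trivial on $\Gamma_{K_m}$; it therefore descends to an $(A\otimes K_m)$-semilinear action of the finite group $\Gal(K_m/K) = \Gamma_K/\Gamma_{K_m}$ on $D_{\Sen,0}^m$. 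Ordinary Galois descent along the finite \'etale cover $K_m/K$ then yields the $A = A\otimes_K K$-module $(D_{\Sen,0}^m)^{\gamma'}$, and the map in the lemma becomes the descent isomorphism $(D_{\Sen,0}^m)^{\gamma'} \otimes_K K_m \simeq D_{\Sen,0}^m$ tensored over $K_m$ with $K_m[\log(t)]$. The independence of $m$ is automatic, since the twist $\exp(-\log\epsilon(\gamma)\nabla_D)$ makes no reference to $m$: descending from $K_m$ or from $K_{m'} \supseteq K_m$ returns the same $A$-module.

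For the vanishing when $i\neq 0$, I run the same analysis on $t^i D_{\Sen,0}^m$, whose Sen operator $\nabla'$ satisfies $\nabla' - i$ locally nilpotent. Since $i\in\Z_{\neq 0}$ is a unit in $A$, $\nabla'$ is itself locally \emph{invertible}, so the recursion $(j+1)v_{j+1} = -\nabla'(v_j)$ admits no nonzero polynomial solution (termination would force $(\nabla')^N v_0 = 0$, hence $v_0 = 0$). This already kills $\ker(\nabla)$ on $t^i D_{\Sen,0}^m \otimes_{K_m} K_\infty[\log(t)]$, and a fortiori the $\Gamma_K$-invariants.

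The main obstacle I anticipate is in the second paragraph: verifying cleanly the cocycle identity $(\gamma_1\gamma_2)' = \gamma_1'\gamma_2'$ and the semilinearity of $\gamma'$ over $A\otimes K_m$, together with interpreting $\log\epsilon(\gamma)$ for $\gamma$ outside $\Gamma_{K_1}$ (via the Iwasawa logarithm) consistently with the formal rule $\gamma(\log(t)) = \log(t) + \log\epsilon(\gamma)$ used in Definition \ref{definitionappendixDpdR}. Once these compatibilities are in place, Galois descent delivers the remaining isomorphisms directly.
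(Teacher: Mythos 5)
Your proof is correct, and it reaches the statement by a genuinely different mechanism than the paper at the key step. Both arguments share the same backbone: nilpotence of the Sen operator from the weight-$0$ hypothesis (via Lemma \ref{lemmanilpotentmatrix}), the analyticity of the $\Gamma_{K_m}$-action for $m$ large to identify $\Gamma_{K_m}$-invariants with $\ker(\nabla)$, and the explicit solution $x\mapsto \exp(-\log(t)\nabla_D)(x)$, which is exactly the paper's map $F$. Where you diverge is in how the displayed map is shown to be bijective: the paper proves directly that the analogous map $\rho_{\Sen,m}$ out of the $\Gamma_{K_m}$-invariants is surjective (a decreasing induction on powers of $\nabla$) and injective (using the auxiliary $\frS$-linear derivation $\nu$ with $\nu(\log(t))=-1$ and comparing $\nu=0$ parts), and only then invokes Galois descent for $K_m/K$; you instead conjugate the whole $\Gamma_K$-action through $F$ into the twisted semilinear action $\gamma'=\exp(-\log\epsilon(\gamma)\nabla_D)\circ\gamma$ of $\Gal(K_m/K)$ on $D_{\Sen,0}^m$ (the cocycle check, triviality on $\Gamma_{K_m}$, and semilinearity all go through, with $\log$ the Iwasawa logarithm as in Definition \ref{definitionappendixDpdR}) and then cite classical finite Galois descent once. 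Your route is cleaner and also makes the $i\neq 0$ vanishing explicit (invertibility of $\nabla'=i+\text{nilpotent}$ kills the recursion), which the paper's written proof leaves implicit; the paper's route, on the other hand, produces the explicit compatibility of the isomorphism with the operator $\nu$, which is reused downstream in Step 3 of the proof of Proposition \ref{propositionappendixequivalencealmostdeRham}. Two small points in your write-up deserve one extra line each: the natural map of the lemma is the descent isomorphism composed with the unipotent automorphism $\exp(-\log(t)\nabla_D)$ of the target (not literally the descent map tensored up), which of course does not affect bijectivity; and the independence of $m$ is most cleanly seen by first taking $\Gal(K_{m'}/K_m)$-invariants of the twisted action on $D_{\Sen,0}^m\otimes_{K_m}K_{m'}$, which returns $D_{\Sen,0}^m$ with its level-$m$ twisted action, so that descending from $K_{m'}$ and from $K_m$ give the same $A$-module.
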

\begin{proof}
    Weights zero means that the Sen operator $\nabla$ acts nilpotently on $D_{\Sen, 0}^m$ by Lemma \ref{lemmanilpotentmatrix}. Since $m$ is large enough, we have $D_{\Sen,0}^{m}\simeq (D_{\Sen,0}^{m}\otimes_{K_{m}}K_{m}[\log(t)])^{\Gamma_{K_{m}}}=(D_{\Sen,0}^{m}\otimes_{K_{m}}K_{m}[\log(t)])^{\nabla=0}$, cf. \cite[Lem. A.2]{wu2021local}. Actually, we have $D_{\Sen,0}^{m}=D_{\Sen,0}^{m}\{\gamma_{K_{m}}=1\}$ and an identification of $A\otimes_KK_{m}$-modules
    \[F:D_{\Sen,0}^{m}\simeq (D_{\Sen,0}^{m}\otimes_{K_{m}}K_{m}[\log(t)])^{\Gamma_{K_{m}}}:x\mapsto \sum_{i=0}^{\infty}\frac{(-1)^i}{i!}\nabla^i(x)\log(t)^i.\]
    We show that the natural $\Gamma_K$-map
    \begin{equation*}
        \rho_{\Sen,m}:(D_{\Sen,0}^{m}\otimes_{K_{m}}K_{m}[\log(t)])^{\Gamma_{K_{m}}}\otimes_{K_{m}}K_{m}[\log(t)]\rightarrow D_{\Sen,0}^{m}\otimes_{K_{m}}K_{m}[\log(t)]
    \end{equation*}
    is an isomorphism. Since $F:\nabla(x)\mapsto \sum_{i=0}^{\infty}\frac{(-1)^i}{i!}\nabla^{i+1}(x)\log(t)^{i}$ and $\nabla$ is nilpotent, one can verify that the map $\rho_{\Sen,m}$ is a surjection by a decreasing induction: for any $x\in D_{\Sen,0}^m$ and $i$, we have $\nabla^j(x)\otimes 1$ is in the image of $\rho_{\Sen, m}$ for all $j\geq i$. To show the injectivity, consider the $A\otimes_{K}K_{m}$-linear derivation of $\nu$ on the two sides: $\nu(\log(t))=-1$. Under the identification $F$, $\nu$ corresponds to $\nabla$ on $D_{\Sen,0}^{m}$. The map $\rho_{\Sen,m}$ induces an isomorphism on $\nu=0$ part: $\nu(\sum_i F(x_i)\log(t)^i)=0$ if and only if $(i+1)x_{i+1}=\nabla(x_{i})$ and in this case we have 
    \[\rho_{\Sen,m}(\sum_{i}F(x_i)\log(t)^i)=\sum_{i+j=k}\frac{(-1)^i}{i!j!}\nabla^k(x_0)\log(t)^k\neq 0\]
    if $x_0\neq 0$. Now suppose that $\rho_{\Sen,m}(x)=0$ for some $x=\sum_{i}F(x_i)\log(t)^i\neq 0$. Let $i$ be the minimal integer such that $\nu^i(x)=0$. Then $i\geq 1$ and $\rho_{\Sen,m}(\nu^{i-1}(x))=0$ which forces $\nu^{i-1}(x)=0$, contradiction! Hence $\rho_{\Sen,m}$ is an isomorphism. 

    Finally, by Galois descent, we have $(D_{\Sen,0}^{m}\otimes_{K_{m}}K_{m}[\log(t)])^{\Gamma_{K_{m}}}=(D_{\Sen,0}^{m}\otimes_{K_{m}}K_{m}[\log(t)])^{\Gamma_{K}}\otimes_KK_{m}$ as subspaces of $D_{\Sen,0}^{m}\otimes_{K_{m}}K_{m}[\log(t)]$. And for $m'\geq m$, we have an isomorphism
    \[(D_{\Sen,0}^{m}\otimes_{K_{m}}K_{m}[\log(t)])^{\Gamma_{K}}\otimes_{K}K_{m'}[\log(t)]\rightarrow D_{\Sen,0}^{m}\otimes_{K_{m}}K_{m'}[\log(t)].\]
    Taking $\Gamma_{K_{m'}}$-invariants, we see $(D_{\Sen,0}^{m}\otimes_{K_{m}}K_{m'}[\log(t)])^{\Gamma_{K_{m'}}}=(D_{\Sen,0}^{m}\otimes_{K_{m}}K_{m'}[\log(t)])^{\nabla=0}=(D_{\Sen,0}^{m}\otimes_{K_{m}}K_{m}[\log(t)])^{\Gamma_{K}}\otimes_{K}K_{m'}$. Taking $\Gamma_{K}$-invariants we get $(D_{\Sen,0}^{m}\otimes_{K_{m}}K_{m'}[\log(t)])^{\Gamma_{K}}=(D_{\Sen,0}^{m}\otimes_{K_{m}}K_{m}[\log(t)])^{\Gamma_{K}}$.
\end{proof}
\section{GAGA and formal functions}\label{sectionappendixformalfunction}
We consider formal completions of rigid spaces and coherent modules on these spaces.
\begin{definition}\label{definitionformalcompletion}
    Let $\frX$ be a rigid space over $L$ and let $\cI$ be a coherent sheaf of ideals of $\cO_{\frX}$. Let $\frX_{n}$ be the analytic closed subspace of $\frX$ defined by $\cI^n$. The formal completion of $\frX$ along $\frX_1$, denoted by $\frX^{\wedge}$, is the ringed site $(\frX_1,\cO_{\frX^{\wedge}})$ which has the same underlying Grothendieck topological space as $\frX_1$ and the structure sheaf $\cO_{\frX^{\wedge}}:=\varprojlim_n \cO_{\frX_n}$. 
\end{definition}
The space above should be considered as a formal rigid analytic space, except that we will ignore the topology on the sheaf of the topological rings $\cO_{\frX^{\wedge}}$.

We consider affinoid cases first. In the following, let $A$ be an affinoid algebra over a $p$-adic field $L$ and let $I\subset A$ be an ideal. Let $\frY^{\wedge}=\varinjlim_n \frY_n=\varinjlim_n \Sp(A/I^n)$ be the formal completion of $\frY=\Sp(A)$ along $\Sp(A/I)$. For an affinoid open subspace $\Sp(B)\subset \Sp(A)$, $\cO_{\frY_n}(\Sp(B/I))=B\widehat{\otimes}_AA/I^n=B/I^n$ and $\cO_{\frY^{\wedge}}(B/I)=B^{\wedge}:=\varprojlim_nB/I^n$. 
\begin{lemma}
    An admissible open subset of $\Sp(A/I)$ admits a covering by open affinoids of the form $\Sp(B/I)$ for affinoid opens $\Sp(B)\subset \Sp(A)$. 
\end{lemma}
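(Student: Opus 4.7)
The strategy is to reduce to the case of rational subdomains and then explicitly lift the defining data from $A/I$ to $A$.

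First I would use the general fact that in rigid geometry every admissible open of an affinoid is covered by rational subdomains, so it suffices to show that any rational subdomain $U \subset \Sp(A/I)$ can be covered by opens of the form $\Sp(B/I)$ with $\Sp(B) \subset \Sp(A)$ an affinoid subdomain. Write
\[
U = \{\bar{x} \in \Sp(A/I) : |f_i(\bar{x})| \leq |f_0(\bar{x})|,\ 1 \leq i \leq n\},
\]
where $f_0,\ldots,f_n \in A/I$ generate the unit ideal of $A/I$.

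For each $\bar{x} \in U$ one has $f_0(\bar{x}) \neq 0$. Pick $\epsilon \in |L^\times|$ with $\epsilon < |f_0(\bar{x})|$, and lift each $f_i$ to some $\tilde{f}_i \in A$ using surjectivity of $A \twoheadrightarrow A/I$. Form the Laurent domain $\Sp(B_0) := \{x \in \Sp(A) : |\tilde{f}_0(x)| \geq \epsilon\}$ inside $\Sp(A)$; here $\tilde{f}_0$ becomes invertible, so one can further form the rational subdomain
\[
\Sp(B) := \{x \in \Sp(B_0) : |\tilde{f}_i(x)| \leq |\tilde{f}_0(x)|,\ 1 \leq i \leq n\},
\]
which is again an affinoid subdomain of $\Sp(A)$.

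The verifications are straightforward: by construction $\bar{x} \in \Sp(B) \cap \Sp(A/I) = \Sp(B/IB) = \Sp(B/I)$, and conversely any $y \in \Sp(B/I)$, viewed as a point of $\Sp(A/I)$, satisfies $|f_i(y)| = |\tilde{f}_i(y)| \leq |\tilde{f}_0(y)| = |f_0(y)|$ with $|f_0(y)| \geq \epsilon > 0$, hence lies in $U$. Varying $\bar{x}$ over $U$ then yields the required covering. The main point to check — and the only subtle one — is that the Laurent/rational operations above really produce affinoid subdomains of $\Sp(A)$ rather than just of $\Sp(B_0)$, which follows from the standard transitivity of affinoid subdomains in Tate's theory; no real obstacle is expected.
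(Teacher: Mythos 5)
Your argument is correct, but it takes a genuinely different route from the paper's. The paper also reduces to rational subdomains, but then lifts the presentation purely algebraically: it lifts $f_1,\dots,f_n,g$ to $\widetilde{f}_i,\widetilde{g}\in A$ and restores the unit-ideal condition in $A$ by adjoining finitely many extra functions $\widetilde{f}_{n+1},\dots,\widetilde{f}_{n+k}\in I$ (harmless, since they vanish on $\Sp(A/I)$), so that the resulting rational subdomain $\Sp(B)\subset\Sp(A)$ satisfies $\Sp(B/I)=U$ on the nose. You instead sidestep the unit-ideal issue by inserting a Laurent domain $\{|\widetilde{f}_0|\geq\epsilon\}$, on which $\widetilde{f}_0$ is invertible, and then taking the Weierstrass-type locus $\{|\widetilde{f}_i|\leq|\widetilde{f}_0|\}$; transitivity of affinoid subdomains makes $\Sp(B)$ an affinoid open of $\Sp(A)$, and your two inclusions are verified correctly. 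The paper's construction buys exact equality $\Sp(B/I)=U$ with a single rational subdomain; yours, as written, buys only a pointwise covering of $U$ by the subsets $\Sp(B/I)$, at the cost of a per-point choice of $\epsilon$.

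The one issue you should address is that this per-point covering is a priori infinite and you never check it is admissible (nor can you invoke quasi-compactness of $U$ before knowing admissibility); since the lemma is used to show that opens of the form $\Sp(B/I)$ form a basis of the Grothendieck topology of $\Sp(A/I)$, an arbitrary set-theoretic covering is not quite enough. Fortunately your construction repairs itself: since $f_0,\dots,f_n$ generate the unit ideal of $A/I$, the function $f_0$ has no zero on $U$, so $1/f_0\in\cO(U)$ is bounded and $|f_0|\geq\delta>0$ on $U$; choosing a single $\epsilon\in|L^{\times}|$ with $\epsilon\leq\delta$ gives $\Sp(B/I)=U$ exactly, recovering the same conclusion as the paper with no covering (hence no admissibility question) at all.
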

\begin{proof}
    A rational subdomain of $\Sp(A/I)$ has the form $\Sp((A/I)\langle x_1,\cdots,x_n \rangle/(x_1g-f_1,\cdots,x_ng-f_n) )$ for some $f_i,g\in A/I$ such that $f_i,g$ generate the unit ideal of $A/I$. Take lifts $\widetilde{f}_i,\widetilde{g}$ in $A$ and add possibly some $\widetilde{f}_{n+1},\cdots,\widetilde{f}_{n+k}\in I$, we see it has the form $\Sp(B/I)$ for a rational subdomain $\Sp(B)\subset \Sp(A)$ where $B=A\langle x_1,\cdots,x_{n+k} \rangle/(x_1\widetilde{g}-\widetilde{f}_1,\cdots,x_{n+k}\widetilde{g}-\widetilde{f}_{n+k})$. Then the statement follows from that rational subdomains form a basis for the Grothendieck topology \cite[Cor. 4.2/12]{bosch2014formal}.
\end{proof}

\begin{definition}\label{definitioncoherentmodules}
    An $\cO_{\frY^{\wedge}}$-module $\cF$ on the $I$-adic formal affinoid space $\frY^{\wedge}$ is coherent if $\cF$ has the form $\cF=\varprojlim \cF_n$ where $\cF_n$ are coherent $\cO_{\frY_n}$-modules such that $\cF_n/I^{n-1}=\cF_{n-1}$.
\end{definition}

Suppose that $F$ is a finitely generated $A^{\wedge}$-module (a coherent $\cO_{\Spec(A^{\wedge})}$-module), we can associate a coherent $\cO_{\frY^{\wedge}}$-module $\cF=\varprojlim \cF_n$ where $\cF_n$ is the coherent $\cO_{\frY_n}$-module attached to $F/I^n$. The following lemma is an analogue of \cite[Prop. 10.10.5, Ch.I]{EGAI}.
\begin{lemma}\label{lemmaGAGAaffinoid} Let $\frY^{\wedge}$ be as above.
    \begin{enumerate}
        \item Let $\cF=\varprojlim_n\cF_n$ be a coherent $\cO_{\frY^{\wedge}}$-module. Then we have $\cF(\frY^{\wedge})=\varprojlim_n \cF_n(\frY_n)$ and $R^i\Gamma(\frY^{\wedge},\cF)=0$ for $i>0$. Moreover, $\cF_n$ is uniquely determined by $F:=\cF(\frY^{\wedge})$ which is a finitely generated $A^{\wedge}$-module and $\cF_n(\frY^{\wedge})=F/I^n$. And for any affinoid open $\Sp(B)\subset \Sp(A)$ which defines an affinoid open $\Sp(B/I)\subset \Sp(A/I)$, we have $\cF(\Sp(B/I))=F\otimes_{A^{\wedge}}B^{\wedge}$. 
        \item The functor $F\mapsto \cF$ from the category of finitely generated $\cO(\frY^{\wedge})$-modules to the category of coherent $\cO_{\frY^{\wedge}}$-modules induces an equivalence of abelian categories. 
        \item An $\cO_{\frY^{\wedge}}$-module $\cF$ is coherent if and only if it is coherent in the sense of \cite[\href{https://stacks.math.columbia.edu/tag/03DK}{Tag 03DK}]{stacks-project}. 
    \end{enumerate}
\end{lemma}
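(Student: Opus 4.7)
The plan is to reduce everything to Kiehl/Tate acyclicity on each affinoid $\frY_n$ and then control the inverse limit via Mittag-Leffler. For part (1), given $\cF = \varprojlim_n \cF_n$ coherent in the sense of Definition \ref{definitioncoherentmodules}, each $\cF_n$ is a coherent sheaf on the rigid affinoid $\frY_n=\Sp(A/I^n)$, hence (by Kiehl/Tate) is the sheaf associated to the finitely generated $A/I^n$-module $F_n:=\cF_n(\frY_n)$ and is acyclic for the Tate-fpqc (equivalently admissible) topology on $\frY_n$. The compatibility $\cF_n/I^{n-1}\cF_n\simeq \cF_{n-1}$ gives surjections $F_n\twoheadrightarrow F_{n-1}$, so $(F_n)$ is a Mittag-Leffler inverse system; applying $R\varprojlim$ to the Čech complex computing cohomology on any admissible affinoid covering of $\frY_1$, the vanishing of $R^1\varprojlim$ for ML systems together with termwise acyclicity of $\cF_n$ gives $\Gamma(\frY^\wedge,\cF)=\varprojlim_n F_n=:F$ and $R^i\Gamma(\frY^\wedge,\cF)=0$ for $i>0$.

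Next I would establish that $F$ is a finitely generated $A^\wedge$-module and that $F/I^nF=F_n$. Since $I$ lies in the Jacobson radical of $A^\wedge$ and $A^\wedge$ is Noetherian (as the $I$-adic completion of the Noetherian ring $A$), a lift to $F$ of a finite generating set of $F_1$ over $A/I$ generates $F$ over $A^\wedge$ by the topological Nakayama lemma (using that $F$ is $I$-adically complete as an inverse limit with surjective transitions); the identity $F/I^nF=F_n$ then follows by Artin-Rees applied to the kernels $\ker(F\to F_n)$. For the local formula, if $\Sp(B)\subset\Sp(A)$ is an affinoid subdomain, the ring map $A\to B$ is flat, hence $A/I^n\to B/I^n$ and, upon passing to the limit, $A^\wedge\to B^\wedge$ are flat; combining this with the finite generation of $F$ gives
\[
\cF(\Sp(B/I))=\varprojlim_n (B/I^n\otimes_{A/I^n} F_n)=\varprojlim_n (B^\wedge\otimes_{A^\wedge} F)/I^n=B^\wedge\otimes_{A^\wedge}F,
\]
where the last equality uses that a finitely generated module over a Noetherian ring is $I$-adically complete after tensoring with a Noetherian flat extension.

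Part (2) follows: the functor $F\mapsto \cF$ is essentially surjective by the very definition of coherent on $\frY^\wedge$, and faithful/full by applying part (1) to sheaves of homomorphisms between two coherent modules. Part (3) is an unwinding: our $\cF$ is locally on every affinoid $\Sp(B/I)$ the sheaf attached to the finitely generated module $B^\wedge\otimes_{A^\wedge}F$ over the Noetherian ring $B^\wedge$, so it is of finite type and kernels of maps from $\cO_{\frY^\wedge}^{\oplus k}$ are automatically finitely generated; this matches the Stacks Project definition, and the converse is immediate since any Stacks-coherent sheaf on $\frY^\wedge$ restricts to a coherent sheaf on each $\frY_n$. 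The main obstacle is verifying that the Mittag-Leffler property, and hence commutation of $\Gamma$ with $\varprojlim$, persists after restricting to an arbitrary affinoid subdomain $\Sp(B/I)$; this is precisely where flatness of $A\to B$ is crucial, since flat base change preserves the surjectivity of the transition maps $F_n\to F_{n-1}$ and thus the ML condition for the localized system.
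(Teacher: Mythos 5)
Your proposal is correct and follows essentially the same route as the paper: Kiehl/Tate acyclicity on each $\frY_n$ plus a Mittag--Leffler/derived-limit argument for (1) (the paper cites Stacks Tags 0D60, 09B8, 087W where you invoke \v{C}ech complexes, topological Nakayama and Artin--Rees), the same affinoid-by-affinoid computation $\cF(\Sp(B/I))=F\otimes_{A^{\wedge}}B^{\wedge}$, Hom-sheaves for (2), and the EGA-style Noetherian argument for (3). Two small remarks: surjectivity of the transition maps after restriction to $\Sp(B/I)$ needs no flatness (right-exactness of the tensor product already preserves it), and in (2) full faithfulness still requires the algebraic identity $\Hom_{A^{\wedge}}(F,G)=\varprojlim_n\Hom_{A/I^n}(F/I^n,G/I^n)$ together with identifying a map $\cF\rightarrow\cG$ with a compatible system $\cF_n\rightarrow\cG_n$ (sheaf Hom does not obviously commute with reduction mod $I^n$, so $\cH om(\cF,\cG)$ is not a priori coherent in the sense of Definition \ref{definitioncoherentmodules}); both points are easily supplied and are exactly what the paper's appeal to Stacks Tag 0EHN covers.
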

\begin{proof}
    (1) Since the maps $\cF_n\rightarrow \cF_{n-1}$ are surjective, we have $\cF=R\varprojlim_n\cF_n$. Since $\frY^{\wedge}$ is affinoid, the maps $\cF_n(\frY^{\wedge})\rightarrow \cF_{n-1}(\frY^{\wedge})$ are surjective. Hence $R\Gamma(\frY^{\wedge},\cF)=\varprojlim_n \cF_n(\frY_n)$ by \cite[\href{https://stacks.math.columbia.edu/tag/0D60}{Tag 0D60}]{stacks-project}. By \cite[\href{https://stacks.math.columbia.edu/tag/09B8}{Tag 09B8}]{stacks-project}, we have $\cF(\frY^{\wedge})/I^n=\cF_n(\frY^{\wedge})$ and $\cF(\frY^{\wedge})$ is finitely generated by Nakayama lemma, see \cite[\href{https://stacks.math.columbia.edu/tag/087W}{Tag 087W}]{stacks-project}. Finally, since $\cF_n$ is coherent over $\frY_{n}$ and $\Sp(B/I^n)$ is an affinoid open in $\Sp(A/I^n)$, $\cF_n(\Sp(B/I^n))=\cF_n(A/I^n)\otimes_{A/I^n}B/I^n=F/I^n\otimes_{A/I^n}B/I^n$. Taking inverse limit $\cF(\Sp(B/I^n))=\varprojlim_n F\otimes_{A^{\wedge}}B/I^n= F\otimes_{A^{\wedge}}B^{\wedge}$ using that $F$ is finitely generated over $A^{\wedge}$.

    (2) The essential surjectivity is by (1). We need verify fully faithfulness. Let $F,G$ be two finitely generated $A^{\wedge}$-modules and let $\cF,\cG$ be the corresponding coherent sheaves. Then $\Hom_{A^{\wedge}}(F,G)=\varprojlim_n\Hom_{A/I^n}(F/I^n,G/I^n)=\varprojlim_n\Hom_{\cO_{\frY^{\wedge}}}(\cF_n,\cG_n)$ by \cite[\href{https://stacks.math.columbia.edu/tag/0EHN}{Tag 0EHN}]{stacks-project}. Apply this argument for the formal affinoid subspace $\Sp(B/I)$ of $\frY^{\wedge}$, we get 
    \[\cH om_{\cO_{\frY^{\wedge}}}(\cF,\cG)(\Sp(B/I))=\Hom_{B^{\wedge}}(\cF(B^{\wedge}),\cG(B^{\wedge}))=\varprojlim_n\cH om_{\cO_{\frY^{\wedge}}}(\cF_n,\cG_n)(\Sp(B/I)).\] 
    Hence $\cH om_{\cO_{\frY^{\wedge}}}(\cF,\cG)=\varprojlim_n\cH om_{\cO_{\frY^{\wedge}}}(\cF_n,\cG_n).$
    Taking global sections we see $\Hom_{\cO_{\frY^{\wedge}}}(\cF,\cG)=\Hom_{A^{\wedge}}(F,G)$. This proves the equivalence. See \cite[\href{https://stacks.math.columbia.edu/tag/087X}{Tag 087X}]{stacks-project} for the structure of abelian categories.
    
    (3) The proof is the same as for \cite[Prop. 10.10.5, Ch.I]{EGAI}, using that affinoid algebras as well as their completions are Noetherian.
\end{proof} 

Let $B$ be an affinoid algebra over $L$. Write $S=\Spec(B)$ and $\frS=\Sp(B)$. Suppose that $f:X_S\rightarrow \Spec(B)$ is a projective scheme over $S$ and let $X_{\frS}$ be its relative analytification over $\frS$ which is equipped with a map of locally ringed spaces $\iota: X_{\frS}\rightarrow X_S$ (see \cite[Exa. 2.2.11, Exa. 2.3.11]{conrad2006relative}). The following result was firstly proved in \cite{kopfeigentliche}. See also \cite[Exa. 3.2.6]{conrad2006relative} or \cite[Ann. A]{poineau2010raccord}.
\begin{theorem}[Relative GAGA theorem]\label{theoremGAGA}
    In the above situation, the functor $\cF\mapsto \iota^*\cF$ induces an equivalence of categories of coherent $\cO_{X_S}$-modules and coherent $\cO_{X_{\frS}}$-modules. And for any coherent $\cO_{X_S}$-module $\cF$ on $X_S$, the natural morphism $H^i(X_S, \cF)\rightarrow H^i(X_{\frS},\iota^*\cF)$ of finite $B$-modules is an isomorphism for all $i\geq 0$.
\end{theorem}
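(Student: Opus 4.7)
The proof follows the classical Serre--Köpf strategy, mirroring Serre's original GAGA in the non-archimedean setting. The plan is to reduce everything to the case of relative projective space $\bP^n_S$, where one can compute cohomology explicitly via Čech complexes, and then use dévissage to handle arbitrary coherent sheaves.

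First, since $f: X_S \to S$ is projective, a closed immersion $X_S \hookrightarrow \bP^n_S$ allows one to push forward coherent sheaves on $X_S$ to coherent sheaves on $\bP^n_S$ (and similarly for the analytification, using that the analytification of a closed immersion is a closed immersion of rigid spaces and that pushforward along a closed immersion preserves coherence and commutes with $\iota^*$, cf. \cite[\S 2, \S 3]{conrad2006relative}). This reduction means it suffices to prove the cohomological comparison and the categorical equivalence for $X_S = \bP^n_S$, with coherent sheaves supported (scheme- or analytic-theoretically) on $X_S$.

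Next, I would compute $H^i(\bP^n_S, \cO(d))$ and $H^i(\bP^n_{\frS}, \cO(d))$ for all $d \in \Z$ and compare. Using the standard affine (resp. affinoid) cover $\{U_i\}_{i=0}^n$ of $\bP^n_S$ (resp. $\bP^n_{\frS}$), the Čech complexes compute these cohomologies. On the algebraic side one gets the usual free $B$-modules; on the analytic side one must identify the analytic sections over intersections $U_{i_0\cdots i_k}^{\an}$ and show the natural map of Čech complexes induces an isomorphism on cohomology. For $d \geq 0$ all cohomology concentrates in degree $0$ and is given by $\Sym^d$ on both sides, which matches. For $d \leq -n-1$ the top cohomology is Serre-dual, computed as a finite free $B$-module in degree $n$, and the comparison reduces to a computation on Laurent-type expansions; one checks that every element of the analytic Čech cohomology has a unique representative which is a finite linear combination of monomials, hence the map is an isomorphism. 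The middle degrees vanish on both sides. The main obstacle here is the precise identification of analytic functions on the intersections $U_{i_0\cdots i_k}^{\an}$ as completed tensor products $B\widehat{\otimes}_L (\cdots)$ and the verification that the resulting cohomology is represented by finite $B$-modules of the correct rank; this uses standard non-archimedean functional analysis and the quasi-compactness/quasi-separatedness of $\bP^n_{\frS}$.

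With the comparison known for line bundles, dévissage completes the argument. Given a coherent $\cO_{\bP^n_{\frS}}$-module $\cG$, one first shows that $\cG \otimes \cO(d)$ is globally generated by finitely many sections for $d \gg 0$ (an analytic analogue of Serre vanishing, which follows from the finiteness of higher direct images of proper maps in rigid geometry, cf. Corollary~\ref{corollaryGAGA} together with the computation above). This yields a two-term resolution of $\cG$ by direct sums of twisting sheaves, and the same construction works algebraically. The five-lemma applied to the long exact sequences for cohomology, combined with the comparison for $\cO(d)$, yields the cohomological isomorphism and simultaneously proves that $\iota^*$ is essentially surjective (the algebraic resolution built from $\Hom$-spaces, which match by the degree-zero comparison, algebraizes $\cG$) and fully faithful (as $\Hom(\cF,\cG) = H^0(\cH om(\cF,\cG))$ matches on both sides). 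The main technical obstacle in the whole proof is the Serre-vanishing/global-generation step in the analytic setting; I would handle this by invoking proper pushforward preservation of coherence for rigid spaces (Kiehl's theorem) together with the explicit vanishing $H^i(\bP^n_{\frS}, \cO(d)) = 0$ for $i > 0$, $d \geq 0$, established in the previous paragraph.
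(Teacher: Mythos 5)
The paper does not actually prove this statement: Theorem~\ref{theoremGAGA} is quoted from the literature, with pointers to \cite{kopfeigentliche}, \cite[Exa.~3.2.6]{conrad2006relative} and \cite[Ann.~A]{poineau2010raccord}, so there is no internal proof to compare against. Your outline is, in substance, the standard Serre-style argument that those references carry out: reduce via a closed immersion into $\bP^n_S$ to relative projective space, compare the \v{C}ech cohomology of the twisting sheaves $\cO(d)$ over the standard affine/affinoid cover, and then handle a general coherent analytic sheaf by a two-term resolution by sums of twists, with the five lemma giving both the cohomology comparison and full faithfulness/essential surjectivity; so your route agrees with the proofs in the cited sources. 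Two cautions. First, your appeal to Corollary~\ref{corollaryGAGA} in the global-generation step is circular in the context of this paper, since that corollary is deduced from Theorem~\ref{theoremGAGA}; your fallback on Kiehl's finiteness theorem for proper rigid morphisms is the correct, non-circular input and should be the only one cited there. Second, the analytic Theorem~A step (global generation of $\cG\otimes\cO(d)$ for $d\gg 0$) is the genuinely hard point and does not follow merely from Kiehl finiteness together with the vanishing of $H^i(\bP^n_{\frS},\cO(d))$ for $i>0$, $d\ge 0$; as in Serre's original argument and in K\"opf's relative version, one needs the induction on dimension via hyperplane sections (or an equivalent d\'evissage) over the affinoid base, using Noetherianness of $B$, to show that twisting eventually kills the obstruction to generation. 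As a sketch your plan is the accepted route, but that step is where the real work lies and would have to be written out.
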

We go to the formal setting. Suppose that $f_Y:X\rightarrow Y$ is a projective scheme over $Y=\Spec(A)$. Let $X_{Y_n}=X\times_YY_n$ where $Y_n=\Spec(A/I^n)$. We form relative analytification $X_{\frY_n}$ for $X_{Y_n}$ over $\Spec(A/I^n)$ with proper maps $X_{\frY_n}\rightarrow \frY_n$ as well as $X_{\frY}\rightarrow \frY$. Then $X_{\frY^{\wedge}}=\varinjlim_n X_{\frY_n}$ is the formal completion of $X_{\frY}$ along the closed subspace $X_{\frY_1}$. We can define the category of coherent $\cO_{X_{\frY^{\wedge}}}$-modules as in Definition \ref{definitioncoherentmodules}, which is equivalent to the usual definition on a ringed site by Lemma \ref{lemmaGAGAaffinoid} (cf. \cite[Thm. 10.11.3, Ch.I]{EGAI}).

We also have a formal scheme $X_{\Spf(A^{\wedge})}=\varinjlim_n X_{Y_n}$ and a scheme $X_{\Spec(A^{\wedge})}$ the base change of $X_Y=X$ to $\Spec(A^{\wedge})$. There are natural morphisms of ringed sites $X_{\frY^{\wedge}}\rightarrow X_{\Spf(A^{\wedge})}\rightarrow X_{\Spec(A^{\wedge})}$. Write $f_{\frY_n}: X_{\frY_n}\rightarrow \frY_{n}$. The following corollary generalizes Lemma \ref{lemmaGAGAaffinoid}.
\begin{corollary}\label{corollaryGAGA}
    Let $X_{\frY^{\wedge}},X_{\Spf(A^{\wedge})},X_{\Spec(A^{\wedge})}$ be as above.
    \begin{enumerate}
        \item The category of coherent $\cO_{X_{\frY^{\wedge}}}$-modules is equivalent to the category of coherent modules on the formal scheme $X_{\Spf(A^{\wedge})}$ (in the sense of \cite[\href{https://stacks.math.columbia.edu/tag/089N}{Tag 089N}]{stacks-project}). Both categories are equivalent to the category of coherent $\cO_{X_{\Spec(A^{\wedge})}}$-modules on the scheme $X_{\Spec(A^{\wedge})}$.
        \item Let $\cF=\varprojlim_n\cF_n$ be a coherent module on $X_{\frY^{\wedge}}$ and $\cF_{A^{\wedge}}$ be the corresponding coherent module on $X_{\Spec(A^{\wedge})}$ by (1). Then for all $i\geq 0$, $R^if_{\frY^{\wedge},*}\cF=\varprojlim_nR^if_{\frY_n,*}\cF_n$ and is the coherent $\cO_{\frY^{\wedge}}$-module attached to the finite $A^{\wedge}$-module $H^i(X_{\Spec(A^{\wedge})},\cF_{A^{\wedge}})$.
    \end{enumerate}
\end{corollary}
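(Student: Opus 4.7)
The plan is to deduce part (1) level-by-level from the relative GAGA theorem (Theorem \ref{theoremGAGA}) together with Grothendieck's formal existence theorem (GFGA), and then to obtain part (2) by combining this identification with Grothendieck's theorem on formal functions. For the first equivalence in (1), both categories are described by compatible inverse systems of coherent sheaves on the $X_{\frY_n}$, respectively on the $X_{Y_n}$, so the statement follows by applying Theorem \ref{theoremGAGA} at each level $n \geq 1$ and checking that the resulting equivalences are compatible with the closed immersions $X_{Y_{n-1}} \hookrightarrow X_{Y_n}$ (which holds because relative analytification commutes with pullback along finite-type closed immersions). For the second equivalence, I would invoke GFGA for the projective morphism $X_{\Spec(A^{\wedge})} \to \Spec(A^{\wedge})$: since $A^{\wedge}$ is Noetherian and $I$-adically complete, Grothendieck's existence theorem (EGA III, \S5) supplies the equivalence between coherent sheaves on $X_{\Spec(A^{\wedge})}$ and coherent modules on its formal completion $X_{\Spf(A^{\wedge})}$.

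For part (2), each $f_{\frY_n}$ is proper, so Kiehl's finiteness theorem guarantees that $R^i f_{\frY_n,*}\cF_n$ is a coherent $\cO_{\frY_n}$-module. Applying Theorem \ref{theoremGAGA} on the target identifies this sheaf with the coherent $\cO_{\frY_n}$-module attached to the finite $A/I^n$-module $H^i(X_{Y_n}, \cF_{n,\alg})$, where $\cF_{n,\alg}$ is the algebraization of $\cF_n$ provided by the equivalence in (1). Grothendieck's theorem on formal functions applied to the proper morphism $X_{\Spec(A^{\wedge})} \to \Spec(A^{\wedge})$ yields a canonical isomorphism
\[
\varprojlim_n H^i(X_{Y_n}, \cF_{n,\alg}) \simeq H^i(X_{\Spec(A^{\wedge})}, \cF_{A^{\wedge}})^{\wedge},
\]
and the latter equals $H^i(X_{\Spec(A^{\wedge})}, \cF_{A^{\wedge}})$ itself, because this cohomology is a finitely generated $A^{\wedge}$-module and $A^{\wedge}$ is $I$-adically complete. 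Combining with Lemma \ref{lemmaGAGAaffinoid} then identifies the inverse limit with the coherent $\cO_{\frY^{\wedge}}$-module associated to this finite $A^{\wedge}$-module. The desired formula $R^i f_{\frY^{\wedge},*}\cF = \varprojlim_n R^i f_{\frY_n,*}\cF_n$ will follow by commuting the derived push-forward with the derived inverse limit $R\varprojlim_n$, which is valid once the relevant Mittag-Leffler condition is in place.

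The main obstacle will be precisely this Mittag-Leffler verification: whereas at the level of global sections on the quasi-compact space $\frY^{\wedge}$ it is immediate, for the higher direct images one must establish that the inverse system of global sections $H^i(X_{\frY_n},\cF_n) = H^i(X_{Y_n},\cF_{n,\alg})$ is Mittag-Leffler. This is exactly where the algebraic reinterpretation via relative GAGA is indispensable: knowing \emph{a priori} that the limit is the finitely generated $A^{\wedge}$-module $H^i(X_{\Spec(A^{\wedge})},\cF_{A^{\wedge}})$, combined with the Artin--Rees lemma applied to the $I$-adic filtration on this module, yields the Mittag-Leffler condition and closes the argument.
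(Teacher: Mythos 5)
Your outline follows the same route as the paper (levelwise GAGA plus Grothendieck existence for (1); identification of the levelwise direct images with their algebraic counterparts, the theorem on formal functions, Mittag--Leffler, and commuting $R\varprojlim$ with $Rf_*$ for (2)), but two steps are not yet proofs as written. First, the Mittag--Leffler verification: knowing that $\varprojlim_n H^i(X_{Y_n},\cF_{n,\alg})$ is a finitely generated $A^{\wedge}$-module, even combined with Artin--Rees applied to the $I$-adic filtration on that limit, does not by itself give the Mittag--Leffler property of the system $(H^i(X_{Y_n},\cF_{A^{\wedge}}/I^n))_n$ --- ML is a property of the images in the system, not of its limit (there are non-ML systems with zero limit). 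What is actually needed is the stronger content inside the proof of the theorem on formal functions: the comparison maps $H^i(X_{\Spec(A^{\wedge})},\cF_{A^{\wedge}})/I^n \rightarrow H^i(X_{Y_n},\cF_{A^{\wedge}}/I^n)$ have essentially zero kernel and cokernel systems, so the system is pro-isomorphic to the $I$-adic quotient system of a finite module, which has surjective transition maps. This is exactly what the paper cites (the lemma preceding the theorem on formal functions in the Stacks Project, applied to the proper morphism $X_{\Spec(A^{\wedge})}\rightarrow\Spec(A^{\wedge})$ and $\cF_{A^{\wedge}}$); you should invoke that lemma rather than the theorem's statement plus Artin--Rees on the limit.

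Second, your concluding sentence identifies $\varprojlim_n R^if_{\frY_n,*}\cF_n$ with the coherent $\cO_{\frY^{\wedge}}$-module attached to $H^i(X_{\Spec(A^{\wedge})},\cF_{A^{\wedge}})$ only at the level of global sections. You cannot directly ``combine with Lemma \ref{lemmaGAGAaffinoid}'', because that lemma applies to sheaves already known to be coherent in the sense of Definition \ref{definitioncoherentmodules}, and the system $(R^if_{\frY_n,*}\cF_n)_n$ need not satisfy $R^if_{\frY_{n+1},*}\cF_{n+1}\otimes_{A/I^{n+1}}A/I^{n}\simeq R^if_{\frY_n,*}\cF_n$ (base change along the closed immersions can fail), so coherence of the limit is part of what must be proved. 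The paper closes this by checking sections over every affinoid subdomain $\Sp(B/I)\subset\Sp(A/I)$: since $A\rightarrow B$ is flat, Lemma \ref{lemmaflatcompletion} gives flatness of $A/I^n\rightarrow B/I^n$ and $A^{\wedge}\rightarrow B^{\wedge}$, and flat base change on the algebraic side yields $H^i(X_{\Spec(B^{\wedge})},\cF_{B^{\wedge}})=H^i(X_{\Spec(A^{\wedge})},\cF_{A^{\wedge}})\otimes_{A^{\wedge}}B^{\wedge}$, which is precisely the sheaf condition required. Add this local verification (or an equivalent pro-isomorphism argument over each $\Sp(B/I)$, which again uses flatness of affinoid localization) and your argument matches the paper's.
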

\begin{proof}
    (1) The first equivalence is an application of Theorem \ref{theoremGAGA} above for each $X_{\frY_n}$ and by the definition of the category of coherent modules. The equivalence of coherent modules on $X_{\Spf(A^{\wedge})}$ and $X_{\Spec(A^{\wedge})}$ is Grothendieck's existence theorem \cite[\href{https://stacks.math.columbia.edu/tag/08BE}{Tag 08BE}]{stacks-project}.

    (2) Since $Rf_{\frY^{\wedge},*}\cF=Rf_{\frY^{\wedge},*}R\varprojlim_n\cF_n=R\varprojlim_nRf_{\frY^{\wedge},*}\cF_n$, we have $R^if_{\frY^{\wedge},*}\cF=\varprojlim_nR^if_{\frY_n,*}\cF_n$ by \cite[\href{https://stacks.math.columbia.edu/tag/0D60}{Tag 0D60}]{stacks-project} for $i\geq 0$ provided that $R^1\varprojlim_n R^if_{\frY_n,*}\cF_n=0$ for all $i$. We need to show that the inverse system $R^if_{\frY_n,*}\cF_n$ is Mittag-Leffler (cf. \cite{emmanouil1996mittag}). Write $(\cF_{A/I^n})_{n}$ for the coherent modules over $X_{\Spf(A^{\wedge})}$ by the equivalence in (1). Then $(\cF_{A/I^n})_{n}=(\cF_{A^{\wedge}}/I^n)_n$. Under the equivalence in Theorem \ref{theoremGAGA}, $R^if_{\frY_n,*}\cF_n=R^if_{Y_n,*}\cF_{A/I^n}$ as coherent sheaves associated to the same $A/I^n$-module $H^i(X_{Y_n},\cF_{A/I^n})=H^i(X_{\Spec(A^{\wedge})},\cF_{A^{\wedge}}/I^n)$. Apply \cite[\href{https://stacks.math.columbia.edu/tag/02OB}{Tag 02OB}]{stacks-project} for the proper morphism $X_{\Spec(A^{\wedge})}\rightarrow \Spec(A^{\wedge})$ and the sheaf $\cF_{A^{\wedge}}$, we see the system $(R^if_{\frY_n,*}\cF_n)_n$ is Mittag-Leffler. By \cite[\href{https://stacks.math.columbia.edu/tag/087U}{Tag 087U}]{stacks-project}, $H^i(X_{\Spec(A^{\wedge})},\cF_{A^{\wedge}})=\varprojlim_n H^i(X_{\frY_n},\cF_{n})$ as finite $A^{\wedge}$-modules. 
    
    Hence $R^if_{\frY^{\wedge},*}\cF(\frY^{\wedge})=\varprojlim_nR^if_{\frY_n,*}\cF_n(\frY_n)=H^i(X_{\Spec(A^{\wedge})},\cF_{A^{\wedge}})$ as $A^{\wedge}$-modules. For an affinoid subdomain $\Sp(B)\subset \Sp(A)$, a similar statement holds replacing $A^{\wedge}$ by $B^{\wedge}$. The ring map $A\rightarrow B$ is flat \cite[Cor. 4.1/5]{bosch2014formal}. Hence the maps $A/I^n\rightarrow B/I^n$ and $A^{\wedge}\rightarrow B^{\wedge}$ are flat by Lemma \ref{lemmaflatcompletion} below. By the flat base change, we have $H^i(X_{\Spec(B^{\wedge})},\cF_{B^{\wedge}})=H^i(X_{\Spec(A^{\wedge})},\cF_{A^{\wedge}})\otimes_{A^{\wedge}}B^{\wedge}$. Thus $R^if_{\frY^{\wedge},*}\cF=\varprojlim_nR^if_{\frY_n,*}\cF_n$, as an $\cO_{\frY^{\wedge}}$-module, is the coherent $\cO_{\frY^{\wedge}}$-module attached to the finite $A^{\wedge}$-module $H^i(X_{\Spec(A^{\wedge})},\cF_{A^{\wedge}})$ (see Lemma \ref{lemmaGAGAaffinoid}).
\end{proof}
We used frequently the following lemma.
\begin{lemma}\label{lemmaflatcompletion}
    Let $A$ be a ring and let $I$ be an ideal of $A$. Suppose that $(M_n)_n$ is an inverse system of $A$-modules such that $M_n$ is a flat $A/I^n$-module for any $n$. Let $M:=\varprojlim_nM_n$.
    \begin{enumerate}
        \item Suppose that $A$ is Noetherian and that the transition maps $M_{n+1}\rightarrow M_n$ are surjective for all $n$, then $M$ is flat over $A$ and $Q\otimes_AM=\varprojlim_n Q\otimes_AM_n$ for any finite $A$-module $Q$.
        \item Suppose that $M_{n+1}\otimes_{A/I^{n+1}}A/I^n=M_n$  and $M_n$ is finite flat over $A/I^n$ for all $n$. If $M_1$ is finite projective over $A/I$, then $M$ is finite projective over $A^{\wedge}=\varprojlim A/I^n$ and $M\otimes_{A^{\wedge}}A/I^n=M_n$ for all $n$. 
    \end{enumerate} 
\end{lemma}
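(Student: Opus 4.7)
For (1), the plan is first to establish the formula $Q\otimes_A M=\varprojlim_n Q\otimes_A M_n$ for a finite $A$-module $Q$. Since $A$ is Noetherian, $Q$ admits a finite presentation $A^a\to A^b\to Q\to 0$; tensoring with each $M_n$ yields right exact sequences $M_n^a\to M_n^b\to Q\otimes_A M_n\to 0$, and the surjective transitions of $(M_n)$ are inherited by $(Q\otimes_A M_n)$, so the system has vanishing $R^1\varprojlim$. Applying $\varprojlim$ and comparing with the right exact presentation of $Q\otimes_A M$ yields the formula. For flatness of $M$ over $A$, I would check $\Tor_1^A(Q,M)=0$ for finite $Q$: pick $0\to K\to A^b\to Q\to 0$ with $K$ finitely generated; then by the limit formula and left exactness of $\varprojlim$, $\Tor_1^A(Q,M)=\varprojlim_n\ker(K\otimes_A M_n\to M_n^b)$, and flatness of $M_n$ over $A/I^n$ identifies the $n$-th kernel with $\bigl((K\cap I^n A^b)/I^n K\bigr)\otimes_{A/I^n}M_n$, which by the Artin--Rees lemma (supplying $c$ with $K\cap I^n A^b\subset I^{n-c}K$ for $n\ge c$) embeds into $I^{n-c}(K\otimes_A M_n)/I^n(K\otimes_A M_n)$. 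The hardest step of the whole proof will be showing that the inverse limit of these shifted submodules vanishes; I expect to handle it by comparing the $I$-adic filtration on $K\otimes_A M$ with the $n$-th level filtrations through the surjective transitions, producing a Krull-intersection-type conclusion that forces any element of the limit to be zero.

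For (2), I would pick a finite generating set $\bar m_1,\dots,\bar m_r$ of the finitely presented module $M_1$ over $A/I$ and lift it compatibly through the surjective system $(M_n)$ to elements $m_1,\dots,m_r\in M$. Nakayama applied to the nilpotent ideal $I/I^n\subset A/I^n$ shows that the images $m_i^{(n)}$ generate $M_n$, defining surjections $\phi_n\colon(A/I^n)^r\twoheadrightarrow M_n$ with kernels $K_n$. The transitions $K_n\to K_{n-1}$ will be surjective: for $y\in K_{n-1}$, pick any lift $\tilde y\in(A/I^n)^r$; then $\phi_n(\tilde y)\in I^{n-1}M_n$, and flatness of $M_n$ identifies $I^{n-1}M_n$ with $(I^{n-1}/I^n)\otimes_{A/I}M_1$, which is surjectively hit by $(I^{n-1}/I^n)^r$, allowing the correction of $\tilde y$ into $K_n$. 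Mittag--Leffler then yields a short exact sequence $0\to\varprojlim K_n\to(A^{\wedge})^r\to M\to 0$, so $M$ is finitely generated. Since $M_1$ is finitely presented, $K_1$ is finitely generated; each $K_n$ is flat over $A/I^n$ (kernel of a surjection between flats has vanishing $\Tor_1$) and satisfies $K_{n+1}\otimes_{A/I^{n+1}}A/I^n=K_n$ (by tensoring the defining sequence with $A/I^n$). Rerunning the same Nakayama--Mittag--Leffler argument on $(K_n)$ gives $\varprojlim K_n$ finitely generated, so $M$ is finitely presented over $A^{\wedge}$.

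To finish (2), the surjectivity of the transitions $K_m\to K_n$ for $m\ge n$ forces the image of $\varprojlim K_m$ in $(A/I^n)^r$ to equal $K_n$; combined with $A^{\wedge}/I^n A^{\wedge}=A/I^n$, this yields $M/I^n M=M_n$. Flatness of $M$ over $A^{\wedge}$ then follows from the local criterion of flatness applied to the $(IA^{\wedge})$-adically complete module $M$: each quotient $M/(IA^{\wedge})^n M=M_n$ is flat over $A^{\wedge}/(IA^{\wedge})^n=A/I^n$ by hypothesis. A finitely presented flat module over any ring is projective, so $M$ is finite projective, concluding the proof.
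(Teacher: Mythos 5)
Your proof of part (1) has a genuine gap at its first, load-bearing step, namely the formula $Q\otimes_AM=\varprojlim_nQ\otimes_AM_n$. From the tensored presentations $M_n^a\to M_n^b\to Q\otimes_AM_n\to 0$, surjectivity of transition maps (equivalently vanishing of $R^1\varprojlim$ of the image system $N_n:=\mathrm{im}(M_n^a\to M_n^b)$) only yields $\varprojlim_nQ\otimes_AM_n\cong M^b/\varprojlim_nN_n$; to identify this with $Q\otimes_AM=M^b/\mathrm{im}(M^a\to M^b)$ you must prove $\mathrm{im}(M^a\to M^b)=\varprojlim_nN_n$, i.e.\ that a compatible family of elements of the images lifts to a compatible family in the $M_n^a$. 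This is not a formal consequence of surjective transitions (for general systems the image of the limit can be strictly smaller than the limit of the images, e.g.\ the constant system $\mathbb{Z}$ mapping onto $\mathbb{Z}/p^n$), and it is precisely where flatness of $M_n$ and Artin--Rees must enter: with $J=\mathrm{im}(A^a\to A^b)$, flatness identifies $\Tor_1^A(Q,M_n)$ with $\bigl((J\cap I^nA^b)/I^nJ\bigr)\otimes_{A/I^n}M_n$, whose transition maps vanish after a shift by the Artin--Rees constant, so the kernels $\ker(M_n^a\to M_n^b)$ form a Mittag--Leffler system, which is exactly what makes $M^a\to\varprojlim_nN_n$ surjective. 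Since your computation of $\Tor_1^A(Q,M)$ invokes the limit formula for the submodule $K$, the gap propagates to the flatness claim. By contrast, the step you single out as the hardest is immediate: the $m$-th component of an element of $\varprojlim_n\ker(K\otimes_AM_n\to M_n^b)$ lies in $I^{n-c}(K\otimes_AM_m)$ for every $n\ge m$, and this is zero once $n\ge m+c$ because $I^m$ kills $K\otimes_AM_m$; no Krull-intersection argument is needed.

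For part (2) the skeleton is correct: compatible lifts of generators, Nakayama for the nilpotent ideals $I/I^n$, surjectivity of $K_{n+1}\to K_n$, Mittag--Leffler, finite presentation of $M$ over $A^{\wedge}$, and $M\otimes_{A^{\wedge}}A/I^n=M_n$ all check out. The weak point is the final appeal to ``the local criterion of flatness'': there is no criterion asserting that a finitely presented $A^{\wedge}$-module whose reductions $M/(IA^{\wedge})^nM$ are flat over $A/I^n$ is flat; in this completed setting such a statement is essentially part (1) applied over $A^{\wedge}$, which needs a Noetherian hypothesis that part (2) does not have (and the identity $A^{\wedge}/I^nA^{\wedge}=A/I^n$ you use already requires $I$ finitely generated). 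A finish valid in the stated generality: since $M$ is finitely presented and $M\otimes_{A^{\wedge}}A/I^n=M_n$, each $M_n$ is finitely presented and flat, hence finite projective over $A/I^n$; projectivity lets you lift a splitting $s_n$ of $\phi_n$ to a splitting $s_{n+1}$ of $\phi_{n+1}$ reducing to $s_n$ (correct the naive lift by the automorphism $\mathrm{id}+\epsilon$ with $\epsilon$ valued in $I^nM_{n+1}$), and $\varprojlim_ns_n$ splits $(A^{\wedge})^r\twoheadrightarrow M$, exhibiting $M$ as a direct summand of a finite free module. For the record, the paper gives no argument of its own here: it cites the Stacks Project (Tags 0912 and 0D4B), of which your attempt is in effect a reconstruction.
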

\begin{proof}
    (1) is \cite[\href{https://stacks.math.columbia.edu/tag/0912}{Tag 0912}]{stacks-project}. (2) follows from \cite[\href{https://stacks.math.columbia.edu/tag/0D4B}{Tag 0D4B}]{stacks-project}. 
\end{proof}
\bibliographystyle{alpha}
\bibliography{mybib}
\end{document}